\documentclass[10.9pt,a4paper]{amsart}
\usepackage{a4wide}
\usepackage[utf8]{inputenc}
\usepackage[english]{babel}

\usepackage{amsfonts,amssymb,amsmath}
\usepackage{xcolor}
\usepackage[colorlinks,
    linkcolor={red!50!black},
    citecolor={blue!50!black},
    urlcolor={blue!80!black}]{hyperref}
\usepackage[capitalize]{cleveref}

\usepackage{tikz}
\usepackage[all]{xy}
\usepackage{enumitem}
\makeatletter
\newcommand{\mylabel}[2]{#2\def\@currentlabel{#2}\label{#1}}
\usetikzlibrary{calc, matrix, arrows, cd}

\newcommand{\bsm}{\left(\begin{smallmatrix}}
\newcommand{\esm}{\end{smallmatrix}\right)}
\newtheorem{innercustomthm}{Theorem}
\newenvironment{customthm}[1]
  {\renewcommand\theinnercustomthm{#1}\innercustomthm}
  {\endinnercustomthm}
  
  \newtheorem{innercustomprop}{Proposition}
\newenvironment{customprop}[1]
{\renewcommand\theinnercustomprop{#1}\innercustomprop}
{\endinnercustomprop}

\numberwithin{equation}{section}

\newtheorem{theorem}[equation]{Theorem}

\newtheorem{corollary}[equation]{Corollary}
\newtheorem{lemma}[equation]{Lemma}
\newtheorem{proposition}[equation]{Proposition}

\theoremstyle{definition}
\newtheorem{definition}[equation]{Definition}

\newtheorem{remark}[equation]{Remark}

\newtheorem{convention}[equation]{Convention}
\newtheorem{construction}[equation]{Construction}
\newtheorem{notation}[equation]{Notation}
\newtheorem{claim}{Claim}
\newtheorem*{claim*}{Claim}

  \newcommand{\unaryminus}{\scalebox{0.75}[1.0]{\( - \)}}

\newcommand{\Z}{\mathbb{Z}}
\newcommand{\Q}{\mathbb{Q}}
\newcommand{\R}{\mathbb{R}}
\newcommand{\C}{\mathbb{C}}

\newcommand{\RP}{\mathbb{RP}}

\newcommand{\Hom}{\operatorname{Hom}}
\newcommand{\Surf}{\operatorname{Surf}}
\newcommand{\Emb}{\operatorname{Emb}}
\newcommand{\closed}{\operatorname{closed}}
\newcommand{\Homeo}{\operatorname{Homeo}}
\newcommand{\charac}{\operatorname{char}}
\newcommand{\ord}{\operatorname{ord}}
\newcommand{\Mod}{\operatorname{Mod}}
\newcommand{\Ad}{\operatorname{Ad}}
\newcommand{\Iso}{\operatorname{Iso}}
\newcommand{\fr}{\operatorname{fr}}
\newcommand{\incl}{\operatorname{incl}}
\newcommand{\rel}{\operatorname{rel}}

\newcommand{\Spin}{\operatorname{Spin}}
\newcommand{\proj}{\operatorname{proj}}

\newcommand{\cd}{\operatorname{cd}}
\newcommand{\pr}{\operatorname{pr}}
\newcommand{\PD}{\operatorname{PD}}

\newcommand{\coker}{\operatorname{coker}}
\newcommand{\ks}{\operatorname{ks}}
\newcommand{\Ext}{\operatorname{Ext}}

\newcommand{\Herm}{\operatorname{Herm}}
\newcommand{\id}{\operatorname{id}}

\newcommand{\Rot}{\operatorname{Rot}}

\newcommand{\ev}{\operatorname{ev}}
\newcommand{\Aut}{\operatorname{Aut}}
\newcommand{\im}{\operatorname{im}}
\newcommand{\wt}{\widetilde}
\newcommand{\wh}{\widehat}

\begin{document}
\title{Homeomorphisms of surfaces in~$4$-manifolds}

\author[A.~Conway]{Anthony Conway}
\address{The University of Texas at Austin, Austin TX}
%%%
\email{anthony.conway@austin.utexas.edu}
\author[D.~Kasprowski]{Daniel Kasprowski}
\address{University of Southampton, United Kingdom}
\email{d.kasprowski@soton.ac.uk }

%%%

\begin{abstract}
This paper establishes necessary and sufficient conditions for locally flat knotted surfaces in simply-connected $4$-manifolds to be equivalent.
For surfaces with knot group $\Z_d$, we extend results of Lee-Wilczynski from spheres to surfaces of arbitrary genus; the surfaces are permitted to be nonorientable and have boundary.
We prove that most projective planes with knot group $\Z_2$ and the same Euler number are determined by the equivariant intersection form of their exterior.
We also prove that knots with prime power determinants bound
at most one Moebius band in~$D^4$ with knot group $\Z_2$ and a given Euler number.
Cancellation results lead to new criteria for homologous discs to be equivalent rel.\ boundary.
Finally,  we determine the topological extendable mapping class group of knotted surfaces with abelian knot group.
\end{abstract}

\maketitle

\section{Introduction}
\label{sec:Introduction}

This article concerns the classification of locally flat surfaces in simply-connected~$4$-manifolds.
We fix a simply-connected~$4$-manifold~$X$ and briefly review what is known on this topic; for brevity a~\emph{$G$-surface} will refer to a locally flatly embedded surface~$S \subset X$ with~$\pi_1(X \setminus S) \cong G$.
Freedman proved that~$\Z$-spheres in~$S^4$ are unknotted~\cite{Freedman}.
For~$d>0$, Lee and Wilczynski 
%\purple{determined situations where homologous $\Z_d$-spheres are isotopic}~\cite{LeeWilczyOdd,LeeWilczy},
showed that homologous~$\Z_d$-spheres are determined by the pointed equivariant intersection form of their branched cover~\cite{LeeWilczyOdd,LeeWilczy}, 
whereas Boyer proved that when the complement is simply-connected, closed knotted surfaces are determined by their genus and homology class~\cite{BoyerRealization}; this was recently generalised to surfaces with boundary by Pyronneau~\cite{Pyronneau}.
Closed~$\Z$-surfaces are determined by the equivariant intersection form of their exterior~\cite{ConwayPowell};
when the boundary is nonempty,  an additional invariant is needed to classify~$\Z$-surfaces~\cite{ConwayPowell,
ConwayPiccirilloPowell,ConwayDaiMiller}.
Nonorientable~$\Z_2$-surfaces in~$S^4$ were studied in~\cite{Lawson, FinashinKreckViro,KreckOnTheHomeomorphism, ConwayOrsonPowell,ConwayGalvin}.
Results for nonabelian knot groups include~\cite{Hillman2KnotsBooks,FriedlTeichner,ConwayPowellDiscs,ConwayDiscs}.

Our main results (Theorems~\ref{thm:CompatiblePairClosedIntro} and~\ref{thm:CompatiblePairIntro}) unify these statements (see Remark~\ref{rem:Closed}) and yield new classifications.
For $d>0$,  they extend Lee and Wilczynski's work from $\Z_d$-spheres to~$\Z_d$-surfaces of arbitrary genus,  possibly with nonempty boundary,  and allowing for nonorientable surfaces.
For~$d=0$, the outcome recovers previous results on~$\Z$-surfaces from~\cite{ConwayPowell}.
In the nonorientable case,  we deduce that most $\Z_2$-projective planes are determined by the equivariant intersection form of their exterior (Theorem~\ref{thm:ProjectivePlanesIntro}); this generalises Lawson's theorem that~$\Z_2$-projective planes in~$S^4$ are unknotted~\cite{Lawson}.
As another application, we prove that knots with prime power determinants bound at most one~$\Z_2$-Moebius band in~$D^4$ with a given Euler number (Theorem~\ref{thm:Moebius}).
Applications to~$\Z_d$-discs include Theorems~\ref{thm:Discs} and~\ref{thm:SpheresDiscsNoCancellation}.
Our main results also give necessary and sufficient conditions for a homeomorphism~$S \to S$ of a simple surface to extend to a homeomorphism~$(X,S) \to (X,S)$.
This leads to a description of the topological extendable mapping class group 
%the subgroup of mapping classes of~$S$ that extend over~$X$
(Theorem~\ref{thm:K3Intro}); this is related to a topological version of Problem~4.39 of the K3 problem list~\cite{K3}.
Here, a surface~$S \subset X$ is said to be \emph{simple} if its exterior $X_S$ has abelian fundamental group.

\medbreak
In what follows,~$4$-manifolds are assumed to be compact, connected and oriented unless mentioned otherwise.
Surfaces are assumed to be compact and connected but not necessarily orientable.
We work in the topological category.

\subsection{Sample applications}
Before describing the general theory, we begin by listing some applications.
Our first result concerns simple Moebius bands in the $4$-ball.
The knot group of such Moebius bands is necessarily of order two.

\begin{theorem}
\label{thm:MoebiusIntro}
Up to isotopy rel.\ boundary, a knot~$K \subset S^3$ with~$\det(K)$ a prime power bounds at most one~$\Z_2$-Moebius band in~$D^4$ with a given Euler number.
\end{theorem}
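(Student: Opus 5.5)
The strategy is to feed the paper's general classification of $\Z_d$-surfaces with boundary (Theorem~\ref{thm:CompatiblePairIntro}) into the setting $X=D^4$, $d=2$, $S$ a Moebius band with $\partial S=K$. Very roughly, that theorem says two $\Z_d$-surfaces with the same boundary, genus and normal data are equivalent rel.\ boundary exactly when there is an isometry of the equivariant intersection forms of their exteriors that is \emph{compatible} with the boundary identification. So I would fix two $\Z_2$-Moebius bands $M_0,M_1\subset D^4$ with $\partial M_i=K$ and the same Euler number $e$, and then argue in three steps.

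\textbf{Step 1: compute the exteriors.} Let $N_i=D^4\setminus \nu M_i$. Then $\pi_1(N_i)\cong\Z_2$, and the double cover $\wt N_i$ is the double branched cover $\Sigma_2(D^4,M_i)$ with a neighbourhood of the branch locus removed. A Moebius band has $\chi=0$, so $\chi(\Sigma_2(D^4,M_i))=2\chi(D^4)-\chi(M_i)=2$. Its boundary is $\Sigma_2(K)$, which is a rational homology sphere with $|H_1|=\det K$. Hence $H_2(\Sigma_2(D^4,M_i))\cong\Z$, and its intersection form is $(\pm\det K)$, where the sign is fixed by $e$ via the $G$-signature theorem (Gordon--Litherland). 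The resulting equivariant forms on $H_2(\wt N_i)$ are therefore abstractly isometric, and the issue is only whether an isometry can be found that is compatible with the boundary.

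\textbf{Step 2: reduce compatibility to automorphisms of the linking form.} The boundary data is the identity on $\partial N_i$. On the level of forms, compatibility means that the induced isometry of the linking form on $H_1(\Sigma_2(K))$, or more precisely on its equivariant version over $\Z[\Z_2]$, must match a prescribed one. Rank-one forms $(\pm\det K)$ admit only the isometries $\pm1$. So any two presentations of the linking form $\lambda_{\Sigma_2(K)}$ by $(\pm\det K)$ differ by an automorphism of a \emph{cyclic} group $H_1(\Sigma_2(K))\cong\Z_{\det K}$. This group is indeed cyclic, because it is presented by a $1\times1$ matrix. The condition we need is that every isometry of the linking form that arises is realised by $\pm1$.

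\textbf{Step 3: use the prime power hypothesis.} With $\det K=p^k$, $p$ odd since $\det K$ is odd, the group $\Z_{p^k}$ is cyclic of prime power order. The isometries of a nondegenerate linking form on it are the units $u$ with $u^2\equiv1 \pmod{p^k}$. For an odd prime power these are only $\pm1$, since the unit group is cyclic. Hence every self-isometry of the linking form lifts to $\pm\id$ of the rank-one form, so a compatible isometry always exists. Theorem~\ref{thm:CompatiblePairIntro} then gives a homeomorphism of pairs rel.\ boundary. Finally, I would upgrade this to an isotopy rel.\ boundary using the fact that the mapping class group of $D^4$ rel.\ boundary is trivial (Alexander trick).

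\textbf{Main obstacle.} I expect the delicate part to be Step 2. One has to check carefully that the compatibility condition in the general theorem really reduces to the linking-form automorphism group, that the equivariant structure (the deck transformation acting by $-1$ on $H_2$) adds no further constraint, and that the sign and Euler-number bookkeeping is correct. If the general theorem also involves an extra obstruction beyond the form, in the spirit of the additional invariant needed for $\Z$-surfaces with boundary, that obstruction would need to vanish or be controlled by the same $\pm1$ argument.
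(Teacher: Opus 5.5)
There is a genuine gap in Step 2: you analyse the wrong boundary module. The compatibility condition $\partial F=h_*$ in Theorem~\ref{thm:CompatiblePairIntro} lives on the Alexander module of the boundary of the \emph{exterior}. That module is $H_1(\partial X_S;\Z[\Z_2])\cong H_1(\Sigma_2(K))\oplus G$, where $G$, of order $4$, comes from the circle bundle $\Sigma\mathbin{\wt{\times}}S^1$ over the Moebius band. It is presented by the form of the exterior's double cover, which is $Q=(4\det K)$ on $H_2(X_S;\Z[\Z_2])\cong\Z_-$, and not by the branched-cover form $(\pm\det K)$. Since it is cyclic of order $4\det K$, one gets $G\cong\Z_4$, and the boundary linking form is $(x,y)\mapsto -xy/(4p^k)$ on $\Z_{4p^k}$.

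For $p^k>1$ this linking form has four isometries. They are the units with $x^2\equiv 1 \bmod 4p^k$, namely $\pm 1$ and $\pm u$, where $u\equiv 1 \bmod p^k$ and $u\equiv -1\bmod 4$. Only $\pm1$ come from $\Aut(Q)=\{\pm1\}$. So your claim that every isometry of the linking form lifts to $\pm\id$ of the rank-one form fails, and the argument as structured would only bound the number of bands by two.

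The missing idea is that the boundary identification is not fixed. The map $h=\id_{E_K}\cup H|$ depends on the bundle isomorphism $H\colon\overline{\nu}(M_0)\to\overline{\nu}(M_1)$. You may precompose $H$ with $\Rot(2\alpha)$, where $\alpha$ generates $H^1(\Sigma,\partial\Sigma;\Z^w)\cong\Z$, and it still intertwines the coefficient systems. By Corollary~\ref{cor:RotOnYe(K)Covers}, this acts trivially on $H_1(\Sigma_2(K))$. On the generator $\gamma$ of $\Z_4$ it acts by $\gamma\mapsto\gamma+\widetilde{\mu}=-\gamma$, since $\widetilde{\mu}=2\gamma$. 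So it acts exactly as $u$, and together with $\Aut(Q)$ it acts transitively on the four isometries.

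The paper packages this as the injection of Corollary~\ref{cor:Section8DiscsSpheres} into $\Iso_{\Z[\Z_2]}(\partial Q,-\ell_{\widehat{Y}_e(K)})/(\{\pm1\}\times 2\Z\langle\alpha\rangle)$, which is a single orbit when $\det K$ is a prime power. The $k$-invariant, quadratic-form and spin-union conditions you worried about are all automatic for Moebius bands, since the Alexander module is torsion. The Alexander trick then upgrades equivalence rel.\ boundary to isotopy rel.\ boundary, as you say.
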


It is known that $\Z_2$-Moebius bands in $D^4$ with boundary a determinant one knot are necessarily isotopic~\cite[Theorem B]{ConwayOrsonPowell}, but no result is available when the determinant is nontrivial.

Our next application concerns projective planes.
Whereas Lawson proved that $\Z_2$-projective planes in $S^4$ are unknotted~\cite{Lawson}, in more general $4$-manifolds, it also becomes necessary to take into account the equivariant intersection forms of the exteriors and whether the surfaces are characteristic or ordinary.
For brevity,  we refer to this latter property as the \emph{type} of the surface.
We also also note that the knot group of a nonorientable simple surface in a simply-connected~$4$-manifold has order at most two.

\begin{theorem}
\label{thm:ProjectivePlanesIntro}
Let $X$ be a closed simply-connected~$4$-manifold.
% and let $e \in \Z$ be an integer.
\begin{itemize}
\item Two projective planes in $X$ with simply-connected complements and the same odd Euler number and type are equivalent if and only if their exteriors have isometric intersection forms.
The intersection forms are isometric if the projective planes are homologous.
\item Two $\Z_2$-projective planes in $X$ with the same Euler number are equivalent if and only if their exteriors have isometric equivariant intersection forms.
\end{itemize}
The same assertions hold for Moebius bands with boundary a knot $K \subset S^3$ satisfying~$\det(K)=1$.
\end{theorem}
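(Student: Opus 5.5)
The plan is to deduce Theorem~\ref{thm:ProjectivePlanesIntro} from the general classification results (Theorems~\ref{thm:CompatiblePairClosedIntro} and~\ref{thm:CompatiblePairIntro}). Those say that two $\Z_d$-surfaces of the same genus, Euler number and type are equivalent if and only if there is a compatible pair. Such a pair consists of an isometry of the equivariant intersection forms of the exteriors, together with a homeomorphism of the boundaries of the exteriors (the circle bundles over $S$, glued to $\partial X$ in the bounded case) that is compatible with it. The proof then reduces to two tasks. First, show that for a projective plane every isometry can be promoted to a compatible pair, possibly after post-composing with an automorphism of the boundary. Second, treat the simply-connected case separately, where the equivariant form is just the ordinary form.

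\textbf{Step 1: the boundary of the exterior.} For $S=\RP^2$ with Euler number $e$, the boundary $\partial X_S$ is the circle bundle over $\RP^2$ with that Euler number. This is a Seifert fibred space (a prism-type manifold, or a quaternionic or lens-space quotient in small cases). When $\pi_1(X_S)\cong\Z_2$, I would compute $H_1$ of the $\Z_2$-cover of $\partial X_S$ with its linking form. I would then identify the homeomorphisms of the boundary that induce identity on the relevant covering data. This is where I expect the restriction ``most'' to enter, via special values of $e$.

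\textbf{Step 2: realising isometries.} Given an isometry $F$ of the equivariant forms, $F$ induces an isometry $\partial F$ of the boundary linking forms of the $\Z[\Z_2]$-modules. The key claim is that every isometry of this (small, cyclic-type) linking form is induced by a fibre-preserving self-homeomorphism of the circle bundle. I would prove this by checking that the automorphism group of the linking form is tiny, essentially $\pm 1$ together with the deck transformation, and then exhibiting each element explicitly: bundle reflections and deck transformations. The main obstacle is exactly this realisation step, and in particular the sign $-1$ compatible with orientations. If it fails, I would use instead the freedom to compose $F$ with the isometry $-\id$ or with a reflection in a class coming from the surface.

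\textbf{Step 3: the remaining cases.} For projective planes with simply-connected complement, the exterior's form is unimodular-by-boundary. When $e$ is odd, the boundary is a rational homology sphere whose linking form has odd order, and all its automorphisms $\pm1$ are realised. The classification theorem then gives that equivalence is equivalent to isometric forms. If the planes are homologous and of the same type, the forms of the exteriors are determined by the orthogonal complement of the class with the given Euler number, and so are isometric. For the Moebius band case with $\det(K)=1$, the boundary gluing contributes no extra linking-form data. The same argument therefore applies verbatim, using Theorem~\ref{thm:CompatiblePairIntro}.
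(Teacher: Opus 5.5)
Your outline follows the same route as the paper. Via Theorems~\ref{thm:CompatiblePairClosedIntro} and~\ref{thm:CompatiblePairIntro} (packaged in the paper as the injection of Corollary~\ref{cor:Section8DiscsSpheres}), everything reduces to showing that each $\Z[\Z_2]$-isometry of the linking form on the Alexander module of $\partial X_S$ is accounted for by $\Aut(\lambda)$ together with admissible bundle automorphisms. However, you never compute that module, and where you do commit to specifics the argument breaks.

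By Proposition~\ref{prop:AlexanderModuleNonOri} (using $H_1(\Sigma_2(K))=0$, since $\det(K)=1$), the module is $\Z_4$ when $d=1$ and $e$ is odd, or when $d=2$ and $e/2$ is odd. It is $\Z_2\langle\widetilde{\mu}\rangle\oplus\Z_2\langle\gamma\rangle$ with $T\gamma=\gamma+\widetilde{\mu}$ when $d=2$ and $e/2$ is even. In particular your Step~3 is wrong: for odd $e$ and trivial knot group, the linking form has order $4$, not odd order. Odd order alone would not give only $\pm1$ anyway; for instance, multiplication by $4$ is an isometry of $xy/15$. The correct reason is that the only linking forms on $\Z_4$ are $\pm xy/4$, whose isometries are $\pm 1$, and these are absorbed by $-\id\in\Aut(\lambda)$.

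The real gap is the case $e\equiv 0 \pmod 4$ with knot group $\Z_2$. Here the module has exactly one nontrivial automorphism, $\gamma\mapsto\gamma+\widetilde{\mu}$. Since $-\id$ acts trivially on this $2$-torsion module, it does not help. Your proposed realisations are not admissible:
\begin{itemize}
\item a fibrewise reflection reverses the orientation of the total space and is not the identity near $K$;
\item a deck transformation is a self-map of the cover, not a homeomorphism $\partial X_{S_0}\to\partial X_{S_1}$ of the form $\id_{E_K}\cup H|$.
\end{itemize}
The missing ingredient is the bundle automorphism $\Rot(2\alpha)$, where $\alpha$ generates $H^1(\Sigma,\partial\Sigma;\Z^w)$. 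It is the identity on the zero section and over $\partial\Sigma$, and it lifts to the $\Z_2$-covers because $2\alpha$ is even. By Corollary~\ref{cor:RotOnYe(K)Covers} it induces precisely $\gamma\mapsto\gamma+\widetilde{\mu}$. Alternatively, you could observe that this automorphism is multiplication by $T$ and absorb it with the isometry $x\mapsto Tx$ of $\lambda$, but some such argument is needed.

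As it stands, your plan leaves this case open, consistent with your expectation of exceptional Euler numbers. The second item has none, however. The only parity restriction is the oddness of $e$ in the first item, and its role is to make the module cyclic of order $4$.

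Two smaller points. First, you should address the spin-union condition of Theorem~\ref{thm:CompatiblePairClosedIntro}. It is vacuous here: in the first item odd $e$ forces $\sigma(X)$ odd, hence $X$ nonspin, and in the second item $d=2$. Second, for homologous planes, $H_2(X_S)$ is not the orthogonal complement of an integral class, since a nonorientable surface has none. The sequence $0\to H_2(X_S)\to H_2(X)\to\Z_2\to H_1(X_S)=0$ identifies $Q_{X_S}$ with $Q_X$ restricted to the index-two sublattice $\{y \mid y\cdot[S]\equiv 0 \pmod 2\}$. This depends only on $[S]\in H_2(X;\Z_2)$; the Euler number plays no role.
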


%The Euler number $e(S)$ of a nonorientable surface $S \subset X$ has same parity as the signature~$\sigma(X)$~\cite{Rokhlin}; see also~\cite[above Corollary 1.1]{Yasuhara}.
In $X=S^4$, any two~$\Z_2$-projective plane exteriors are seen without too much difficulty to have isometric equivariant intersection forms; see e.g.~\cite[Proposition 5.11]{ConwayOrsonPowell}.
%nonorientable surfaces are necessarily nullhomologous and 
%the equivariant intersection form of $\Z_2$-projective plane exteriors is seen without too much difficulty to be isometric to $\Z_- \times \Z_- \to \Z[\Z_2],(x,y) \mapsto 4(1-T)xy$; see e.g.~\cite[Proposition 5.11]{ConwayOrsonPowell}.
Thus, in this case, Theorem~\ref{thm:ProjectivePlanesIntro} recovers Lawson's result that simple projective planes in $S^4$ are unknotted~\cite{Lawson}.

Next, we consider discs and spheres.
Our first theorem on discs is an analogue of Lee and Wilczynski's uniqueness 
%(pre-cancellation) 
result for spheres~\cite{LeeWilczyOdd,LeeWilczy}.
%, with the difference that we consider the discs up to equivalence instead of isotopy.
%{AC: Can we make it be isotopy as they do?}
In order to state this result,  given an integer~$d>0$ and a~$\Z_d$-surface~$S \subset X$,  we write~$\lambda_{\Sigma_d(S)}$ for the equivariant intersection form of the~$d$-fold branched cover and refer to an isometry~$F \colon \lambda_{\Sigma_d(S_0)} \cong \lambda_{\Sigma_d(S_1)}$ as \emph{pointed} if it satisfies~$F([\widetilde{S}_0])=([\widetilde{S}_1])$, where~$\widetilde{S}_i \subset \Sigma_d(S_i)$ denotes the branch set.

\begin{theorem}
\label{thm:PointedFormsDiscsIntro}
Let~$d>0$, let~$X$ be a simply-connected~$4$-manifold with boundary~$\partial X=S^3$,  let~$K \subset~S^3$ be a knot with~$H_1(\Sigma_d(K))=0$ and let~$D_0,D_1$ be simple discs with boundary~$K$,  
nonzero Euler number $e$, and the same type.
%that represent a divisibility~$d$ homology class~$x \in H_2(X,\partial X)$.
The following assertions are equivalent:
\begin{enumerate}
\item the discs~$D_0$ and~$D_1$ are equivalent rel.\ boundary,
\item there is a pointed isometry~$\lambda_{\Sigma_d(D_0)} \cong \lambda_{\Sigma_d(D_1)}$.
\end{enumerate}
When the discs represent a divisibility~$d$ homology class~$x \in H_2(X,\partial X)$,
a pointed isometry exists if~$d=1$ or if~$b_2(X)>6$ and 
\begin{equation}
\label{eq:UniquenessIneq}
b_2(X)> \max_{0\leq j<d}\Big|\sigma (X)-\frac{2j(d-j)}{d^2}\, x\cdot x +\sigma_K (e^{\frac{2\pi i j}{d}})\Big|.
\end{equation}
\end{theorem}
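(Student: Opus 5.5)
The plan is to deduce the statement from the paper's general classification of simple surfaces, Theorem~\ref{thm:CompatiblePairIntro}. That theorem says two $\Z_d$-surfaces with the same boundary are equivalent rel.\ boundary if and only if there is a ``compatible pair'': an isometry of the equivariant intersection forms of the exteriors $\lambda_{X_{D_0}}\cong\lambda_{X_{D_1}}$ together with a compatible identification of the boundaries $\partial X_{D_i}$.

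For (1)$\Rightarrow$(2), a homeomorphism of pairs $(X,D_0)\to(X,D_1)$ rel.\ boundary lifts to the $d$-fold branched covers. The lift carries the branch set to the branch set and commutes with the deck transformations, possibly after composing with a power of the generator. It therefore induces a pointed isometry $\lambda_{\Sigma_d(D_0)}\cong\lambda_{\Sigma_d(D_1)}$.

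For (2)$\Rightarrow$(1), the work is to pass from the branched cover to the exterior.
\begin{itemize}
\item The $d$-fold cover of $X_{D_i}$ is $\Sigma_d(D_i)$ minus a tubular neighbourhood of $\widetilde D_i$.
\item $H_1(\Sigma_d(K))=0$ and $e\neq 0$ should force $\lambda_{X_{D_i}}$ to be the orthogonal complement of $[\widetilde D_i]$ inside $\lambda_{\Sigma_d(D_i)}$, viewed as a $\Z[\Z_d]$-form, up to a rank-one correction governed by $e$.
\item A pointed isometry preserves $[\widetilde D_i]$ and hence restricts to an isometry of these complements.
\item The boundary $\partial X_{D_i}$ is the circle bundle of Euler number $e$ over $D_i$, glued to the exterior of $K$. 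It is the same manifold for both discs because $e$ agrees. The restricted isometry should induce the standard identification on the boundary linking forms, since the linking form is controlled by $[\widetilde D_i]\cdot[\widetilde D_i]=e$ together with the vanishing of $H_1(\Sigma_d(K))$.
\item This gives a compatible pair, and the type hypothesis handles the remaining $\Z_2$ Kirby--Siebenmann and $w_2$ obstruction in Theorem~\ref{thm:CompatiblePairIntro}.
\end{itemize}
I expect the main obstacle to be checking that the boundary automorphism is realised, i.e.\ genuinely ``compatible'', rather than differing by an automorphism of $H_1(\partial X_{D_i};\Z[\Z_d])$ that does not extend.

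For the existence clause, the case $d=1$ is handled directly. Then $\Sigma_1(D_i)=X$, $\widetilde D_i=D_i$, and the identity is a pointed isometry because the two discs are homologous.

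For $d>1$ the aim is to build an isometry taking $[\widetilde D_0]$ to $[\widetilde D_1]$.
\begin{itemize}
\item Rationally, $\lambda_{\Sigma_d(D_i)}$ splits over the eigenspaces $e^{2\pi ij/d}$ of the deck transformation.
\item Their signatures are given by the $G$-signature formula: $\sigma(X)-\frac{2j(d-j)}{d^2}x\cdot x+\sigma_K(e^{2\pi ij/d})$. The ranks depend only on $b_2(X)$, $d$ and the divisibility data, as in Lee--Wilczynski.
\item The two forms therefore have the same ranks and signatures, and the branch classes have the same square and divisibility.
\end{itemize}
Inequality~\eqref{eq:UniquenessIneq} says each eigen-summand is indefinite, and $b_2(X)>6$ guarantees enough hyperbolic summands. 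Under these conditions the Lee--Wilczynski/Hambleton--Kreck cancellation and transitivity results for $\Z[\Z_d]$-lattices show that the forms are isometric and that primitive vectors with the same invariants lie in one orbit, which yields the pointed isometry. The cited algebraic transitivity result will be invoked rather than reproved.
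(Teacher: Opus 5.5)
Your (1)$\Rightarrow$(2) argument is fine, and so is your $d=1$ case: the identity of $H_2(X)$ is a pointed isometry for homologous discs, which is quicker than the paper's uniform argument. There are, however, two genuine gaps.

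\emph{Gap in (2)$\Rightarrow$(1).} The step you flag as the main obstacle is left open, and the linking-form heuristic cannot close it. Since $H_1(\Sigma_d(K))=0$, the group $H_1(\partial\widetilde{X}_{D_i})\cong\Z_{e/d}$ is generated by the lifted meridian $\widetilde{\mu}_i$. Because $h_*(\widetilde{\mu}_0)=\widetilde{\mu}_1$, you need exactly $\partial F(\widetilde{\mu}_0)=\widetilde{\mu}_1$. Preservation of linking forms only gives $\partial F(\widetilde{\mu}_0)=u\,\widetilde{\mu}_1$ with $u^2\equiv 1 \bmod e/d$. For $e/d=8$ or $15$ there are such $u\neq\pm 1$; this is precisely why Theorem~\ref{thm:SpheresDiscsNoCancellation} restricts $e/d$.

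The missing idea is to use pointedness a second time. The Mayer--Vietoris sequence $0\to H_2(\widetilde{X}_{D_i})\to H_2(\Sigma_d(D_i))\to H_1(\widetilde{D}_i\times S^1)\cong\Z\langle\widetilde{\mu}_i\rangle$ identifies $H_2(\widetilde{X}_{D_i})$ \emph{exactly} with $[\widetilde{D}_i]^{\perp}$, with no rank-one correction. Hence $F^\Sigma$ induces isomorphisms of the relative groups $H_2(\Sigma_d(D_i),\widetilde{D}_i\times S^1)$, and therefore an isomorphism $F_\partial$ of $H_1(\widetilde{D}_i\times S^1)\cong\Z$, which is necessarily $\pm1$. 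A diagram chase shows $\partial F(\widetilde{\mu}_0)=\pm\widetilde{\mu}_1$. The sign is removed by precomposing the bundle isomorphism with the fibrewise reflection $(x,y)\mapsto(x,-y)$ of $\overline{\nu}(D_0)\cong D^2\times D^2$. Proposition~\ref{prop:CompatibleSimplification} and Theorem~\ref{thm:CompatiblePair} then finish. Note that $e\neq 0$ is used to drop the relative $k$-invariant condition, which your summary of the classification theorem omits: it makes the Alexander module finite, so its dual vanishes.

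\emph{Gap in the existence clause for $d>1$.} Equal ranks and eigenspace signatures over $\Q$ do not determine a $\Z[\Z_d]$-lattice up to isometry. They are also not the input of the Lee--Wilczynski/Hambleton--Kreck results. Those results require the pointed forms to be \emph{stably} isometric (Lee--Wilczynski's condition (6.3)), together with further algebraic conditions ((1)--(4), (6.2), (7), (8)). The inequality is only their condition (9); like $b_2(X)>6$, it is a hypothesis of the cancellation step, not a substitute for the stable isometry.

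The paper supplies the stable isometry geometrically:
\begin{enumerate}
\item Choose $H$ so that $X_{D_0}\cup_h X_{D_1}$ maps to the common normal $1$-type (Remark~\ref{rem:Normal1TypeUnion}).
\item Equality of signatures and Kirby--Siebenmann invariants makes this union null-bordant over that normal $1$-type.
\item By Kreck's modified surgery, $h$ then extends to a homeomorphism after summing with copies of $S^2\times S^2$.
\item So the discs are stably equivalent rel.\ boundary, and the pointed branched-cover forms are stably isometric.
\end{enumerate}
Only then does cancellation produce the pointed isometry. Without this stable-equivalence step, your argument for the existence clause does not go through.
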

Here $\sigma_K(\omega)$ denotes the Levine--Tristram signature of the knot $K$ at $\omega \in S^1$.
%Lee-Wilczynski prove that when~\eqref{eq:UniquenessIneq} holds and $b_2(X)>6$ (this latter condition can be replaced by one of the conditions in~\cite[Addendum 1]{LeeWilczy}), homologous~$\Z_d$-spheres in such manifolds are isotopic.
Verifying that an isometry is pointed is typically challenging and, when~$d>1$,  the inequality in~\eqref{eq:UniquenessIneq} precludes~$X$ from being definite.
These limitations are also present in Lee and Wilczynski's work on spheres but not in the following result.

\begin{theorem}
\label{thm:SpheresDiscsNoCancellationIntro}
%%Phrasing it with homology classes to compare with LW.
Let~$d\neq 0$ and~$e \neq 0$ be integers such that~$n=e/d$ is an odd prime power or~$1,2,4,2p^k$ with~$p$ an odd prime power.
\begin{itemize}
\item Given a closed simply-connected~$4$-manifold~$X$,  two~$\Z_d$-spheres in~$X$ with Euler number $e$ and the same type are equivalent if and only if their exteriors have isometric equivariant intersection forms.
\item Given a simply-connected~$4$-manifold~$X$ with~$\partial X = S^3$ and a knot~$K \subset \partial X = S^3$ with~$H_1(\Sigma_d(K))=0$,  two~$\Z_d$-discs in~$X$ with boundary~$K$,  Euler number $e$, and the same type are equivalent rel.\ boundary if and only if their exteriors have isometric equivariant intersection forms.
\end{itemize}
\end{theorem}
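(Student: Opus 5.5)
The plan is to deduce Theorem~\ref{thm:SpheresDiscsNoCancellationIntro} from the main classification results, Theorems~\ref{thm:CompatiblePairClosedIntro} and~\ref{thm:CompatiblePairIntro}. As I understand them, two simple surfaces of the same genus, Euler number and type (with the same boundary, in the relative case) are equivalent if and only if there is an isometry of the equivariant intersection forms of their exteriors that is \emph{compatible} with the boundary identification. Concretely, this means that on the boundary linking form (or the boundary $\Z[\Z_d]$-module) it agrees with the map induced by a homeomorphism of the boundaries extending the identity on $S$ (respectively on $K$ rel.\ boundary). The forward direction is immediate, since an equivalence of pairs restricts to a homeomorphism of exteriors and hence induces an isometry. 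The content is therefore the converse: any isometry $F\colon \lambda_{X_{S_0}}\cong\lambda_{X_{S_1}}$ can be upgraded to a compatible one.

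First I would compute the boundary. For a $\Z_d$-sphere with Euler number $e$, the boundary of the exterior is the circle bundle over $S^2$ with Euler number $e$, i.e.\ the lens space $L(e,1)$. The $\Z_d$-cover is $L(n,1)$ with $n=e/d$, possibly up to sign. The relevant boundary invariant is the equivariant linking form on $H_1$ of this cover, which is $\Z_n$ with deck action by some unit. In the disc case, the condition $H_1(\Sigma_d(K))=0$ ensures that the knot contributes nothing, so the boundary module is again essentially $\Z_n$, with the action through $\Z_d$ given by multiplication by a unit. Any isometry $F$ induces an automorphism $\partial F$ of this linking form, which is multiplication by a unit $u\in\Z_n^\times$ with $u^2=1$ commuting with the action. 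The homeomorphisms of the boundary that extend over the surface neighbourhood realise the automorphisms $\pm1$: the identity and the fibrewise/base reflection, which is available because we may precompose with homeomorphisms of $S$ and its normal disc bundle.

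The key arithmetic step is that under the hypothesis on $n$ (an odd prime power, or $1,2,4,2p^k$), the group $\Z_n^\times$ is cyclic. These are exactly the $n$ admitting primitive roots. Consequently $u^2=1$ forces $u=\pm1$. So every $\partial F$ is realised by a self-homeomorphism of the boundary extending over the surface, possibly after composing $F$ with an automorphism of the form on one side. Hence every isometry is compatible, and the main theorems give equivalence (rel.\ boundary in the disc case).

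I expect the main obstacle to be the precise bookkeeping of which boundary automorphisms must be realised. This includes verifying that $-1$ is indeed induced by a homeomorphism of the pair fixing $K$ pointwise in the disc case, and checking that the equivariant structure does not impose extra constraints beyond $u^2=1$. I would handle the $-1$ case via the map that is complex conjugation on the normal disc fibre composed with a reflection of the disc; if that fails rel.\ boundary, I would instead absorb $-1$ into the isometry $F$ by composing with $-\mathrm{id}$ on the form, which acts by $-1$ on the boundary.
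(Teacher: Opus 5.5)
Your argument is correct and is essentially the paper's. The paper packages it through the injection of Corollary~\ref{cor:Section8DiscsSpheres} into $\Iso_{\Z[\Z_d]}(\partial Q,-\ell_{\widehat{Y}})/\Aut_{\Z[\Z_d]}(Q)$: the hypothesis on $n$ forces the linking-form isometries of $\Z_n$ to be $\pm 1$, and $\pm\id\in\Aut_{\Z[\Z_d]}(Q)$ absorbs the sign, which is exactly your replacement of $F$ by $\pm F$ (with the $k$-invariant, spin and quadratic-form conditions automatic for genus zero and $e\neq 0$). Your first option, realising $-1$ by a boundary homeomorphism, cannot work for discs, because $h=\id_{E_K}\cup H|$ fixes the lifted meridian that generates $\Z_n$; so the fallback $F\mapsto -F$ is the step that is actually needed.
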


When~$X=\C P^2$, surfaces with knot group of order $d=2$ are ordinary,  have Euler number $\pm 4$,  and the equivariant intersection form of their exteriors are seen to be isometric without too much difficulty~\cite[Corollary 2.4]{ConwayOrson}.
%then~$e=4$ and the equivariant intersection is seen without too much difficulty to be isometric to~$\Z_- \times \Z_- \to \Z[\Z_2],(x,y) \mapsto -2(1-T)xy$~\cite[Corollary 2.4]{ConwayOrson}; here~$\Z_2=\langle T\mid T^2=1\rangle$ and~$\Z_-$ denotes the abelian group~$\Z$ with~$\Z_2$-action given by~$Tx=-x$ for every~$x \in \Z$.
Thus in this case, Theorem~\ref{thm:SpheresDiscsNoCancellation} recovers~\cite[Theorem 1.1]{ConwayOrson} with equivalence instead of isotopy.
Finally, we note that  the reason for which we focus on~$\Z_d$-discs and~$\Z_d$-spheres with~$d>0$ is that~$\Z$-discs and~$\Z$-spheres in simply-connected~$4$-manifolds were classified in~\cite{ConwayPowell,ConwayPiccirilloPowell}.

\subsection{Closed surfaces}

We now describe the general theory, beginning with the closed case.
Given closed surfaces~$S_0,S_1 \subset X$ whose exteriors~$X_{S_0},X_{S_1}$ have fundamental group $\Z_d$,  an isometry~$
F \colon H_2(X_{S_0};\Z[\Z_d]) \to H_2(X_{S_1};\Z[\Z_d])$ induces (via Poincar\'e duality and the evaluation map) an isomorphism~$H_2(X_{S_0},\partial X_{S_0};\Z[\Z_d]) \to H_2(X_{S_1},\partial X_{S_1};\Z[\Z_d])$ and thus an isomorphism
$$
\partial F \colon H_1(\partial X_{S_0};\Z[\Z_d]) \to H_1(\partial X_{S_1};\Z[\Z_d]).
$$
When the $S_i$ are characteristic,  the \emph{Freedman--Kirby quadratic form} (resp.\ the \emph{Guillou--Marin quadratic form} in the nonorientable case) refers to the quadratic refinement~$H_1(S_i;\Z_2) \to \Z_2$ (resp.~$H_1(S_i;\Z_2) \to \Z_4$) of the $\Z_2$-intersection form on $H_1(S_i;\Z_2)$ that maps a loop~$\gamma \subset S_i$ to~$q_{S_i}([\gamma])=D \cdot S_i+W(D)$, where $D \subset X$ is an embedded disc with boundary $\gamma$ that intersects~$S_i$ transversally, and $W(D)$ denotes a certain framing obstruction~\cite{FreedmanKirby,GuillouMarin}.
For brevity,  we refer to the \emph{the quadratic form} to mean either the Freedman--Kirby quadratic form (in the orientable case) or the Guillou--Marin quadratic form (in the nonorientable case).

Our first result states necessary and sufficient conditions for closed~$\Z_d$-surfaces to be equivalent; here for the definition of the relative $k$-invariant, we refer to~\cite{ConwayKasprowskiKinvariant}.
\begin{theorem}
\label{thm:CompatiblePairClosedIntro}
Let~$X$ be a closed simply-connected~$4$-manifold and let~$S_0,S_1 \subset X$ be~$\Z_d$-surfaces 
with the same Euler number that are either both characteristic or both ordinary.
The following assertions are equivalent:
\begin{itemize}
\item There is a homeomorphism~$(X,S_0) \to (X,S_1)$.
\item There is an isometry $F\colon \lambda_{X_{S_0}} \cong \lambda_{X_{S_1}}$ of the equivariant intersection forms and a bundle isomorphism $H \colon \overline{\nu}(S_0) \to \overline{\nu}(S_1)$ such that~$(F,H|)$ preserves the relative~$k$-invariants,
$$\partial F=(H|)_* \colon H_1(\partial X_{S_0};\Z[\Z_d]) \to H_1(\partial X_{S_1};\Z[\Z_d]),$$
and such that 
\begin{itemize}
\item when $X$ is spin,  $d=1$, and $S_0,S_1$ are nonorientable,  $X_{S_0} \cup_{H|} X_{S_1}$ is spin,
\item when $S$ is characteristic $H|_{S_0}$ preserves the quadratic forms.
\end{itemize}
\end{itemize}
The~$k$-invariant condition can be omitted in the following circumstances: either~$\pi$ is trivial,  or~$\pi$ is infinite cyclic,  or the $S_i$ are spheres, or the $S_i$ are projective planes. 
The condition involving the quadratic forms is automatic when the $S_i$ are spheres or projective planes. 
\end{theorem}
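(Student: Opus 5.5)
The forward implication is formal, so I would concentrate on the converse, which I would prove by the usual strategy: build a homeomorphism between the exteriors that extends the given map on their boundaries, then glue in the normal bundles.

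\emph{Forward direction.} Given a homeomorphism $\Phi\colon (X,S_0)\to(X,S_1)$, I would first isotope it so that it restricts to a bundle isomorphism $H\colon \overline{\nu}(S_0)\to\overline{\nu}(S_1)$. This uses uniqueness of normal bundles of locally flat surfaces in dimension four. The restriction $\Phi|\colon X_{S_0}\to X_{S_1}$ then induces an isometry $F$ of equivariant intersection forms, and this $F$ satisfies $\partial F=(H|)_*$ by naturality of Poincar\'e duality and of the long exact sequence of the pair. Naturality also shows that the relative $k$-invariants are preserved. Three further conditions remain.
\begin{itemize}
\item The quadratic forms are preserved because $\Phi$ carries discs to discs and preserves both intersection numbers and framing obstructions.
\item The spin condition holds because $X_{S_0}\cup_{H|}X_{S_1}\cong X_{S_0}\cup_{\id} X_{S_0}$, which is the double, and the double of a spin-structure-admitting manifold admits a spin structure.
\item Lifting to covers poses no problem here.
\end{itemize}

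\emph{Converse.} The central step is to realise $(F,H|)$ by a homeomorphism $\Psi\colon X_{S_0}\to X_{S_1}$ with $\Psi|_\partial=H|$. Once $\Psi$ exists, $\Psi\cup H$ is the required homeomorphism $(X,S_0)\to(X,S_1)$. To construct $\Psi$ I would use the modified surgery programme of Kreck, working relative to the boundary.
\begin{enumerate}
\item Show that $X_{S_0}$ and $X_{S_1}$, glued along $H|$, have the same normal $1$-type relative to the boundary. The type is $B\Z_d\times B\mathrm{STOP}$ or $B\Z_d\times B\mathrm{TOPSpin}$, twisted appropriately in the nonorientable and characteristic cases; the type condition on the $S_i$ decides which case occurs.
\item Use the hypothesis that $(F,H|)$ preserves relative $k$-invariants, together with the isometry $F$ and the condition $\partial F=(H|)_*$, to produce a homotopy equivalence rel.\ boundary. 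Here I would invoke the realisation results of \cite{ConwayKasprowskiKinvariant}: for groups of the form $\Z_d$, the $\pi_2$, the form and the $k$-invariant determine the homotopy type rel.\ boundary of a $4$-manifold with given boundary identification.
\item Show that the glued closed manifold $M=X_{S_0}\cup_{H|}X_{S_1}$ is null-bordant over the normal $1$-type. This is the step that uses the spin hypothesis and the quadratic-form hypothesis. They kill the relevant components of $\Omega_4$ of the normal $1$-type, namely the Arf-type invariant and the $\Z_2$ (or $\Z_4$) Guillou--Marin type invariant, while the signature and the $\Z_d$-rho-type invariants are controlled by $F$.
\item Apply Kreck's theorem to obtain an $s$-cobordism rel.\ boundary. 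Since $\Z_d$ is good, Freedman's $s$-cobordism theorem turns this into the homeomorphism $\Psi$. Because $L$-groups of finite cyclic groups are computable, I would instead try to upgrade the homotopy equivalence by surgery; the $\Z_d$ Wall realisation should act transitively on the relevant structure-set fibre.
\end{enumerate}

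\emph{Addenda.}
\begin{itemize}
\item The $k$-invariant condition is vacuous for $\pi$ trivial or infinite cyclic, since the $k$-invariant lives in a cohomology group that vanishes or is determined by the form.
\item For spheres and projective planes, $H_1(\partial X_S;\Z[\Z_d])$ is small and the $k$-invariant is determined by $\partial F$.
\item The quadratic form is trivially preserved for spheres, since $H_1=0$. For projective planes, the Guillou--Marin form on $H_1(\RP^2;\Z_2)=\Z_2$ is determined by the Euler number via the Guillou--Marin formula.
\end{itemize}

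I expect the main obstacle to be step 3, the bordism computation. It requires identifying exactly which bordism invariants survive and checking that the quadratic-form and spin hypotheses, together with $F$, are exactly what is needed to kill them.
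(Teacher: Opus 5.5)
Your forward direction is fine, and the architecture of the converse is right: extend $h:=H|$ over the exteriors, then glue in $H$. Working directly with the closed exteriors is equivalent to the paper's reduction, which punctures along a common disc and applies Theorem~\ref{thm:CompatiblePair}. The gap is in how you extend $h$.

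\textbf{Kreck's route does not give $\Psi$.} In dimension four, Kreck's modified surgery only yields a \emph{stable} homeomorphism \cite{KreckSurgeryAndDuality}. The obstruction to turning a bordism rel.\ boundary into an $s$-cobordism lives in the $l_5$-monoid, which you cannot control. No cancellation is available in this generality; compare the hypotheses of Theorem~\ref{thm:PointedFormsDiscsIntro}, which exclude definite $X$ when $d>1$. So step~4 fails, your bordism step only ever buys a stable statement, and everything rests on the fallback ``homotopy equivalence rel.\ boundary, then surgery''.

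\textbf{Step~2 is false as stated.} A $k$-invariant preserving compatible pair does not suffice to extend $h$ to a homotopy equivalence $X_{S_0}\to X_{S_1}$. There is a secondary obstruction: a hermitian form on $H^2(\partial X_{S_1};\Z[\Z_d])\cong H_1(\partial X_{S_1};\Z[\Z_d])$, taken modulo an action. It can be disposed of when the universal covers are not spin. When they are spin, it is governed by whether the universal cover of $X_{S_0}\cup_h X_{S_1}$ is spin. That is where the spin and quadratic-form hypotheses enter, not in a bordism computation.

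\textbf{The existing classification does not apply.} The result in \cite{ConwayKasprowski4Manifolds} that you would quote assumes $H_1(\partial X_{S_i};\Z[\Z_d])^*=0$. This fails as soon as $g>0$, because the $\Z^{2g}$ summand carries the trivial $\Z_d$-action. It also fails for Euler number zero spheres. Extending it to Alexander modules $L\oplus T\oplus\Z^k$, including control of coefficients at elements of order two (Lemma~\ref{lem:EvenElemOfOrder2}), is exactly Theorem~\ref{thm:CK4Manifold}. Your plan has no substitute for it.

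\textbf{No mechanism for a spin union.} Whether $X_{S_0}\cup_{H|}X_{S_1}$, or its universal cover, is spin depends on the choice of $H$; it is not a bordism invariant that the hypotheses ``kill''. In the characteristic case, the quadratic-form hypothesis is exactly what makes the union spin (Proposition~\ref{prop:Audrick}). For $d$ even with spin universal covers, the universal cover of the union is automatically spin (Proposition~\ref{prop:SpindEvenAutomatic}). But when $X$ is spin, $d$ is odd and the $S_i$ are orientable, the theorem assumes nothing, and for the given $H$ the union can fail to be spin. You must then do two things at once:
\begin{itemize}
\item replace $H$ by $H\circ\Rot(d^3\alpha)$ for a suitable $\alpha$; since $d$ is odd this corrects the spin structure, and since $d$ divides $d^3\alpha$ it still respects the coefficient systems;
\item replace $F$ by $F\circ G$, where $G$ is an isometry with $\partial G=\Rot(d^3\alpha)_*$ that still preserves the relative $k$-invariant (Propositions~\ref{prop:BoyerAdaptSpin} and~\ref{prop:BuildCompatiblePair}).
\end{itemize}

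\textbf{The sphere addendum is incomplete.} An Euler number zero $\Z_d$-sphere has Alexander module $\Z$ with trivial action, and its dual is nonzero. So the $k$-invariant cannot be dropped by a smallness argument; this needs the separate argument of Proposition~\ref{prop:EulerNumberZeroSpheres}.
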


A variation on Theorem~\ref{thm:CompatiblePairClosedIntro} provides conditions for a homeomorphism $\theta \colon S_0 \to S_1$ to extend to a homeomorphism $(X,S_0) \to (X,S_1)$; see Section~\ref{sec:ProofMain} for details.

%When the $S_i$ are characteristic,  the Freedman--Kirby quadratic form (resp.\ the Guillou--Marin quadratic form in the nonorientable case) refers to the quadratic refinement~$H_1(S_i;\Z_2) \to \Z_2$ (resp.~$H_1(S_i;\Z_2) \to \Z_4$) of the $\Z_2$-intersection form on $H_1(S_i;\Z_2)$ that maps a loop~$\gamma \subset S_i$ to~$q([\gamma])=D.F+W(D)$, where $D \subset X$ is an embedded disc with boundary $\gamma$ that intersects $F$ transversally, and $W(D)$ denotes a certain framing obstruction~\cite{FreedmanKirby,GuillouMarin}.

\begin{remark}
\label{rem:Necessary}
The surfaces having the same Euler number ensures that there is a bundle isomorphism~$H \colon \overline{\nu}(S_0) \to \overline{\nu}(S_1)$.
%extending~$\theta$.
The crux of Theorem~\ref{thm:CompatiblePairClosedIntro} is therefore whether it is possible to simultaneously find such an~$H$ and an isometry~$F$ with~$\partial F=(H|)_*$.
As described below,  there are cases where, given an~$F$,  any $H$ will satisfy $\partial F=(H|)_*$.
Note that for $(H|)_*$ to be defined on Alexander modules,~$(H|_\partial)_* \colon \pi_1(\partial X_{S_0}) \to \pi_1(\partial X_{S_1})$ must commute with the coefficient systems to~$\Z_d$.
We refer to Remark~\ref{rem:Intertwine} for further details on this technical point 
but note that this is implicitly assumed of $H$ in Theorems~\ref{thm:CompatiblePairClosedIntro} and~\ref{thm:CompatiblePairIntro}.
%\end{itemize}
\end{remark}

\begin{remark}
\label{rem:Closed}
We compare Theorem~\ref{thm:CompatiblePairClosedIntro} with prior literature on knotted surfaces.
For brevity, we refer to a pair $(F,H)$ satisfying~$\partial F=(H|_\partial)_*$ as an \emph{extendable compatible pair}.
In brief,  Theorem~\ref{thm:CompatiblePairClosedIntro} unifies all previous results in the area, but is not necessarily as sharp: in several cases, the existence of an extendable compatible can be established with further work.
\begin{itemize}
\item When $\pi$ is trivial and the surfaces are orientable, Theorem~\ref{thm:CompatiblePairClosedIntro} recovers a portion of~\cite{BoyerUniqueness,BoyerRealization}, but Boyer then establishes the existence of an extendable compatible pair,  leading (with additional work) to the result that homologous closed orientable surfaces of the same genus with simply-connected complements are isotopic~\cite[Theorem~F]{BoyerRealization}.
\item When $\pi=\Z_d$ is finite cyclic and the surfaces are spheres,  we show in Section~\ref{sec:CompatiblePairs} that Theorem~\ref{thm:CompatiblePairClosedIntro} recovers a result of Lee and Wilczynski, namely the combination of~\cite[Proposition 3.1 and Theorem 3.3]{LeeWilczyOdd} and~\cite[Proposition 4.6]{LeeWilczy} which asserts that homologous $\Z_d$-spheres are equivalent if and only if the pointed equivariant intersection forms of their branched covers are isometric.
Given a nonzero divisibility~$d$ class~$x \in H_2(X),$ when~$X$ satisfies~$b_2(X)>6$ and
\begin{equation*}
%\label{eq:UniquenessIneq}
 b_2(X) > \underset{0 \leq j <d}{\operatorname{max}} \ \Big| \sigma(X)-\frac{2j(d-j)}{d^2}x\cdot x \Big|,
\end{equation*}
Lee and Wilczynski prove that such an isometry necessarily exists,  leading (with additional work) to the result that spheres representing $x$ are isotopic.
The condition $b_2(X) >6$ can be replaced by one of the conditions in~\cite[Addendum 1]{LeeWilczy}.
%When $d>1$,  the inequality in~\eqref{eq:UniquenessIneq} precludes $X$ from being definite.
\item When $\pi=\Z$, Theorem~\ref{thm:CompatiblePairClosedIntro} recovers a portion of~\cite[Theorem 1.4]{ConwayPowell}, but the latter uses Blanchfield forms to show that an extendable compatible pair always exists so that closed~$\Z$-surfaces with isometric intersection forms are necessary equivalent.
\item When $\pi=\Z_2,X=S^4$,  the surfaces are of nonorientable genus $h$, and the Euler numbers of the surfaces satisfy~$e\neq 2h$,  Theorem~\ref{thm:CompatiblePairClosedIntro} follows from~\cite[Theorem A]{ConwayOrsonPowell}; the latter establishes that the surfaces are isotopic.
The same is true when $h \leq 5$ and $e=2h$ (with the case $h=1$ originally due to Lawson~\cite{Lawson} and the cases $h=4,5$ requiring~\cite{Pencovitch}).
%%Don't delete
%%If can get ride of k-invariant, then can write something like the following:
%When the $e=2h$ with~$h>6$, our results are new.
%\end{itemize}
%\end{remark}
%
%
%\begin{remark}
\item When~$b_2(X)>|\sigma(X)|+2$,  Sunukjian~\cite[Theorem 7.4]{Sunukjian} proves that homologous~$\Z_d$-surfaces of positive genus $g$ are equivalent provided 
$$
b_2(X)+2g > \operatorname{max}_{0 \leq j <d}
\Big|\sigma(X)-\frac{2j(d-j)}{d^2}Q_X(x,x)\Big|.
$$
Thanks to work of Lee and Wilczynski~\cite{LeeWilczyGenus}, this condition can be reformulated as requiring that the surfaces have nonminimal genus.
Since we are unable to control the relative~$k$-invariant for surfaces of positive genus,  we are currently unable to recover this result; see Remark~\ref{rem:Sunukjian} for details.
\end{itemize}
\end{remark}

%%Don't delete.
%%I'm writing Aut(\lambda) instead of Aut_{\Z[\pi]} to keep it coherent with the rest of the intro.
\subsection{Estimating the number of surfaces with fixed invariants}
In several cases,  we are able to provide an upper bound on the number of $\Z_d$-surfaces with the same genus $g$,  Euler number~$e$ and whose exteriors have equivariant intersection form isometric to a fixed hermitian form
$$\lambda \colon H \times H \to \Z[\Z_d].$$
We provide a brief outline for~$d>0$ since the case~$d=0$ was treated in~\cite{ConwayPiccirilloPowell}.
For simplicity,  we presently restrict to closed surfaces,  even though Section~\ref{sec:Injection} (and specifically Theorem~\ref{thm:InjectionSurf}) allows for surfaces with boundary.
In the nonorientable case,  recall that~$d \in \{1,2\}$ is forced.

%%Don't delete.
%Really meant to be a single surface here.

Given a closed~$\Z_d$-surface~$S \subset X$,  Poincaré duality on $X_S$ induces an isomorphism
$$
D_S \colon H_1(\partial X_S;\Z[\Z_d]) \xrightarrow{\cong} \coker(\Ad \lambda_{X_S}).
$$
Here,  $\Ad \lambda \colon H \to H^*:=\Hom_{\Z[\Z_d]}(H,\Z[\Z_d])$ denotes the adjoint of a hermitian form $\lambda$.

The choice of  an isometry~$F \colon \lambda_{X_S} \cong \lambda$ induces an isomorphism~$\coker(\Ad \lambda_{X_S}) \cong \coker(\Ad \lambda).$ 
When~$S$ has (possibly nonorientable) genus~$g$ and Euler number~$e$,  the choice of an identification~$\fr$ of the normal bundle~$\nu(S)$ with the Euler number~$e$ rank~$2$ vector bundle over the abstract genus~$g$ surface~$\Sigma$ induces an isomorphism~$H_1(\partial X_S;\Z[\Z_d]) \cong H_1(Y_e)$, where~$Y_e$ denotes the Euler number~$e$ circle bundle over~$\Sigma$.
 
In order for~$D_S$ to yield an invariant of~$S$, it is necessary to quotient out by the choice of the isometry~$F$,  the choice of the identification~$\fr$,  and the choice of the parametrisation of the surface.
More precisely, we show that the assignment~$S \mapsto D_S$ gives rise to an invariant that we call, as in~\cite{ConwayPiccirilloPowell}, the \emph{automorphism invariant}:
$$
\Surf^w_{d,e,\lambda}(g)(X) 
\to
\frac{\Iso_{\Z[\Z_d]}^{\ell k}(\coker(\Ad \lambda),H_1(Y_e;\Z[\Z_d]))}
{\Aut(\lambda) \times (d \cdot H^1(\Sigma;\Z^w) \rtimes \Homeo(\Sigma))}.
$$
Here,~$\Surf_{d,e,\lambda}^w(g)(X)$ denotes the set of $\Z_d$-surfaces of genus $g$, Euler number $e$ and whose exterior has prescribed equivariant intersection form $\lambda \colon H \times H \to \Z[\Z_d]$.
 The superscript $w \in \{+,-\}$ records whether or not the surface is orientable.
The set~$\Iso_{\Z[\Z_d]}^{\ell k}(\coker(\Ad \lambda),H_1(Y_e;\Z[\Z_d])$ consists of isometries $\coker(\Ad \lambda) \cong H_1(Y_e;\Z[\Z_d])$ whose restrictions to the $\Z$-torsion subgroups preserve the appropriate $\Q/\Z$-valued linking forms; we refer to Section~\ref{sec:Injection} for the details but note that this is where we use $d>0$ as, for the $d=0$ case,  equivariant linking forms (i.e.\ Blanchfield forms) would be involved instead.
Finally, $\Aut(\lambda)$ denotes the group of isometries of $\lambda$.
For a detailed description of the actions involved in the orbit set, we also refer to Section~\ref{sec:Injection}.

Decompose~$\Surf_{d,e,\lambda}^w(g)(X)$ according to whether or not the surfaces are characteristic in~$X$:
$$\Surf_{d,e,\lambda}^w(g)(X)=\Surf_{d,e,\lambda}^{w,\charac}(g)(X) \sqcup \Surf_{d,e,\lambda}^{w,\ord}(g)(X).$$
Similarly,  decompose the set of spin structures on $Y_e$ according to whether or not they extend over the Euler number $e$ disc bundle over $\Sigma$:
 $$
\Spin(Y_e)=
\Spin^{\operatorname{ext}}(Y_e)
\sqcup
\Spin^{\operatorname{not-ext}}(Y_e).
$$
The following result 
%(which is a particular case of Theorem~\ref{thm:InjectionSurf}) 
shows that the automorphism invariant,  the relative $k$-invariant~$k \in H^3(\Z_d,Y;H)$ and the spin structure determined by~$X_S$ on~$\partial X_S \cong Y_e$ form a complete set of invariants of~$\Z_d$-surfaces with Euler number~$e$ and equivariant intersection form $\lambda$.
%We now prove Theorem~\ref{thm:InjectionSurfIntro} from the introduction.
For brevity, we use the abbreviations~$\Iso:=\Iso_{\Z[\Z_d]}^{\ell k}(\coker(\Ad Q),H_1(Y_e;\Z[\Z_d])), H^3:=H^3(\Z_d,Y_e;H)$ and~$\Spin^{*}:=\Spin^{*}(Y_e)$.

\begin{theorem}
\label{thm:InjectionSurfIntro}
Let~$X$ be a closed simply-connected~$4$-manifold,  let~$d>0$ be an integer,  let $e \in \Z$ be another integer, and let $\lambda \colon H \times H \to \Z[\Z_d]$ be a hermitian form over $\Z[\Z_d]$.

In the characteristic case,  there are injections
%%Don't delete.
%Note in the nonspin case Rot(\alpha) acts trivially on Spin(Y_e) that doesn't extend.
%%
%%In the spin case, Rot(\alpha) acts nontrivially on Spin(Y_e) that extend.
%%B.14
$$
\Xi_{\Surf} \colon \Surf_{d,e,\lambda}^{w,\charac}(g)(X) 
\hookrightarrow
\begin{cases}
\frac{\Iso
\times H^3 \times\Spin^{\operatorname{not-ext}}
 }{\Aut(\lambda) \times d \cdot H^1(\Sigma;\Z^w)}\Big/\Homeo(\Sigma) \quad& \text{if $X$ is not spin,} \\
 %%%
 \frac{\Iso
\times H^3 }{\Aut(\lambda) \times d \cdot H^1(\Sigma;\Z^w)}\Big/\Homeo(\Sigma)  \quad & \text{if $X$ is spin.}
 \end{cases}
$$
In the ordinary case,  there are injections
$$
\Xi_{\Surf}\colon \Surf_{d,e,\lambda}^{w,\ord}(g)(X) 
\hookrightarrow
\begin{cases}
\frac{\Iso
\times H^3 }
{\Aut(\lambda) \times d \cdot H^1(\Sigma;\Z^w)}\Big/\Homeo(\Sigma) \quad& \text{if $X$ is not spin,} \\
 %%%
 \frac{\Iso
\times H^3  \times\Spin^{\operatorname{ext}}}{\Aut(\lambda) \times d \cdot H^1(\Sigma;\Z^w)}\Big/\Homeo(\Sigma)  \quad & \text{if $X$ is spin.}
 \end{cases}
$$
When the $k$-invariant condition is automatic, the $H^3(\Z_d,Y_e;H)$ factors can be omitted.
The action of $d \cdot H^1(\Sigma;\Z^w)$ on spin structures is trivial in the characteristic case and nontrivial in the ordinary case.
\end{theorem}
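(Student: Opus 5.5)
We derive the injectivity of $\Xi_{\Surf}$ from Theorem~\ref{thm:CompatiblePairClosedIntro}. Two things are needed: the map must be well defined, and any two surfaces with the same image must be equivalent.

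For well-definedness, let $S \in \Surf_{d,e,\lambda}^{w,*}(g)(X)$. We pick an isometry $F\colon \lambda_{X_S}\cong \lambda$ and a framing $\fr\colon \overline{\nu}(S)\cong E_e$ onto the Euler number $e$ disc bundle over $\Sigma$. Composing $D_S^{-1}$ with $\partial F$ and $\fr|$ gives an element of $\Iso$. The relative $k$-invariant of $(X_S,\partial X_S)$ is transported by $(F,\fr|)$ to an element of $H^3(\Z_d,Y_e;H)$. The spin structure on $X_S$ restricts to a spin structure on $\partial X_S\cong Y_e$; since $X$ is simply connected this spin structure is unique when it exists.
\begin{itemize}
\item In the nonspin characteristic case, $X_S$ is spin but the structure does not extend over $E_e$, since otherwise $X$ would be spin.
\item In the spin ordinary case, the restriction of the spin structure of $X$ gives a structure that extends.
\item In the two remaining cases the spin structure is either forced or absent, so it is omitted.
\end{itemize}
Changing $F$ by $\Aut(\lambda)$, or changing $\fr$ by a bundle automorphism covering the identity, moves the triple by the stated actions. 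Such bundle automorphisms are classified by $H^1(\Sigma;\Z^w)$, and only $d\cdot H^1$ acts compatibly with the $\Z_d$ coefficient system (Remark~\ref{rem:Intertwine}). Reparametrising $\Sigma$ gives the $\Homeo(\Sigma)$ action. An equivalence $(X,S_0)\to(X,S_1)$ transports all these choices, so the image depends only on the equivalence class. The claims that $d\cdot H^1$ acts trivially on spin structures in the characteristic case and nontrivially in the ordinary case I would check directly: a class in $H^1(\Sigma)$ twists the fibre framing along loops, and this alters the spin structure on $Y_e$ by the pullback of a class in $H^1(\Sigma;\Z_2)$ precisely when the fibre circle is spin-bounding.

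For injectivity, suppose $S_0,S_1$ have the same image. After precomposing with a homeomorphism of $\Sigma$ and elements of $\Aut(\lambda)$ and $d\cdot H^1$, we obtain isometries $F_i$ and framings $\fr_i$ whose triples agree on the nose. Set $F:=F_1^{-1}F_0$ and $H:=\fr_1^{-1}\fr_0\colon\overline{\nu}(S_0)\to\overline{\nu}(S_1)$. Equality of the $\Iso$-components is exactly $\partial F=(H|)_*$, and equality of the $H^3$-components says that $(F,H|)$ preserves the relative $k$-invariants.

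It remains to check the extra hypotheses of Theorem~\ref{thm:CompatiblePairClosedIntro}.
\begin{itemize}
\item In the spin case with $d=1$ and nonorientable surfaces, the union $X_{S_0}\cup_{H|}X_{S_1}$ is spin because $H|$ matches the restricted spin structures, which is the $\Spin^{\operatorname{ext}}$ component.
\item In the characteristic case, the requirement that $H|_{S_0}$ preserve the quadratic forms should follow from matching the spin structures on $Y_e$. The Freedman--Kirby and Guillou--Marin forms can be read off from the spin structure of $X_S$ restricted to the boundary circle bundle, via the framing of the normal push-off of a loop: a loop $\gamma$ lifts to $Y_e$, and $q(\gamma)$ is determined by whether this lift bounds in the spin structure, together with the $W$-correction.
\end{itemize}
Theorem~\ref{thm:CompatiblePairClosedIntro} then yields a homeomorphism $(X,S_0)\to(X,S_1)$. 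When the $k$-invariant condition is automatic, the same argument runs without the $H^3$ factor.

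I expect the main obstacle to be precisely this identification of the quadratic form with the boundary spin structure in the characteristic case. It is also where I would verify the asserted triviality of the $d\cdot H^1$ action on $\Spin^{\operatorname{not-ext}}$. I would first try the nonspin characteristic case via the double cover / Guillou--Marin description, reducing to the statement that spin structures on $Y_e$ not extending over $E_e$ correspond bijectively to quadratic refinements.
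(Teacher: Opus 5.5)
\paragraph{The gap: characteristic surfaces in spin $X$.} There you have $Q_X([S],x)\equiv Q_X(x,x)\equiv 0 \pmod 2$ for every $x$. Hence $[S]$ vanishes mod $2$, $d$ is even, and $H^1(X_S;\Z_2)\cong\Z_2$. So the exterior has two spin structures: the boundary spin structure is not ``forced'', and the target of $\Xi_{\Surf}$ carries no $\Spin$ factor in this case. (Relatedly, where a $\Spin$ factor does appear, the exterior spin structure is unique because $d$ is odd, not because $X$ is simply connected.)

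Your plan is to verify the quadratic-form hypothesis of Theorem~\ref{thm:CompatiblePairClosedIntro} by reading $q_{S_i}$ off from matched boundary spin structures. In this case that plan has nothing to work with. Equality of the $\Iso$ and $H^3$ components gives you no control over $q_{S_1}\circ (H|_{S_0})_*$. By (the proof of) Proposition~\ref{prop:Audrick}, for an $H$ preserving nice sections, preservation of the forms is equivalent to $X_{S_0}\cup_h X_{S_1}$ being spin, which is exactly the information the invariant does not record here. So Theorem~\ref{thm:CompatiblePairClosedIntro} cannot be invoked as a black box in the spin characteristic case.

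\paragraph{How the paper avoids this.} The paper never checks the quadratic forms. It reduces the closed case to surfaces with boundary the unknot by capping off (Proposition~\ref{prop:InjectionClosedEmbedding}). In the proof of Theorem~\ref{thm:Injectionv1}, it then feeds the compatible pair into the \emph{proof} of Theorem~\ref{thm:CompatiblePairIntro}, namely Theorem~\ref{thm:CKForSurfaceExteriors}. The only extra requirement there is that the universal cover of $X_{S_0}\cup_h X_{S_1}$ be spin.
\begin{itemize}
\item For $d$ even this holds for every bundle isomorphism $H$, by Proposition~\ref{prop:SpindEvenAutomatic}.
\item For $d$ odd with spin exteriors (exactly the two cases carrying a $\Spin$ factor), the matched boundary spin structures make the union itself spin.
\end{itemize}
Quadratic-form preservation then follows only a posteriori.

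\paragraph{Repair.} In the characteristic cases, replace the black-box appeal by this route, after puncturing to reach the bounded setting. In the nonspin characteristic case your detour through Proposition~\ref{prop:Audrick}, direction $(1)\Rightarrow(2)$, does work, since $H=\fr_1^{-1}\circ\fr_0'$ preserves nice sections, but it is unnecessary. Apart from this, treating closed surfaces directly rather than capping off is a reasonable alternative.
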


The cardinality of the targets of these injections provides an upper bound on the number of~$\Z_d$-surfaces with genus $g$,  Euler number $e$ and whose exteriors have equivariant intersection form $\lambda$.
Theorems~\ref{thm:ProjectivePlanesIntro}  and~\ref{thm:SpheresDiscsNoCancellation} are then obtained by analysing nonorientable analogues of these sets.

\begin{remark}
\label{rem:KirbyTaylorIntro}
Use $\operatorname{Quad}(H_1(S;\Z_2))$ to denote the set of $\Z_2$-quadratic refinements (resp.\ $\Z_4$-valued in the nonorientable case) of the $\Z_2$-intersection form of $S$.
Building on work of Kirby--Taylor~\cite{KirbyTaylor} (see also~\cite[Section 6]{ConwayOrsonPowell} and~\cite[Section 3]{Pyronneau}),  it is known that there is a bijection~$\operatorname{Spin}(Y_e) \cong \Spin(S^1) \times \operatorname{Quad}(H_1(S;\Z_2))$ that induces maps
\begin{align*}
\Surf_{d,e,\lambda}^{w,\charac} 
\to 
\operatorname{Spin}^{\operatorname{not-ext}}(Y_e)
\xrightarrow{\cong} 
\operatorname{Quad}(H_1(S;\Z_2))
&\quad \text{ if $X$ is nonspin,} \\
%%%%
\Surf_{d,e,\lambda}^{w,\ord} 
\to 
\operatorname{Spin}^{\operatorname{ext}}(Y_e)
\xrightarrow{\cong} 
\operatorname{Quad}(H_1(S;\Z_2))
&\quad \text{ if $X$ is spin.}
\end{align*}
In the first  case,  a characteristic~$\Z_d$-surface $S \subset X$ is mapped to its Freedman--Kirby (resp.\ Guillou--Marin quadratic form in the nonorientable case).
In the second case, the assignment depends on the choice of the identification of the normal bundle.
%Here, recall that the Freedman--Kirby quadratic form (resp.\ the Guillou--Marin quadratic form in the nonorientable case) refers to the quadratic refinement~$H_1(S_i;\Z_2) \to \Z_2$ (resp.~$H_1(S_i;\Z_2) \to \Z_4$) of the $\Z_2$-intersection form on $H_1(S_i;\Z_2)$ that maps a loop~$\gamma \subset S_i$ to~$q([\gamma])=D.F+W(D)$, where $D \subset X$ is an embedded disc with boundary $\gamma$ that intersects $F$ transversally, and $W(D)$ denotes a certain framing obstruction~\cite{FreedmanKirby,GuillouMarin}.
\end{remark}

\subsection{The extendable mapping class group of simple surfaces}
\label{sub:Extendable}
For $S \subset X$ a simple surface,  the proof of Theorem~\ref{thm:InjectionSurfIntro} also provides a framework to study the subgroup~$\Mod_X(S)$ of the mapping class group~$\Mod(S)$ consisting of those mapping classes that extend to homeomorphisms~$(X,S) \to (X,S)$.
Determining the smooth analogue of~$\Mod_X(S)$ is Problem 4.39 of the new Kirby problem list, but note that for a mapping class to extend smoothly, it must in particular extend as a homeomorphism.
Here and in what follows, we identify the abstract genus $g$ surface~$\Sigma$ with its image $S \subset X$ by an embedding
$$
\iota \colon \Sigma \hookrightarrow X.
$$
In other words,  this problem is asking to identify those (isotopy classes of) surface homeomorphisms~$\theta \colon S \to S$ such that~$\iota \circ \theta=\Phi \circ \iota$ for some homeomorphism~$\Phi \colon X \to X$, i.e.\ to determine those~$\theta$ such that the embeddings~$\iota \circ \theta$ and~$\iota$ are equivalent.

For~$\bullet=\ord,\charac$,  Theorems~\ref{thm:Injectionv1} and~\ref{thm:InjectionSurf} show that the maps~$\Xi_{\Surf}$ from Theorem~\ref{thm:InjectionSurfIntro} factor through the corresponding sets of embeddings which we denote~$\Emb_{d,e,\lambda}^{w,\bullet}$:
\begin{equation}
\label{eq:XiEmbIntro}
\xymatrix{
\Emb_{d,e,\lambda}^{w,\bullet} \ar@{^{(}->}[r]^-{\Xi_{\Emb}}\ar@{->>}[d]& \mathcal{A}\ar@{->>}[d] \\
\Surf_{d,e,\lambda}^{w,\bullet}  \ar@{^{(}->}[r]^-{\Xi_{\Surf}}& \mathcal{A}/\Homeo(S). 
}
\end{equation}
Here,  for $*\in \{\operatorname{ext},\operatorname{non-ext} \}$, we are using $\mathcal{A}$ to denote the targets of the maps in Theorems~\ref{thm:InjectionSurfIntro}:
%,~\ref{thm:Injectionv1} and~\ref{thm:InjectionSurf}:
$$
\mathcal{A}:=\begin{cases}
\frac{\Iso_{\Z[\Z_d]}^{\ell k}(\coker(\Ad Q),H_1(Y_e;\Z[\Z_d]))
\times H^3(\Z_d,Y_e;H) \times\Spin^*(Y_e)
 }{\Aut(\lambda) \times d \cdot H^1(\Sigma;\Z^w)} \quad& \text{or,} \\
 %%%
 \frac{\Iso_{\Z[\Z_d]}^{\ell k}(\coker(\Ad Q),H_1(Y_e;\Z[\Z_d]))
\times H^3(\Z_d,Y_e;H) }{\Aut(\lambda) \times d \cdot H^1(\Sigma;\Z^w)}.  \quad & \text{}
 \end{cases}
$$
The following theorem  provides an algebraic description of the subgroup~$\Mod_X(S)$ of extendable mapping classes for~$\Z_d$-surfaces with~$d>0$.
For~$d=0$, see Remark~\ref{rem:Hirose} below.
\begin{theorem}
\label{thm:K3Intro}
Let~$X$ be a closed simply-connected~$4$-manifold and let $d>0$.
%and let~$S \subset X$ be a closed~$\Z_d$-surface with~$d>0$.
The extendable mapping class group of a closed~$\Z_d$-surface agrees with the stabiliser of~$\Xi_{\Emb}(S)\in \mathcal{A}$ under the action of~$\Homeo(S)$:
$$
\Mod_X(S)=\operatorname{Stab}_{\Homeo(S)}(\Xi_{\Emb}(\iota)).
$$
\end{theorem}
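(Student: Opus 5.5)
The plan is to deduce the statement formally from the injectivity of $\Xi_{\Emb}$ in~\eqref{eq:XiEmbIntro}, together with the equivariance of $\Xi_{\Emb}$ with respect to the action of $\Homeo(S)$ on embeddings by precomposition. Here $\Homeo(S)=\Homeo(\Sigma)$ acts on $\Emb_{d,e,\lambda}^{w,\bullet}$ via $\theta\cdot\iota:=\iota\circ\theta^{-1}$, and on $\mathcal{A}$ through its action on $H_1(Y_e;\Z[\Z_d])$, on $H^3(\Z_d,Y_e;H)$, and on $\Spin(Y_e)$. Each of these is induced by lifting $\theta$ to a bundle automorphism of the Euler number $e$ circle bundle; the lift is well defined up to the $d\cdot H^1(\Sigma;\Z^w)$ ambiguity that has already been quotiented out in $\mathcal{A}$.

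The first step is equivariance: $\Xi_{\Emb}(\iota\circ\theta)=\theta^{-1}\cdot\Xi_{\Emb}(\iota)$. This should be immediate from the construction. The automorphism invariant is $D_S$ composed with the identification $\fr$ of $\nu(S)$ with the model bundle, and replacing $\iota$ by $\iota\circ\theta$ changes $\fr$ exactly by a bundle map covering $\theta$. The $k$-invariant and the spin structure are pulled back along that same bundle map. In addition, $\theta$ must be lifted compatibly with the coefficient system to $\Z_d$ (Remark~\ref{rem:Intertwine}); this is where the $d\cdot H^1(\Sigma;\Z^w)$ quotient enters.

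Next I prove the inclusion $\Mod_X(S)\subseteq \operatorname{Stab}$. If $\iota\circ\theta=\Phi\circ\iota$ for a homeomorphism $\Phi$ of $X$, then the embeddings $\iota\circ\theta$ and $\iota$ are equivalent, so they lie in the same class of $\Emb^{w,\bullet}_{d,e,\lambda}$. Since $\Xi_{\Emb}$ is well defined on equivalence classes, it follows that $\Xi_{\Emb}(\iota\circ\theta)=\Xi_{\Emb}(\iota)$, and by equivariance $\theta$ stabilises $\Xi_{\Emb}(\iota)$. One point needs care here: $\Phi$ may a priori reverse orientation, or permute the characteristic and ordinary types. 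Both are excluded because embeddings are taken up to orientation-preserving equivalence, and because the type is preserved by any homeomorphism carrying $S$ to itself.

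For the reverse inclusion, suppose $\theta\cdot\Xi_{\Emb}(\iota)=\Xi_{\Emb}(\iota)$. By equivariance, $\Xi_{\Emb}(\iota\circ\theta^{-1})=\Xi_{\Emb}(\iota)$. Both $\iota$ and $\iota\circ\theta^{-1}$ have the same image $S$, so they are $\Z_d$-surfaces with the same $e$, $\lambda$, genus and type, and hence both lie in $\Emb^{w,\bullet}_{d,e,\lambda}$. Injectivity of $\Xi_{\Emb}$ (Theorems~\ref{thm:Injectionv1} and~\ref{thm:InjectionSurf}) then gives a homeomorphism $\Phi$ of $X$ with $\Phi\circ\iota=\iota\circ\theta^{-1}$, so $\theta^{-1}$, and hence $\theta$, lies in $\Mod_X(S)$. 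Finally, the statement is about isotopy classes, so I must check that isotopic $\theta$ have the same action on $\mathcal{A}$. This holds because isotopic homeomorphisms induce the same maps on homology, cohomology and spin structures, up to the quotienting ambiguities.

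The main obstacle is the equivariance step, specifically matching the action of $\Homeo(\Sigma)$ on $\mathcal{A}$ (defined via lifts to $Y_e$ modulo $d\cdot H^1(\Sigma;\Z^w)$) with the effect of reparametrising $\iota$ on the invariants. I will handle it by fixing a model bundle map $\wh\theta$ covering $\theta$. Reparametrisation replaces $\fr$ by $\fr\circ\wh\theta$, and any other choice of $\wh\theta$ differs by a gauge transformation. On the $\Z[\Z_d]$-homology, these gauge transformations act precisely through $d\cdot H^1(\Sigma;\Z^w)$, given the constraint of intertwining the coefficient systems.
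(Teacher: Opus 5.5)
Your proposal is correct and follows the paper's argument. Extendability gives equality of invariants because $\Xi_{\Emb}$ is well defined on equivalence classes. Conversely, equality of the invariants of $\iota$ and $\iota\circ\theta$ gives an equivalence by injectivity of $\Xi_{\Emb}$ (Theorem~\ref{thm:Injectionv1}, passed to closed surfaces via the cap-off bijection), and $\Homeo(\Sigma)$-equivariance turns this into the stabiliser statement. Your extra points fill in details the paper leaves implicit: choosing lifts of $\theta$ that intertwine the coefficient systems (so the ambiguity is exactly $d\cdot H^1(\Sigma;\Z^w)$), requiring $\Phi$ to be orientation-preserving, and checking isotopy invariance.
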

\begin{proof}
If~$\theta$ extends to a homeomorphism~$\Theta \colon (X,S) \to (X,\theta(S))$, then the embeddings~$\iota$ and~$\iota \circ \theta$ share all the invariants listed in Theorem~\ref{thm:InjectionSurfIntro}.
This implies that~$\theta$ lies in the stabiliser of~$\Xi_{\Emb}$.
Conversely,  if~$\iota$ and~$\iota \circ \theta$ share the aforementioned invariants, then Theorem~\ref{thm:Injectionv1} (i.e.\ the diagram from~\eqref{eq:XiEmbIntro}) implies that the embeddings~$\iota$ and~$\iota \circ \theta$ are equivalent, i.e.\ that~$\theta$ is extendable.
The proof is analogous in the nonorientable case but one applies Theorem~\ref{thm:InjectionSurf} instead of Theorem~\ref{thm:InjectionSurfIntro}. 
\end{proof}

Using $q_S$ to denote the Freedman--Kirby or Guillou--Marin quadratic refinement of the $\Z_2$-intersection form of $S$, we relate Theorem~\ref{thm:K3Intro} to prior work in the area.
\begin{remark}
When~$d=1$ (so that $S \subset X$ has simply-connected complement), work of Boyer~\cite[Proof of Theorems F and G]{BoyerRealization} implicitly shows that if~$S$ is orientable, then
\begin{equation}
\label{eq:BoyerHirose}
\operatorname{Mod}_X(S)=
\begin{cases}
\left\{ 
[\theta]\in \operatorname{Mod}(S) \mid
q_S \circ \theta_*=q_S
%\text{ preserves the~$FK$-forms}
\right\} &\quad \text{if $S$ is characteristic} \\
%%%
\operatorname{Mod}(S) &\quad \text{if $S$ is ordinary}.
\end{cases}
\end{equation}
We explain how this relates to Theorem~\ref{thm:K3Intro}.
Firstly,  in the simply-connected case,  the $k$-invariant condition is automatic, i.e.\ the relative $k$-invariant is fixed by every surface homeomorphism.
%%We didnd't precisely describe the action at this point, but c'est la vie.
Secondly,   since Boyer showed that for such surfaces a compatible pair necessarily exists,  the automorphism invariant is fixed by every surface homeomorphism.
Finally, as explained in Remark~\ref{rem:KirbyTaylorIntro}, in the characteristic case, fixing the spin structure is equivalent to fixing the Freedman--Kirby quadratic form.
Thus, in this case, Theorem~\ref{thm:K3Intro} reduces to~\eqref{eq:BoyerHirose}.
\end{remark}

\begin{remark}
\label{rem:Hirose}
We discuss the analogue of Theorem~\ref{thm:K3Intro} for~$\Z$-surfaces, i.e.\ the case~$d=0$.
Since a framing induces an isomorphism~$H_1(\partial X_S;\Z[\Z])\cong H_1(S)$,  an isometry~$F \colon \lambda_{X_S} \cong \lambda_{X_S}$ induces an isomorphism of~$H_1(S)$.
The work from~\cite{ConwayPowell} implicitly shows that
\begin{equation}
\label{eq:Extendabled=0}
\operatorname{Mod}_X(S)=\left\{ 
[\theta] \in \operatorname{Mod}(S) \mid \theta_*=\partial F \text{ for } F \in \Aut(\lambda_{X_S})
\right\}.
\end{equation}
When~$X=S^4$ and~$S=U$ is unknotted,  Hirose~\cite[Theorem 1.2]{Hirose} (see also~\cite[Theorem~1.2]{HiroseNonorientable} for the nonorientable case) proved that 
$$\operatorname{Mod}_X(S)
=
\left\{ 
[\theta]\in \operatorname{Mod}(S) \mid
q_S \circ \theta_*=q_S
\right\}.
$$
In fact, Hirose proves this result for the smooth analogue of $\operatorname{Mod}(S)$.
We relate this description to the one from~\eqref{eq:Extendabled=0}.
Firstly,  Proposition~\ref{prop:Audrick} implies that for a characteristic~$\Z$-surface~$S\subset S^4$,  the condition~$q_S \circ \theta_*=q_S$ is equivalent to the requirement that the union~$X_S \cup_h X_S$ be spin, where~$h:=H|$ is obtained by extending~$\theta$ to a bundle isomorphism~$H$ and restricting the outcome to the boundary.
Secondly,  the hypothesis~$\theta_*=\partial F$ ensures that $(F,h)$ forms a compatible pair which,  thanks to~\cite[Theorem 3.12]{ConwayPowell}, implies that~$X_S \cup_h X_S$ is spin.
Thus, the condition that~$\theta_*=\partial F$ is typically stronger than~$q_S \circ \theta_*=q_S$ but Hirose's theorem implies that for unknotted surfaces they are in fact equivalent.
\end{remark}

We refer to~\cite{Lehman, BaykurSunukjianExtendable} for recent work on extendable mapping classes (in the smooth category) as well as for further references on the topic.

%%Don't delete
%We note that, given~$\Z$-surfaces~$S_0,S_1$, using~\cite{ConwayCrowleyPowell},  the condition for~$X_{S_0} \cup_h X_{S_1}$ to be spin can also be phrased as requiring that~$h$ preserve the quadratic refinements of the Blanchfield forms of~$\partial X_{S_0}$ and~$$\partial X_{S_1}$.

\subsection{Surfaces with boundary}

For $X$ a $4$-manifold with boundary $\partial X=S^3$ and $S \subset X$ a surface with boundary a knot $K\subset S^3$,  the boundary $\partial X_S$ of the surface exterior decomposes as~$E_K \cup (\partial \overline{\nu}(S)\setminus \nu(K))$, where $E_K$ denotes the exterior of the knot $K$.
As we note in Section~\ref{sec:Closed},  Theorem~\ref{thm:CompatiblePairClosedIntro} (which concerned closed surfaces) is a consequence of the following result for surfaces with nonempty boundary.

\begin{theorem}
\label{thm:CompatiblePairIntro}
Let~$X$ be a simply-connected~$4$-manifold with boundary~$\partial X=S^3$, and let~$K \subset~S^3$ be a knot.
When~$d>0$ is not a prime, assume that $\Sigma_d(K)$ is a rational homology sphere.
Let~$S_0,S_1 \subset X$ be~$\Z_d$-surfaces with boundary~$K$,
with the same Euler number, and that are either both characteristic or both ordinary.
The following assertions are equivalent:
\begin{itemize}
\item There is a rel.\ boundary homeomorphism~$(X,S_0) \to (X,S_1)$.
\item There is an isometry $F\colon \lambda_{X_{S_0}} \cong \lambda_{X_{S_1}}$ of the equivariant intersection forms and a bundle isomorphism $H \colon \overline{\nu}(S_0) \to \overline{\nu}(S_1)$ that is the identity on a tubular neighborhood of $K$ such that~$(F,\id_{E_K} \cup H|)$ preserves the relative~$k$-invariants,
$$\partial F=(\overbrace{\id_{E_K} \cup H|}^{:=h})_* \colon H_1(\partial X_{S_0};\Z[\Z_d]) \to H_1(\partial X_{S_1};\Z[\Z_d]),$$
and such that 
\begin{itemize}
\item when $X$ is spin,  $d=1$, and $S_0,S_1$ are nonorientable,~$X_{S_0} \cup_h X_{S_1}$ is spin,
\item when $S$ is characteristic $H|_{S_0}$ preserves the quadratic forms.
\end{itemize}
\end{itemize}
The~$k$-invariant condition can be omitted in the following circumstances: either~$\pi$ is trivial,  or~$\pi$ is infinite cyclic,  or the $S_i$ are discs with nonzero Euler number, or the $S_i$ are Moebius bands. 
The condition involving the quadratic forms is automatic when the $S_i$ are discs or Moebius bands. 
\end{theorem}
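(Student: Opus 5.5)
The forward direction is formal. A rel.\ boundary homeomorphism $\Phi\colon (X,S_0)\to(X,S_1)$ can be isotoped so that it restricts to a bundle isomorphism $H\colon \overline{\nu}(S_0)\to\overline{\nu}(S_1)$ that is the identity near $K$. This uses uniqueness of topological normal bundles for locally flat surfaces, relative to the boundary. The restriction $\Phi|\colon X_{S_0}\to X_{S_1}$ then induces an isometry $F$ of equivariant intersection forms, and by naturality of Poincar\'e duality and of the long exact sequence of the pair we get $\partial F=h_*$ with $h=\id_{E_K}\cup H|$. Naturality likewise shows that $(F,h)$ preserves relative $k$-invariants and that $H|_{S_0}$ preserves the quadratic forms, since embedded discs are carried to embedded discs and $W$ is a homeomorphism invariant. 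Finally, $X_{S_0}\cup_h X_{S_1}$ is homeomorphic to the double of $X_{S_0}$, which is spin whenever $X_{S_0}$ is.

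For the converse, I will reduce to extending the boundary homeomorphism $h\colon \partial X_{S_0}\to\partial X_{S_1}$ to a homeomorphism $X_{S_0}\to X_{S_1}$ that is compatible with the maps to $B\Z_d$. Gluing back $H$ then yields the desired rel.\ boundary homeomorphism $(X,S_0)\to(X,S_1)$. The extension problem is handled by the rel.\ boundary classification of $4$-manifolds with finite cyclic (or trivial, or infinite cyclic) fundamental group, in the spirit of Boyer's and Hambleton--Kreck's work. Concretely, I will use the modified surgery framework:
\begin{enumerate}
\item Show that $X_{S_0}\cup_h -X_{S_1}$ admits a normal $1$-type structure, i.e.\ a map to $B\Z_d$ together with compatible $w_2$-data.
\item Show that this closed manifold is zero in the relevant bordism group over $B\Z_d$.
\item Use that the second homologies are isometric via an isometry restricting to $h$ on the boundary to perform surgery to an $s$-cobordism rel.\ boundary.
\item Apply Freedman's topological $s$-cobordism theorem, which holds since $\Z_d$ and $\Z$ are good groups.
\end{enumerate}

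The conditions in the statement feed into these steps as follows. The relative $k$-invariant condition, together with $\partial F=h_*$, guarantees via \cite{ConwayKasprowskiKinvariant} that $F$ and $h$ assemble to a homotopy equivalence rel.\ boundary of the Postnikov $2$-types, and hence a homotopy equivalence $X_{S_0}\simeq X_{S_1}$ extending $h$. The type and quadratic-form hypotheses, together with the spin hypothesis in the case $d=1$ with nonorientable surfaces and spin $X$, ensure that the spin or $w_2$-type structures on the two sides match across $h$. The signature and bordism invariants in $\Omega_4(B\Z_d)$ vanish because the $X_{S_i}$ have equal intersection forms and bound compatibly. When $d$ is not prime, the rational homology sphere hypothesis on $\Sigma_d(K)$ is needed to make the boundary $\Z[\Z_d]$-homology behave well enough for these arguments.

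The omission statements reduce to showing that the relevant obstruction group, or the indeterminacy, vanishes.
\begin{itemize}
\item When $\pi$ is trivial or infinite cyclic, $H^3(\pi,\partial;H)$ is controlled by $\partial F$ alone.
\item For discs with $e\neq0$ and for Moebius bands, I compute $H^3(\Z_d,\partial X_S;H)$ from the explicit small boundary, which is a surgery on $K$ or a twisted circle bundle over a M\"obius band.
\item For discs and Moebius bands, $H_1(S;\Z_2)$ is zero or $\Z_2$ with a determined quadratic refinement, so the quadratic-form condition is automatic.
\end{itemize}

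The main obstacle is the sufficiency step: promoting the algebraic data $(F,h)$ to an actual homotopy equivalence rel.\ boundary, and then to a homeomorphism. I would follow the realization strategy of \cite{ConwayPowell}, replacing the Blanchfield-form arguments with the $k$-invariant input.
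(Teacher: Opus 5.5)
There is a genuine gap: your converse tries to extend the \emph{given} $h=\id_{E_K}\cup H|$, and this can fail.

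Your sentence that the type, quadratic-form and ($d=1$, nonorientable) spin hypotheses ``ensure that the spin or $w_2$-type structures on the two sides match across $h$'' is false when $X$ is spin, the $S_i$ are orientable of positive genus, and $d$ is odd. Then the $S_i$ are necessarily ordinary and $H_1(X_{S_i};\Z_2)=0$. Nothing in the statement constrains how the given $h$ acts on the unique spin structures.

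Indeed, one can compose a good pair $(F,H)$ with $(G,\Rot(d^3\alpha))$, where $[\alpha]_2\neq 0$ and $G$ is an isometry of $\lambda_{X_{S_0}}$ with $\partial G=\Rot(d^3\alpha)_*$ that preserves the relative $k$-invariant. The result is still compatible and $k$-invariant preserving, but $X_{S_0}\cup_h X_{S_1}$ is no longer spin. By your own double argument, $h$ then cannot extend over the exteriors at all, so step~1 of your converse fails for that $h$.

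The missing idea is to change the pair rather than extend the given one. Replace $H$ by $H\circ\Rot(d^3\alpha)$, where $[\alpha]_2$ is the spin-structure discrepancy; this works because $d^3\alpha\equiv\alpha$ mod $2$ and $d^3\alpha$ still intertwines the coefficient systems. Simultaneously replace $F$ by $F\circ G$. Constructing such a $G$ is a real step: the paper uses a norm-map lift and the fact that $H^3(\Z_d;I)$ is $d$-torsion, in Propositions~\ref{prop:BoyerAdaptSpin} and~\ref{prop:BuildCompatiblePair}.

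In the other cases the spin matching does hold, but not for the reasons you give. In the characteristic case it is the Kirby--Taylor comparison of Proposition~\ref{prop:Audrick}. For $d$ even with non-spin exteriors whose universal covers are spin, it is Proposition~\ref{prop:SpindEvenAutomatic}.

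The second gap is the step ``preserves $k$-invariants, hence a homotopy equivalence $X_{S_0}\simeq X_{S_1}$ extending $h$''. Matching relative $k$-invariants only gives an equivalence of $2$-types. Extending over the $4$-manifolds meets a secondary obstruction, a hermitian form on $H^2(\partial X_{S_1};\Z[\Z_d])$. Killing it is exactly where the spin condition on the universal cover of the union enters, together with the splitting $H_1(\partial X_{S_i};\Z[\Z_d])\cong L\oplus T\oplus\Z^k$ with trivial action on $\Z^k$ (Theorem~\ref{thm:CK4Manifold}). That splitting is the real role of the rational homology sphere hypothesis: a free part of $H_1(\Sigma_d(K))$ never carries the trivial $\Z_d$-action.

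Two further points are missing:
\begin{itemize}
\item $\partial F=h_*$ on $H_1$ does not by itself give a compatible pair. You also need agreement on the image of $H_2(\partial X_{S_0};\Z[\Z_d])$, which the paper obtains from the $H_1\times H_2$ pairing on $\partial\widetilde X_{S_i}$ (Proposition~\ref{prop:CompatibleSimplification}).
\item Your modified-surgery steps~2--3 yield at best a stable homeomorphism. Without cancellation, which is unavailable e.g.\ for definite $X$, an isometry of second homology gives no way to surger to an $s$-cobordism rel.\ boundary. The paper instead uses the rel.\ boundary homotopy classification followed by surgery over the good group $\Z_d$.
\end{itemize}
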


As in the closed case, a variation on Theorem~\ref{thm:CompatiblePairIntro} provides conditions for a rel.\ boundary homeomorphism $\theta \colon S_0 \to S_1$ to extend to a homeomorphism $(X,S_0) \to (X,S_1)$; see Section~\ref{sec:ProofMain} for the details.
We do not know whether the condition on~$H_1(\Sigma_d(K))$ being torsion is necessary; note that when~$d$ is a prime power,~$H_1(\Sigma_d(K))$ is necessarily torsion~\cite[Corollary 9.8]{LickorishIntroduction}.

\begin{remark}
We compare Theorem~\ref{thm:CompatiblePairIntro} with prior literature on the topic.
When $\pi$ is trivial and the surfaces are orientable, recent work of Pyronneau shows that homologous genus $g$ surfaces with the same boundary and with simply-connected complements are isotopic rel.\ boundary~\cite{Pyronneau}.
When $\pi=\Z$, Theorem~\ref{thm:CompatiblePairClosedIntro} recovers~\cite[Theorem~1.3]{ConwayPowell}.
When $\pi=\Z_2, X=D^4$,  the surfaces are of nonorientable genus $h$,  the relative Euler numbers satisfy~$e\neq 2h$ and $\det(K)=1$,  it is known that $\Z_2$-surfaces with boundary $K$ are isotopic~\cite[Theorem~B]{ConwayPowell}; again the same is true when $h \leq 5$ and $e=2h$ with the cases $h=4,5$ requiring~\cite{Pencovitch}.
\end{remark}

%%%Don't delete
%\begin{remark}
%We could try~$BS(1,n)$-surfaces, but it might be higher genus; so let's delay until later.
%Starting point: Determine the module structure of~$H_1(\partial X_{S_i};\Z[\pi])$ when~$\pi$ is a Baumslag-Solitar group.
%\end{remark}

The take away from Theorems~\ref{thm:CompatiblePairClosedIntro},~\ref{thm:InjectionSurfIntro} and~\ref{thm:CompatiblePairIntro} is that surfaces in simply-connected $4$-manifolds with abelian (and therefore cyclic) knot group are determined by their genus, their Euler number, the equivariant intersection form of their exteriors, a relative~$k$-invariant, the spin structure on the boundary of the exterior, and a secondary invariant that is related to the existence of compatible pair.
In several cases, the existence of a compatible pair is automatic, as is the condition involving spin structures and the relative $k$-invariant.

\subsection{Outlook}
\label{sub:Outlook}
%Outlook and strategy of the proof

The proofs of Theorems~\ref{thm:CompatiblePairClosedIntro} and~\ref{thm:CompatiblePairIntro} rely on our work on the classification of~$4$-manifolds with boundary for which the inclusion induced map $i_* \colon \pi_1(\partial X) \to \pi_1(X)$ is surjective~\cite{ConwayKasprowski4Manifolds}.
This latter condition constrains the types of (good) knot groups that our result apply to (e.g. for knotted spheres,  this condition forces the knot group to be cyclic).
For surfaces with boundary,  more knot groups become available and, for example,  Theorem~\ref{thm:BS12} describes how our framework leads to a generalisation of the classification of~$BS(1,2)$-homotopy ribbon discs in~$D^4$ from~\cite[Theorem~1.4]{ConwayPowellDiscs} to Euler number zero $BS(1,2)$-homotopy ribbon discs in more general~$4$-manifolds with boundary~$S^3$.

\medbreak

In conclusion,  it appears to us that further progress on the classification on surfaces in simply-connected~$4$-manifolds would either involve determining restrictions on the present invariants (specifically the relative $k$-invariant and the equivariant intersection form) or on broadening the classification of $4$-manifolds with boundary in order to weaken the condition that $i_*$ be surjective.

\subsection*{Organisation}
Section~\ref{sec:Setup} contains some background homological calculations involving surface exteriors.
Section~\ref{sec:CompatiblePairTheorem} generalises some results from~\cite{ConwayKasprowski4Manifolds} so that they apply to exteriors of simple knotted surfaces.
Section~\ref{sec:kInvariant} describes situations where the $k$-invariant condition can be omitted.
Section~\ref{sec:SpinUnion} assumes that the universal covers~$\widetilde{X}_{S_0}$ and~$\widetilde{X}_{S_1}$ are spin and studies when it is possible to extend a homeomorphism  $S_0 \to S_1$ to a bundle isomorphism~$H \colon \overline{\nu}(S_0) \to \overline{\nu}(S_1)$ such that the universal cover of~$X_{S_0} \cup X_{S_1}$ is also spin.
Section~\ref{sec:ProofMain} proves Theorem~\ref{thm:CompatiblePairIntro}.
Section~\ref{sec:CompatiblePairs} relates compatible pairs to isometries of pointed hermitian forms.
Section~\ref{sec:Injection} proves a version of Theorem~\ref{thm:InjectionSurfIntro} for surfaces with boundary. 
Section~\ref{sec:Closed} proves Theorems~\ref{thm:CompatiblePairClosedIntro} and~\ref{thm:InjectionSurfIntro} on closed surfaces.
Section~\ref{sec:DiscsSpheres}, which focuses on applications to spheres and discs, contains the proofs of Theorems~\ref{thm:PointedFormsDiscsIntro} and~\ref{thm:SpheresDiscsNoCancellation}.
Section~\ref{sec:Nonorientable} is concerned with projective planes and Moebius bands and proves Theorems~\ref{thm:MoebiusIntro} and~\ref{thm:ProjectivePlanesIntro}.
Section~\ref{sec:BS(12)} briefly discusses $BS(1,2)$-homotopy ribbon discs.

\subsection*{Conventions}
Spaces are assumed to be compact and connected unless mentioned otherwise.
Throughout this article, $X$ will denote a $4$-manifold that is either closed or has boundary homeomorphic to~$S^3$.
In the latter case, we fix an identification $\partial X \cong S^3$ once and for all and therefore assume that $\partial X=S^3$ to ease notation.

Given a ring~$R$ with involution, and a left~$R$-module~$H$, we write~$\overline{H}$ for the right~$R$-module whose underlying group agrees with that of~$H$ but with the~$R$-module structure~$x\cdot r:=\overline{r} x$ for~$x \in H$ and~$r \in R$.
Similarly, we write~$H^*:=\overline{\Hom_{\operatorname{left-}R}(H,R)}$ to indicate that we are using the involution on~$R$ to turn the right~$R$-module $\Hom_{\operatorname{left-}R}(H,R)$ into a left~$R$ module.

\subsection*{Acknowledgments}
AC was partially supported by the NSF grant DMS~2303674.

	\section{Simple surfaces }
\label{sec:Setup}	
	
	The goal of this section is to set up some conventions on knotted surfaces and to calculate the Alexander module of the boundary of the exterior of a~$\Z_d$-surface.
	Section~\ref{sub:Boundary} describes the homeomorphism type of the boundary of a surface exterior.
	Section~\ref{sub:HomologyCalculation} is concerned with the homology of surface exteriors and their boundaries.
	Section~\ref{sub:NiceIdentifications} describes issues related to identifications,  framings and coefficient systems.
	Finally, Section~\ref{sub:AlexanderModule} calculates the Alexander module of the boundary of the exterior.

\subsection{The boundary of surface exteriors}
\label{sub:Boundary}
	
We begin by describing the homeomorphism type of the boundary of surface exteriors.
This requires some notation.
Rank two vector bundles over a surface~$\Sigma$ with nonempty boundary are classified by their~$w_1$.
Fix a model~$\Sigma\mathbin{\wt{\times}}\R^2$ of the (total space of the) rank two vector bundle over~$\Sigma$ determined by~$w_1(\Sigma)\colon \pi_1(\Sigma)\to \Z_2.$
%\cong  \pi_1(BO(2))$.
Write~$\Sigma\mathbin{\wt{\times}}D^2$ for the disc bundle of~$\Sigma\mathbin{\wt{\times}}\R^2$ and~$\Sigma\mathbin{\wt{\times}}S^1$ for its circle bundle.
Again using that~$H^2(\Sigma)=0$,  the Euler class of~$\Sigma\mathbin{\wt{\times}}\R^2$ vanishes, leading to the existence of a nowhere zero section and thus to the existence of a section of~$\Sigma\mathbin{\wt{\times}}S^1$.
Fix a section~$r\colon \Sigma\to \Sigma\mathbin{\wt{\times}}S^1$. 
	\begin{convention}
	\label{conv:SectionIsFraming}
Later on, it will be helpful to work with explicit models of these bundles.
	\begin{itemize}
\item When $\Sigma$ is an orientable surface,  we choose the trivial bundle~$\Sigma \times \R^2$ as a model for $\Sigma\mathbin{\wt{\times}}\R^2$ and $r\colon \Sigma \to \Sigma \times S^1 \subset \Sigma \times \R^2, x \mapsto (x,\bsm 1\\ 0\esm)$ as the section.
\item In general,  we use~$p \colon \widehat{\Sigma} \to \Sigma$ to denote the orientation double cover of~$\Sigma$,  consider the~$\Z_2$-action on~$\R^2$ given by~$(x,y)\mapsto (x,-y)$ and make use of~$\widehat{\Sigma} \times_{\Z_2} \R^2$ as a model for~$ \Sigma\mathbin{\wt{\times}} \R^2.$
When~$\Sigma$ is orientable,  this is canonically isomorphic to~$\Sigma \times \R^2$.
The preferred section~$r\colon \Sigma \to \Sigma \mathbin{\wt{\times}} S^1 \subset \Sigma \mathbin{\wt{\times}} \R^2$ is~$r(b):= [b',\bsm 1\\ 0\esm]$ for any~$b'\in p^{-1}(b)$; since~$\bsm 1\\ 0\esm \in S^1$ is a fixed point under the~$\Z_2$ action, this does not depend on the choice of~$b'$. 
%%%Write p=diag(1,-1). Note that that p^{-1}=p. Write T for the generator of Z_2.
%%If b'=Tb'', then (P.I=PIP^{-1}=I) Flip on 1 \in S^1 doesn't do anything.
%%If [b',1]=[Tb',p \cdot 1]=[b'',1] =[b'',1]
	\end{itemize}
	\end{convention}

In what follows, we write~$E_K$ for the exterior of a knot~$K \subset S^3$.
The next lemma determines the homeomorphism type of the boundary of surface exteriors.
	
	\begin{lemma}
		\label{lem:WhatIsTheBoundary}
		For a surface~$S \subset X$ with boundary~$K$ and relative Euler number~$e$, there is a homeomorphism~$\partial X_S \cong E_K \cup (\Sigma \mathbin{\wt{\times}}S^1)$, where the gluing identifies the meridian of~$K$ with the~$S^1$-fibre and the~$e$-framed longitude of~$K$ with~$r(\partial \Sigma)$.
	\end{lemma}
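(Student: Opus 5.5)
We decompose $\partial X_S$ along the boundary of a tubular neighbourhood of $S$. Write $X_S=X\setminus \nu(S)$, where $\overline{\nu}(S)$ is a closed tubular neighbourhood that meets $\partial X=S^3$ in a closed tubular neighbourhood $\overline{\nu}(K)$ of $K$. This is possible since $S$ is locally flat and properly embedded, using the topological normal bundle theory of Freedman--Quinn (with collars near the boundary). Then $\partial X_S$ is the union of $S^3\setminus \nu(K)=E_K$ and the part of $\partial\overline{\nu}(S)$ lying in the interior of $X$. That part is the circle bundle of $\overline{\nu}(S)$, glued to $E_K$ along $\partial \overline{\nu}(K)$, which is the torus $\partial\Sigma \times S^1$ sitting over $\partial S=K$.

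Next we identify $\overline{\nu}(S)$ with the model $\Sigma \mathbin{\wt{\times}} D^2$ of Convention~\ref{conv:SectionIsFraming}. Rank two bundles over a surface with nonempty boundary are classified by $w_1$, and $w_1(\nu(S))=w_1(S)$ because $X$ is orientable. Hence there is a bundle isomorphism $\Phi\colon \Sigma \mathbin{\wt{\times}} D^2 \to \overline{\nu}(S)$ covering a homeomorphism $\Sigma\cong S$. Restricting $\Phi$ to circle bundles and taking the union with $\id_{E_K}$ gives the desired homeomorphism $\partial X_S \cong E_K \cup (\Sigma \mathbin{\wt{\times}}S^1)$.

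It remains to check the gluing. The fibre $\{pt\}\times S^1$ over a point of $\partial\Sigma$ is mapped to the boundary of a normal disc to $K$ in $S^3$, that is, a meridian of $K$. The curve $\Phi(r(\partial\Sigma))$ is a push-off of $K$ in $\partial\overline{\nu}(K)$, so it is some longitude of $K$, namely the $m$-framed one for some $m\in\Z$.

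The main step is to show that $m=e$, and we plan to arrange this by choosing $\Phi$ appropriately. The relative Euler number $e$ is defined as the obstruction to extending, over all of $S$, the section of the circle bundle restricted to $\partial S$ that is given by the Seifert framing of $K$. Equivalently, $e$ is the self-intersection of $S$ with a push-off that agrees with the Seifert push-off on the boundary, computed with local orientations or twisted coefficients in the nonorientable case. We will run this computation in reverse. The section $\Phi\circ r$ is nowhere zero on all of $S$ and restricts on $\partial S$ to the $m$-framed push-off. The Seifert push-off differs from it by $-m$ twists along $\partial S$, so the obstruction to extending the Seifert push-off is $\pm m$. This gives $m=\pm e$, and a convention check pins down the sign as $m=e$.

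If the sign or the choice of $\Phi$ needs adjusting, we can change the trivialising section. Precomposing $\Phi$ with a bundle automorphism of $\Sigma \mathbin{\wt{\times}} D^2$ that twists the fibres near $\partial\Sigma$ changes $r(\partial\Sigma)$ by the corresponding number of meridians. However, such an automorphism only extends over $\Sigma$ compatibly with the $w_1$-twisting, and it changes the section globally. So the cleanest route is the obstruction-theoretic identification above: it shows that the framing induced by any nowhere zero section of $\overline{\nu}(S)$ over all of $S$ is forced to be the $e$-framing. Verifying this canonicity, including in the nonorientable case where $e$ can be odd, is the step we expect to require the most care.
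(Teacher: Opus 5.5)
Your argument is correct up to sign, and it determines the framing by a different route from the paper. The setup is the same as the paper's: decompose $\partial X_S$, then identify $\overline{\nu}(S)\cong\Sigma\mathbin{\wt{\times}}D^2$ via $w_1$. The difference is in how $m$ is found.

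\textbf{Comparison of routes.} The paper closes up. It forms $(\widehat X,\widehat S)=(X,S)\cup-(D^4,G)$ with $G$ a pushed-in Seifert surface, and computes $e(\widehat S)$ twice: once with a push-off ending in the Seifert longitude, and once with $r(\Sigma)$ extended by a push-off of $G$ ending in $\ell_m$. Everything then reduces to the formula $F\cdot F'=\ell k(\partial F,\partial F')$ for surfaces in $D^4$.

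You stay inside $X$ and compare the Seifert section with the global nowhere-zero section $\Phi\circ r$ along $\partial S$. This is sound. The relative Euler class of the Seifert section is $\delta$ of its difference class with $\Phi\circ r|_{\partial S}$. The map $\delta\colon H^1(\partial S;\Z)\to H^2(S,\partial S;\Z^w)\cong\Z$ is an isomorphism, because $H^2(S;\Z^w)=0$ and $H^1(S;\Z^w)\to H^1(\partial S;\Z)$ is zero (dually, $H_1(S,\partial S)\to H_0(\partial S)$ vanishes).

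The same vanishing settles your canonicity worry. Two nowhere-zero sections over $S$ differ by a class in $H^1(S;\Z^w)$ that restricts trivially to $\partial S$, so they induce the same framing. Hence adjusting $\Phi$ is neither needed nor possible. Your route is more self-contained; the paper's avoids twisted obstruction theory at the cost of an auxiliary closed manifold.

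\textbf{The sign.} What you call a convention check is where the remaining content lies, and done honestly it gives the opposite sign.
\begin{itemize}
\item Orient $\partial X$ outward-normal-first and take $\ell k(\mu,K)=+1$. Then $T\partial X|_K=TK\oplus\nu(S)|_K$, so the oriented normal plane of $S$ along $K$ is that of the meridian disc.
\item Your own observation is that the Seifert section winds $-m$ times relative to $\Phi\circ r$. With these orientations this gives $e=S\cdot S''=-m$, not $m=e$.
\end{itemize}

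A concrete check: take a small ball $B$ centred on a line $L\subset\C P^2$, let $X=\C P^2\setminus\mathring{B}$ and $S=L\cap X$.
\begin{itemize}
\item $e=L\cdot L=+1$, since the disc $L\cap B$ contributes $\ell k(K,\ell_0)=0$.
\item A holomorphic section of $\nu(L)$ vanishing only at the centre of $B$ pushes $K$ to a circle with linking number $+1$ in $\partial B=-\partial X$. That is framing $-1$ in $\partial X$.
\end{itemize}

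The paper's proof is equally loose here. For intersections in $-D^4$ it uses $G\cdot G''=-\ell k(K,\ell_K)$ in its first computation but $G\cdot G'=+\ell k(\partial G,\partial G')$ in its second. Treated consistently, it also yields $n=-e$.

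So do not claim that a convention check produces $m=e$. Either fix orientation conventions under which that holds, or record the gluing as identifying the $(-e)$-framed longitude with $r(\partial\Sigma)$. The homology computations built on the lemma only involve $e$ up to sign, so either choice leaves them intact.
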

	\begin{proof}
		Since~$X$ is orientable,~$\overline{\nu}(S)$ is bundle isomorphic to~$\Sigma\mathbin{\wt{\times}}D^2$.
%%%Don't delete.
		%Since X is orientable the bundle \nu(S) is orientable and so then it's determined by w_1 of the bundle which is w_1 of the surface.
		Hence there is a homeomorphism~$\partial X_S \cong E_K \cup (\Sigma \mathbin{\wt{\times}}S^1)$ for some gluing.
		The gluing identifies the~$n$-framed longitude (for some~$n$) with~$r(\partial \Sigma)$; the challenge is to show that~$n=e$.
Choose an orientable surface~$G \subset D^4$ with boundary $K$.
		Consider the closed~$4$-manifold~$\widehat{X}:=X \cup -D^4$ and the closed surface
		$$(\widehat{X},\widehat{S})=(X,S) \cup -(D^4,G).$$
We will calculate~$e(\widehat{S})$ in two different way, first showing it 
		
		We assert that~$e(\widehat{S})= e$.
		Pick a push-off~$S'' \subset X$ of~$S$ whose boundary is the~$0$-framed longitude~$\ell_K$ of the knot~$K$ as well as an orientable surface~$G'' \subset D^4$ whose boundary is also~$\ell_K$.
Since the union~$\widehat{S}'':=S'' \cup -G''$ is a push off of~$\wh{S}$,  so~$e(\wh{S})=\widehat{S}.\widehat{S}''$. 
		Here if~$S$ is nonorientable we are using the local orientations to make the intersection count well-defined; see e.g.~\cite[Remark~2.1]{ConwayOrsonPowell} for more details. 
		We deduce that
		\[e(\widehat{S})=\widehat{S}.\widehat{S}''=S.S''+G'.G''=e-\ell k(K,\ell_K)=e.\]
		%%Technically we glue along the reverse of~$K$ or something.
This concludes the proof of the assertion.
	
We now show that~$e(\widehat{S})=n$.
Consider~$S':=r(\Sigma) \subset \Sigma \mathbin{\wt{\times}} S^1 \subset \partial X_S \subset X$ as a push-off of~$S$.
Note that~$S'$ is disjoint from~$S$, where the latter is viewed as the~$0$-section of~$\nu(S) \cong \Sigma \mathbin{\wt{\times}} \R^2$.
%%Don't delete
%because s is nowhere vanishing.
Recall that~$r(\partial \Sigma)$ is identified with the~$n$-framed longitude of~$K$.
Let~$G' \subset D^4$ be a push-off of~$G$  with boundary this~$n$-framed longitude.
		Since the surface~$G' \subset D^4$ is orientable,  its Euler number relative to the Seifert framing is zero.
		It follows that~$\widehat{S}':=S'\cup G'$ is a push-off of~$\widehat{S}:=S \cup G$ with~$e(\widehat{S})=e(S)$, whence, using the assertion,
		\[
		e
		=e(\widehat{S})
		=\widehat{S}.\widehat{S}'
%		=e(S)
		=S.S'+G.G'=0+\ell k(\partial G,\partial G')
		=n,
		\]
		establishing the lemma.
	\end{proof}
	
	\subsection{Integral homology}
	\label{sub:HomologyCalculation}
	
This section describes the homology of surface exteriors and their boundary.	
The following lemma (combined with Lemma~\ref{lem:WhatIsTheBoundary}) calculates the homology of the boundary.
	To do so,  we write $E_K \cup_e (\Sigma \mathbin{\wt{\times}}S^1)$ for the gluing described in Lemma~\ref{lem:WhatIsTheBoundary}.

\begin{lemma}
\label{lem:MV}
Let $\Sigma$ be a surface with one boundary component.
\begin{itemize}
\item If~$\Sigma$ is orientable,  then there is
split short exact sequence
$$
\xymatrix{
0\ar[r]&
\Z_e \ar[r]&
H_1(E_K \cup_e (\Sigma \times S^1))\ar[r]&
H_1(\Sigma)\ar[r] \ar@/_1pc/[l]_-{r_*}&
0
}
$$
The generator of~$\Z_e$ maps to the $S^1$-fibre~$[S^1]$.
% an isomorphism
%$$H_1(E_K \cup_e (\Sigma \times S^1)) \xrightarrow{\cong} H_1(\Sigma) \oplus \Z_e$$
%that sends the meridian~$[S^1]$ to~$(0,1)$. 
\item If $\Sigma$ is nonorientable,  then there is a short exact sequence
$$
\xymatrix{
0\ar[r]&
\Z_2 \ar[r]&
H_1(E_K \cup_e (\Sigma \times S^1))\ar[r]&
H_1(\Sigma)/\langle [\partial \Sigma] \rangle \ar[r] \ar@{-->}@/_1pc/[l]_-{r_*}&
0.
}
$$
%$$0\to \Z_2\to H_1(E_K \cup_e (\Sigma \mathbin{\wt{\times}}S^1))\to H_1(\Sigma)/\langle [\partial \Sigma] \rangle\to 0.$$
The generator of~$\Z_2$ maps to the meridian~$[S^1]$ and~$H_1(\Sigma)/\langle [\partial \Sigma] \rangle \cong \Z_2\oplus \Z^{h-1}$, where~$h$ is the nonorientable genus of~$\Sigma$.
The sequence splits if and only if~$e$ is even,  so that
			$$ H_1(E_K \cup_e (\Sigma \mathbin{\wt{\times}}S^1))
			\cong
			\begin{cases}
				\Z_2 \oplus \Z_2 \oplus \Z^{h-1} &\quad \text{ if~$e$ is even}  \\
				\Z_4 \oplus \Z^{h-1} &\quad \text{ if~$e$ is odd}.
			\end{cases}
			$$
\end{itemize}
	\end{lemma}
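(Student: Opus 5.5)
We will compute $H_1$ of the glued manifold $M:=E_K \cup_e (\Sigma \mathbin{\wt{\times}}S^1)$ with a Mayer--Vietoris sequence for the decomposition along the torus $T:=\partial E_K=\partial(\Sigma\mathbin{\wt{\times}}S^1)$. We have $H_1(T)=\Z\langle \mu,\ell\rangle$, where $\mu$ is the meridian and $\ell$ the $0$-framed longitude. Also $H_1(E_K)=\Z\langle\mu\rangle$ with $\ell\mapsto 0$. By Lemma~\ref{lem:WhatIsTheBoundary}, $\mu$ is glued to the fibre $[S^1]$ and $\ell+e\mu$ is glued to $r(\partial\Sigma)$. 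Since $\Sigma$ has nonempty boundary, it deformation retracts to a wedge of circles. The circle bundle is therefore determined by $w_1(\Sigma)$, and the section $r$ lets us write
$$H_1(\Sigma\mathbin{\wt{\times}}S^1)\cong \big(\Z\langle[S^1]\rangle/\langle 2[S^1] \text{ if } w_1\neq 0\rangle\big)\oplus r_*H_1(\Sigma).$$
Orientation-reversing loops act by $-1$ on the fibre, which gives the relation $[S^1]=-[S^1]$. In the orientable case the bundle is $\Sigma\times S^1$ and $H_1=\Z\oplus H_1(\Sigma)$. Since $H_0$-terms are connected, Mayer--Vietoris presents $H_1(M)$ as
$$\big(H_1(E_K)\oplus H_1(\Sigma\mathbin{\wt{\times}}S^1)\big)\big/\big\langle (\mu,-[S^1]),\ (0,\,r_*[\partial\Sigma]-e[S^1])\big\rangle.$$
The second relation comes from $\ell\mapsto 0$ in $E_K$ and $\ell \mapsto r_*[\partial\Sigma]-e[S^1]$ on the other side. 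We can use the first relation to eliminate $\mu$.

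\textbf{Orientable case.} Here $[\partial\Sigma]=0$ in $H_1(\Sigma)$, since it is a product of commutators. Hence $r_*[\partial\Sigma]=0$ and the only remaining relation is $e[S^1]=0$. This gives $H_1(M)\cong \Z_e\oplus H_1(\Sigma)$, with $\Z_e$ generated by $[S^1]$. Projecting onto $H_1(\Sigma)$ is induced by the bundle projection, and $r_*$ splits it. This yields the split exact sequence.

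\textbf{Nonorientable case.} Now $2[S^1]=0$ and $H_1(\Sigma)\cong\Z^h$ is free on the crosscap curves $a_1,\dots,a_h$, with $[\partial\Sigma]=2(a_1+\dots+a_h)$. The main subtlety, and the step needing most care, is the class $r_*[\partial\Sigma]$ in $H_1(\Sigma\mathbin{\wt{\times}}S^1)$. The identification above assumed $r_*$ is a homomorphism splitting the projection, which is true on $H_1$ for a section. However, the relation imposed is $r_*[\partial\Sigma]=e[S^1]$, so $r_*(2\sum a_i)$ gets identified with $e[S^1]$. I would verify this with the explicit model of Convention~\ref{conv:SectionIsFraming}: the loop $r(\partial\Sigma)$ is homologous to $2r_*(\sum a_i)$ plus possibly a multiple of the fibre, coming from the twisting over each crosscap. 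This correction is even, or at least zero mod $2$, and so it is absorbed since $2[S^1]=0$.

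Granting this, $H_1(M)$ is the quotient of $\Z_2\langle[S^1]\rangle\oplus\Z^h$ by the single relation $2\sum a_i=e[S^1]$. The projection to $H_1(\Sigma)/\langle[\partial\Sigma]\rangle\cong \Z^h/\langle 2\sum a_i\rangle\cong\Z_2\oplus\Z^{h-1}$ has kernel $\Z_2\langle[S^1]\rangle$. This kernel injects, as the relation can be checked not to kill $[S^1]$. Changing basis to $b=\sum a_i, a_2,\dots,a_h$ reduces everything to the rank-one piece $\Z_2\langle[S^1]\rangle\oplus\Z\langle b\rangle/(2b-e[S^1])$. If $e$ is even this is $\Z_2\oplus\Z_2$ and the sequence splits via $r_*$. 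If $e$ is odd, $b$ has order $4$ with $2b=[S^1]$, giving $\Z_4$ and a nonsplit extension. This yields the stated isomorphism types.
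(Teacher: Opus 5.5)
Your argument is correct. In the orientable case it is the same Mayer--Vietoris argument the paper has in mind. In the nonorientable case the paper takes a shortcut: it notes via Mayer--Vietoris that $H_1(E_K\cup_e(\Sigma\mathbin{\wt{\times}}S^1))$ does not depend on $K$, sets $K=U$ so the space becomes the Euler number $e$ circle bundle over the closed nonorientable surface of genus $h$, and cites \cite[Proposition~4.2]{ConwayOrsonPowell}. You instead compute directly, presenting $H_1$ as $\Z_2\langle[S^1]\rangle\oplus\Z^h$ modulo the single relation $2b=e[S^1]$ and reducing to a rank-one piece. Your route is self-contained, shows that the parity of $e$ enters only through the gluing of the $e$-framed longitude to $r(\partial\Sigma)$, and gives independence of $K$ for free. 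The paper's route is shorter but outsources the key computation.

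The step you flag as the main subtlety is not actually one. Since $r$ is a genuine map $\Sigma\to\Sigma\mathbin{\wt{\times}}S^1$, functoriality gives $[r(\partial\Sigma)]=r_*[\partial\Sigma]=2r_*(a_1+\dots+a_h)$ on the nose, so there is no fibre correction to control. Any discrepancy from choosing other lifts of the $a_i$ would be twice a multiple of $[S^1]$, hence zero anyway.

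Two points are worth stating explicitly. For odd $e$, the extension is nonsplit simply because $\Z_4\oplus\Z^{h-1}\not\cong\Z_2\oplus\Z_2\oplus\Z^{h-1}$. For even $e$, the relation gives $r_*[\partial\Sigma]=0$ in the quotient, so $r_*$ descends to $H_1(\Sigma)/\langle[\partial\Sigma]\rangle$ and provides the splitting.
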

	\begin{proof}
In the orientable case,~$\Sigma \mathbin{\wt{\times}} S^1 = \Sigma \times S^1$ and the result follows promptly from a Mayer--Vietoris argument.
We therefore focus on the nonorientable case.
A rapid Mayer--Vietoris calculation shows that~$H_1(E_K \cup_e (\Sigma \mathbin{\wt{\times}}S^1))$ is independent of the knot~$K$.
We can therefore assume that~$K=U$ and, in this case,  the Mayer--Vietoris sequence calculates the homology of the Euler number~$e$ circle bundle~$Y$ over the closed nonorientable genus~$h$ surface.
The remaining statements now follow from~\cite[Proposition~4.2]{ConwayOrsonPowell}.
%%The SES matches COP because H_1(closed surface)=H_1(surface)/H_1(boundary).
	\end{proof}
	
	The following completes the calculation of the integral homology of surface exteriors.

	\begin{lemma}
		\label{lem:H1NonOrientable}
Let $S\subset X$ be a surface with one boundary component. 
\begin{itemize}
\item If $S$ is orientable and $[S]\in H_2(X,\partial X)$ has divisibility $d$, then 
$$H_1(X_S) \cong \Z_d.$$
\item If $S$ is nonorientable, then
		$$
		H_1(X_S) =
		\begin{cases}
			0 &\quad \text{ if } [S] \in H_2(X,\partial X;\Z_2) \text{ is nonzero,} \\
			\Z_2 & \quad \text{ if } [S] \in H_2(X,\partial X;\Z_2) \text{ is zero.}
		\end{cases}
		$$
		When~$[S]$ is zero, the relative Euler number of~$e$ is even.
		\end{itemize}
	\end{lemma}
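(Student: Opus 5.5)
We compute $H_1(X_S)$ through the long exact sequence of the pair $(X,X_S)$ combined with excision and duality. We have $X = X_S \cup \overline{\nu}(S)$, and $X_S\cap \overline{\nu}(S)$ is the part $\Sigma\mathbin{\wt{\times}}S^1$ of $\partial X_S$. Excision gives
\[
H_*(X,X_S)\cong H_*(\overline{\nu}(S),\Sigma\mathbin{\wt{\times}}S^1),
\]
the homology of the disc bundle relative to the circle bundle. This is a Thom space computation with twisted coefficients. In particular, $H_2(X,X_S)\cong H_0(S;\Z^{w})$ by the (twisted) Thom isomorphism, and $H_1(X,X_S)=0$. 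Hence $H_2(X,X_S)$ is $\Z$ if $S$ is orientable and $\Z_2$ if $S$ is nonorientable, generated by a normal disc fibre $D$ with boundary the meridian. Since $H_1(X)=0$ because $X$ is simply connected, the exact sequence
\[
H_2(X)\to H_2(X,X_S)\to H_1(X_S)\to H_1(X)=0
\]
identifies $H_1(X_S)$ with the cokernel of $H_2(X)\to H_2(X,X_S)$, generated by the meridian.

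Next we identify this map. For $\alpha\in H_2(X)$, its image in $H_2(X,X_S)\cong H_0(S;\Z^w)$ is the intersection number $\alpha\cdot[S]$. When $S$ is orientable this is computed with $[S]\in H_2(X,\partial X)$, giving an integer. When $S$ is nonorientable we use $\Z_2$-coefficients, giving $\alpha\cdot[S]\in\Z_2$. The cleanest way to justify this is Lefschetz/Poincaré duality: $H_2(X,X_S)\cong H^2(\overline{\nu}(S),\partial)$ pairs with $H_2(X)$ via intersection with the zero section. Note that $H_2(X)\cong H^2(X,\partial X)$ since $\partial X$ is $S^3$ or empty.

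It then remains to compute the cokernel in each case.

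In the orientable case, the image is $\{\alpha\cdot[S] : \alpha\in H_2(X)\}\subseteq\Z$. Since $X$ is simply connected with boundary $S^3$ or empty, the intersection pairing $H_2(X)\times H_2(X,\partial X)\to\Z$ is unimodular. The image is therefore $d\Z$, where $d$ is the divisibility of $[S]$, so $H_1(X_S)\cong\Z_d$.

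In the nonorientable case, the image is $\{\alpha\cdot[S] \bmod 2\}\subseteq\Z_2$. By unimodularity mod $2$, using $H_2(X;\Z_2)=H_2(X)\otimes\Z_2$ since $H_1(X)=0$, this image is all of $\Z_2$ exactly when $[S]\in H_2(X,\partial X;\Z_2)$ is nonzero. This gives $0$ or $\Z_2$ as claimed.

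For the final claim, suppose $[S]=0$ in $H_2(X,\partial X;\Z_2)$. Then the mod $2$ self-intersection vanishes. I would argue with the closed-up surface $\widehat S\subset\widehat X$ from the proof of Lemma~\ref{lem:WhatIsTheBoundary}, where $e(\widehat S)=e$. Capping with an orientable surface $G\subset D^4$ keeps the class zero mod $2$, since $H_2(D^4,S^3;\Z_2)=0$ and the Mayer--Vietoris gluing is compatible. Then $e(\widehat S)\equiv[\widehat S]\cdot[\widehat S]\equiv 0\pmod 2$, because the mod $2$ reduction of the twisted Euler number equals the $\Z_2$-self-intersection.

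The main obstacle is this last step, together with making the twisted Thom isomorphism identification precise in the nonorientable case. Specifically, one must check that the normal bundle, which has $w_1=w_1(S)$, gives $H_2(\overline\nu,\partial)\cong\Z_2$ rather than $\Z$, and that the boundary map to $H_1(X_S)$ sends the generator to the meridian.
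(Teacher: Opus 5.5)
Your proof is correct. For $H_1(X_S)$ it is the paper's argument in different packaging. The paper runs Mayer--Vietoris for $X=X_S\cup\overline{\nu}(S)$, quoting that $H_1(\Sigma\mathbin{\wt{\times}}S^1)\to H_1(\overline{\nu}(S))$ is onto with kernel $\Z_2\langle\mu\rangle$. That is your statement that $H_2(\overline{\nu}(S),\Sigma\mathbin{\wt{\times}}S^1)\cong\Z_2$ is generated by a fibre disc. The paper then identifies the connecting map as $x\mapsto Q_X([S],x)\mu$, as you do. Your twisted Thom isomorphism treats the orientable and nonorientable cases uniformly, and the point you flag as an obstacle is routine: $w_1(\nu)=w_1(S)$ because $X$ is orientable.

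You genuinely diverge on the parity of $e$. The paper stays in the exterior: if $e$ were odd, then $H_1(\partial X_S)\cong\Z_4\oplus\Z^{h-1}$ with the meridian equal to twice a generator. The meridian would then die under the surjection $H_1(\partial X_S)\to H_1(X_S)\cong\Z_2$, a contradiction. You instead close up, use $e(\widehat S)=e$ from the proof of Lemma~\ref{lem:WhatIsTheBoundary}, and use that the twisted normal Euler number of a closed surface reduces mod $2$ to its $\Z_2$-self-intersection.

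Your route is more elementary and proves more: $e$ is congruent mod $2$ to the $\Z_2$-self-intersection of $[S]$, and you never need the structure of the boundary homology for odd $e$. The paper's route reuses a boundary computation it needs anyway and avoids leaving $X$.

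The one sketchy phrase is your justification that $[\widehat S]_2=0$. A clean version: the map $H_2(\widehat X;\Z_2)\to H_2(\widehat X,S^3;\Z_2)\cong H_2(X,\partial X;\Z_2)\oplus H_2(D^4,S^3;\Z_2)$ is injective because $H_2(S^3;\Z_2)=0$, and it sends $[\widehat S]_2$ to $([S]_2,0)$.
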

	\begin{proof}
The orientable case  is well documented (see e.g.~\cite{LeeWilczyOdd,ConwayOrsonPencovitch}) and follows from a Mayer--Vietoris argument, so we focus on the nonorientable case.
		Consider the Mayer--Vietoris sequence for~$X=X_S \cup \overline{\nu}(S)$.
		The gluing occurs along~$Y^\circ:=\partial X_S \setminus E_K \cong \Sigma \mathbin{\wt{\times}}S^1$ leading to the exact sequence
		$$\ldots  \to H_2(X) \xrightarrow{\partial} H_1(Y^\circ) \to H_1(X_S) \oplus H_1(\overline{\nu}(S)) \to 0.$$
		By~\cite[Proposition 4.2]{ConwayOrsonPowell}, the map~$H_1(Y^\circ) \to H_1(\overline{\nu}(S))$ is a surjection whose kernel is order~$2$, generated by the~$S^1$-fibre~$\mu$.
		It follows that there is an exact sequence
		$$\ldots  \to H_2(X) \xrightarrow{\partial} \Z_2\langle\mu\rangle \to H_1(X_S)  \to 0.$$
Thus $H_1(X_S)=0$ or $H_1(X_S)\cong \Z_2$ depending on whether or not $\partial$ is the nontrivial map.
Now, since~$\partial(x)=Q_X([S],x)\mu$,  it follows that~$\partial$ is nontrivial if and only if there exists an~$x \in H_2(X;\Z_2)$ such that~$Q_X([S],x)=1$.
		Since~$Q_X$ is nonsingular, this occurs if and only if~$[S] \in H_2(X,\partial X;\Z_2)$ is~nontrivial.

		We conclude by proving the assertion concerning the relative Euler number.
		Consider the inclusion induced map~$i_* \colon H_1(\partial X_S) \to H_1(X_S)$.
		It is surjective because it maps the~$S^1$-fibre to the meridian.
		Also,  since the class~$[S] \in H_2(X,\partial X;\Z_2)$ is trivial,~$H_1(X_S) \cong \Z_2$.
		If~$e$ were odd, we would have~$H_1(\partial X_S) \cong \Z_4 \oplus \Z^{h-1}$ where the meridian represents~$2 \in \Z_4$ (without loss of generality, we can take~$K=U$ and apply~\cite[Proposition~4.2]{ConwayOrsonPowell}).
		This would contradict~$i_*$ being surjective:~$1=i_*(\mu)=i_*(2)=0.$
		%%%%%Don't delete. This is how I got the intuition.
		%Consider the closed case when~$h=1$.
		%We argue by contradiction.
		%Mayer--Vietoris for~$X=X_S\cup \overline{\nu}(S)$ gives
		%$$ \ldots  \to H_2(X) \to \overbrace{H_1(Y)}^{\cong \Z_4} \to \overbrace{H_1(X_S)\oplus H_1(\overline{\nu}(S))}^{\cong \Z_2 \oplus \Z_2} \to 0~$$
		%To get the right hand side iso we  used~$d$ even whereas for the left hand side we used~$e$ odd.
		%This is not possible, because surjection of finite groups gives bijection.
	\end{proof}

	\subsection{Nice identifications}	
	\label{sub:NiceIdentifications}
	Next, we make the identification from Lemma~\ref{lem:WhatIsTheBoundary} more explicit.
	This will be helpful to better understand the map~$H_1(E_K \cup_e (\Sigma \mathbin{\wt{\times}}S^1)) \cong H_1(\partial X_S) \to H_1(X_S)$ induced by the inclusion.
In what follows, recall that we have fixed a section~$r\colon \Sigma\to \Sigma \mathbin{\wt{\times}}S^1 \subseteq \Sigma\mathbin{\wt{\times}}D^2$.

	\begin{definition}
		\label{def:NiceFraming-or}
		Let~$S \subset X$ be a surface with~$\partial S=K$.
		A  bundle isomorphism~$\fr\colon \nu(S) \xrightarrow{\cong} \Sigma \mathbin{\wt{\times}}\R^2$ is an~\emph{$e$-nice identification} if 
		\begin{itemize}
			\item the isomorphism~$\fr$ extends the $e$-framing of $\nu(K)$,  that is~$\ell k(\fr^{-1}\circ r(\partial \Sigma),K)=e$,
			%The things really are equivalent.
			\item pushing off any loop~$\gamma \subset S$ using~$\fr^{-1}\circ r$ results in a curve that is nullhomologous in~$X_S$.
		\end{itemize}
We refer to $\fr^{-1}\circ r \colon \Sigma \to \nu(S)$ as an \emph{$e$-nice section}.
	\end{definition}

	%%%Don't delete.
	%\begin{remark}
	%In the nonorientable case,  the normal bundle~$\nu(S)$ won't in general be trivial and so we will instead define~$e$-sections.
	%%%See COP23 although the terminology isn't used there
	%I feel that the ordering of COP23 should have maybe followed the ordering of this section; see in particular Remark~\ref{rem:IdentifCovers}.
	%\end{remark}

\begin{remark}
When $S$ is orientable,  Convention~\ref{conv:SectionIsFraming} implies that an $e$-nice identification is
%,  in particular,  
%Line break
a framing of $S \subset X$ that extends the $e$-framing on $K=\partial S$.
This is why we use the notation $\fr$ for nice identifications even though, in the nonorientable case,  the normal bundle of $S$ is nontrivial.
\end{remark}

In order to prove that~$e$-nice identifications exist, we introduce a bundle automorphism of $\Sigma \mathbin{\wt{\times}} \R^2$ that will be used frequently throughout this paper; details can be found in Appendices~\ref{sec:BundleIsos} and~\ref{sec:BundleIsosNonOri}.
We begin with orientable case.

\begin{customprop}{\ref{prop:RotAlphaAxioms}}
\label{prop:RotAlphaAxiomsSection2}
Let $\Sigma$ be an orientable surface with nonempty connected boundary.
For every~$\alpha\in H^1(\Sigma)$, there exists a bundle isomorphism
$$
\Rot(\alpha) \colon \Sigma \times \R^2 \to \Sigma \times \R^2
$$
that satisfies the following properties:
\begin{enumerate}
\item The homeomorphism $\Rot(\alpha)$ is orientation-preserving and restricts to the identity on the~$0$-section and on $(\Sigma \times \R^2)|_{\partial \Sigma}.$
\item The effect of $\Rot(\alpha)$ on $H_1(\Sigma \times S^1)=\Z \oplus H_1(\Sigma)$ is
\begin{equation*}
%\label{eq:RotnHomology}
\Rot(\alpha)_*=
\begin{pmatrix} \id & \alpha_* \\ 0 & \id \end{pmatrix}.
\end{equation*}
Here,~$\alpha_* \colon H_1(\Sigma) \to H_1(S^1)=\Z$ is the map induced by~$\alpha \in H^1(\Sigma) \cong [\Sigma,S^1]$ on homology.
The splitting~$H_1(\Sigma \times S^1)=\Z \oplus H_1(\Sigma)$ is obtained using the fixed section~$r \colon \Sigma \to \Sigma \times S^1$.
\item For every bundle automorphism~$H \colon \Sigma \times \R^2 \to \Sigma \times \R^2$ that satisfies~$H|_{\partial \Sigma \times \R^2}=\id$, there exists a unique~$\alpha \in H^1(\Sigma)$ such that~$H \simeq \Rot(\alpha)$ rel.\ boundary.
\end{enumerate}
\end{customprop}

Next, we formulate the nonorientable analogue of this result, where $w:=w_1(\Sigma) \colon H_1(\Sigma) \to \Z_2$.

\begin{customprop}{\ref{prop:RotAlphaAxiomsNonori}}
\label{prop:RotAlphaAxiomsSection2Nonori}
Let $\Sigma$ be a nonorientable surface with nonempty connected boundary.
For every cohomology class~$\alpha\in H^1(\Sigma,\partial \Sigma;\Z^w)$, there exists a bundle isomorphism
$$
\Rot(\alpha) \colon \Sigma \mathbin{\wt{\times}} \R^2 \to \Sigma \mathbin{\wt{\times}} \R^2
$$
that satisfies the following properties:
\begin{enumerate}
\item The homeomorphism $\Rot(\alpha)$ is orientation-preserving and restricts to the identity on the~$0$-section and on $(\Sigma \mathbin{\wt{\times}}\R^2)|_{\partial \Sigma}.$
\item The effect of $\Rot(\alpha)$ on $H_1(\Sigma \mathbin{\wt{\times}} S^1)=\Z_2 \oplus H_1(\Sigma)$ is
\begin{equation*}
\Rot(\alpha)_*=
\begin{pmatrix} \id & \ev([\alpha]_2) \\ 0 & \id \end{pmatrix}.
\end{equation*}
Here,~$\ev([\alpha]_2)  \colon H_1(\Sigma) \to \Z_2$ is the map induced by~$[\alpha]_2 \in H^1(\Sigma,\partial \Sigma;\Z_2) \cong H^1(\Sigma;\Z_2)$.
The splitting~$H_1(\Sigma \mathbin{\wt{\times}} S^1)=\Z_2 \oplus H_1(\Sigma)$ is obtained using the fixed section~$r \colon \Sigma \to \Sigma \mathbin{\wt{\times}} S^1$.
\item For every bundle automorphism~$H \colon \Sigma \mathbin{\wt{\times}} \R^2 \to \Sigma \mathbin{\wt{\times}} \R^2$ 
%that preserves local orientations and
%%Don't delete
%%Automatic because rel.\ boundary ->homeo is o-p <-> bundle aut preserves local orientations.
%%For the equivalence: true because both conditions are local.
 such that~$H|_{\partial \Sigma \mathbin{\wt{\times}} \R^2}=\id$, there exists a unique~$\alpha \in H^1(\Sigma,\partial \Sigma;\Z^w)$ such that~$H \simeq \Rot(\alpha)$ rel.\ boundary.
 \end{enumerate}
\end{customprop}

	The next lemma ascertains the existence of~$e$-nice identifications.
	
	\begin{lemma}
		\label{lem:ExistseNice-or}
		A surface~$S \subset X$ whose relative Euler number is~$e$ admits an~$e$-nice identification.
	\end{lemma}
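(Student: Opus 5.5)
Start from an arbitrary bundle isomorphism $\fr_0\colon \nu(S)\to \Sigma\mathbin{\wt{\times}}\R^2$; one exists because $X$ is orientable, so $\nu(S)$ has $w_1=w_1(\Sigma)$, as in the proof of Lemma~\ref{lem:WhatIsTheBoundary}. Adjust $\fr_0$ in two steps: first fix the boundary framing, then fix the pushoffs of interior loops, using the automorphisms $\Rot(\alpha)$ of Propositions~\ref{prop:RotAlphaAxiomsSection2} and~\ref{prop:RotAlphaAxiomsSection2Nonori}.

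\emph{Step 1: the boundary condition.} Over $\partial\Sigma\cong S^1$ the bundle is trivial. Precomposing with fibrewise rotations by $t\mapsto e^{2\pi i k t}$ on a collar of $\partial\Sigma$, tapered off to the identity in the interior, changes $\ell k(\fr^{-1}\circ r(\partial\Sigma),K)$ by an arbitrary integer $k$. So we may assume that $\fr_0^{-1}\circ r(\partial\Sigma)$ is the $e$-framed longitude. The proof of Lemma~\ref{lem:WhatIsTheBoundary} already shows that this linking number equals the relative Euler number $e$ for any section extending over $\Sigma$. Hence the first bullet of Definition~\ref{def:NiceFraming-or} holds automatically for every bundle isomorphism. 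All subsequent modifications are rel.\ boundary, so they do not disturb this.

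\emph{Step 2: the interior loops.} Let $s=\fr_0^{-1}\circ r$. Define a map $\phi\colon H_1(\Sigma)\to H_1(X_S)$ sending $[\gamma]$ to the class of the pushoff $s(\gamma)$. Since $H_1(X_S)$ is cyclic, generated by the meridian $\mu$ (Lemma~\ref{lem:H1NonOrientable}), $\phi$ takes values in $\Z_d\langle\mu\rangle$ with $d\in\{0,\dots\}$ as appropriate. Since $s(\partial\Sigma)$ is the $e$-framed longitude and $e$ is compatible with $d$ (for instance $d\mid e$ in the orientable case by the Mayer--Vietoris argument, and $e$ is even when $d=2$ nonorientably), $\phi(\partial\Sigma)=0$ should follow. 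Composing with $\Rot(\alpha)$ changes the pushoff of $\gamma$ by $\alpha_*(\gamma)$ fibres, by property (2). In the nonorientable case the change is instead $\ev([\alpha]_2)(\gamma)$ fibres.

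In the orientable case, $H_1(\Sigma)$ is free and $\Z\to\Z_d$ is surjective, so lift $-\phi$ to some $\alpha\in H^1(\Sigma)=\Hom(H_1(\Sigma),\Z)$. Then $\fr:=\Rot(\alpha)^{-1}\circ\fr_0$ (with signs to be checked) makes every pushoff nullhomologous in $X_S$.

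In the nonorientable case, $H_1(X_S)$ is $0$ or $\Z_2$, so $\phi$ is a homomorphism to $\Z_2$ vanishing on $[\partial\Sigma]$. We need $\alpha\in H^1(\Sigma,\partial\Sigma;\Z^w)$ whose mod-$2$ reduction $[\alpha]_2\in H^1(\Sigma;\Z_2)$ equals $\phi$. This requires surjectivity of reduction $H^1(\Sigma,\partial\Sigma;\Z^w)\to H^1(\Sigma,\partial\Sigma;\Z_2)\cong H^1(\Sigma;\Z_2)$ onto classes killing $\partial\Sigma$. It follows from the Bockstein sequence, since $H^2(\Sigma,\partial\Sigma;\Z^w)\cong H_0(\Sigma;\Z)=\Z$ is torsion-free via twisted Poincaré duality. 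I expect this step to be the main obstacle. It requires verifying that $\phi$ is well defined and a homomorphism (pushoffs of homologous curves are homologous, using the section), checking $\phi([\partial\Sigma])=0$ via the parity statement of Lemma~\ref{lem:H1NonOrientable}, and matching exactly which classes the mod-$2$ reduction hits.
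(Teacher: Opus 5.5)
Your proposal is correct and is essentially the paper's argument: start from any bundle isomorphism, note via (the proof of) Lemma~\ref{lem:WhatIsTheBoundary} that the boundary condition holds automatically, record the meridional defect of the push-offs as a class mod~$d$, lift it to $H^1(\Sigma,\partial\Sigma;\Z^w)$, and correct by $\Rot(\pm\alpha)$; your Bockstein argument (using $H^2(\Sigma,\partial\Sigma;\Z^w)\cong H_0(\Sigma;\Z)\cong\Z$) justifies the lift, which the paper simply asserts. The one thing to fix is the first half of Step~1, which is false and unneeded: a degree-$k\neq 0$ fibrewise rotation along $\partial\Sigma$ cannot be tapered to the identity across a collar, since that would null-homotope a degree-$k$ loop in $SO(2)$ (this is consistent with your next sentence, that every bundle isomorphism yields linking number $e$); likewise $\phi([\partial\Sigma])=0$ needs no parity input, because $[\partial\Sigma]$ is $0$ in the orientable case and is $2\gamma$, mapping into a group of exponent at most $2$, in the nonorientable case.
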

	\begin{proof}
The proof of Lemma~\ref{lem:WhatIsTheBoundary} shows that there is a bundle isomorphism~$\fr'\colon \overline{\nu}(S) \cong \Sigma \mathbin{\wt{\times}}D^2$ that extends the $e$-nice framing on $K$.
		Since~$H_1(X_S)$ is generated by a meridian, this homeomorphism can then be arranged to be an~$e$-nice identification by adding meridians to the push-offs as e.g. in~\cite[beginning of Proof of Theorem 2.2]{ConwayOrsonPencovitch}.
		%See COP25, CM24 and COP23.
We provide more details.
Given a bundle isomorphism~$\fr \colon \nu(S) \to \Sigma \mathbin{\wt{\times}} \R^2$, assume that a push off of a curve~$\gamma \subset \Sigma$ (using $\fr^{-1} \circ r$) consists of~$n_\gamma$ meridional components in the homology of~$X_S$. 
The map $\gamma\mapsto n_\gamma$ defines an element~$\alpha\in H^1(\Sigma;\Z_d) \cong H^1(\Sigma,\partial \Sigma;\Z_d)$.
Lift this element to $H^1(\Sigma,\partial \Sigma;\Z^w)$ and
consider the bundle automorphism~$\Rot(-\alpha) \colon \Sigma \mathbin{\wt{\times}}D^2 \to \Sigma \mathbin{\wt{\times}}D^2$ that is given by Propositions~\ref{prop:RotAlphaAxiomsSection2} and~\ref{prop:RotAlphaAxiomsSection2Nonori}.
The second items of these propositions ensure that postcomposing~$\fr$ with~$\Rot(-\alpha)$ gives a new bundle isomorphism for which the push-off of each curve $\gamma$ is nullhomologous.
In order to check that this still extends the~$e$-framing on the boundary, apply (the proof of) Lemma~\ref{lem:WhatIsTheBoundary}: notice that its proof neither depends on the choice of the $e$-nice framing nor on the choice of the section $r$.
%%{XX: In the oriented case isn't the fixed section $x \mapsto (x,1)$ already nice? NO.}
	\end{proof}
	
	\begin{remark}
	\label{rem:HowManyIdentifications}
Given a divisibility $d$ surface~$S \subset X$ whose relative Euler number is~$e$, the third items of Propositions~\ref{prop:RotAlphaAxiomsSection2} and~\ref{prop:RotAlphaAxiomsSection2Nonori} imply that any two $e$-nice identifications of $S$ are related (up to homotopy rel.\ boundary) by postcomposition with $\Rot(\alpha)$ for some~$\alpha\in H^1(\Sigma,\partial \Sigma;\Z^w)$ that is divisible by $d$; if $d=0$,  any two nice framings are bundle homotopic rel.\ boundary.
%Don't delete.
%%A push off of a nullhomologous curve is nullhomologous in Z_d so one needs multiplication by $d$.
%%fr'(\gamma)=\gamma
%%Rot(\alpha) fr'(\gamma)=\gamma+\alpha(\gamma) 
%%For these to give equal push offs \alpha needs to be \equiv 0 mod d.
%%For d=0,  only \alpha=0 will do. So $fr'=Rot(0) \circ \fr$ i.e.\ $\fr'=\fr$.
In the nonorientable case, it is understood that $d=1$ if~$[S] \neq 0$ and~$d=2$ if~$[S]=0$.
These statements remain valid if $\Sigma$ has disconnected boundary because the proofs of the aforementioned propositions also apply in this more general setting.
%%(3) of A.3 applies Because B' in B.20 can be taken to be disconnected
\end{remark}

	We can now make the homeomorphism from Lemma~\ref{lem:WhatIsTheBoundary} more explicit and, simultaneously, define the coefficient systems that we will be using throughout this article.

	\begin{construction}
		\label{cons:CoeffSys}
An~$e$-nice identification~$\fr \colon \nu(S) \to \Sigma \mathbin{\wt{\times}} \R^2$ induces a homeomorphism 
		$$\fr| := \id \cup \fr| \colon \partial X_S \to  E_K \cup_e (\Sigma \mathbin{\wt{\times}}S^1).$$
		
		We first assume that~$\Sigma$ is orientable.		
		Lemma~\ref{lem:MV} implies~$H_1(E_K \cup_e (\Sigma \times S^1)) \cong H_1(\Sigma) \oplus \Z_e$, where the second summand is generated by the meridian of~$K$. 
		Since~$d$ divides~$e$, we can consider the following map that is defined by sending the meridian to~$1$ and the curves on $\Sigma$ to zero:
		$$\varphi \colon H_1(E_K \cup_e (\Sigma \times S^1)) \to \Z_d.$$ 
		If~$S \subset X$ has divisibility~$d$, then~$d$ divides~$e$ 
		%%Don't delete
		%%(consider again~$(X,\widehat{S})$).
		and~$\varphi$ agrees with the composition
		$$H_1(E_K \cup_e (\Sigma \times S^1)) \xrightarrow{\fr|^{-1},\cong} H_1(\partial X_S) \to H_1(X_S) \cong \Z_d.$$
		Indeed,  both maps send the meridian to~$1$ and the curves on $\Sigma$ to zero.
		
We now consider the case when~$\Sigma$ is nonorientable. 
Lemma~\ref{lem:H1NonOrientable} shows that since~$S$ is a simple surface,~$X_S$ is simply connected if $[S] \neq 0$ and~$\pi_1(X_S)\cong \Z_2$ if $[S]=0$.
If~$[S]=0$, then~$e$ is even by Lemma~\ref{lem:H1NonOrientable}.
%Because even
Lemma~\ref{lem:MV} implies that~$H_1(E_K \cup_e (\Sigma \mathbin{\wt{\times}}S^1)) \cong \Z_2 \oplus H_1(\Sigma)/\langle [\partial \Sigma] \rangle$, where the first summand is generated by the meridian of~$K$. We consider the following map that is defined by sending the meridian to~$1$ and the curves on $\Sigma$ to zero:
		$$\varphi \colon H_1(E_K \cup_e (\Sigma \mathbin{\wt{\times}}S^1)) \to \Z_2.$$ 
		As in the orientable case, the map~$\varphi$ agrees with the following composition:
		$$H_1(E_K \cup_e (\Sigma \mathbin{\wt{\times}}S^1)) \xrightarrow{\fr^{-1},\cong} H_1(\partial X_S) \to H_1(X_S) \cong \Z_2.$$
	\end{construction}

	Next, we use this construction to discuss identifications.
	
	\begin{remark}
		\label{rem:IdentifCovers-or}
		In the remainder of the section, for simplicity, we pick an~$e$-nice identification for every surface and identify~$E_K \cup_e (\Sigma \mathbin{\wt{\times}}S^1)$ with~$\partial X_S$.
		In particular, we use Construction~\ref{cons:CoeffSys} to identify the boundary of the~$d$-fold cover of~$X_S$ with the~$\varphi$-induced cover of~$E_K \cup_e (\Sigma \mathbin{\wt{\times}}S^1)$ when~$\Sigma$ is orientable. 
Similarly, when~$\Sigma$ is nonorientable and~$H_1(X_S)\cong \Z_2$, we use Construction~\ref{cons:CoeffSys} to identify the boundary of the~$2$-fold cover of~$X_S$ with the~$\varphi$-induced cover of~$E_K \cup_e (\Sigma \mathbin{\wt{\times}}S^1)$.
	\end{remark}
		
	\subsection{The Alexander module}
	\label{sub:AlexanderModule}
	
This section establishes the main result of this section, namely the calculation of the Alexander module of the boundary of a $\Z_d$-surface exterior.
	
	\begin{remark}
	\label{rem:HomologyBranchedUnbranched}
		For~$d>0$, a Mayer--Vietoris argument gives the short exact sequence
		$$ 
		0 \to \Z \to H_1(E_K;\Z[\Z_d]) \to H_1(\Sigma_d(K)) \to 0.
		$$
		The~$\Z$-factor is generated by a lift of the~$d$-th power of the oriented meridian of~$K$.
Thus, the $\Z$-factor is endowed with the trivial $\Z_d$-action.
		In particular,  this short exact sequence is $\Z[\Z_d]$-split by considering the projection induced map postcomposed with division by~$d$:
		%%Don't delete because p(deck)=p.
		$$
		H_1(E_K;\Z[\Z_d]) \xrightarrow{p_*} d \cdot H_1(E_K) \xrightarrow{\div d,\cong} \Z.
		$$
Thus, in what follows, we use this $\Z[\Z_d]$-splitting without any further mention:
		$$
		H_1(E_K;\Z[\Z_d])  \cong H_1(\Sigma_d(K)) \oplus \Z\langle \widetilde{\mu}_K\rangle.
		$$
	\end{remark}

	We can now prove the main results of this section. 
	We start with the orientable case.

	\begin{proposition}
		\label{prop:AlexanderModule}
		Let~$S\subset X$ be an orientable divisibility~$d$ surface of genus~$g$ with boundary a knot~$K$.
		If the Euler number of~$S$ relative to the Seifert framing is denoted by~$e$, then the choice of an~$e$-nice identification gives rise to a $\Z[\Z_{d}]$-isomorphism
		$$
		H_1(\partial X_S;\Z[\Z_{d}])
		\cong  
		\begin{cases} 
H_1(\Sigma_{d}(K)) \oplus \Z_{e/d} \oplus \Z^{2g} & \quad \text{ if } d \neq 0, \\
H_1(E_K;\Z[\Z]) \oplus 	\Z^{2g} & \quad \text{ if } d = 0.
		\end{cases} 
		$$
Both the $\Z^{2g}$-factor and the $\Z_{e/d}$-factor are endowed the trivial $\Z_d$-action.
		
		In particular, in the closed case,  we obtain
		$$H_1(\partial X_S;\Z[\Z_{d}]) \cong
		\begin{cases} 
\Z_{e/d} \oplus \Z^{2g}  & \quad \text{ if } d \neq  0, \\
\Z^{2g} & \quad \text{ if } d = 0.
		\end{cases}
		$$
		%The display is a little ugly but not having \Z^{2g} first is for consistency later.
	\end{proposition}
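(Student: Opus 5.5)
Using the $e$-nice identification of Construction~\ref{cons:CoeffSys}, we identify $\partial X_S$ with $E_K \cup_e (\Sigma \times S^1)$, with coefficient system $\varphi$ sending the meridian to $1$ and the curves $r(\gamma)$, $\gamma\subset\Sigma$, to $0$. We then compute the $\varphi$-induced cover by Mayer--Vietoris for the decomposition $E_K \cup_{T} (\Sigma\times S^1)$, where $T=\partial E_K$ is glued to $\partial\Sigma\times S^1$. The key observation is that $\varphi$ restricted to $\Sigma\times S^1$ factors through the projection to $S^1$. Hence the induced cover of $\Sigma\times S^1$ is $\Sigma\times \widetilde{S^1}$: for $d\neq 0$ this is $\Sigma\times S^1$ with the $d$-fold cover in the fibre, and for $d=0$ it is $\Sigma\times\R$. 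Similarly the cover of $T$ is a torus (resp.\ annulus $S^1\times\R$). Consequently
\[H_1(\Sigma\times S^1;\Z[\Z_d])\cong H_1(\Sigma)\oplus\Z\langle\widetilde\mu\rangle,\]
with trivial $\Z_d$-action, where $\widetilde\mu$ is the lift of $\mu^d$ (omit $\widetilde\mu$ for $d=0$). Also $H_1(T;\Z[\Z_d])=\Z\langle\widetilde\mu\rangle\oplus\Z\langle\widetilde\lambda\rangle$, where $\widetilde\lambda$ is a lift of $r(\partial\Sigma)$, which under the gluing is the $e$-framed longitude $\lambda_K+e\mu$. The $H_0$ terms are all $\Z$ and map isomorphically, so the sequence reduces to degree one.

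The Mayer--Vietoris sequence becomes
\[H_1(T;\Z[\Z_d])\to H_1(E_K;\Z[\Z_d])\oplus H_1(\Sigma)\oplus\Z\langle\widetilde\mu\rangle\to H_1(\partial X_S;\Z[\Z_d])\to 0.\]
We compute the images of the generators.
\begin{itemize}
\item $\widetilde\mu$ maps to $(\widetilde\mu_K,\widetilde\mu)$.
\item $\widetilde\lambda$ maps to $([\partial\Sigma],0)=(0,0)$ in the $\Sigma\times S^1$ factor, since $\Sigma$ has one boundary component and $[\partial\Sigma]=0$ in $H_1(\Sigma)$ in the orientable case.
\item In $H_1(E_K;\Z[\Z_d])\cong H_1(\Sigma_d(K))\oplus\Z\langle\widetilde\mu_K\rangle$ (Remark~\ref{rem:HomologyBranchedUnbranched}), a lift of $\lambda_K+e\mu$ is a closed loop $\widetilde\lambda_K$ plus $e/d$ copies of $\widetilde\mu_K$ up to deck translates. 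The lift $\widetilde\lambda_K$ vanishes in $H_1(\Sigma_d(K))$, being a lift of the longitude, which bounds a lift of a Seifert surface. Its $\Z$-component is zero under the splitting $p_*/d$, since $\lambda_K$ is nullhomologous in $E_K$. So $\widetilde\lambda\mapsto (e/d)\widetilde\mu_K$, noting that the sum of translates of $\widetilde\mu_K$ equals $\widetilde\mu_K$ itself because the action on this factor is trivial.
\end{itemize}
The quotient identifies $\widetilde\mu$ with $\widetilde\mu_K$ (up to sign) and kills $(e/d)\widetilde\mu_K$. This yields $H_1(\Sigma_d(K))\oplus\Z_{e/d}\oplus H_1(\Sigma)$ with $H_1(\Sigma)\cong\Z^{2g}$, where both the $\Z_{e/d}$ and $\Z^{2g}$ factors carry trivial action.

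For $d=0$ the fibre cover is $\R$, so $\widetilde\mu$ is absent. $H_1(T;\Z[\Z])=\Z\langle\widetilde\lambda\rangle$ and $\widetilde\lambda$ maps to the longitude class in $H_1(E_K;\Z[\Z])$, which is zero (the longitude lifts to the boundary of a lifted Seifert surface), and to $0$ in $H_1(\Sigma)$. This gives $H_1(E_K;\Z[\Z])\oplus\Z^{2g}$.

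The closed case follows by taking $K$ the unknot, for which $H_1(\Sigma_d(U))=0$ and $H_1(E_U;\Z[\Z])=0$, after capping off; equivalently, rerun the computation with $E_U=S^1\times D^2$.

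The main obstacle is the bookkeeping in the image of $\widetilde\lambda$. One must verify carefully that the lifted longitude contributes exactly $(e/d)\widetilde\mu_K$ and nothing in $H_1(\Sigma_d(K))$, with signs and deck translates handled via the trivial action on the $\Z$-summand. One must also check that the resulting identification is a $\Z[\Z_d]$-isomorphism, using the $\Z[\Z_d]$-splitting of Remark~\ref{rem:HomologyBranchedUnbranched} and the section $r$ to split off $H_1(\Sigma)$ equivariantly.
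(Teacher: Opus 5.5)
Your proposal is correct and follows essentially the same route as the paper. You identify $\partial X_S$ with $E_K\cup_e(\Sigma\times S^1)$ via the $e$-nice identification, note that the cover restricts to $\id\times(z\mapsto z^d)$ on $\Sigma\times S^1$, and run Mayer--Vietoris with the two relations $\widetilde\mu_K=\widetilde\mu$ and $(e/d)\widetilde\mu_K=0$, the latter coming from the lifted $e$-framed longitude $\widetilde\lambda_K+(e/d)\widetilde\mu_K$.

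The differences are minor. You treat $d=0$ by direct computation, where the paper defers to Conway--Powell, and you handle the closed case via $E_U$. Strictly, the degree-zero map $\Z\to\Z\oplus\Z$ is injective rather than an isomorphism, but injectivity is all you need.
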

	\begin{proof}
		The case~$d=0$ is treated in~\cite[Lemma 5.5]{ConwayPowell}, so we assume that~$d > 0$.
%		Also, without loss of generality, we can assume~$d>0$. 
%%Removed because divsibility is \geq 0.
		Lemma~\ref{lem:WhatIsTheBoundary} and Remark~\ref{rem:IdentifCovers-or} show that the choice of an~$e$-framing $\nu(S) \cong \Sigma \times \R^2$ determines a homeomorphism~$\partial X_S \cong E_K \cup (\Sigma \times S^1)$, where the gluing identifies the meridian of~$K$ with the~$S^1$-fibre and the~$e$-framed longitude of~$K$ with the boundary of~$\Sigma$.

		We now lift this gluing to the~$d$-fold covers; implicitly using Remark~\ref{rem:IdentifCovers-or}.
		The lift of~$d$-times~$\mu_K$ gets identified to the lift of~$d$-times the~$S^1$-fibre,  and the boundary of the lifted surface is identified with the lift of~$d$-times the~$e$-framed longitude.
		We elaborate.
		By definition of an $e$-nice framing and the definition of the map~$\varphi$ from Construction~\ref{cons:CoeffSys}, the restriction of the cover to~$\Sigma \times S^1$ is~$\Sigma \times S^1 \to \Sigma \times S^1$ where the covering map is the identity on the surface factor and multiplication by~$d$ on the circle factor.
		Thus, while the meridian~$\mu_K$ and the~$S^1$-fibre~$\mu$ do not lift to loops in the cover for~$d>1$, their~$d$-fold multiples do; we write~$\widetilde{\mu}_K$ for the resulting lift of the~$d$-fold meridian, and similarly for $\widetilde{\mu}$.
		Note also that since~$S \subset X$ is embedded, we know that the relative Euler number~$e$ is a multiple of~$d^2$.
		%%By capping off with (D^4,G).
		In particular the~$e$-framed longitude lifts to a loop in the cover.
		Homologically,  this lift represents~$\frac{e}{d}\widetilde{\mu}_K+\widetilde{\ell}_K$.

		We conclude the proof of the proposition.
		First,  recall that~$H_1(E_K;\Z[\Z_d]) \cong H_1(\Sigma_d(K)) \oplus~\Z$, where~$\Z$ is generated by~$\widetilde{\mu}_K$; see Remark~\ref{rem:HomologyBranchedUnbranched}.
		%the lift of~$d$-times~$\mu_K$.
		Our discussion of the restriction of the covering map to~$\Sigma \times S^1$ implies that~$H_1(\Sigma \times S^1;\Z[\Z_d]) \cong H_1(\Sigma) \oplus \Z\langle \widetilde{\mu}\rangle$ where both summands have the trivial~$\Z_d$-action.
		The discussion from the previous paragraph combined with a Mayer--Vietoris argument with~$\Z[\Z_d]$ coefficients now yields the announced homology calculation.
		More precisely, the~$e$-nice framing and inclusions induce $\Z[\Z_d]$-isomorphisms
		\begin{align*}
			H_1(\partial X_S;\Z[\Z_d])
			&  \xrightarrow{(\fr|)_*,\cong}
			H_1(E_K \cup (\Sigma \times S^1);\Z[\Z_d]) 
			\xleftarrow{\incl_*,\cong}
			\frac{
				H_1(E_K;\Z[\Z_d]) \oplus H_1(\Sigma \times S^1;\Z[\Z_d])
			}
			{\langle e/d \cdot [\widetilde{\mu}_K]=0,  [\widetilde{\mu}_K]=[\widetilde{\mu}] \rangle}  \\
			%%%%
			&  \xrightarrow{\incl_*,\cong}
			H_1(\Sigma_d(K))  \oplus \Z_{e/d}\langle [\widetilde{\mu}] \rangle \oplus H_1(\Sigma).
		\end{align*}
		This concludes the proof of the proposition,
	\end{proof}
	
	We now consider the case that~$\Sigma$ is nonorientable.
	\begin{proposition}
		\label{prop:AlexanderModuleNonOri}
		Let~$S \subset X$ be a surface of nonorientable genus~$h$ with boundary a knot~$K$.
		Write~$e:=e(S)$ for the Euler number of~$S$ relative to the Seifert framing.
		\begin{itemize}
			\item When~$[S] \in H_2(X,\partial X;\Z_2)$ is nontrivial,  the only cyclic cover of~$X_S$ is the trivial cover, and the choice of an~$e$-nice identification gives rise to an isomorphism
			$$ H_1(\partial X_S)
			\cong
			\begin{cases}
				\Z_2 \oplus \Z_2 \oplus \Z^{h-1} &\quad \text{ if~$e$ is even}  \\
				\Z_4 \oplus \Z^{h-1} &\quad \text{ if~$e$ is odd}.
			\end{cases}
			$$
The relative Euler number $e$ is even if and only if the signature~$\sigma(X)$ is even.
			\item When~$[S] \in H_2(X,\partial X;\Z_2)$ is trivial,  then~$e$ is necessarily even,  the only nontrivial cyclic cover of~$X_S$ is the~$2$-fold cover,  and the choice of an~$e$-nice identification gives rise to a~$\Z[\Z_2]$-isomorphism
\begin{align*} H_1(\partial X_S;\Z[\Z_2])
&\cong
H_1(\Sigma_2(K)) \oplus \frac{\Z_2 \langle \widetilde{\mu}\rangle \oplus H_1(\Sigma)}{\langle (e/2,[\partial \Sigma])\rangle} 	 \\
&\cong
\begin{cases}
H_1(\Sigma_2(K))  \oplus  \Z_2 \oplus \Z_2 \oplus \Z^{h-1} &\quad \text{ if~$e/2$ is even}  \\
H_1(\Sigma_2(K))  \oplus \Z_4 \oplus \Z^{h-1} &\quad \text{ if~$e/2$ is odd.} 
\end{cases}
\end{align*}
Here $\widetilde{\mu}$ denotes a lift of twice the meridian of $K$, and the $\Z[\Z_2]$-module structure is determined by $T\eta=\eta+w_1(\eta) \widetilde{\mu}$ and $T \widetilde{\mu}= \widetilde{\mu}$ for $\eta \in H_1(\Sigma)$.
		\end{itemize}
	\end{proposition}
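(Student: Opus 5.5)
The plan is to mirror the proof of Proposition~\ref{prop:AlexanderModule}: fix an $e$-nice identification, use Construction~\ref{cons:CoeffSys} and Remark~\ref{rem:IdentifCovers-or} to identify $\partial X_S$ with $E_K \cup_e (\Sigma \mathbin{\wt{\times}} S^1)$ together with the coefficient system $\varphi$, and then run a Mayer--Vietoris argument on the relevant cover.

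For the first item, $[S]\neq 0$ in $H_2(X,\partial X;\Z_2)$. Lemma~\ref{lem:H1NonOrientable} gives $H_1(X_S)=0$, so $\pi_1(X_S)$ has trivial abelianisation. Hence the only cyclic cover of $X_S$ is the trivial one. The integral homology of the boundary is then exactly Lemma~\ref{lem:MV}, transported along $\fr|$.

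For the parity statement, I will use the closed-up surface $\widehat S\subset \widehat X = X\cup -D^4$ from the proof of Lemma~\ref{lem:WhatIsTheBoundary}, which satisfies $e(\widehat S)=e$. Since $[S]\neq 0$ mod $2$ and $Q_X$ is nonsingular, $[\widehat S]$ is the mod $2$ reduction of a class $x$ with
\[
e \equiv x\cdot x \pmod 2 .
\]
I will then reduce to $\sigma(X)$ via Rokhlin/Guillou--Marin type congruences. The intended route is this: since $\widehat S$ is nonorientable, $e(\widehat S)\equiv [\widehat S]^2 \bmod 2$ (Whitney/Massey), and $[\widehat S]^2\equiv \sigma$ when $\widehat S$ is characteristic. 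In general this step needs care. I would cite the known fact that $e(S)\equiv \sigma(X) \pmod 2$ for nonorientable $S$ only when $S$ is characteristic; for ordinary $S$ I would instead argue through the $\Z_2$-intersection form. I expect the precise parity claim to be the main delicate point of this item and would check it against the literature, for example Yasuhara or \cite[Proposition 4.2]{ConwayOrsonPowell}.

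For the second item, $[S]=0$. Lemma~\ref{lem:H1NonOrientable} gives $H_1(X_S)\cong \Z_2$ and shows that $e$ is even. The unique nontrivial cyclic cover is therefore the $2$-fold cover, which restricts on the boundary to the $\varphi$-cover. Over $E_K$ this is the $2$-fold cyclic cover of the knot exterior, and Remark~\ref{rem:HomologyBranchedUnbranched} gives
\[
H_1(E_K;\Z[\Z_2])\cong H_1(\Sigma_2(K))\oplus \Z\langle\widetilde\mu_K\rangle .
\]
Over $\Sigma\mathbin{\wt{\times}}S^1$, niceness means that $\varphi$ kills the pushed-off curves $r(\gamma)$ and sends the fibre to $1$. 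The cover is thus $\Sigma\mathbin{\wt{\times}}S^1$ itself, fibrewise squared, with the deck transformation acting as rotation by $\pi$ on fibres. The lifted section is $r$ together with its rotated copy. I will compute $H_1$ of this cover as $\Z\langle\widetilde\mu\rangle\oplus H_1(\Sigma)$, with the action as described: the deck transformation fixes $\widetilde\mu$, and for an orientation-reversing loop $\eta$, transporting the rotated section around $\eta$ reflects the fibre. This produces the correction term $T\eta=\eta+w_1(\eta)\widetilde\mu$. Verifying this sign or correction term carefully in the model $\widehat\Sigma\times_{\Z_2}\R^2$ is the step I expect to be the main obstacle.

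Gluing, the $e$-framed longitude lifts to a loop homologous to $\tfrac e2\widetilde\mu_K+\widetilde\ell_K$, and $\widetilde\ell_K$ lies in the summand $H_1(\Sigma_2(K))$, where it is zero. The relation $\widetilde\mu_K=\widetilde\mu$ then gives the quotient $\Z\langle\widetilde\mu\rangle\oplus H_1(\Sigma)$ modulo $(e/2,[\partial\Sigma])$. A second relation making $\widetilde\mu$ $2$-torsion should come from $[\partial\Sigma]$ being twice a class in the orientation-reversing direction combined with the $T$-action, or directly from the Mayer--Vietoris boundary map. Finally, the dichotomy $\Z_2\oplus\Z_2$ versus $\Z_4$ follows exactly as in Lemma~\ref{lem:MV}: by \cite[Proposition 4.2]{ConwayOrsonPowell} applied with $e/2$ in place of $e$, since $[\partial\Sigma]$ is twice a primitive class modulo the free part.
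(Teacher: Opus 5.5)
Your outline (trivial abelianisation, Lemma~\ref{lem:MV}, Mayer--Vietoris on the $\varphi$-cover, the lifted-longitude relation, a homotopy argument for $T$) follows the paper's proof. Two steps, however, do not go through as written.

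\textbf{The parity assertion in the first item.} Your worry here is justified, but the proposed fallback for ordinary $S$ cannot work, because the assertion is false for ordinary surfaces. Your reduction $e=e(\widehat S)\equiv[S]_2\cdot[S]_2\pmod 2$ is correct, and it is all that holds in general. It gives $e\equiv\sigma(X)\pmod 2$ only when $[S]_2$ is characteristic.

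Here is a counterexample. In $X_0=\C P^2\#\C P^2$, take $\C P^1$ in the first summand and connect-sum it with an unknotted projective plane $P_\pm\subset S^4$ of Euler number $\pm2$. Then remove a small ball centred on the surface. The result is a Moebius band $S\subset X$ with:
\begin{itemize}
\item unknotted boundary and simply-connected exterior;
\item $[S]_2=(1,0)\neq 0$, which is ordinary since the characteristic class is $(1,1)$;
\item $e=1\pm 2$, which is odd, while $\sigma(X)=2$.
\end{itemize}

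The paper's own proof gets the parity from the congruence $e(S)\equiv 2h+2\sigma(K)+\sigma(X)\pmod 4$ quoted from Yasuhara. That is a Guillou--Marin type congruence for characteristic surfaces, and the same example shows it fails for ordinary ones. So do not look for an argument in the ordinary case. The parity statement needs a characteristic hypothesis, or should be replaced by $e\equiv[S]_2\cdot[S]_2\pmod 2$. Under that hypothesis your argument is enough.

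\textbf{The homology of the covered circle bundle in the second item.} You correctly note that the $\varphi$-cover of $\Sigma\mathbin{\wt{\times}}S^1$ is again $\Sigma\mathbin{\wt{\times}}S^1$, but you then compute its first homology as $\Z\langle\widetilde\mu\rangle\oplus H_1(\Sigma)$. It is in fact $\Z_2\langle\widetilde\mu\rangle\oplus H_1(\Sigma)$, exactly as in Lemma~\ref{lem:MV} and \cite[Proposition~4.2]{ConwayOrsonPowell}. Over an orientation-reversing loop the circle bundle restricts to a Klein bottle, in which the fibre has order two.

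This $2$-torsion is intrinsic to the bundle piece and is not produced by the gluing. The Mayer--Vietoris relations are only $\widetilde\mu_K=\widetilde\mu$ and $\tfrac e2\widetilde\mu=[\partial\Sigma]=2\gamma$. With $\widetilde\mu$ of infinite order they give a group of rank $h$ instead of $h-1$. Your own formula $T\eta=\eta+w_1(\eta)\widetilde\mu$, together with $T^2=\id$, already forces $2\widetilde\mu=0$, which exposes the inconsistency.

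Once $\Z_2\langle\widetilde\mu\rangle$ is used, your computation matches the paper's proof. The $\Z_2\oplus\Z_2$ versus $\Z_4$ dichotomy then follows as you describe.
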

	\begin{proof}
Besides the assertion about the Euler number, the first item follows from Lemma~\ref{lem:MV}.
Using~$\sigma(K)$ to denote the signature of the knot $K$, as mentioned in~\cite[above Corollary 1.1]{Yasuhara}, the relative Euler number~$e(S)$  satisfies
$$ e(S) \equiv 2h+2\sigma(K)+\sigma(X) \quad \mod 4.$$ 
The required conclusion follows: $e(S)$ is even if and only if~$\sigma(X)$ is even.

		We now assume that~$[S] \in H_2(X,\partial X;\Z_2)$ is trivial; we already proved in Lemma~\ref{lem:H1NonOrientable} that this implies~$e$ is even.
The choice of coefficient systems from Construction~\ref{cons:CoeffSys} ensures that the total space of the cover $\Sigma \mathbin{\wt{\times}} S^1 \to \Sigma \mathbin{\wt{\times}} S^1$ is again homeomorphic to~$\Sigma \mathbin{\wt{\times}} S^1$.
The reasoning is now identical as in~\cite[Proposition~4.9]{ConwayOrsonPowell}.
In a nutshell, the argument is the same as in the orientable case with the difference that~$H_1(\Sigma\mathbin{\wt{\times}}S^1)\cong H_1(\Sigma)\oplus \Z_2\langle \mu \rangle$ and~$[\partial \Sigma]\in H_1(\Sigma)$ has order two (instead of vanishing) so that, after choosing an $e$-nice identification~$\fr$,  the Mayer--Vietoris sequence yields the following sequence of~$\Z[\Z_2]$-isomorphisms:
\begin{align*}
H_1(\partial X_S;\Z[\Z_2])
&  \xrightarrow{(\fr|)_*,\cong}
H_1(E_K \cup (\Sigma \mathbin{\wt{\times}} S^1);\Z[\Z_2]) 
\xleftarrow{\incl_*,\cong}
			\frac{
H_1(E_K;\Z[\Z_2]) \oplus H_1(\Sigma \mathbin{\wt{\times}} S^1;\Z[\Z_2])
			}
			{\langle e/2 \cdot [\widetilde{\mu}_K]=r_*([\partial \Sigma]),  [\widetilde{\mu}_K]=[\widetilde{\mu}]\rangle}  \\
			%%%%
			&  \xrightarrow{\incl_*,\cong}
	H_1(\Sigma_2(K)) \oplus \frac{\Z_2 \langle \widetilde{\mu}\rangle \oplus H_1(\Sigma)}{\langle (e/2,[\partial \Sigma])\rangle} 
\cong
H_1(\Sigma_2(K))  \oplus G \oplus \Z^{h-1}.
		\end{align*}
Here,  as an abelian group $G$ is either $\Z_2 \oplus \Z_2$ or $\Z_4$ according to whether $e/2$ is even or odd.
%(again thanks to Lemma~\ref{lem:MV}).
%%Thanks to COP really.
We delay a discussion of the $\Z[\Z_2]$-module structure to Appendix~\ref{sec:BundleIsosNonOri} and specifically to Corollary~\ref{cor:RotOnYe(K)Covers}.
	\end{proof}
	
	\begin{remark}
	\label{rem:HomologyBranched}
		The abelian group~$H_1(\Sigma_d(K))$ is finite if and only if~$\Delta_K(t)$ does not have a root at a~$d$-th root of unity, see e.g.~\cite[Corollary 9.8]{LickorishIntroduction}.
		This occurs for example when~$d$ is a prime power. 
		In general,  in the decomposition~$H_1(\Sigma_d(K)) \cong\Z^k \oplus TH_1(\Sigma_d(K))$,  one can argue that the~$\Z^k$ factor can never have the trivial~$\Z_d$-action, unless $k=0$.
To see this, write~$\Z=\langle t\rangle$ and~$\Z_d=\langle T \mid T^d=1\rangle$, and pick a representative of the Alexander polynomial~$\Delta_K$ with~$\Delta_K(1)=1$,  so that for any~$(x \otimes 1) \in H_1(E_K;\Z[\Z]) \otimes_{\Z[\Z]} \Z[\Z_d] \cong H_1(\Sigma_d(S))$ that is fixed by the $\Z_d$-action,
$$(x \otimes 1)=\Delta_K(1)(x \otimes 1)=\Delta_K(T)(x \otimes 1)=(\Delta_K(t)x \otimes 1)=0.$$
%The second equality is what uses being fixed.
Here, in the second equality, $\Delta_K(T)$ denotes the element of $\Z[\Z_d]$ obtained by substituting~$t \in \Z$ for~$T\in \Z_d$ in the Alexander polynomial of~$K$.
	\end{remark}

\section{Compatible pairs and equivalence of surfaces}
\label{sec:CompatiblePairTheorem}

The aim of this section is to show that the results of~\cite{ConwayKasprowski4Manifolds} apply to exteriors of knotted surfaces.
% prove Theorem~\ref{thm:CompatiblePair}.
In Section~\ref{sub:4ManifoldsWithBoundary} we prove a result about $4$-manifolds with boundary and finite cyclic fundamental group whose proof is an adaptation of~\cite[Theorem~1.8]{ConwayKasprowski4Manifolds}.
Section~\ref{sub:ProofMeta} applies this result to surface exterior, leading to the main theorem of this section (Theorem~\ref{thm:CKForSurfaceExteriors}) which provides necessary and sufficient conditions for two simple surfaces to be equivalent. 
Finally,  Section~\ref{sub:CompatiblePairEquiv} is concerned with an alternative (but equivalent) definition of compatible pairs between $4$-manifolds with finite fundamental group.

\subsection{The classification of certain~$4$-manifolds with boundary}
\label{sub:4ManifoldsWithBoundary}

We begin by proving a theorem on $4$-manifolds with boundary that can be obtained by slightly modifying a previous result of ours~\cite[Theorem 1.8]{ConwayKasprowski4Manifolds}.
This latter theorem was stated for manifolds $X_0,X_1$ with finite cyclic fundamental group under the assumption that $H_1(\partial X_i;\Z[\Z_d])^*=0$, and our goal is to explain why it holds more generally when the Alexander module is of the form~$H_1(\partial X_i;\Z[\Z_d]) \cong L\oplus T\oplus \Z^n$ with $L$ free,  $T^*=0$ and where $\Z^n$ is endowed with the trivial $\Z_d$-action.

\begin{convention}
\label{conv:LTZ}
Here and in what follows, when~$d=0,1$ the hypothesis that ``$H_1(\partial X_i;\Z[\Z_d])$ decomposes as~$L\oplus T\oplus \Z^n$ with~$L$ free,~$T^*=0$ and where~$\Z^n$ is endowed with the trivial~$\Z_d$-action" is understood to mean that~$H_1(\partial X_i;\Z[\Z_d]) \cong L\oplus T$ with~$L$ free and~$T$ torsion.
Indeed when~$d=0$ the~$\Z[\Z]=\Z[t^{\pm 1}]$-module~$\Z^n=\Z[t^{\pm 1}]/(t-1)$ is torsion, whereas when~$d=1$, the abelian group~$\Z^n$ is free.
When~$d=1$, every abelian group decomposes in this way.
\end{convention}

%\medbreak

The main technical challenge in adapting~\cite[Theorem 1.8]{ConwayKasprowski4Manifolds} to surface exteriors is related to the potential presence of order~$2$ elements in $\pi$. 
In order to overcome this issue,  it will be useful to recall some concepts from~\cite[Section 2]{ConwayKasprowski4Manifolds}.
First, in what follows,  we write~$y_g$ for the coefficient of~$y \in \Z[\pi]$ at~$g \in \pi$ and~$y_1$ for the coefficient at the neutral element.

\begin{lemma}
\label{lem:EvenElemOfOrder2}
If $X$ is a $4$-dimensional Poincar\'e complex, then $\lambda_X(x,x)_g$ is even for every nontrivial~$g \in \pi$ of order $2$ and every~$x \in H_2(X;\Z[\pi])$.
\end{lemma}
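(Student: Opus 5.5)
The plan is to use only the hermitian symmetry of $\lambda_X$ together with the way the involution acts on group elements of order two. In the conventions of~\cite[Section 2]{ConwayKasprowski4Manifolds}, the equivariant intersection form of a $4$-dimensional Poincar\'e complex is a hermitian form. Thus $\lambda_X(x,y)=\overline{\lambda_X(y,x)}$, and the involution on $\Z[\pi]$ is $\overline{\sum n_g g}=\sum n_g w(g) g^{-1}$ with $w$ the orientation character. Here $X$ is oriented, so $w$ is trivial; if the orientation character were nontrivial the statement would need modification, but the paper's conventions have $X$ oriented.

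\textbf{Step 1.} Set $y:=\lambda_X(x,x)$. Hermitian symmetry gives $y=\overline{y}$, so $y_g=y_{g^{-1}}$ for all $g$. For $g$ of order two we have $g=g^{-1}$, so this gives no information. The symmetric part therefore cannot control parity, and we need the finer structure of $\lambda_X$.

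\textbf{Step 2.} Interpret the coefficient geometrically. Realize $x$ in the universal cover $\widetilde X$, which is a $4$-dimensional Poincar\'e complex with free $\pi$-action, and write
$$\lambda_X(x,x)_g=\widetilde x\cdot g\widetilde x,$$
the $\Z$-valued intersection number in $\widetilde X$ (equivalently, the cap/cup product pairing of the chain-level lifts). Take $g$ of order two. The intersection pairing on $H_2(\widetilde X)$ is symmetric, and it is invariant under the deck transformation $g$ since $g$ preserves orientation. Hence
$$\widetilde x\cdot g\widetilde x=g\widetilde x\cdot g^2\widetilde x=g\widetilde x\cdot \widetilde x .$$
Again this is consistent but does not yet force evenness.

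\textbf{Step 3.} To get evenness, pass to a chain-level description in which the pairing splits into contributions that come in pairs. Concretely, work with a $\Z[\pi]$-chain complex model $C_*$ of $\widetilde X$ and with the Poincar\'e duality chain map $\varphi=-\cap[X]\colon C^{4-*}\to C_*$. This map is symmetric only up to a chain homotopy $\varphi_1$ (the first higher symmetric structure coming from the diagonal approximation). One then shows that the $g$-coefficient of $\lambda_X(x,x)$ equals
$$\langle \widetilde x, g\widetilde x\rangle=z+\overline{z}\ \text{evaluated at } g, \qquad z=\langle x^*,\varphi_1 x^*\rangle\text{-type term},$$
up to a quantity whose $g$-coefficient vanishes. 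For an element of order two, $(z+\overline z)_g=z_g+z_{g^{-1}}=2z_g$, which is even. I expect the main obstacle to be pinning down this ``quadratic refinement'' identity: $\lambda(x,x)=\mu(x)+\overline{\mu(x)}$ modulo terms with vanishing $g$-coefficient. It should come from the symmetric $L$-theory structure, namely the $\Z_2$-equivariant diagonal, since $\lambda(x,x)$ lies in the image of $1+T$ on the off-identity coefficients.

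\textbf{Step 4.} If the chain-level route is too heavy, I would instead cite the statement from~\cite[Section 2]{ConwayKasprowski4Manifolds}, which I expect contains it, and give the geometric reason. For $X$ a manifold, represent $x$ by an immersed sphere in general position. Each double point of the immersion in $X$ contributes a pair of terms $\pm(h+\overline h)$ to $\lambda(x,x)$. Intersections between $\widetilde x$ and $g\widetilde x$ in $\widetilde X$ are exactly lifts of double points with group element $g$ or $g^{-1}=g$, and they are freely permuted in pairs by $g$. Hence their number is even.
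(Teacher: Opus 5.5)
Your Step~3 reduction is the paper's proof. The paper writes $\lambda_X(x,x)=\mu_X(x)+\overline{\mu_X(x)}+e(x)$ with $e(x)$ supported at the identity. For $g\neq 1$ with $g^2=1$ the $g$-coefficient is then $\mu_X(x)_g+\mu_X(x)_{g^{-1}}=2\mu_X(x)_g$; this is well defined on $Q_{+1}(\Z[\pi])$ because $(a-\overline{a})_g=0$ when $g=g^{-1}$. The paper does not derive $\mu_X$; it imports it from \cite[Lemma~2.21]{ConwayKasprowski4Manifolds}. So your fallback citation reproduces the paper's argument.

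The self-contained derivation you propose for the key identity would fail, however. The expression $\langle x^*,\varphi_1x^*\rangle$ is not defined, since $\varphi_1$ raises degree and sends $C^2$ to $C_3$. More seriously, the identity cannot follow from the chain-level symmetric structure $\{\varphi_s\}$ alone. Take the complex concentrated in degree $2$ with $C_2=\Z[\pi]$, $\varphi_0$ given by multiplication by $g$, and $\varphi_s=0$ for $s\geq 1$. This is a $4$-dimensional symmetric Poincar\'e complex over $\Z[\pi]$ with $\lambda(1,1)=g$, whose $g$-coefficient is odd. So the argument must use that the diagonal comes from a space, and your sketch does not say how. Asserting that $\lambda(x,x)$ lies in the image of $1+T$ away from the identity is the claim itself, not a reason for it.

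Your Step~4 does supply geometric input and is a complete proof when $X$ is a manifold. The class $x$ is spherical because $H_2(X;\Z[\pi])=H_2(\wt X)\cong\pi_2(X)$. For a generic immersion $f\colon S^2\to X$ representing $x$ with lift $\wt f$, the element $g$ acts freely on the finite $g$-invariant set $\wt f(S^2)\cap g\wt f(S^2)$. That set therefore has even cardinality, so the signed count is even whatever the signs. But the lemma is stated, and used in Section~\ref{sec:CompatiblePairTheorem}, for Poincar\'e complexes, where there are no immersions.

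Here is a space-level argument that works there, at least for $\pi$ finite, which is the case the paper uses.
\begin{itemize}
\item Let $p\colon\wt X\to Y:=\wt X/\langle g\rangle$, let $\alpha\in H^2(\wt X;\Z_2)$ be Poincar\'e dual to $x$, and let $\operatorname{tr}$ be the transfer of $p$.
\item The transfer is induced by a stable map, so it commutes with $\mathrm{Sq}^2$. Hence $\operatorname{tr}(\alpha)^2=\operatorname{tr}(\alpha^2)$.
\item Evaluate both sides on $[Y]$. Use $p^*\operatorname{tr}(\alpha)=\alpha+g^*\alpha$, the projection formula $\operatorname{tr}(p^*\beta\cup\alpha)=\beta\cup\operatorname{tr}(\alpha)$, and the fact that the homology transfer sends $[Y]$ to $[\wt X]$. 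This gives $\langle\alpha^2,[\wt X]\rangle=\langle\alpha^2+g^*\alpha\cup\alpha,[\wt X]\rangle$.
\item Therefore $\langle g^*\alpha\cup\alpha,[\wt X]\rangle=0$ in $\Z_2$, which says exactly that $\lambda_X(x,x)_g\equiv 0\pmod 2$.
\end{itemize}
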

\begin{proof}
This follows because~$\lambda_X(x,x)=\mu_X(x)+\overline{\mu_X(x)}+e(x)$ with $e(x) \in \{0,1\}$; the existence of~$\mu_X \colon H_2(X;\Z[\pi]) \to Q_{+1}(\Z[\pi])$ can be established e.g. as in~\cite[Lemma~2.21]{ConwayKasprowski4Manifolds}.
%%Don't delete
%That result assumes spin and gets \lambda=\mu+\mu^*.
Here, as is common in surgery theory, we write~$Q_{+1}(\Z[\pi]):=\Z[\pi]/\langle x-\overline{x} \mid x \in \Z[\pi]\rangle.$
\end{proof}

In order to adapt the proof of~\cite[Theorem 1.8]{ConwayKasprowski4Manifolds} to our setting, it is necessary to recall some notation from that article.
First, given a closed $3$-manifold $Y$,  we set 
$$I^1(Y;\Z[\pi]):=
\im
\left( H^1(Y;\Z[\pi])
 \xrightarrow{\ev}
\overline{\Hom(H_1(Y;\Z[\pi]),\Z[\pi])}
\to
  \Hom(H_1(Y;\Z[\pi]),\Z_2)
   \right),$$
where the second map is induced by augmentation modulo $2$.
It will also be helpful to recall the following group homomorphism from~\cite[Section 2.6]{ConwayKasprowski4Manifolds}:
\begin{align}
\label{eq:hatFunction}
\widehat{} \ \colon \Herm(H^2(Y;\Z[\pi])) &\to I^1(Y;\Z[\pi]) \\
b &\mapsto (x \mapsto b(\PD_{Y}^{-1}(x),\PD_{Y}^{-1}(x))_1 \mod 2) \nonumber.
\end{align}

We are now ready to adapt~\cite[Theorem 1.8]{ConwayKasprowski4Manifolds} to allow for more general Alexander modules.
Indeed~\cite[Theorem 1.8]{ConwayKasprowski4Manifolds} assumes that~$H_1(\partial X_i;\Z[\Z_d])^*=0$ without allowing (when $d \neq 0,1$) for the presence a~$\Z^k$-factor.

\begin{theorem}
\label{thm:CK4Manifold}
Let~$X_0$ and~$X_1$ be~$4$-manifolds with~$\pi_1(X_j) \cong \Z_d$ and~$\iota_j \colon \pi_1(\partial X_j) \to \pi_1(X_j)$ surjective for~$j=0,1$.
Assume that the Alexander module is~$H_1(\partial X_i;\Z[\Z_d]) \cong L \oplus T \oplus \Z^k$ with~$L$ free, ~$T^*=0,$ and where~$\Z$ is endowed with the trivial~$\Z_d$-action.
Fix an orientation-preserving homeomorphism~$h \colon \partial X_0 \to \partial X_1$.
\begin{itemize}
\item When~$H_1(\partial X_i;\Z[\Z_d])^*=0$ or when $\pi_1=\Z_{2\ell}$ with $\ell>0$ and $H_1(\partial X_i;\Z[\Z_d]) \cong T \oplus \Z^k$,  the homeomorphism~$h$ extends to a homeomorphism~$X_0 \to X_1$ if and only if there is an isomorphism~$u \colon \pi_1(X_0) \cong \pi_1(X_1)$ with~$u \circ \iota_0=\iota_1 \circ h_*$,  there is a $k$-invariant preserving compatible pair~$(F,h)$, and~$\ks(X_0)=\ks(X_1)$.
\item When the universal covers of~$X_0,X_1$ are not spin,~$h$ extends to a homeomorphism~$X_0 \to X_1$ if and only if there is an isomorphism~$u \colon \pi_1(X_0) \cong \pi_1(X_1)$ with~$u \circ \iota_0=\iota_1 \circ h_*$,  there is a $k$-invariant preserving compatible pair~$(F,h)$, and~$\ks(X_0)=\ks(X_1)$.
\item When the universal covers of~$X_0,X_1$ are spin, ~$h$ extends to a homeomorphism~$X_0 \to X_1$ if and only if there is an isomorphism~$u \colon \pi_1(X_0) \cong \pi_1(X_1)$ with~$u \circ \iota_0=\iota_1 \circ h_*$,   there is a $k$-invariant preserving~$(F,h)$,  and the universal cover of~$X_0 \cup_h X_1$ is spin.
\end{itemize}
Additionally,  when~$H_1(\partial X_i;\Z[\Z_d])^*=0$ or when the~$X_i$ are spin, given a~$k$-invariant preserving compatible pair~$(F,h)$ as above,  the homeomorphism~$X_0 \to X_1$ extending~$h$ can be chosen to induce~$F$ on~$\Z[\Z_d]$-homology.
\end{theorem}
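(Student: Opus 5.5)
The plan is to follow the proof of~\cite[Theorem~1.8]{ConwayKasprowski4Manifolds} step by step and to isolate the places where the hypothesis $H_1(\partial X_i;\Z[\Z_d])^*=0$ was used. That proof runs in three stages.
First, the compatible pair $(F,h)$ is used to build a $\Z[\pi]$-homotopy-theoretic model: one glues $X_0\cup_h -X_1$ and shows that the union has the homology of a closed manifold with $\pi_1=\Z_d$ and hyperbolic equivariant intersection form. Preservation of the relative $k$-invariant then produces a homotopy equivalence rel.\ boundary, or equivalently a degree one normal map $X_0\to X_1$ extending $h$.
Second, one computes the surgery obstruction. The normal invariant is controlled by $\ks$ and, in the spin case, by the spin structure on the universal cover of the union. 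The $L$-group obstruction for $\Z_d$ is detected by multisignatures and Arf invariants.
Third, Freedman's surgery for finite groups turns the resulting $s$-cobordism into a homeomorphism.

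The only point where the Alexander module enters is the algebraic analysis of the boundary. There one needs that the map $H_2(X_i;\Z[\pi])\to H_2(X_i,\partial X_i;\Z[\pi])$, composed with the adjoint of $\lambda$, is controlled by $\coker(\Ad\lambda)$. One also needs that the group $I^1(\partial X_i;\Z[\pi])$ and the map $\widehat{\ }$ from~\eqref{eq:hatFunction} behave well.

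I will therefore decompose $H_1(\partial X_i;\Z[\Z_d])=L\oplus T\oplus\Z^k$ and treat each summand in turn.
The summand $T$ satisfies $T^*=0$ and is handled exactly as in the original argument.
The free summand $L$ contributes only a free part to $H^1$, on which evaluation is an isomorphism, so the relevant cohomology classes are realised directly.
For $\Z^k$ with trivial action, I will compute $\Hom_{\Z[\Z_d]}(\Z,\Z[\Z_d])\cong\Z\cdot N$, where $N=\sum_g g$ is the norm element. Evaluation followed by augmentation mod~$2$ sends this generator to $d$ mod~$2$. Hence the $\Z^k$ summand contributes to $I^1$ only when $d$ is odd.

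This is why the case $\pi=\Z_{2\ell}$ with $L=0$ can be grouped with the case $H_1^*=0$ in the first bullet. In that case $I^1$ receives no contribution from $T\oplus\Z^k$, so the spin-type secondary obstruction vanishes and $\ks$ is the only remaining obstruction. This is where Lemma~\ref{lem:EvenElemOfOrder2} is used: for the order-two element $g=T^\ell$, the coefficient $\lambda(x,x)_g$ is even, so the terms that do appear are killed mod~$2$.

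In the remaining cases the argument splits according to whether the universal covers are spin. If they are not spin, the $w_2$-type obstruction can be absorbed by a self-homotopy equivalence realising the element of $I^1$, so again only $\ks$ matters. If they are spin, the obstruction is exactly whether the universal cover of $X_0\cup_hX_1$ is spin.

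For the final assertion about realising $F$: when $H_1^*=0$ or the $X_i$ are spin, no modification by self-equivalences was needed, so the homeomorphism produced induces $F$.

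The main obstacle I expect is the bookkeeping in the second stage: checking that the difference element in $\Herm(H^2(\partial X;\Z[\pi]))$ produced by two extensions maps under $\widehat{\ }$ into the span of the classes coming from $L$. This requires a precise description of $H^2(\partial X_i;\Z[\Z_d])$ via Poincar\'e duality and the universal coefficient spectral sequence, including the $\Ext^1$-terms from $\Z^k$, which are $\Z_d$-torsion. My first approach is to compute these explicitly for the trivial module $\Z$, using the periodic resolution for $\Z_d$.
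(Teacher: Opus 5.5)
Your argument for the clause $\pi_1=\Z_{2\ell}$, $H_1(\partial X_i;\Z[\Z_d])\cong T\oplus\Z^k$ of the first bullet does not work as stated. The secondary obstruction is the hermitian form $b=b(c_0,c_1)$ on $H^2(\partial X_1;\Z[\pi])$, up to the action of $\mathcal{G}$. The way to kill it is to show that $b$ is even \cite[Theorem~2.22]{ConwayKasprowski4Manifolds}. The parity that matters is that of $b(y,y)_1$, the coefficient at the identity (which is also what $b\mapsto\widehat{b}$ records), not the augmentation.

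Since $T^*=0$ and $(\Z^k)^*\cong\Z^k$, $b$ is determined by its restriction to $\Z^k$ and takes values in $N\cdot\Z[\Z_{2\ell}]\cong\Z$. So $b(y,y)=c_yN$ and $b(y,y)_1=c_y$, regardless of $\varepsilon(N)=d$. Your computation that $\Z^k$ contributes nothing to $I^1$ for even $d$ therefore places no constraint on $b$. For instance, the form with $b(y,y)=N$ on a generator of one trivial summand has augmentation $2\ell\equiv 0$, yet it is not weakly even, so \cite[Theorem~2.22]{ConwayKasprowski4Manifolds} does not remove it.

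The missing idea is that all coefficients of such a norm-valued form coincide, so $b(y,y)_1=b(y,y)_{T^\ell}$. The coefficient at the order-two element is then forced to be even geometrically. When the universal covers are spin, \cite[Section~10]{ConwayKasprowski4Manifolds} identifies its parity with that of $\lambda_{X_0\cup_hX_1}(y,y)_{T^\ell}$, which is even by Lemma~\ref{lem:EvenElemOfOrder2}. Hence $b$ is a weakly even $\Z$-valued form, hence even, and the obstruction vanishes; the nonspin case is the second bullet. You do invoke Lemma~\ref{lem:EvenElemOfOrder2} at $T^\ell$, but without the observation that $b$ is a multiple of $N$ it connects to nothing, and the parity of $d$ plays no role.

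For the other bullets you simply assert that the conclusions of \cite{ConwayKasprowski4Manifolds} persist. The hypotheses that genuinely need rechecking are not the ones your summand-by-summand $\Ext$ computation targets:
\begin{itemize}
\item Here $\pi_2(X_i)$ is only stably isomorphic to the augmentation ideal (via \cite[Lemma~3.3]{ConwayKasprowski4Manifolds} and Schanuel's lemma). By contrast, \cite[Theorems~2.15, 2.17, 2.18, 2.22, 9.4 and Proposition~2.24]{ConwayKasprowski4Manifolds} are stated for projective $\pi_2$.
\item \cite[Theorem~2.9]{ConwayKasprowski4Manifolds}, proved via Proposition~7.7 there, lets one use compatible pairs instead of $2$-types, and it carries its own Alexander module hypothesis.
\item You need weakly even and even to coincide for forms on $L\oplus T\oplus\Z^k$.
\item The injectivity in \cite[Proposition~2.24]{ConwayKasprowski4Manifolds} must be redone in the presence of an element of order two. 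The equality $\widehat{b}=\widehat{b(\varphi)}$ only controls coefficients at the identity. To conclude that $b-b(\varphi)$ is weakly even you again need Lemma~\ref{lem:EvenElemOfOrder2}, using that both forms are pulled back from intersection forms of Poincar\'e complexes.
\end{itemize}
Your outline also misdescribes the reference. The argument there is obstruction-theoretic: first the relative $k$-invariant, then the secondary form $b$, producing a homotopy equivalence extending $h$. The spin condition on the universal cover of $X_0\cup_hX_1$ arises from the secondary obstruction rather than from normal invariants. Surgery enters only at the end, via \cite[Proposition~1.5]{ConwayKasprowski4Manifolds}.
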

\begin{proof}
When $d=0,1$ (and when $k=0$),  the result is already proved in~\cite[Theorem 1.6]{ConwayKasprowski4Manifolds}.
In what follows, we therefore assume that $d \neq 0,1$.
We first note that since~$\pi$ is finite cyclic and~$\pi_1(\partial X_i) \to \pi_1(X_i)$ is surjective, the second homotopy group of~$X_i$ is stably isomorphic to the augmentation ideal:~$\pi_2(X_i) \cong_s I$.
The argument appears in~\cite[Lemma 4.6]{ConwayOrsonPencovitch} but we also outline a direct argument.
Applying \cite[Lemma 3.3]{ConwayKasprowski4Manifolds} shows that~$\pi_2(X)$ fits in a $\Z[\Z_d]$-exact sequence~$0\to \pi_2(X)\to C_2\to C_1\to C_0\to \Z\to 0$ where the~$C_i$ are~$\Z[\Z_d]$-free. 
Modules fitting in such a sequence are all stably isomorphic by Schanuel’s lemma. 
Taking the standard free~$\Z[\Z_d]$-resolution of~$\Z$,  it follows that~$\pi_2(X)$ is stably isomorphic to~$I$.

Thanks to known results in surgery theory (see e.g.~\cite[Proposition 1.5]{ConwayKasprowski4Manifolds}),  it suffices to argue that the homotopy classification of~\cite[Theorem~1.3]{ConwayKasprowski4Manifolds} (or more generally~\cite[Theorem~2.29]{ConwayKasprowski4Manifolds}) goes through under the given assumptions.
Indeed this will prove the last two items of the theorem.
We will discuss the first item at the end of the proof.

Before going through the proof of this result and explaining the required changes,  we note that three commonly made assumptions in~\cite{ConwayKasprowski4Manifolds} hold in our setting.
%; we refer to~\cite{ConwayKasprowski4Manifolds} for the relevant definitions.
To begin with, we note that since~$\pi$ is finite,~$H^i(\pi;\Z[\Z_d])=0$ for~$i>0$,  so~$\ev \colon H^2(X_i;\Z[\Z_d]) \to \overline{\Hom(H_2(X;\Z[\Z_d]),\Z[\Z_d])}$ is an isomorphism and therefore so is its dual,~$\ev^*$.
The condition~$H^i(\pi;\Z[\Z_d])=0$ for~$i>0$ also ensures that~$0=H^2(\pi;\Z[\Z_d]) \to H^2(\partial X_1;\Z[\Z_d])$ is necessarily injective and that compatible triples can be substituted for compatible pairs (recall~\cite[Proposition 2.11]{ConwayKasprowski4Manifolds}).
%the~$\ev^*$ assumption necessarily holds as does the requirement that~$H^2(\pi;\Z[\Z_d]) \to H^2(\partial X_1;\Z[\Z_d])$ be injective (indeed~$H^2(\pi;\Z[\Z_d])=0$ for~$\pi$ finite).
%Also, since~$H^i(\pi;\Z[\Z_d])=0$ for~$i>0$, compatible triples can be substituted for compatible pairs (recall~\cite[Proposition 2.11]{ConwayKasprowski4Manifolds}).
We also note that a hermitian form on~$L \oplus T \oplus \Z^k$ is weakly even if and only if it is even: this is noted in~\cite[Lemma~2.19]{ConwayKasprowski4Manifolds} when~$k=0$,  but adding~$\Z^k$ makes no difference because a form on~$\Z^k$ takes values in~$\Z$
 (to see this,  note~$Tb(x,y)=b(Tx,y)=b(x,y)$ and~$\ker(1-T)=\im (\cdot \mathcal{N}) \cong \Z$) 
 and a symmetric form on~$\Z$ is weakly even if and only if it is even.
 %%Don't delete.
 %%\mu(x)=a_x/2.
Here,  recall that a hermitian form~$\lambda \colon H \times H \to \Z[\pi]$ is \emph{weakly even} if for every~$x \in H$,  it satisfies~$\lambda(x,x) = a+\overline{a}$ for some~$a \in \Z[\pi]$. 

Next, we outline the ingredients that go into the proof of~\cite[Theorem 2.29]{ConwayKasprowski4Manifolds},  namely~\cite[Theorem 2.8]{ConwayKasprowski4Manifolds} (to obtain a homotopy equivalence, it suffices to arrange for the first and secondary obstructions to vanish), ~\cite[Theorem 2.9]{ConwayKasprowski4Manifolds} (it is possible to work with compatible pairs instead of with the~$2$-type), ~\cite[Theorem 2.15]{ConwayKasprowski4Manifolds} (the secondary obstruction can be assumed to vanish in the case where the intersection form is not weakly even),  and~\cite[Theorems~2.17-18]{ConwayKasprowski4Manifolds} (the reformulation of the secondary invariant when the intersection form is weakly even).
We argue that the hypotheses in each of these theorems also hold under the present assumptions.
\begin{itemize}
\item First,~\cite[Theorem 2.8]{ConwayKasprowski4Manifolds} is already stated for finite cyclic groups without any assumptions on the Alexander module.
\item  By contrast,~\cite[Theorem 2.9]{ConwayKasprowski4Manifolds} assumes that~$H_1(\partial X_i;\Z[\Z_d])\cong L\oplus T$. 
This is not an issue however since its proof (which is essentially~\cite[Proposition~7.7]{ConwayKasprowski4Manifolds}) in fact also works when $\pi$ is finite and~$H_1(\partial X_i;\Z[\Z_d]) \cong L\oplus T\oplus \Z^n$; this is mentioned in the statement of~\cite[Proposition~7.7]{ConwayKasprowski4Manifolds}.
\item Next,~\cite[Theorems 2.17 and 2.18]{ConwayKasprowski4Manifolds} (both of which are proved in~\cite[Section~10]{ConwayKasprowski4Manifolds}) rely on~\cite[Theorems 2.22]{ConwayKasprowski4Manifolds} which itself relies on~\cite[Theorems 9.4]{ConwayKasprowski4Manifolds}.
These theorems require that~$\pi_2$ be projective,  but in fact they also hold when~$\pi$ is finite and~$\pi_2$ is stably the augmentation ideal as noted in~\cite[Theorems 9.4]{ConwayKasprowski4Manifolds}.
%{DK: There we assume isomorphic to $I\oplus \Z[\pi]^k$. Should we have assumed stably $I$ there?
%Resolution: we actually meant stably I.
%}
The remainder of the arguments go through without change.
%%Don't delete: Theorem~2.18 was proved in Section 10, and the argument goes through without change. 
\item 
Next, we discuss~\cite[Theorem 2.15]{ConwayKasprowski4Manifolds}, which is proved in \cite[Section 11]{ConwayKasprowski4Manifolds} and, as stated,  requires~$\pi_2(X_i)$ to be projective and the Alexander module to be of the form~$L \oplus T$
The key inputs in the proof are~\cite[Theorem 11.2]{ConwayKasprowski4Manifolds} and~\cite[Proposition 2.24]{ConwayKasprowski4Manifolds}; the remainder of the proof makes no further uses of the hypotheses.
As is already noted in~\cite[Theorem 11.2]{ConwayKasprowski4Manifolds}, the hypotheses of the former can be modified to our setting, and we focus on~\cite[Proposition 2.24]{ConwayKasprowski4Manifolds}.
%The application of Proposition 2.24 requires~$\pi_2$ projective and~$H_1(\partial X_i;\Z[\Z_d]) \cong L \oplus T$ but both hypotheses can be replaced by the present ones: the former because, as we've already noted,  Theorem~9.4 holds when~$\pi_2$ is stably the augmentation, and the latter because even and weakly even are equivalent in this setting as well.
%It remains to explain why the presence of elements of order~$2$ does not cause any problems.
\item We conclude by discussing~\cite[Proposition 2.24]{ConwayKasprowski4Manifolds} since, as explained above, it plays an important role in~\cite[Theorem 2.15]{ConwayKasprowski4Manifolds}.
The hypotheses that $\pi_2(X_i)$ be projective and that~$H_1(\partial X_i;\Z[\Z_d]) \cong L \oplus T$ can be adapted to our setting since they are only included to apply~\cite[Theorem 9.4]{ConwayKasprowski4Manifolds} which, as already noted,  works in our setting.
It remains to explain why the presence of elements of order~$2$ does not cause any problems.
The second item of~\cite[Proposition 2.24]{ConwayKasprowski4Manifolds}  immediately goes through if one replaces ``weakly even" by ``hermitian forms~$b$ such that $b(x,x)_1$ is even for every $x$".
We explain why the third item of~\cite[Proposition 2.24]{ConwayKasprowski4Manifolds} goes through.
We explain how to adapt the proof of injectivity.
Assume that~$\widehat{b}=\widehat{b(\varphi)}$ for some~$\varphi \in \mathcal{G}$; see~\eqref{eq:hatFunction} for the notation.
%, then~$b$ and $b(\varphi)$.
The key observation is that both these pairings satisfy the property that their coefficient at every element (nontrivial) $g \in \pi$ of order $2$ is even;
this follows from Lemma~\ref{lem:EvenElemOfOrder2} because both pairings are obtained by pulling back the intersection form of a $4$-dimensional Poincar\'e complex.
%To see this,  observe that both pairings are obtained by pulling back the intersection form of a $4$-dimensional Poincar\'e complex~$X$.
%Since~$\lambda_X(x,x)=\mu(x)+\overline{\mu(x)}+e(x)$ with $e(x) \in \{0,1\}$ for every $x \in H_2(X;\Z[\pi])$ (the existence of $\mu_X$ can be argued as in~\cite[Lemma~2.20]{ConwayKasprowski4Manifolds}), it follows that $b-b(\varphi)$ has vanishing coefficient at every (nontrivial) group element of order $2$.
%%Don't delete
%forced by \mu
Since~$\widehat{b}=\widehat{b(\varphi)}$,  it follows that $b(x,x)_1-b(\varphi)(x,x)_1$ is even for every $x $ and thus~$b-b(\varphi)$ is weakly even.
%%see e.g. the proof of 22.0.
Since these forms are defined on~$H_1(\partial X_i;\Z[\Z_d]) \cong L \oplus T \oplus \Z^k$, as discussed above, his implies that the difference is actually an even form.
The remainder of the proof now proceeds without change.
%Apply Theorem~\ref{thm:GoalActionpdf} to realise this form as $b(\varphi')$ for some $\varphi' \in \mathcal{G}$ so that $b=b(\varphi)+b(\varphi')=b(\varphi \circ \varphi')$, as required.
\end{itemize}
We have now discussed in detail how the hypotheses of~\cite[Theorem 2.29]{ConwayKasprowski4Manifolds} are used.
The remainder of the proof of this theorem goes through without change and leads to the second and third items of the present theorem.

It remains to prove the first item.
In general,  
%%Writing pi instead of Z_d
the secondary obstruction is represented by a~$\Z[\pi]$-valued hermitian form $b:=b(c_0,c_1)$ on~$H_1(\partial X_1;\Z[\pi]) \cong H^2(\partial X_1;\Z[\pi])$.
The adjoint of this form is a map~$H^2(\partial X_i;\Z[\pi]) \to \overline{\Hom_{\Z[\pi]}(H^2(\partial X_i;\Z[\pi]),\Z[\pi])}$.
Thus if~$H_1(\partial X_i;\Z[\pi])^*=0$, this form is trivial,  so the first part of the statement follows from~\cite{ConwayKasprowski4Manifolds}.
%%%Don't cite again...Hard to cite; explained above.

It remains to consider the case where~$\pi \cong \Z_{2 \ell}$ with~$\ell >0$ and~$H_1(\partial X_i;\Z[\pi])  \cong T \oplus \Z^k$.
Since there is a $\Z[\pi]$-isomorphism~$(\Z^k)^*:=\Hom_{\Z[\pi]}(\Z^k,\Z[\pi]) \cong \Z^k$, in this case,  the adjoint of the hermitian form $b$ determines an element in
\begin{align*}
\Hom_{\Z[\pi]}(H^2(\partial X_i;\Z[\pi]),H^2(\partial X_i;\Z[\pi])^*)
&\cong \Hom_{\Z[\pi]}(\Z^k \oplus T,(\Z^k \oplus T)^*)
=\Hom_{\Z[\pi]}(\Z^k \oplus T,(\Z^k)^*) \\
&\cong \Hom_{\Z[\pi]}(\Z^k,(\Z^k)^*).
\end{align*}
Put differently, $b$ is $\Z$-valued and is determined by $T$ and by the integer-valued symmetric form on~$b|_{\Z^k \times \Z^k}$.
To see that the form is $\Z$-valued,  note that since~$\Z^k$ is fixed by the $\pi=\Z_{2 \ell}$-action,  it takes values in~$\mathcal{N} \cdot \Z[\Z_{2\ell}] \cong \Z$.
If the universal covers are nonspin,  the second item already ensures that~$h$ extends and so there is nothing to prove.
We therefore assume that the universal covers are spin (i.e.\ that their equivariant intersection forms are weakly even) 

We claim that~$b$ is weakly even. 
Since the form~$b$ takes values in~$\mathcal{N} \cdot \Z[\Z_{2\ell}] \cong \Z$,  it satisfies~$b(x,x)_h=b(x,x)_{h'}$ for every~$h,h' \in \Z_{2 \ell}$ and in particular~$b(x,x)_1=b(x,x)_g$ for every~$g$ of order~$2$.
Thus in order to prove that~$b$ is weakly even,  it suffices to prove that~$b(x,x)_g$ is even for every nontrivial~$g$ of order~$2$;  such a~$g$ exists because we assumed that~$\pi \cong \Z_{2\ell}$ with~$\ell >0$.
Setting~$X:=X_0 \cup_h X_1$, the arguments in~\cite[Section 10]{ConwayKasprowski4Manifolds} (which apply since $\lambda_{X_i}$ is weakly even for $i=0,1$) show that this is equivalent to proving that~$\lambda_X(x,x)_g$ is even for every nontrivial~$g$ of order~$2$.
The latter assertion is true thanks to Lemma~\ref{lem:EvenElemOfOrder2}.
Thus~$b$ is weakly even, as claimed.

Since $b$ is a weakly even integral form,  it is necessarily even.
The secondary obstruction now vanishes thanks to~\cite[Theorem 2.22]{ConwayKasprowski4Manifolds}.
As explained above,  it now follows from~\cite{ConwayKasprowski4Manifolds} that~$h$ extends to a homeomorphism, as required.
%%%Don't cite again...Hard to cite; explained above.
\end{proof}

\subsection{Surface exteriors and compatible pairs}
\label{sub:ProofMeta}

The next result, which consists of applying Theorem~\ref{thm:CK4Manifold} to surface exteriors,  provides a first step towards the proof of Theorem~\ref{thm:CompatiblePairIntro}.

\begin{theorem}
\label{thm:CKForSurfaceExteriors}
Let~$X$ be a simply-connected~$4$-manifold with~$\partial X=S^3$, and let $K \subset S^3$ be a knot.
When $d>0$ is not a prime,  assume that~$\Sigma_d(K)$ is a rational homology sphere.
Let~$S_0,S_1 \subset X$ be~$\Z_d$-surfaces with boundary~$K$, with the same Euler number, and
whose exteriors are either both spin or both nonspin.
The following assertions are equivalent:
\begin{itemize}
\item The surfaces~$S_0$ and~$S_1$ are equivalent rel.\ boundary.
\item There is a bundle isomorphism~$H\colon (\overline{\nu}(S_0),S_0) \to (\overline{\nu}(S_1),S_1)$ such that the homeomorphism
$$h:=\id_{E_K}  \cup H| \colon \partial X_{S_0} \to \partial X_{S_1}$$
fits in a~$k$-invariant-preserving compatible pair~$(F,h)$,  and is such that when the universal covers of the~$X_{S_i}$ are spin, the universal cover of the union~$X_{S_0} \cup_h X_{S_1}$ is also spin.
\end{itemize}
%%Only in main theorem:
%When $H_1(\partial X_{S_0};\Z[\pi])^*=0$,  or when $\pi_1=\Z_{2\ell}$ with $\ell>0$ and $H_1(\partial X_i;\Z[\Z_d]) \cong T \oplus \Z^k$, the~$k$-invariant and spin conditions can be omitted.
\end{theorem}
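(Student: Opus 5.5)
The plan is to reduce the statement to Theorem~\ref{thm:CK4Manifold} applied to the exteriors $X_0:=X_{S_0}$ and $X_1:=X_{S_1}$, with $\pi=\Z_d$.

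\emph{Forward direction.} Suppose $\Phi\colon (X,S_0)\to(X,S_1)$ is a rel.\ boundary homeomorphism. After an isotopy supported near $S_1$ (uniqueness of topological normal bundles), $\Phi$ maps $\overline{\nu}(S_0)$ to $\overline{\nu}(S_1)$ by a bundle isomorphism $H$ that is the identity near $K$. Then $\Phi|\colon X_{S_0}\to X_{S_1}$ restricts on the boundary to $h=\id_{E_K}\cup H|$. It induces an isometry $F$ on $\Z[\Z_d]$-homology which, by naturality, forms a compatible pair with $h$ and preserves relative $k$-invariants. Moreover $X_{S_0}\cup_h X_{S_1}$ is homeomorphic to the double of $X_{S_1}$, whose universal cover is spin whenever $\widetilde{X}_{S_1}$ is.

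\emph{Converse.} Given $H$ and a $k$-invariant preserving compatible pair $(F,h)$, we apply Theorem~\ref{thm:CK4Manifold} to extend $h$ to a homeomorphism $X_{S_0}\to X_{S_1}$. Gluing this homeomorphism to $H$ along $\partial\overline{\nu}(S_i)\setminus\nu(K)$ gives a homeomorphism $(X,S_0)\to(X,S_1)$ that is the identity on $S^3=\partial X$. Since $E_K\subset\partial X_{S_i}$ is mapped by $\id$, this is a rel.\ boundary equivalence.

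For the converse we must check the hypotheses of Theorem~\ref{thm:CK4Manifold}.

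First, $\pi_1(\partial X_{S_i})\to\pi_1(X_{S_i})\cong\Z_d$ is surjective, since the meridian of $K$, i.e.\ the $S^1$-fibre, generates $H_1(X_{S_i})$; this is Lemma~\ref{lem:H1NonOrientable} and the orientable analogue.

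Second, the Alexander module must have the form $L\oplus T\oplus\Z^k$ with $L$ free, $T^*=0$ and $\Z^k$ trivial. By Propositions~\ref{prop:AlexanderModule} and~\ref{prop:AlexanderModuleNonOri} it is $H_1(\Sigma_d(K))\oplus\Z_{e/d}\oplus\Z^{2g}$, or the analogous nonorientable module. When $\Sigma_d(K)$ is a rational homology sphere, $H_1(\Sigma_d(K))$ is finite, and finite $\Z$-torsion modules have trivial $\Z[\Z_d]$-dual, so they may be placed in $T$. When $d$ is prime this finiteness is automatic by Remark~\ref{rem:HomologyBranched}. The $\Z^{2g}$ summand carries the trivial action. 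In the nonorientable case with $d=2$, the twisted action $T\eta=\eta+w_1(\eta)\widetilde{\mu}$ only differs from the trivial one by torsion. I would handle this by splitting off $\Z^{h-1}$ after a change of basis, modulo the finite part. The cases $d=0,1$ fall under Convention~\ref{conv:LTZ}.

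Third, we need an isomorphism $u$ with $u\circ\iota_0=\iota_1\circ h_*$. Both coefficient maps send the meridian to $1$ and surface curves to $0$ after choosing nice identifications as in Construction~\ref{cons:CoeffSys}. Choosing $H$ compatible with these, as in Remark~\ref{rem:Necessary}, we may take $u=\id$.

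Fourth, the Kirby--Siebenmann condition: $\ks(X_{S_i})=\ks(X)-\ks(\overline{\nu}(S_i))=\ks(X)$ for both $i$, so the invariants agree.

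Finally, in the spin case the required spin condition is exactly the one assumed in the statement, and in the nonspin case no extra condition is needed.

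The main obstacle is the second hypothesis: verifying the Alexander module decomposition $L\oplus T\oplus\Z^k$ with the $\Z^k$ part carrying the trivial action. The torsion part must satisfy $T^*=0$, and for nonprime $d$ this is where the rational homology sphere hypothesis is essential. In the nonorientable case the nontrivial $\Z[\Z_2]$-action on $H_1(\Sigma)$ must be absorbed into the trivial-action and torsion summands.
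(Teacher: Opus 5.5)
Your argument follows the paper's proof. You obtain $u$ from the fact that $h$ intertwines the coefficient systems, get $\ks(X_{S_0})=\ks(X)=\ks(X_{S_1})$ by additivity, and read off from Propositions~\ref{prop:AlexanderModule} and~\ref{prop:AlexanderModuleNonOri} that Theorem~\ref{thm:CK4Manifold} applies. You then extend $h$ and glue in $H$. The forward direction (the union is a double, so its universal cover is spin), the $\pi_1$-surjectivity check and the Kirby--Siebenmann check are all fine.

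\textbf{The step you flag fails for even genus.} Suppose the $S_i$ are nonorientable of even genus $h$ and $d=2$. Set $M=(\Z_2\langle\widetilde\mu\rangle\oplus H_1(\Sigma))/\langle(e/2,[\partial\Sigma])\rangle$.
\begin{itemize}
\item The $\Z$-torsion $T_\Z M$ is generated by $\widetilde\mu$ and $\gamma=\gamma_1+\dots+\gamma_h$. Here $\widetilde\mu\neq 0$ because the relation has nonzero $H_1(\Sigma)$-component.
\item The deck transformation fixes both generators, since $w_1(\gamma)=h\equiv 0$.
\item Every lift of the image of $\gamma_1$ in $M/T_\Z M\cong\Z^{h-1}$ has the form $x=\gamma_1+a\widetilde\mu+c\gamma$. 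For it, $Tx-x=(1+ch)\widetilde\mu=\widetilde\mu\neq 0$.
\end{itemize}
So $T_\Z M$ has no $T$-fixed complement, and no change of basis splits off a trivial $\Z^{h-1}$. The finite summand $H_1(\Sigma_2(K))$ does not help, because it is a $\Z[\Z_2]$-summand. A summand with vanishing $\Z[\Z_2]$-dual must be finite, and a free summand would make the rational action nontrivial. Hence the Alexander module is not of the form required by Theorem~\ref{thm:CK4Manifold} at all. For odd $h$ your idea does work: $\gamma_1+\gamma,\dots,\gamma_{h-1}+\gamma$ span a $T$-fixed complement.

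The paper's proof does not handle this case separately either; it only cites the two propositions. To complete the argument in the even-genus case, you would have to go back into the proof of Theorem~\ref{thm:CK4Manifold} and check that it only needs $M/T_\Z M$ to carry the trivial action. This holds wherever the decomposition is visibly used: the step ``weakly even implies even'' and the computation of $\Hom(M,M^*)$ in the first item. Both only need that $\Z[\Z_2]$-linear maps to $\Z[\Z_2]$ kill $\Z$-torsion and that $M/T_\Z M$ is trivial. The appeal to \cite[Proposition~7.7]{ConwayKasprowski4Manifolds} would still have to be rechecked for a non-split extension.
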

\begin{proof}
If the surfaces are equivalent rel.\ boundary, then an equivalence $(X,S_0) \to (X,S_1)$ gives the required compatible pair~$(F,h)$.
We therefore focus on the converse.
Assume that we are given~$H$ and~$F$ as in the theorem statement.
As mentioned in Remark~\ref{rem:Necessary}, this implicitly means that~$(H|_\partial)_* \colon \pi_1(\partial X_{S_0}) \to \pi_1(\partial X_{S_1})$ must commute with the coefficient systems to~$\Z_d$.
In other words,  there is an isomorphism~$u \colon \pi_1(X_{S_0}) \to \pi_1(X_{S_1})$ with~$u \circ \iota_0 =\iota_1 \circ h_*$, where we
use~$\iota_j \colon \pi_1(\partial X_{S_j}) \to \pi_1(X_{S_j})$ to denote the inclusion induced map for $j=0,1$.
The additivity of the Kirby--Siebenmann invariant (see e.g.~\cite[Theorem 9.2]{FriedlNagelOrsonPowell}) ensures that~$\ks(X_{S_0})=\ks(X)=\ks(X_{S_1})$.
Next, using the assumption on~$\Sigma_d(K)$, the Alexander module calculations from Propositions~\ref{prop:AlexanderModule} and~\ref{prop:AlexanderModuleNonOri} ensure that Theorem~\ref{thm:CK4Manifold} applies to the surface exteriors.
(If~$b_1(\Sigma_d(K))>0$,  then Remark~\ref{rem:HomologyBranched} states that the~$\Z$-free part of~$H_1(\Sigma_d(K))$ is never endowed with trivial~$\Z_d$-action, so Theorem~\ref{thm:CK4Manifold} cannot be applied).
Under the hypotheses of the present theorem, we deduce that the homeomorphism~$h$ extends over the surface exteriors,  therefore leading to the required equivalence.
\end{proof}

\subsection{An equivalent characterisation of compatible pairs}
\label{sub:CompatiblePairEquiv}

This short section returns to the setting of general~$4$-manifolds and provides an alternative (but equivalent) description of compatible pairs.
In brief, we show that compatible pairs, and therefore Theorem~\ref{thm:CK4Manifold},  can be formulated using the homology of  universal covers instead of twisted homology.
In what follows, given a left~$\Z[\pi]$-module~$H$, we endow~$\Hom_\Z(H,\Z)$ with the right~$\pi$-action given by~$(\gamma \cdot \varphi)(x):=\varphi(\gamma x).$ 
Taking the coefficient at $g=e$ induces a right~$\Z[\pi]$-linear map~$\ev_e \colon \Hom_{\Z[\pi]}(H,\Z[\pi]) \to \Hom_\Z(H,\Z)$ which is an isomorphism when~$\pi$ is finite.
%f(x)=\sum_g f_g(x)g
%ev_e((f \cdot \gamma)(x))=ev_e(f(\gamma x))=f_e(\gamma x)
%ev_e(f)\gamma (x)=\gamma f_e(x)
%Linearity f(\gamma x)=\gamma f(x). Take coefficient at neutral element f_e(\gamma x)=\gamma f_{\gamma^{-1}}(x)
The following proposition provides an equivalent characterisation of compatible pairs for~$4$-manifolds with finite fundamental group.

\begin{proposition}
\label{prop:EquivCompatiblePair}
Let~$X_0$ and~$X_1$ be~$4$-manifolds with finite fundamental group~$\pi$,  let~$h \colon \partial X_0 \to~\partial X_1$ be a homotopy equivalence,
and assume there is an isomorphism~$u \colon \pi_1(X_0) \to \pi_1(X_1)$ that satisfies~$u \circ \iota_0=\iota_1 \circ h$.
\begin{itemize}
\item The set of~$\Z[\pi]$-isomorphisms~$H_2(\widetilde{X}_0)  \to H_2(\widetilde{X}_1)$ that preserve the~$\Z$-intersection forms is in canonical bijection with the set of isometries~$(H_2(X_0;\Z[\pi]),\lambda_{X_0})  \to (H_2(X_1;\Z[\pi]),\lambda_{X_1})$ of the~$\Z[\pi]$-intersection forms.
\item The set of compatible pairs~$(F,h)$ is in canonical bijection with the set of pairs~$(F',h)$ where~$F'$ is a~$\Z[\pi]$-isomorphism~$H_2(\widetilde{X}_0)  \to H_2(\widetilde{X}_1)$ that preserves the~$\Z$-intersection forms and fits into the following commutative diagram of~$\Z[\pi]$-modules and~$\Z[\pi]$-linear maps:
$$
\xymatrix@C0.4cm@R0.4cm{
H_2(\partial \widetilde{X}_0)
\ar[r]\ar[d]_{h_*}&
H_2(\widetilde{X}_0)
\ar[r]\ar[d]_{F'}&
{\overbrace{H_2(\widetilde{X}_0,\partial \widetilde{X}_0)}^{\cong \Hom_\Z(H_2(\widetilde{X}_0),\Z)}}
\ar[r]\ar[d]_{((F')^*)^{-1}}&
H_1(\partial \widetilde{X}_0)
\ar[d]_{h_*}\
\ar[r]&0
\\
%%%%%%%%%%%%
H_2(\partial \widetilde{X}_1)
\ar[r]&
H_2(\widetilde{X}_1)
\ar[r]&
{\underbrace{H_2(\widetilde{X}_1,\partial \widetilde{X}_1)}_{\cong \Hom_\Z(H_2(\widetilde{X}_1),\Z)}}
\ar[r]&
H_1(\partial \widetilde{X}_1)
\ar[r]&0.
}
$$
\end{itemize}
\end{proposition}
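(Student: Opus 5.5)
The plan is to exploit the canonical identification of twisted homology with the homology of the universal cover when $\pi$ is finite, together with the map $\ev_e$ recalled just before the statement. For a finite (or compact) cover, $C_*(X_j;\Z[\pi]) = C_*(\widetilde{X}_j)$ as left $\Z[\pi]$-chain complexes. Hence $H_2(X_j;\Z[\pi]) = H_2(\widetilde{X}_j)$, and likewise for the boundary and relative groups. The condition $u\circ\iota_0=\iota_1\circ h$ ensures that $h$ lifts to a $u$-equivariant map $\partial\widetilde{X}_0\to\partial\widetilde{X}_1$, so that $h_*$ on twisted homology agrees with the map on covers. Under these identifications, a $\Z[\pi]$-isomorphism $F$ is literally the same thing as a $\Z[\pi]$-isomorphism $F'$; all the content lies in comparing the forms and the right-hand squares.

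For the first item I would recall the standard formula $\lambda_X(x,y)=\sum_{g\in\pi} (x\cdot_{\widetilde{X}} gy)\, g^{-1}$, up to the convention on $g$ versus $g^{-1}$, where $\cdot_{\widetilde X}$ is the $\Z$-intersection form of the universal cover. In particular $\lambda_X(x,y)_e = Q_{\widetilde{X}}(x,y)$, so $Q_{\widetilde X}$ is $\ev_e$ of the adjoint of $\lambda_X$. Conversely, $\lambda_X$ is recovered from $Q_{\widetilde X}$ by equivariance: $\lambda(x,y)_g=\lambda(x,g^{\pm1}y)_e$. It follows that a $\Z[\pi]$-isomorphism preserves $\lambda$ if and only if it preserves $Q$, which gives the canonical bijection, namely the identity on underlying maps.

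For the second item I would compare the defining diagram of a compatible pair in the twisted setting (the sequence $H_2(\partial X)\to H_2(X)\to H_2(X,\partial X)\cong \overline{H^2(X;\Z[\pi])}\xrightarrow{\ev} H_2(X)^*\to H_1(\partial X)\to 0$, with the map $(F^*)^{-1}$) with the stated one. The key is a commutative square relating $\ev\colon H^2(X;\Z[\pi])\to\Hom_{\Z[\pi]}(H_2,\Z[\pi])$ and the $\Z$-valued evaluation $H^2(\widetilde{X})\to\Hom_\Z(H_2(\widetilde X),\Z)$ via $\ev_e$. For finite $\pi$, the isomorphism $H^*(X;\Z[\pi])\cong H^*_c(\widetilde X)=H^*(\widetilde X)$ sends a cochain $f$ to $\ev_e\circ f$, so the squares commute. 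Since $\ev_e$ is natural, $\ev_e\circ (F^*)^{-1}=((F')^*)^{-1}\circ\ev_e$, and the twisted diagram maps isomorphically onto the untwisted one. Surjectivity onto $H_1(\partial)$ together with the lifted $h_*$ then matches the two conditions exactly.

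The main obstacle is the bookkeeping of involutions and left/right module structures. One must check that $\ev_e$ is $\Z[\pi]$-linear for the conventions $H^*=\overline{\Hom}$ and $(\gamma\varphi)(x)=\varphi(\gamma x)$, and that Poincaré duality on $X$ with $\Z[\pi]$ coefficients corresponds to Poincaré duality on the compact manifold $\widetilde X$. I would verify this at the cochain level first, and then deduce all squares by naturality.
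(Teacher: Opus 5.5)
Your proposal is correct and follows essentially the same route as the paper. The paper also identifies $H_*(X_i;\Z[\pi])$ with $H_*(\widetilde{X}_i)$ via the chain-level isomorphism $\theta_i$, relates $\lambda_{X_i}$ to $Q_{\widetilde{X}_i}$ through $\lambda(x,y)=\sum_{g}Q_{\widetilde{X}_i}(x,gy)g^{-1}$ and the coefficient map $\ev_e$, checks that the resulting ladder of exact sequences commutes, and takes $F':=\theta_1\circ F\circ\theta_0^{-1}$. For the compatibility of Poincar\'e duality and evaluation with this identification, which you propose to verify at the cochain level, the paper instead cites Kasprowski--Powell--Teichner.
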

\begin{proof}
In what follows, for $i=0,1$, we write $Q_{\widetilde{X}_i}$ for the $\Z$-intersection form on $H_2(\widetilde{X}_i)$ and $\lambda_{\widetilde{X}_i}$ for the~$\Z[\pi]$-valued hermitian form on~$H_2(\widetilde{X}_i)$ given by
$$\lambda_{\widetilde{X}_i}(x,y)=\sum_{g \in \pi}Q_{\widetilde{X}_i}(x, gy)g^{-1}.$$
In what follows, the adjoint of a hermitian form $\lambda \colon H \times H \to R$ is denoted $\Ad \lambda \colon H \to \overline{\Hom(H,R)}.$
For $i=0,1,$ consider the following diagram of~$\Z[\pi]$-modules and~$\Z[\pi]$-linear maps:
$$
\xymatrix@C0.7cm@R0.4cm{
H_2(\partial X_i;\Z[\pi])
\ar[r]\ar[d]_=&
H_2(X_i;\Z[\pi])
\ar[r]\ar[d]_=&
H_2(X_i,\partial X_i;\Z[\pi])
\ar[r]\ar[d]^{\ev \circ \PD^{-1}}_\cong&
H_1(\partial X_i;\Z[\pi])
\ar[d]_\cong \\
%%%%%%
H_2(\partial X_i;\Z[\pi])
\ar[r]\ar[d]^{\theta_i}_\cong&
H_2(X_i;\Z[\pi])
\ar[r]^-{\Ad \lambda_{X_i}}\ar[d]^{\theta_i}_\cong&
\overline{\Hom_{\Z[\pi]}(H_2(X_i;\Z[\pi]),\Z[\pi])}
\ar[r]\ar[d]^{\theta_i}_\cong&
\coker( \Ad \lambda_{\widetilde{X}_i} )
\ar[d]_\cong
\\
%%%%%%
H_2(\partial \widetilde{X}_i)
\ar[r]\ar[d]_=&
H_2(\widetilde{X}_i)
\ar[r]^-{\Ad \lambda_{X_i}}\ar[d]_=&
\overline{\Hom_{\Z[\pi]}(H_2(\widetilde{X}_i),\Z[\pi])}
\ar[r]\ar[d]^{(\ev_e)_*}_\cong&
\coker( \Ad \lambda_{\widetilde{X}_i})
\ar[d]_\cong\\
%%%%%%
H_2(\partial \widetilde{X}_i)
\ar[r]&
H_2(\widetilde{X}_i)
\ar[r]^-{\Ad Q_{\widetilde{X}_i}}&
%The overline is just to make it into a left module but it doesn't really matter because the action is trivial.
\overline{\Hom_\Z(H_2(\widetilde{X}_i),\Z)}
%The overline is just to make it into a left module but it doesn't really matter because the action is trivial.
\ar[r]&
\coker(\Ad Q_{\widetilde{X}_i} )\\
%%%%%%
H_2(\partial \widetilde{X}_i)
\ar[r]\ar[u]^=&
H_2(\widetilde{X}_i)
\ar[r]\ar[u]^=&
H_2(\widetilde{X}_i,\partial \widetilde{X}_i)
\ar[r]\ar[u]_{\ev \circ \PD^{-1}}^\cong&
H_1(\partial \widetilde{X}_i).
\ar[u]^\cong
\\
%%%%%%%%%%%%
}
$$
The map labeled $\theta_i$ is induced by the isomorphism $\Z[\pi] \otimes_{\Z[\pi]} C_*(\widetilde{X}_i)=C_*(X_i;\Z[\pi]) \to C_*(\widetilde{X}_i)$ and similarly for $\partial X_i$.
We argue that this diagram commutes.
The leftmost and rightmost squares are readily seen to commute, so we focus on the central squares.
The top-most and bottom-most central squares commute by definition of the (equivariant) intersection form.
The second central square from the top commutes e.g. by~\cite[Proposition 4.1]{KasprowskiPowellTeichner}.
The remaining square commutes by definition of~$\lambda_{\widetilde{X}_i}$ and of the augmentation map.

The commutativity of the central squares implies the first assertion by setting~$F':=\theta_1 \circ F \circ \theta_0^{-1}$.
%%Don't delete
%Q_{X_1}(\theta_1F\theta_0^{-1}(x),\theta_1F\theta_0^{-1}(y))
%=\varepsilon \lamba_1 (F\theta_0^{-1}(x),F\theta_0^{-1}(y))
%=\varepsilon \lamba_0 (\theta_0^{-1}(x),\theta_0^{-1}(y))
%Q_{X_0}(x,y).
The commutativity of the entire diagram implies the second assertion.
\end{proof}

We conclude by showing that when $\pi$ is cyclic,  the compatible pair condition reduces to verifying an equality on Alexander modules.
\begin{proposition}
\label{prop:CompatibleSimplification}
Let~$X_0$ and~$X_1$ be~$4$-manifolds with finite fundamental group~$\Z_d$,  let~$h \colon \partial X_0 \to~\partial X_1$ be a degree one homotopy equivalence,
and assume there is an isomorphism~$u \colon \pi_1(X_0) \to \pi_1(X_1)$ that satisfies~$u \circ \iota_0=\iota_1 \circ h$.
Given an isometry $F \colon \lambda_{X_0} \cong \lambda_{X_1}$, the following are equivalent:
\begin{enumerate}
\item the pair $(F,h)$ is compatible,
\item the pair $(F,h)$ satisfies 
$$\partial F=h_* \colon H_1(\partial X_0;\Z[\Z_d]) \to H_1(\partial X_1;\Z[\Z_d]).$$
\end{enumerate}
\end{proposition}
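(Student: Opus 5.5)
The plan is to use the diagram-based characterisation of compatible pairs from Proposition~\ref{prop:EquivCompatiblePair}, transported back to twisted coefficients. Recall that a compatible pair $(F,h)$ is one for which the ladder
$$H_2(\partial X_0;\Z[\Z_d]) \to H_2(X_0;\Z[\Z_d]) \to H_2(X_0,\partial X_0;\Z[\Z_d]) \to H_1(\partial X_0;\Z[\Z_d]) \to 0$$
(and its analogue for $X_1$), with vertical maps $h_*$, $F$, $(F^*)^{-1}$ (via $\ev\circ\PD^{-1}$) and $h_*$, commutes. Surjectivity on the right holds because $\iota_j$ is surjective, so $H_1(X_j;\Z[\Z_d])=0$. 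The map $\partial F$ is, by definition, the map induced by $(F^*)^{-1}$ on cokernels. Hence (1)$\Rightarrow$(2) is immediate: commutativity of the rightmost square together with surjectivity of $H_2(X_0,\partial X_0;\Z[\Z_d])\to H_1(\partial X_0;\Z[\Z_d])$ shows that $h_*$ coincides with the induced map $\partial F$.

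For (2)$\Rightarrow$(1), the two right-hand squares commute by the hypothesis $\partial F=h_*$ and by the isometry property of $F$ (which gives $(F^*)^{-1}\circ\Ad\lambda_{X_0}=\Ad\lambda_{X_1}\circ F$). It remains to check the leftmost square, i.e.\ that $F\circ i_0 = i_1\circ h_*$ on $H_2(\partial X_0;\Z[\Z_d])$. I would identify $\im(i_j)$ with $\ker(\Ad\lambda_{X_j})$ by exactness, so $F\circ i_0$ and $i_1 \circ h_*$ both land in $\ker \Ad\lambda_{X_1}$, but they need not agree a priori. Here I would use duality on the boundary: by Poincar\'e duality, $H_2(\partial X_j;\Z[\Z_d])\cong H^1(\partial X_j;\Z[\Z_d])$, and since $H^1(X_j;\Z[\Z_d])=0$ for $\pi$ finite, the map $i_j$ is determined by evaluation against $H_1$. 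Concretely, I would argue that $i_j$ is the dual of the boundary map $H_2(X_j,\partial X_j;\Z[\Z_d])\to H_1(\partial X_j;\Z[\Z_d])$ under the identifications $H_2(X_j;\Z[\Z_d])\cong H^2(X_j,\partial X_j;\Z[\Z_d])\cong \Hom(H_2(X_j,\partial X_j;\Z[\Z_d]),\Z[\Z_d])$. The latter uses $\ev$ being an isomorphism, which needs $H^i(\Z_d;\Z[\Z_d])=0$ for $i>0$, and that $h$ is a degree one homotopy equivalence so that it commutes with Poincar\'e duality. Dualising the already-commuting rightmost square then yields the leftmost square.

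The main obstacle is this last step: making the duality precise with twisted coefficients. The cohomology $H^1(\partial X_j;\Z[\Z_d])$ is not simply $\Hom(H_1,\Z[\Z_d])$, since an $\Ext$ term appears. I would therefore follow the universal-cover translation of Proposition~\ref{prop:EquivCompatiblePair}, which replaces $\Hom_{\Z[\pi]}(-,\Z[\pi])$ with $\Hom_\Z(-,\Z)$ (valid because $\pi$ is finite). Working in the long exact sequence of the universal cover, I would then check naturality of the $\Ext$ terms under $h$ and $F$. If a direct check is awkward, the fallback is to quote the equivalence of compatible pairs and compatible triples \cite[Proposition 2.11]{ConwayKasprowski4Manifolds}, whose extra data is automatic since $H^2(\pi;\Z[\pi])=0$, and verify only the condition on $H_1$.
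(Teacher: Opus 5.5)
Your proposal is correct and follows essentially the paper's route. The paper passes to universal covers via Proposition~\ref{prop:EquivCompatiblePair} and uses that the inclusion $H_2(\partial\widetilde{X}_i)\to H_2(\widetilde{X}_i)$ is adjoint to the connecting map $H_2(\widetilde{X}_i,\partial\widetilde{X}_i)\to H_1(\partial\widetilde{X}_i)$ under the intersection pairings, with the degree one map $h$ respecting duality; it phrases your ``dualise the right-hand square'' step as $F$ and $h_*$ both preserving the pairing $Q_{\partial\widetilde{X}_i}\colon H_1(\partial\widetilde{X}_i)\times H_2(\partial\widetilde{X}_i)\to\Z$, which is nonsingular modulo torsion. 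The $\Ext$ obstacle you anticipate does not arise on the universal cover, since $H^1(\partial\widetilde{X}_i;\Z)\cong\Hom_\Z(H_1(\partial\widetilde{X}_i),\Z)$ by the universal coefficient theorem, so the fallback to compatible triples is unnecessary.
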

\begin{proof}
Since the first statement clearly implies the second,  we focus on showing that the equality~$\partial F=h_*$ on $H_1(\partial X_0;\Z[\Z_d])$ suffices to get a compatible pair, i.e.\ implies the corresponding equality on~$H_2(\partial X_0;\Z[\Z_d])$.
When~$d=0$,  since~$\Z \cong H_2(\partial X_i;\Z[\Z]) \to H_2(X_i;\Z[\Z])$ is the zero map (see e.g.~\cite[Lemma~3.2]{ConwayPowell}), there is nothing to prove; we therefore assume that~$d>0$.
We work with the characterisation of compatible pairs from Proposition~\ref{prop:EquivCompatiblePair}, i.e.\ with the $\Z$-homology of universal covers.
Since we assumed that~$F$ and~$h$ induce the same isomorphism on~$H_1$,  it remains to deduce that~$F$ and~$h$ restrict to the same map on~$H_2(\partial \widetilde{X}_0)$; the use of the word ``restrict" is justified because~$H_3(\widetilde{X}_i,\partial \widetilde{X}_i) \cong H^1(\widetilde{X}_i)=0$ for~$i=0,1$.
Consider the intersection form
$$Q_{\partial \widetilde{X}_i} \colon H_1(\partial \widetilde{X}_i) \times H_2(\partial \widetilde{X}_i) \to \Z.$$
Since the~$\widetilde{X}_i$ are simply-connected,  it is known that the isometry~$F$ induces an isometry of~$Q_{\partial \widetilde{X}_i}$ for $i=0,1$; this is mentioned e.g. in~\cite[page 336]{BoyerUniqueness} and follows by considering the connecting homomorphism~$\partial \colon H_2(\widetilde{X}_i,\partial  \widetilde{X}_i) \to H_1(\partial  \widetilde{X}_i)$ and observing that~$Q_{\partial \widetilde{X}_i}(\partial(x),y)=Q_{\widetilde{X}_i}^\partial(x,\operatorname{incl}(y))$ for every~$x \in H_2(\widetilde{X}_i,\partial  \widetilde{X}_i)$ and every~$y \in H_2(\partial  \widetilde{X}_i)$.
%%%
%%Draw the diagram with long exact sequence, PD and ev.
%Q_\partial(\partial F\partial x,F(\incl(y))
%=Q^\partial(\partial F^{-*} x,F(\incl(y))
%=Q^\partial(x,\incl(y))
%=Q_\partial(\partial x,y).
Note that since the homotopy equivalence~$h$ has degree one,  it also induces such an isometry of this intersection form.
Thus, since~$\partial F=h_*$ on~$H_1(\partial \widetilde{X}_0)$,
for every~$x_1 \in H_1(\partial \widetilde{X}_1)$ and every~$x_2 \in H_2(\partial \widetilde{X}_0)$ we have
$$
0
=Q_{\partial \widetilde{X}_0}((\partial F^{-1}-h_*^{-1})(x_1),x_2)
=Q_{\partial \widetilde{X}_1}(x_1,(F-h_*)(x_2)).
$$
The form~$Q_{\partial \widetilde{X}_i}$ descends to a nonsingular form~$H_1(\partial \widetilde{X}_i)/TH_1(\partial \widetilde{X}_i) \times H_2(\widetilde{X}_i) \to \Z.$
Since~$\partial F$ and~$h$ both descend to this quotient, 
%%Don't delete
%%Isos so take torsion to torsion.
 we deduce that~$F$ and~$h$ agree on~$H_2(\partial \widetilde{X}_0)$.
Thus~$F$ and~$h$ form a compatible pair.
\end{proof}

\section{The relative~$k$-invariant condition}
\label{sec:kInvariant}

We collect situations where the $k$-invariant condition from Theorem~\ref{thm:CK4Manifold} can be omitted.

\begin{proposition}
\label{prop:NokInvariant}
Let~$X$ be a simply-connected~$4$-manifold that is either closed or has~$\partial X=S^3$, and let~$S_0,S_1 \subset X$  be~$\Z_d$-surfaces with the same (relative) Euler number.
%Not sure homologous is needed here.
%%I know that relative should be said after announcing K but it's makes it very clunky.
When~$\partial X=S^3$, fix a knot $K \subset S^3$ and assume that~$S_0,S_1$ have boundary $K$.
A compatible pair $(F,h) \colon X_{S_0} \to X_{S_1}$ preserves relative~$k$-invariants in the following circumstances:
\begin{itemize}
\item if the group $\pi$ is trivial or infinite cyclic,
\item if $\pi$ is finite and the $S_i$ are spheres, Moebius bands or projective planes.
\item if $\pi$ is finite and the $S_i$ are discs with nonzero relative Euler number.
%\item if $\pi$ is finite and $H_1(\partial X_i;\Z[\Z_d])^*=0$,
%\item if the $S_i$ are Euler number zero spheres.
\end{itemize}
\end{proposition}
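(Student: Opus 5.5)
The strategy is to show that in each listed case the group in which the relative $k$-invariants live, namely $H^3(\pi,\partial X_{S_0};\pi_2)$ (equivalently $H^3(\Z_d,Y;H)$ with $Y=\partial X_{S_0}$ and $H=H_2(X_{S_0};\Z[\pi])$), either vanishes, or the $k$-invariant is determined by data that any compatible pair already preserves. I would take the description of the relative $k$-invariant from \cite{ConwayKasprowskiKinvariant} as a class in $H^3(B\pi,Y;\pi_2(X_S))$, and use the long exact sequence of the pair
\[
H^2(Y;\pi_2)\to H^3(B\pi,Y;\pi_2)\to H^3(B\pi;\pi_2)\to H^3(Y;\pi_2).
\]

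\emph{Case $\pi$ trivial or infinite cyclic.} Here $B\pi$ is a point or $S^1$, so $H^3(B\pi;-)=0$. The remaining contribution comes from the cokernel of $H^2(B\pi;\pi_2)\to H^2(Y;\pi_2)$. I would invoke \cite{ConwayKasprowski4Manifolds}, where for $\pi$ of cohomological dimension at most one the $k$-invariant condition is shown to be automatic, so that compatible pairs always preserve it. Alternatively, one can note that the $2$-type of $X_S$ relative to $Y$ is determined by $\pi_2$ together with the boundary, since $B\pi$ is $1$-dimensional.

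\emph{Case $\pi=\Z_d$ finite and $S$ a sphere, disc, projective plane or Moebius band.} Here $\Sigma$ has $H_1(\Sigma)$ of rank $0$ modulo the boundary class. Propositions~\ref{prop:AlexanderModule} and~\ref{prop:AlexanderModuleNonOri} then give
\[
H_1(Y;\Z[\Z_d])\cong H_1(\Sigma_d(K))\oplus \Z_{e/d},
\]
or $H_1(\Sigma_2(K))\oplus\Z_2\oplus\Z_2$ or $\oplus\Z_4$ in the nonorientable case with $h=1$. Under the standing assumption on $\Sigma_d(K)$, this is a finite group whenever $e\neq0$; this is where nonzero Euler number enters for discs. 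For closed spheres and projective planes we take $K$ to be the unknot. The universal cover $\widetilde Y$ is then a rational homology sphere. I would use this to compute $H^*(\widetilde Y)$ and, via the Cartan--Leray spectral sequence, show that the relevant obstruction group reduces to one on which $(F,h)$ acts compatibly. Concretely, I would show that the map $H^3(B\pi,Y;\pi_2)\to H^3(B\pi;\pi_2)$ is injective, and that the image of the $k$-invariant in $H^3(\Z_d;\pi_2)\cong \hat H^{3}(\Z_d;\pi_2)$ is determined by $\pi_2$ being stably the augmentation ideal $I$. For $I$ the relevant Tate class is the canonical generator, and $F$ (which induces a stable isomorphism of $\pi_2$) carries generator to generator once it is compatible with $h$ on $H_1$ and $H_2$ of $\widetilde Y$.

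\emph{Main obstacle.} I expect the hardest step to be the injectivity, or at least the control of the image of $H^2(Y;\pi_2)\to H^3(B\pi,Y;\pi_2)$. For this I would use that $H^2(\widetilde Y)$ is finite and that $\pi_2$ is stably $I$. This should give $H^2(Y;\pi_2)$ generated by classes coming from $H^2(B\pi;\pi_2)$, possibly after an explicit computation with the small cell structure of $Y$ in these low-genus cases. Higher genus fails exactly because $H_1(\Sigma)$ contributes free summands with trivial action, so this cokernel becomes nontrivial.
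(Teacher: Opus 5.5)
\textbf{The case $\pi$ trivial or infinite cyclic.} Your treatment here is fine and essentially matches the paper. The paper cites \cite[Proposition~7.13]{ConwayKasprowski4Manifolds} for groups of cohomological dimension at most two.

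\textbf{The finite case rests on a false step.} Injectivity of $H^3(\pi,Y;\pi_2)\to H^3(\pi;\pi_2)$ is equivalent to surjectivity of $H^2(\pi;\pi_2)\to H^2(Y;\pi_2)$, and that surjectivity fails.
\begin{itemize}
\item Since $\pi_2(X_S)$ is only stably the augmentation ideal, it generally has free summands, e.g.\ $\pi_2\cong I\oplus\Z[\pi]^k$.
\item On each free summand, $H^2(\pi;\Z[\pi])=0$ because $\pi$ is finite.
\item By contrast, $H^2(Y;\Z[\pi])\cong H_1(Y;\Z[\pi])$ is the Alexander module. For example, it is $\Z_{e/d}$ for a sphere and $\Z_4$ or $\Z_2\oplus\Z_2$ for a $\Z_2$-projective plane.
\end{itemize}
So the kernel of your map contains copies of the Alexander module. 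Finiteness of $H_*(\widehat Y)$ does not make this cokernel vanish, and the relative $k$-invariant is not detected by its image in $H^3(\pi;\pi_2)$.

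Your treatment of the absolute part is also unsupported. The general compatibility argument, as used in the proof of Proposition~\ref{prop:BuildCompatiblePair} via \cite[Proposition~7.13]{ConwayKasprowski4Manifolds}, only shows that $d\cdot k$ is preserved, because $H^3(\Z_d;\pi_2)\cong\Z_d$. Nothing in your sketch explains why compatibility with $h$ forces $F_*$ to fix the class itself.

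The paper uses a different input. For these surfaces, Propositions~\ref{prop:AlexanderModule} and~\ref{prop:AlexanderModuleNonOri} show that the Alexander module is $\Z$-torsion, so $H_1(\partial X_{S_i};\Z[\Z_d])^*=0$. Together with $H_4(\Z_d)=0$, this is exactly the hypothesis of \cite[Proposition~7.15]{ConwayKasprowski4Manifolds}, which gives preservation of relative $k$-invariants directly.

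\textbf{A missing case: Euler number zero spheres.} For $\pi$ finite, the statement covers spheres of every Euler number. When $e=0$:
\begin{itemize}
\item the boundary of the exterior is $S^1\times S^2$, and its $\Z_d$-cover is again $S^1\times S^2$;
\item $H_1(\partial X_S;\Z[\Z_d])\cong\Z$ with trivial action.
\end{itemize}
This is neither a rational homology sphere nor a module with vanishing dual. So neither your argument nor the dual-vanishing argument applies.

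The paper handles this case separately in Proposition~\ref{prop:EulerNumberZeroSpheres}.
\begin{itemize}
\item Using $H_4(\pi)=0$ and Lemma~\ref{lem:KPTTheoremA}, one reduces $\C P^2$-stably to a model $M\#l\C P^2\#l'\overline{\C P}^2$. Here $M$ is the handle $2$-skeleton of the double of a presentation $2$-complex.
\item For this model, the sequence $0\to H_2(Y;\Z[\pi])/H^1(\pi;\Z[\pi])\to H_2(X;\Z[\pi])\to H_2(X,Y;\Z[\pi])\to H_1(Y;\Z[\pi])\to 0$ splits into three explicit pieces.
\item Choose $3$-connected maps $c,c_1$ to the relative $2$-type with $c=c_1\circ h$. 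An $\Ext^1$ group that identifies with $H^3(\pi;\Z[\pi]^m)=0$ then shows that $F_*-(c_1)_*^{-1}c_*$ factors through $H_2(X,Y;\Z[\pi])$.
\item Since $k_{X,Y}$ maps to zero in $H^3(\pi,Y;H_2(X,Y;\Z[\pi]))$ by \cite[Lemma~6.1]{ConwayKasprowskiKinvariant}, the $k$-invariant is preserved.
\end{itemize}
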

\begin{proof}
When the group~$\pi:=\Z_d$ is trivial or infinite cyclic,~$\cd(\pi) \leq 2$ and the result therefore follows from~\cite[Proposition 7.13]{ConwayKasprowski4Manifolds}.
If~$\pi$ is finite cyclic and the $S_i$ are spheres (resp.\ discs) with nonzero Euler number (resp.\ relative Euler number), then Proposition~\ref{prop:AlexanderModule} implies that~$H_1(\partial X_{S_i};\Z[\Z_d])^*=0$.
The same is true for Moebius bands or projective planes (without any Euler number assumption) thanks to Proposition~\ref{prop:AlexanderModuleNonOri}.
%%Because Alexander module just has Z_2+Z_2 or Z_4; no Z_e
Since~$H_4(\pi)=0$, the result then follows from~\cite[Proposition~7.15]{ConwayKasprowski4Manifolds}.
The case of Euler number zero spheres follows from Proposition~\ref{prop:EulerNumberZeroSpheres} below.
%is the last clause of Theorem~\ref{thm:CompatiblePair}.
\end{proof}

\subsection{Euler number zero spheres}
\label{sub:EulerNumberZeroSpheres}

This section completes the proof of Proposition~\ref{prop:NokInvariant} by showing that the $k$-invariant condition can be omitted in the case of Euler number zero spheres.

\begin{lemma}
\label{lem:KPTTheoremA}
Let $\pi$ be a finite group,  let $X_0,X_1$ be $4$-manifolds with $\pi_1(\partial X_i) \to \pi_1(X_i) \cong \pi$ surjective for $i=0,1$,  and let $h \colon \partial X_0 \to \partial X_1$ be a homeomorphism.
If~$\ks(X_0)=\ks(X_1)$ and~$H_4(\pi)=0$,  then there are integers $a,a',b,b' \geq 0$ so that~$h$ extends to a homeomorphism 
$$X_0 \#_a \C P^2 \#_b \overline{\C P}^2 \xrightarrow{\cong} X_1 \#_{a'} \C P^2 \#_{b'} \overline{\C P}^2.$$
\end{lemma}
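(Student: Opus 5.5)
The plan is to run the stable classification of $4$-manifolds with boundary (Kreck's modified surgery) relative to $h$. Form the closed $4$-manifold $M:=X_0\cup_h -X_1$. The statement is a consequence of showing that $M$ is bordant to a manifold with the same normal $1$-type as the $X_i$, via a bordism relative to the boundary identification. The normal $1$-type is $B\pi\times B\mathrm{TOP}$ in the totally non-spin case, with the spin variants handled similarly. Stable diffeomorphism, in the $\C P^2$-stable sense, is then detected by the bordism group $\Omega_4(B\pi\times B\mathrm{TOP})$ over the relevant $1$-type. That group is computed by the Atiyah--Hirzebruch spectral sequence with $E^2$-terms
\[
H_p(\pi;\Omega_q^{\mathrm{TOP}}),
\]
which in total degree $4$ reduces, up to the signature and Kirby--Siebenmann contributions, to $H_4(\pi)\oplus H_0(\pi;\Omega_4^{\mathrm{TOP}})$. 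Stabilising by $\C P^2$ and $\overline{\C P}^2$ kills the $w_2$-type distinction and the signature, so the relevant invariants of $M$ are $\ks(M)=\ks(X_0)-\ks(X_1)=0$ together with the image of $[M]$ in $H_4(\pi)$. Since $H_4(\pi)=0$ by hypothesis, the class vanishes.

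The key steps are as follows. First, use surjectivity of $\pi_1(\partial X_i)\to\pi_1(X_i)$ to see that any reference map $\partial X_0\to B\pi$ extends over both $X_i$. This gives classifying maps $c_i\colon X_i\to B\pi$ that are $2$-connected and compatible with $h$, and they glue to a map $M\to B\pi$. Second, after a single $\C P^2$ connected sum (or $\overline{\C P}^2$ if needed), put both $X_i$ in the totally non-spin normal $1$-type $\xi\colon B\pi\times B\mathrm{TOP}\to B\mathrm{TOP}$, so that each carries a normal $1$-smoothing restricting to the same one on the boundary via $h$. Third, show that $[M]$ vanishes in $\Omega_4(\xi)$ after adding copies of $\pm\C P^2$ to correct the signature, using $\ks(M)=0$ and $H_4(\pi)=0$. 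Fourth, apply Kreck's theorem: two normal $1$-smoothings in the same relative bordism class, rel.\ the identification $h$ of boundaries, become homeomorphic rel.\ $h$ after connected sum with copies of $S^2\times S^2$. In the non-spin setting these can be traded for $\C P^2\#\overline{\C P}^2$, which yields exactly the integers $a,a',b,b'$ of the statement.

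The main obstacle is the bordism computation, specifically checking that the edge terms $H_2(\pi;\Z_2)$ and $H_3(\pi;\Z_2)$ from $\Omega_2,\Omega_3$ do not contribute in total degree $4$ for the non-spin type. In $\Omega_4^{\mathrm{TOP}}$-bordism over $B\pi$ with no spin structure, the only coefficients in low degree are $\Omega_0=\Z$ and $\Omega_4^{\mathrm{TOP}}=\Z\oplus\Z_2$ (signature and $\ks$), since $\Omega_1=\Omega_2=\Omega_3=0$ for oriented $\mathrm{TOP}$ bordism. So $\Omega_4(B\pi\times B\mathrm{TOP}$-type$)\cong \Z\oplus\Z_2\oplus H_4(\pi)$, and the problem reduces to what was described above. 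Alternatively, I would cite this directly as the relative version of Kasprowski--Powell--Teichner's stable classification (their Theorem A). It applies since $\pi$ is finite and the universal covers can be made non-spin by a stabilisation.
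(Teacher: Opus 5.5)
Your proposal follows the paper's proof. Stabilise by $\C P^2$ so that the normal $1$-type is $\xi\colon B\pi\times\operatorname{BSTOP}\to\operatorname{BSTOP}$. It should be $\operatorname{BSTOP}$ rather than $\operatorname{BTOP}$, since the $X_i$ are oriented and the $1$-smoothing must be $2$-connected; your spectral sequence already uses oriented coefficients. Then compute $\Omega_4(\xi)\cong\Z\oplus\Z_2\oplus H_4(\pi)$, match signatures with further $\pm\C P^2$ summands, deduce that $X_0'\cup_h -X_1'$ is null-bordant over $\xi$, and apply Kreck's theorem rel.\ boundary.

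The only real difference is the last step. The paper invokes a relative version of \cite[Theorem~2.2]{KasprowskiPowellTeichner} to pass from $S^2\times S^2$-stable to $\C P^2$-stable homeomorphism. You instead use that $X_i'$ already contains a $\C P^2$ summand, so each $S^2\times S^2$ can be traded via $\C P^2\#(S^2\times S^2)\cong\C P^2\#\C P^2\#\overline{\C P}^2$ away from the boundary. That is more elementary and suffices here.

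One step is wrongly justified: your claim that surjectivity of $\iota_i\colon\pi_1(\partial X_i)\to\pi_1(X_i)$ makes a reference map $\partial X_0\to B\pi$ extend over both $X_i$. Surjectivity does let you extend a suitable map over $X_0$. But getting $c_1\colon X_1\to B\pi$ with $c_1\circ h=c_0$ requires $h_*(\ker\iota_0)=\ker\iota_1$, i.e.\ an isomorphism $u$ with $u\circ\iota_0=\iota_1\circ h_*$.

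This does not follow from the stated hypotheses, and without it the conclusion fails. A homeomorphism extending $h$ induces such a $u$, and $\C P^2$-stabilisation does not change $\pi_1$. For a counterexample, take $X_0=X_1$ to be the manifold $M$ from the proof of Proposition~\ref{prop:EulerNumberZeroSpheres}, built from the presentation $\langle g_1,g_2\mid g_1^2,g_2\rangle$ of $\Z_2$. Then $\partial M=\#^2(S^1\times S^2)$ and $\iota$ sends the second free generator to the trivial element. Let $h$ swap the two $S^1\times S^2$ summands; no such $u$ exists, so $h$ cannot extend.

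So this compatibility has to be added as a hypothesis. The paper's proof uses it tacitly as well, and it holds where the lemma is applied, since there $\iota_1\circ h_*=\iota_0$.
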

\begin{proof}
The proof is analogous to the proof of~\cite[Theorem A]{KasprowskiPowellTeichnerCP2}; we give an outline for the reader's convenience, assuming some familiarity with modified surgery~\cite{KreckSurgeryAndDuality}.
Adding copies of~$\C P^2$ to the $X_i$ ensures that their normal $1$-type becomes~$\xi=(\operatorname{BSTOP} \times B\pi \to \operatorname{BSTOP})$. 
The condition~$H_4(\pi)=0$ then implies that $\Omega_4(\xi)=\Z \oplus \Z_2$ detected by the signature and the Kirby--Siebenmann invariant. 
Add more $\C P^2$ and $\overline{\C P}^2$ to the $X_i$ if necessary so that the resulting manifolds $X_0'$ and $X_1'$ have the same signature.
It follows that the union $X_0'\cup_h -X_1'$ is nullbordant over the $1$-type. 
Hence the $X_i'$ are stably homeomorphic. 
A relative version of~\cite[Theorem~2.2]{KasprowskiPowellTeichner} then shows that the $X_i'$ (and therefore the $X_i$) are $\C P^2$-stably homeomorphic.
%For this the boundary condition is not needed. But we use it later for the doubles.
%AC: To do: Extract as a lemma before the proposition
\end{proof}

This next result implies completes the proof of Proposition~\ref{prop:NokInvariant}.

	\begin{proposition}
	\label{prop:EulerNumberZeroSpheres}
Let $\pi$ be a group with $H_4(\pi)=0$ and $H^k(\pi;\Z[\pi])=0$ for $k=2,3$, and let~$X_0, X_1$ be $4$-manifolds with fundamental group $\pi$,  and boundary homeomorphic to~$\#_{i=1}^n(S^1\times~S^2)$ such that the inclusion induces a surjection $\iota_i\colon F_n\cong \pi_1(\partial X_i)\to \pi$ for $i=0,1$.
Assume that~$\ks(X_0)=\ks(X_1)$.
If~$h\colon \partial X_0\to \partial X_1$ is a homotopy equivalence such that $\iota_1\circ h_*=\iota_0\colon F_n\to \pi$, and~$F\colon \pi_2(X_0)\to \pi_2(X_1)$ is a $\Z[\pi]$-isomorphism such that $(F,h)$ is a compatible pair, then 
$$F_*(k_{X_0,\partial X_0})=h^*(k_{X_1,\partial X_1}) \in H^3(\pi,Y_0;\pi_2(X_0)).$$
	\end{proposition}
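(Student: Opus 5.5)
Throughout, write $Y_i:=\partial X_i$. The plan is to compare the two relative $k$-invariants in a group that we first show is controlled by $\pi_2$ alone, and then use Lemma~\ref{lem:KPTTheoremA} to reduce to a case where the comparison is forced.

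\emph{Step 1: identify the group containing the $k$-invariants.} The relative $k$-invariant $k_{X_i,Y_i}$ lives in $H^3(\pi,Y_i;\pi_2(X_i))$, where $Y_i$ maps to $B\pi$ via $\iota_i$. Since $Y_i\simeq\#_n(S^1\times S^2)$, the map $Y_i\to B\pi$ factors through $\bigvee_n S^1=BF_n$ up to the $2$-skeleton, and $H^3(Y_i;\pi_2)\cong H^3(\bigvee_n S^2 \vee \bigvee_n S^1\cup e^3;\pi_2)$ is computed by the cellular chain complex of $Y_i$. I will write down the long exact sequence of the pair
$$
H^2(\pi;\pi_2(X_i))\to H^2(Y_i;\pi_2(X_i))\to H^3(\pi,Y_i;\pi_2(X_i))\to H^3(\pi;\pi_2(X_i))\to H^3(Y_i;\pi_2(X_i)).
$$
The aim is to show that the outer terms $H^2(\pi;\pi_2)$ and $H^3(\pi;\pi_2)$ give no extra freedom. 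Using that $\pi_2(X_i)$ is built from $\Z[\pi]$-modules via a resolution (as in the proof of Theorem~\ref{thm:CK4Manifold}, via \cite[Lemma 3.3]{ConwayKasprowski4Manifolds}) and the hypotheses $H^k(\pi;\Z[\pi])=0$ for $k=2,3$, together with a dimension-shifting argument, I expect to conclude that the map $H^3(\pi,Y_i;\pi_2)\to H^2(Y_i;\pi_2)$ (or its composite with a coboundary) is injective on the relevant subset. In that case the $k$-invariant is detected by data living on $Y_i$, namely the restriction of the $2$-type to the boundary.

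\emph{Step 2: use compatibility.} Once the $k$-invariant is detected on the boundary, the compatible pair condition says precisely that $F$ and $h$ agree on $H_2(\widetilde{Y}_i)\to \pi_2(X_i)$ and on the boundary map $H_2(\widetilde{X}_i,\widetilde{Y}_i)\to H_1(\widetilde{Y}_i)$. This should identify $F_*(k_{X_0,Y_0})$ and $h^*(k_{X_1,Y_1})$.

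\emph{Step 3: alternative route via stabilisation.} If Step~1 does not give injectivity outright, I would instead use Lemma~\ref{lem:KPTTheoremA}. Since $\ks(X_0)=\ks(X_1)$ and $H_4(\pi)=0$, the map $h$ extends to a homeomorphism $\Phi$ of the $\C P^2$-stabilisations. Such a homeomorphism preserves the stabilised $k$-invariants. Stabilisation changes $\pi_2$ by free summands $\Z[\pi]^m$, and naturality shows that the $k$-invariant of $X_i\#_a\C P^2\#_b\overline{\C P}^2$ is the image of $k_{X_i,Y_i}$ under the inclusion $\pi_2(X_i)\hookrightarrow\pi_2(X_i)\oplus\Z[\pi]^m$.

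The comparison then reduces to two things. First, $H^3(\pi,Y;\Z[\pi])$, which vanishes after the same long exact sequence, using $H^k(\pi;\Z[\pi])=0$ and that $H^2(Y;\Z[\pi])\to H^3(\pi,Y;\Z[\pi])$ is zero because $H^2(\pi;\Z[\pi])=0$ and $H^3(\pi;\Z[\pi])=0$ force the $\Z[\pi]$-summand contributions to be controlled. Second, the difference between $\Phi_*$ and $F\oplus\id$ on $\pi_2$. I expect this difference to be the main obstacle. Its off-diagonal part $\pi_2(X_0)\to\Z[\pi]^m$ acts on the $k$-invariant, and one must show that it kills the difference using compatibility with $h$, again via vanishing of $H^3(\pi,Y;\Z[\pi])$-type groups.
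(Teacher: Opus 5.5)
Your first step is where the proposal breaks. In the sequence you wrote, the coboundary goes $\delta\colon H^2(Y_i;\pi_2(X_i))\to H^3(\pi,Y_i;\pi_2(X_i))$, so there is no map in the direction you want to be injective. Nor need $k_{X_i,Y_i}$ lie in the image of $\delta$. Its image in $H^3(\pi;\pi_2(X_i))$ is the absolute $k$-invariant, which in general is nonzero. So the relative $k$-invariant is not determined by boundary data, and compatibility of $(F,h)$ cannot identify the two classes in the way Steps~1--2 suggest.

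In Step~3, the reduction via Lemma~\ref{lem:KPTTheoremA} and $k_{X\#\C P^2,Y}=(k_{X,Y},0)$ is fine, and it is also how the paper begins. Note, though, that the lemma needs $h$ to be a homeomorphism, whereas the statement only assumes a homotopy equivalence.

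The argument then relies on $H^3(\pi,Y;\Z[\pi])=0$, which is false. Because $H^2(\pi;\Z[\pi])=H^3(\pi;\Z[\pi])=0$, the coboundary $H^2(Y;\Z[\pi])\to H^3(\pi,Y;\Z[\pi])$ is an isomorphism, not the zero map. Moreover $H^2(Y;\Z[\pi])\cong H_1(Y;\Z[\pi])\cong\ker(\iota)^{ab}$ is the relation module. For $\pi=\Z_d$ and $n=1$ this group is $\Z$, and that is exactly the Euler number zero sphere case the proposition is used for. Also, $\Phi_*^{-1}\circ(F\oplus\id)-\id$ need not be off-diagonal: its $\pi_2(X_0)\to\pi_2(X_0)$ component is nonzero in general. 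So the step you call the main obstacle is left open, and the vanishing result you invoke for it is false.

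The missing idea is a factorization argument. Suppose $A$ and $A'$ both fit into compatible pairs with the same boundary map. Then $D:=A-A'$ vanishes on the image of $H_2(Y;\Z[\pi])$, and the induced difference on $H_2(X,Y;\Z[\pi])$ lands in $\ker(\partial)$. What one needs is $D=\psi\circ j$ for some $\psi$, where $j\colon H_2(X;\Z[\pi])\to H_2(X,Y;\Z[\pi])$. Then $D_*(k_{X,Y})=\psi_*(j_*k_{X,Y})=0$, because $j_*(k_{X,Y})=0$ in $H^3(\pi,Y;H_2(X,Y;\Z[\pi]))$ by \cite[Lemma~6.1]{ConwayKasprowskiKinvariant}.

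The paper gets this factorization by using Lemma~\ref{lem:KPTTheoremA} to replace both $X_i$ by the model $M\#l\C P^2\#l'\overline{\C P}^2$. Here $M$ is the handle $2$-skeleton of the double of a presentation complex $K$ of $\pi$. For this model the homology sequence of $(X,Y)$ splits as the sum of three sequences:
\begin{itemize}
\item $0\to\im d^2_K\to C^2(\wt K)\to\coker d^2_K\to 0$;
\item $0\to\ker d_2^K\to C_2(\wt K)\to\ker d_1^K\to 0$;
\item the identity of $\Z[\pi]^{l+l'}$.
\end{itemize}
On the last two summands $j\colon H_2(X;\Z[\pi])\to\ker(\partial)$ is an isomorphism, so $D$ factors there automatically. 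The $C^2(\wt K)$-summand factors because $\Ext^1_{\Z[\pi]}(\ker d_1^K,C^2(\wt K))\cong H^3(\pi;C^2(\wt K))=0$. This is where $H^3(\pi;\Z[\pi])=0$ is actually used.

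The comparison map $A'$ in the paper is $(c_1)_*^{-1}c_*$, for two $3$-connected maps $c,c_1$ from $X$ to its Postnikov $2$-type that are related by $h$ on $Y$; these exist because $H_4(\pi)=0$. This choice also handles $h$ being only a homotopy equivalence. Your homeomorphism $\Phi$ could play the role of $A'$ once both sides are identified with this model, but the factorization is the heart of the proof and your proposal does not contain it.
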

	\begin{proof}
Throughout this proof,  for $i=0,1$, we use the Hurewicz isomorphism to identify $\pi_2(X_i)$ with $H_2(\widetilde{X}_i)$, and we write~$Y_i:=\partial X_i$ for brevity.	
Observe that we have 
$$k_{X_i\#\C P^2,Y_i}=(k_{X_i,Y_i},0)\in H^3(\pi,Y_i;\pi_2(X_i))\oplus H^3(\pi,Y_i;\Z[\pi])\cong H^3(\pi,Y_i;\pi_2(X_i\#\C P^2)).$$
Hence if the statement is true for~$X_i\#\C P^2$ and~$F\oplus\id_{\Z[\pi]}$, then it is true for~$X_i$ and~$F_i$: indeed
$$\left((F\oplus \id_{\Z[\pi]})_*-h^*\right)(k_{X_1\#\C P^2,Y_1})=(F_*(k_{X_0,Y_0})-h^*(k_{X_1,Y_1}),0).$$		
%Since~$H_4(\pi;\Z)=0$, all manifolds with fundamental group~$\pi$,  boundary~$\#_{i=1}^n(S^1\times S^2)$ and given surjection~$\iota_0 \colon F_n\to \pi$ are~$\C P^2$-stably homeomorphic, i.e.\ homeomorphic up to connected sum with~$\C P^2$ and~$\overline{\C P}^2$. 		
Let~$\langle g_1,\ldots g_n\mid R\rangle$ be a presentation of~$\pi$ such that~$\iota_0\colon F_n \to \pi$ is given by sending the~$k$-th generator of~$F_n$ to~$g_k$.
 Let~$K$ be a presentation~$2$-complex for this presentation.		
 Let~$D(K)$ be  the boundary of a thickening of~$K$ in~$\R^5$,  and let~$M:=D(K)^{(2)}$ be the handle~$2$-skeleton for the corresponding handle structure. 
% {AC: Is it always the case that~$D(K)$ is a proper double? Reading Hambleton-Nicholson to refresh my memory, they seem to say that~$D(K)$ is~$h$-cobordant to the manifold obtained by doubling a~$4$d thickening of~$K$. DK: I believe for given $K$ all the boundaries of thickening are $s$-cobordant. For non-good $\pi$ we probably want to start with an actual double (we can for example get this by taking a 4d thickening times $I$).
% 
%AC: It seems to me that when you pick the handle structure below you are using that it's a double. 
%Why not work with the double from the start? DK: Because the 5d version is easier to explain. But I think we want to start with a specific double from Kreck-Schäfer. See next note.
% }
 %%Remove 3,4-handles those are \natural_{i=1}^n(S^1\times D^3).

 That is,~$M=D(K)\setminus \natural_{i=1}^n(S^1\times D^3)$, where the map~$\natural_{i=1}^n(S^1\times D^3)\to D(K)$ is determined by~$\iota_0$.
 Since it suffices to prove the statement~$\C P^2$-stably,  by Lemma~\ref{lem:KPTTheoremA}, we can assume without loss of generality that~$X:=X_0=X_1=M\#l\C P^2\#l'\overline{\C P}^2$ for some $l,l' \geq 0$. 
 Set~$Y:=Y_0=Y_1$
 
 Our plan is now to decompose~$H^3(\pi,Y;\pi_2(X))$ as a direct sum of three summands, and to prove that the equality~$F_*(k_{X,Y})=h^*(k_{X,Y})$ holds for each of the three components.
 Such a decomposition would follow from a decomposition of~$\pi_2(X) \cong H_2(X;\Z[\pi])$ but in fact, we will need a little more. 
 In the following claim, it will be helpful to recall that the universal coefficient spectral sequence shows that~$H^1(\pi;\Z[\pi])$ injects into $H^2(Y;\Z[\pi])\cong H_2(Y;\Z[\pi])$.
 
 \begin{claim}
 	For some integers~$l,l' \geq 0$,  the exact sequence
 	\begin{equation}
 		\label{eq:LESDecompo}
 		0 \to H_2(Y;\Z[\pi])/H^1(\pi;\Z[\pi])\to H_2(X;\Z[\pi])\to H_2(X,Y;\Z[\pi])\to H_1(Y;\Z[\pi])\to 0
 	\end{equation}
 	decomposes as a direct sum of the three following exact sequences:
 	\begin{align}
 		&0\to \im(d^2_K)\to C^2(\wt K)\xrightarrow{\proj} \coker(d^2_K)\to 0\to0, \label{eq:1} \\
 		&0\to 0\to \ker(d_2^K)\to C_2(\wt K)\xrightarrow{d_2^K} \ker d_1^K\to 0,\label{eq:2} \\
 		&0\to0\to \Z[\pi]^{l+l'}\xrightarrow{\id}\Z[\pi]^{l+l'}\to 0\to 0.\label{eq:3}
 	\end{align}
 	In particular,  there is an isomorphism
 	$$\pi_2(X) \cong C^2(\wt K)\oplus \ker(d_2^K) \oplus  \Z[\pi]^{l+l'}.$$
 \end{claim}
 \begin{proof}
 	Since~$X=M\#l\C P^2\#l'\overline{\C P}^2$, it suffices to prove that the sequence in~\eqref{eq:LESDecompo} with~$(X,Y)$ substituted for~$(M,\partial M)$ decomposes as the direct sum of the sequences in~\eqref{eq:1} and~\eqref{eq:2}.
 	The inclusion~$(M,\partial M)\to (D(K),\natural_{i=1}^n(S^1\times D^3))$ induces the following commutative diagram, where the middle map is an isomorphism thanks to excision:
 	%%%Don't delete.
 	%{AC: Why is the middle map an iso? Are you thinking of it as the inclusion inducing a homeo~$M/\partial M \cong D(K)/\natural S^1 \times D^3$. DK: This is excision. $\natural_{i=1}^n(S^1\times D^3)$ is the complement of $M$ (it's given by the 3 and 4-handles with in the dual decomposition are a 0-handle with some 1-handles.)}
 	$$
 	\xymatrix@C0.3cm{
 		0\ar[r]&\frac{H_2(\partial M;\Z[\pi])}{H^1(\pi;\Z[\pi])}\ar[r]&H_2(M;\Z[\pi])\ar[r]\ar[d]&H_2(M,\partial M;\Z[\pi])\ar[r]\ar[d]^{\cong}&H_1(\partial M;\Z[\pi])\ar[r]\ar[d]^{\cong}&0 \\
 		%%%%
 		&0\ar[r]&H_2(D(K);\Z[\pi])\ar[r]&H_2(D(K),\natural_{i=1}^n(S^1\times D^3);\Z[\pi])\ar[r]&H_1(\natural_{i=1}^n(S^1\times D^3);\Z[\pi])\ar[r]&0.
 	}
 	$$
 	%		\[\begin{tikzcd}
 		%			H_2(M;\Z[\pi])\ar[d]\ar[r]&H_2(M,\partial M;\Z[\pi])\ar[d,"\cong"]\ar[r]&H_1(\partial M;\Z[\pi])\ar[d,"\cong"]\ar[r]&0\\
 		%			H_2(D(K);\Z[\pi])\ar[r]&H_2(D(K),\natural_{i=1}^n(S^1\times D^3);\Z[\pi])\ar[r]&H_1(\natural_{i=1}^n(S^1\times D^3);\Z[\pi])\ar[r]&0.
 		%		\end{tikzcd}\]
 	We now analyze the bottom row and deduce the isomorphism type of the upper row.
 	As explained in e.g.~\cite[Proposition II.3]{KreckSchafer}, the handle complex of~$D(K)$ is given by 
 	%{
 		%DK: The reference is Kreck-Schäfer Proposition II.3. They say they use a specific model, but I don't think it is relevant. Their proof should work for all doubles. But we can also just use their model to make the citation easier.
 		%}
 	
 	\[0 \to C^0(\wt K)\xrightarrow{d^1_K} C^1(\wt K)\xrightarrow{\bsm d_K^2 \\ 0 \esm}C^2(\wt K)\oplus C_2(\wt K)\xrightarrow{\bsm 0 \ d_2^K \esm}C_1(\wt K)\xrightarrow{d_1^K}C_0(\wt K) \to 0 .\]	
 	Using that the inclusion~$\natural_{i=1}^n(S^1\times D^3) \hookrightarrow D(K)$ is homotopic to the inclusion of the~$1$-skeleton, we see that bottom line of the diagram is isomorphic to
 	$$
 	\xymatrix@C0.4cm{
 		&0\ar[r]&H_2(D(K);\Z[\pi])\ar[r]\ar[d]^{\cong}&H_2(D(K),\natural_{i=1}^n(S^1\times D^3);\Z[\pi])\ar[r]\ar[d]^{\cong}&H_1(\natural_{i=1}^n(S^1\times D^3);\Z[\pi])\ar[r]\ar[d]^{\cong}&0 \\
 		%%%%%
 		&0\ar[r]&
 		\coker d_K^2\oplus \ker(d_2^K) \ar[r]&
 		\coker d_K^2\oplus C_2(\wt K) \ar[r]^{\bsm 0 & d_2^K \esm}&
 		\ker d_1^K
 		\ar[r]&0.
 	}
 	$$
 	%		$$
 	%0\to \overbrace{\cong H_2(D(K);\Z[\pi])}^{\coker d_K^2\oplus \ker(d_2^K)} \to \overbrace{H_2(D(K),\natural_{i=1}^n(S^1\times D^3);\Z[\pi])}^{\cong  \coker d_K^2\oplus C_2(\wt K)} \to \overbrace{H_1(\natural_{i=1}^n(S^1\times D^3);\Z[\pi])}^{\cong \ker d_1^K} \to 0.
 	%		$$		
 	%		\[0 \to \coker d_K^2\oplus \ker(d_2^K)\to \coker d_K^2\oplus C_2(\wt K)\to \ker d_1^K \to 0.\]		
 	%Note in particular that $H_2(D(K);\Z[\pi]) \cong \coker d_K^2\oplus \ker(d_2^K)$.
 	Since $C_*(\widetilde{M})$ is obtained by truncating $C_*(\widetilde{D}(K))$,  we deduce that $H_2(M;\Z[\pi]) \cong  C^2(\wt K)\oplus \ker(d_2^K)$ and that the inclusion induced map~$H_2(M;\Z[\pi])\to H_2(D(K);\Z[\pi])$ is given by the projection
 	$$
 	\overbrace{H_2(M;\Z[\pi])}^{\cong C^2(\wt K)\oplus \ker(d_2^K)} 
 	\to \overbrace{H_2(D(K);\Z[\pi])}^{\cong \coker d_K^2\oplus \ker(d_2^K)}.
 	$$
 	In conclusion, we obtain the following commutative diagram of chain complexes:
 	$$
 	\xymatrix@C0.8cm{
 		0\ar[r]&
 		\frac{H_2(\partial M;\Z[\pi])}{H^1(\pi;\Z[\pi])}\ar[r]\ar@{-->}[d]^{\cong}&
 		H_2(M;\Z[\pi])\ar[r]\ar[d]^{\cong}&
 		H_2(M,\partial M;\Z[\pi])\ar[r]\ar[d]^{\cong}
 		&H_1(\partial M;\Z[\pi])\ar[r]\ar[d]^{\cong}&
 		0\\
 		%%%%%%
 		0\ar[r]&
 		\im(d^2_K) \ar[r]^-{\bsm \incl \\ 0 \esm}&
 		C^2(\widetilde{K}) \oplus \ker(d_2^K) \ar[r]^-{\proj\oplus \incl}&
 		%\bsm \proj &0 \\ 0 & \incl \esm}&
 	\coker(d^2_K) \oplus C_2(\widetilde{K}) \ar[r]^-{\bsm 0&d_2^K\esm}&
 	\ker(d_1^K)\ar[r]&0.
 	%%%%%%
 }
 $$
 A direct verification now shows that the bottom sequence decomposes as claimed.
 \end{proof}

 We now show that~$F_*(k_{X,Y})=h^*(k_{X,Y})$.
 Let~$B$ be a model for the Postnikov~$2$-type of~$X$ that contains~$Y$, and let~$c \colon X  \to B$ and~$c_1 \colon X \to B$ be~$3$-connected maps with~$c=c_1 \circ~h$. 
 To see that these maps exist,  use~$H_4(\pi)=0$ to argue as in the proof of~\cite[Proposition~7.15]{ConwayKasprowski4Manifolds}.
 Since~$F|_{H_2(Y;\Z[\pi])/H^1(\pi;\Z[\pi])}=h_*=(c_1)_*^{-1}c_*$, 
 %%Don't delete
 %% F_*=h_* on H_2(Y). Can you remind me why they agree on H^1(\pi) (and therefore on H_2(Y)/H^1(\pi))?
% By assumption, h_* is compatible with the identifications of pi_1(X_i) with pi, hence H^1(pi)->H^1(Y)-h^*->H^1(Y) is the same as H^1(pi)->H^1(Y). Thus h^* or h_*:H_2(Y)->H_2(Y) by PD induces a map H_2(Y)/H^1(pi)->H_2(Y)/H^1(pi). Since F| is the same map on H_2(Y), this is also true for F| and the maps still agree.
the map~$F_*-(c_1)_*^{-1}c_*\colon H_2(X;\Z[\pi]) \to H_2(X;\Z[\pi])$ factors through $H_2(X;\Z[\pi])/H_2(Y;\Z[\pi])$. 
Since~$(F,h)$ is a compatible pair and~$H^k(\pi;\Z[\pi])=0$ for~$k=2,3$, the isomorphism~$F$ defines an isomorphism~$(F^*)^{-1}$ on relative homology.
%%%Don't delete
%I rewrote the proof of 4.3 using additionally that H^k(\pi;Z\pi)=0 for k=2,3 and that (F,h) is a compatible pair. The latter is only needed to get a map on H_2(X,Y) it does not really matter whether it is F^*^{-1} or some other map. So if we really want, we could have F,G,h being an iso of sequences but I don't think this extra generality would ever be useful. 
Since both~$(F^*)^{-1}$ and~$(c_1)_*^{-1}c_*$ induce~$h_*$ on~$H_1(Y;\Z[\pi])$,
 the map~$(F^*)^{-1}-(c_1)_*^{-1}c_* \colon H_2(X,Y;\Z[\pi]) \to H_2(X,Y;\Z[\pi])$ has image in~$\ker(\partial\colon H_2(X,Y;\Z[\pi])\to H_1(Y;\Z[\pi]))$. 
 Thus we obtain the commutative diagram
 \[\xymatrix{
 H_2(X;\Z[\pi])/H_2(Y;\Z[\pi])\ar[r]\ar[d]^{F_*-(c_1)_*^{-1}c_*}&H_2(X,Y;\Z[\pi])\ar[d]^{(F^*)^{-1}-(c_1)_*^{-1}c_*}\\
 H_2(X;\Z[\pi])\ar[r]&\ker(\partial).
}\]
We assert that the map~$F_*-(c_1)_*^{-1}c_*$ on the left of this diagram factors through $H_2(X,Y;\Z[\pi])$.
Using the decomposition of \eqref{eq:LESDecompo}, the bottom row is the direct sum
$$
C^2(\wt K) \oplus \ker(d_2^K) \oplus \Z[\pi]^{l+l'}\xrightarrow{ \proj \oplus \id \oplus \id} \coker(d^2_K) \oplus \ker(d_2^K) \oplus \Z[\pi]^{l+l'}.
$$
Thus, if we write the left vertical map as $(F_1,F_2,F_3)$, then the presence of the two identity summands in the bottom horizontal maps implies that $F_2$ and $F_3$ factor through~$H_2(X,Y;\Z[\pi])$.
%%Don't delete
%%Just go up-right, then down, then down-left (because down left is id so an iso).
It remains to show that $F_1 \colon C^2(\wt K) \to  H_2(X;\Z[\pi])$ factors through $ H_2(X,Y;\Z[\pi])$.
Using the decomposition of \eqref{eq:LESDecompo} for the top row, we obtain the exact sequence
$$0\to H_2(X;\Z[\pi])/H_2(Y;\Z[\pi])\to H_2(X,Y;\Z[\pi])\to \ker d_1^{K}\to 0.$$
Applying the Ext functor~$\Ext^*_{\Z[\pi]}(-;C^2(\wt K))$, we get the exact sequence
\[\Hom_{\Z[\pi]}(H_2(X,Y;\Z[\pi]),C^2(\wt K))\to \Hom_{\Z[\pi]}\left(\frac{H_2(X;\Z[\pi])}{H_2(Y;\Z[\pi])}, C^2(\wt K)\right)\to \Ext^1_{\Z[\pi]}(\ker d_1^{K};C^2(\wt K)).\]
%%Don't delete.
%0->ker(d_1)->C_1->Im(d_1)->0
%0->Im(d_1)->C_0->H_0=Z->0
Note that~$\Ext^1_{\Z[\pi]}(\ker d_1^{K};C^2(\wt K))\cong \Ext^3(\Z;C^2(\wt K))\cong H^3(\pi;C^2(\wt K))=0$, where the last equality uses that~$C^2(\wt K)$ is~$\Z[\pi]$-free together with our assumption that~$H^3(\pi;\Z[\pi])=0.$
It therefore follows that the map~$F_1 \colon C^2(\wt K) \to  H_2(X;\Z[\pi])$ factors through~$ H_2(X,Y;\Z[\pi])$, and as a consequence that~$F_*-(c_1)_*^{-1}c_*=(F_1,F_2,F_3)$ factors through~$H_2(X,Y;\Z[\pi])$,  as asserted.

Using the assertion,
%We deduce that the map~$F_*-(c_1)_*^{-1}c_*$ factors through~$H_2(X,Y;\Z[\pi])$. 
since~$k_{X,Y}$ maps trivially into~$H^3(\pi,Y;H_2(X,Y;\Z[\pi]))$ (this is \cite[Lemma~6.1]{ConwayKasprowskiKinvariant}), we conclude that the image of~$k_{X,Y}$ under~$F_*-(c_1)_*^{-1}c_*$ is trivial. 
This proves that~$F_*(k_{X,Y})=(c_1)_*^{-1}c_*(k_{X,Y})=h^*(k_{X,Y})$, as needed.
	\end{proof}

\section{Spin unions}
\label{sec:SpinUnion}

In this section,  we assume that the universal covers~$\widetilde{X}_{S_0}$ and~$\widetilde{X}_{S_1}$ are spin and study when it is possible to extend a homeomorphism  $S_0 \to S_1$ to a bundle isomorphism~$H \colon \overline{\nu}(S_0) \to \overline{\nu}(S_1)$ such that the universal cover of~$X_{S_0} \cup_h X_{S_1}$ is spin; here recall that~$h:=\id_{E_K} \cup H$.
Recall that this condition is necessary in order to apply Theorem~\ref{thm:CKForSurfaceExteriors}.
\medbreak

\begin{remark}
\label{rem:Intertwine}
Recall from Definition~\ref{def:NiceFraming-or} that given an~$e$-nice identification,  there is an associated~$e$-nice section~$s \colon S \to \partial X_S\setminus E_K$ such that~$s([\gamma])=0 \in H_1(X_S)$ for every~$\gamma \subset S$.
For brevity, we also say that a bundle isomorphism~$H \colon \nu(S_0) \to \nu(S_1)$
that is the identity on a closed tubular neighborhood of $K$
 \emph{intertwines the coefficient systems to~$\Z_d$} 
 %if it is the identity on a tubular neighborhood of $K$ and
 if the homeomorphism~$h:=\id_{E_K} \cup H|$ is such that~$h_* \colon \pi_1(\partial X_{S_0}) \to \pi_1(\partial X_{S_1})$ commutes with the coefficient system described in Construction~\ref{cons:CoeffSys}.
%This occurs if $H$ sends one $e$-nice section to the other.
Observe that intertwines the coefficient systems to~$\Z_d$ if and only if~$H$ preserves a choice of $e$-nice sections,  meaning that~$H \circ s_0=s_1 \circ \theta$ for some $e$-nice sections $s_0,s_1$ and some homeomorphism $\theta \colon S_0 \to S_1$.
%%Don't delete.
%commute: genus curves map to zero under push off.
%%but then since H preserves section and thus push offs, true for the other section.
%%Conversely if one has $H$ intertwines the coefficient systems and one picks $\fr_0$ then $\fr_1 :=\fr_0 \circ H$ will also be $e$-nice because $H$ preserves the coefficient systems.
\end{remark}

\begin{convention}
Thoughout this section, given a bundle isomorphism $H \colon (\overline{\nu}(S_0),S_0) \to  (\overline{\nu}(S_1),S_1)$ with $H|_{\overline{\nu}(K)}=\id$, we write
$h:=\id_{E_K} \cup H| \colon \partial X_{S_0} \to \partial X_{S_1}$.
\end{convention}

While Propositions~\ref{prop:BoyerAdaptSpin},~\ref{prop:Audrick} and ~\ref{prop:SpindEvenAutomatic} are key inputs for Theorem~\ref{thm:CompatiblePairIntro},  the results of this section also combine to yield the following theorem that might be of independent interest.

\begin{theorem}
\label{thm:SpinUnion}
Let~$X$ be a simply-connected~$4$-manifold with boundary~$\partial X=S^3$,  let $K$ be a knot such that when~$d>0$ is not a prime, $\Sigma_d(K)$ is a rational homology sphere,
and let~$S_0,S_1 \subset~X$ be~$\Z_d$-surfaces with boundary $K \subset S^3$ whose exteriors have spin universal covers.
%%Don't delete.
%%Don't need homologous this time.
Fix a rel.\ boundary homeomorphism 
$$ \theta \colon S_0 \to S_1.$$
Assume that $S_0$ and $S_1$ have the same relative Euler number and, when $S_0,S_1$ are characteristic,  additionally assume that $\theta$ preserves the Freedman--Kirby or Guillou--Marin quadratic forms depending on whether the $S_i$ are orientable or not.
Then $\theta$ extends to a bundle isomorphism
$$H\colon (\overline{\nu}(S_0),S_0) \to (\overline{\nu}(S_1),S_1)$$
that intertwines the coefficient systems to $\Z_d$.
%%Don't delete
%%Understood to mean H|_{\nu(K)}=\id, see def.
Furthermore:
\begin{itemize}
\item  Given such a bundle isomorphism,  there exists another bundle isomorphism $H'$ with the same properties and such that, in addition, the universal cover of~$X_{S_0} \cup_{h'} X_{S_1}$ is spin.
When the $S_i$ are characteristic or $d>0$ is even,  $H'=H$.
\item When the $S_i$ are characteristic or $X$ is spin, the condition on $\Sigma_d(K)$ can be omitted.
In those cases,~$X_{S_0} \cup_{h} X_{S_1}$ is itself spin.
\item 
When~$H_1(\partial X_{S_i};\Z[\Z_d])^*=0$, for $i=0,1$  no condition on $H$ is required for the universal cover of~$X_{S_0} \cup_{h} X_{S_1}$ to be spin.
\end{itemize}
\end{theorem}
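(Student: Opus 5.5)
The extension of $\theta$ is the easy part. Because $S_0$ and $S_1$ have the same relative Euler number, Lemma~\ref{lem:ExistseNice-or} gives $e$-nice identifications $\fr_i\colon \nu(S_i)\to \Sigma\mathbin{\wt{\times}}\R^2$. After reparametrising, we may regard $\theta$ as a homeomorphism of $\Sigma$ fixing $\partial\Sigma$. Lift $\theta$ to a bundle map $\Theta$ of $\Sigma\mathbin{\wt{\times}}\R^2$ that preserves the section $r$ and is the identity over $\partial\Sigma$, and set
\[
H:=\fr_1^{-1}\circ\Theta\circ\fr_0 .
\]
Then $H\circ s_0=s_1\circ\theta$ for the $e$-nice sections $s_i=\fr_i^{-1}\circ r$. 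By Remark~\ref{rem:Intertwine}, $H$ intertwines the coefficient systems to $\Z_d$. Every other extension with this property differs from $H$ by $\Rot(\alpha)$, with $\alpha\in d\cdot H^1(\Sigma,\partial\Sigma;\Z^w)$, by Remark~\ref{rem:HowManyIdentifications}.

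For the spin assertions, set $W:=X_{S_0}\cup_h X_{S_1}$. Its fundamental group is $\Z_d$, since $h$ intertwines the coefficient systems and both inclusions $\partial X_{S_i}\to X_{S_i}$ are $\pi_1$-surjective. The plan is to compute $w_2(\widetilde W)$ through a Mayer--Vietoris argument on the universal (that is, $d$-fold) cover. Since the $\widetilde X_{S_i}$ are spin, the obstruction is the difference of the two spin structures that $\widetilde X_{S_0}$ and $\widetilde X_{S_1}$ induce on $\partial\widetilde X_{S_1}$. This difference lies in $H^1(\partial \widetilde X_{S_1};\Z_2)$ modulo classes restricted from either side, and $\widetilde W$ is spin if and only if it vanishes there.

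The boundary $\partial\widetilde X_{S_i}$ decomposes as the $d$-fold cover of $E_K$ glued to a circle bundle over $\Sigma$, as in Propositions~\ref{prop:AlexanderModule} and~\ref{prop:AlexanderModuleNonOri}. The $E_K$ part matches because $h$ is the identity there. Via Kirby--Taylor (Remark~\ref{rem:KirbyTaylorIntro}), a spin structure on the circle bundle part restricted from $\widetilde X_{S_i}$ is recorded by a quadratic refinement on $H_1(\Sigma;\Z_2)$ together with the fibre spin structure.
\begin{itemize}
\item \emph{Characteristic case.} The refinement is the Freedman--Kirby or Guillou--Marin form, and $\theta$ preserves it by hypothesis, so $h$ carries one restricted spin structure to the other. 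In fact the spin structures extend over $X_{S_i}$ themselves, so $W$ is spin and no condition on $\Sigma_d(K)$ is needed.
\item \emph{Ordinary case.} The discrepancy is a class $\beta\in H^1(\Sigma;\Z_2)$. Replacing $H$ by $H\circ\Rot(\alpha)$ changes the induced structure by $\ev([\alpha]_2)$, using the second items of Propositions~\ref{prop:RotAlphaAxiomsSection2} and~\ref{prop:RotAlphaAxiomsSection2Nonori}. Since $\alpha$ must be divisible by $d$, this correction is available when $d$ is odd: pick $\alpha=d\alpha'$ with $[\alpha']_2=\beta$. When $d$ is even, $\Rot(\alpha)$ acts trivially mod $2$. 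There we need to show $\beta=0$ automatically, arguing via Lemma~\ref{lem:EvenElemOfOrder2} as in the proof of Theorem~\ref{thm:CK4Manifold}: the $\Z_d$-equivariant form on the lifted surface classes has even coefficients at the order-$2$ element, which forces the refinements to agree.
\item \emph{$X$ spin.} $X_{S_i}$ is spin, and the same reasoning applies downstairs.
\end{itemize}

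For the last bullet, assume $H_1(\partial X_{S_i};\Z[\Z_d])^*=0$. Following the final part of the proof of Theorem~\ref{thm:CK4Manifold}, the spin obstruction for $\widetilde W$ is detected by a hermitian form on $H^2(\partial X;\Z[\Z_d])\cong H_1(\partial X;\Z[\Z_d])$, via $\widehat{\ }$ of~\eqref{eq:hatFunction} with values in $I^1$. Its adjoint lands in $H_1^*=0$, so the form vanishes and $\widetilde W$ is spin for every $H$.

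The main obstacle is the even-$d$ ordinary case, where the $\Rot$ correction is invisible mod $2$ and one must prove directly that no obstruction arises. I would first try to identify the obstruction with $b(x,x)_1 \bmod 2$ on the $\Z^{2g}$ (or $\Z^{h-1}$) summand. Since that summand carries the trivial $\Z_d$-action, the form is $\mathcal N$-valued, so $b(x,x)_1=b(x,x)_g$ for $g$ of order $2$, and this coefficient is even by Lemma~\ref{lem:EvenElemOfOrder2}. The rational homology sphere hypothesis on $\Sigma_d(K)$ enters exactly here, to ensure the decomposition $L\oplus T\oplus\Z^k$ holds.
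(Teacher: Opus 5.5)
Your case split is the paper's own. The extension step is Lemma~\ref{lem:ExtendHomeoOverDiscBundle}, the characteristic case is Proposition~\ref{prop:Audrick}, the $\Rot(d\alpha')$ correction for $X$ spin and $d$ odd is Proposition~\ref{prop:BoyerAdaptSpin}, and the even-$d$ argument via the $\Z$-valued form $b$ and Lemma~\ref{lem:EvenElemOfOrder2} is Proposition~\ref{prop:SpindEvenAutomatic}. The first three parts match the paper, but the even-$d$ step has a genuine gap, which the paper's argument shares.

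Your remark that for $d$ even ``$\Rot(\alpha)$ acts trivially mod $2$'' is true on $\partial X_{S_0}$, but the obstruction for $\widetilde X_{S_0}\cup_{\widetilde h}\widetilde X_{S_1}$ lives on $\partial\widetilde X_{S_0}$. There the lift of $\Rot(d\alpha')$ is $\Rot(\alpha')$: fibrewise, $w\mapsto f_{\alpha'}(x)w$ covers $z\mapsto f_{\alpha'}(x)^dz$ (compare Proposition~\ref{prop:RotAlexanderModule}). By Proposition~\ref{prop:RotalphaSpinOrientable}, $\Rot(\alpha')$ changes any spin structure that bounds on the lifted fibre by $p|^*([\alpha']_2)$. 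When $X_S$ is spin, the spin structure of $\widetilde X_S$ is pulled back, hence non-bounding on the lifted fibre for $d$ even, and nothing happens. In the ordinary case with $X$ nonspin, however, it can bound.

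Here is a concrete case. Take $X=\C P^2$ minus a ball, $S_0=S_1=S$ a smooth quartic punctured so that $K$ is the unknot, and $\theta=\id$. Then $d=4$, $S$ is ordinary, and $\Sigma_4(S)$ is a punctured K3 surface. So the unique spin structure on the simply-connected $\widetilde X_S$ extends over $\overline\nu(\widetilde S)$. Now take $H=\Rot(4\alpha')$ with $[\alpha']_2\neq0$. It extends $\theta$, is the identity near $K$, and intertwines the coefficient systems. Yet $\widetilde h$ does not preserve that spin structure, so the universal cover of $X_S\cup_h X_S$ is not spin. Hence ``$H'=H$ when $d>0$ is even'' cannot be proved for arbitrary $H$; it is false as stated.

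The $b$-argument misses this because $b=b(c_0,c_1)$ in the proof of Theorem~\ref{thm:CK4Manifold} is a secondary obstruction. It is only defined once a ($k$-invariant preserving) compatible pair $(F,h)$ has supplied the maps $c_0,c_1$, and no such $F$ is among the present hypotheses. At best, your argument (and Proposition~\ref{prop:SpindEvenAutomatic}) gives the even-$d$ clause for those $H$ whose $h$ fits into such a pair, as in Theorem~\ref{thm:CompatiblePairIntro}. The same restriction applies to your treatment of the last bullet, which also runs through $b$.

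Separately, ``the $\widetilde{E_K}$ parts match because $h$ is the identity there'' is not a valid reason. The two spin structures are restricted from different manifolds and can a priori differ on $\widetilde{E_K}$. Inside the $b$-argument, this is exactly what $T^*=0$, hence the rational homology sphere hypothesis, is used to handle.
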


We record the following consequence of Theorem~\ref{thm:SpinUnion} as it might be of independent interest.
Note that this is a generalisation of~\cite[Proposition 6.2]{ConwayOrsonPowell} which concerned $\Z_2$-surfaces in $D^4$ 
and~\cite[Section 3]{Pyronneau} which treats orientable surfaces whose complements are simply-connected.
\begin{corollary}
\label{cor:SpinAutomatic}
Let~$X$ be a simply-connected~$4$-manifold with boundary~$\partial X=S^3$,  let $K$ be a knot, and let~$S_0,S_1 \subset X$ be~$\Z_d$-surfaces with boundary $K \subset S^3$ whose exteriors have spin universal covers.
Assume that $S_0$ and $S_1$ have the same relative Euler number.
There is a bundle isomorphism~$H\colon (\overline{\nu}(S_0),S_0) \to (\overline{\nu}(S_1),S_1)$
with $H|_{\overline{\nu}(K)}=\id$
 such that 
\begin{itemize}
\item if $\Sigma_d(K)$ is a rational homology sphere,  then the universal cover of~$X_{S_0} \cup_h X_{S_1}$ is spin. 
\item if the~$[S_i]$ are characteristic or $X$ is spin,  then the union~$X_{S_0} \cup_{h} X_{S_1}$ is spin.
\end{itemize}
\end{corollary}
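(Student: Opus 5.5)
The plan is to deduce the corollary directly from Theorem~\ref{thm:SpinUnion}, applied to a suitable rel.\ boundary homeomorphism $\theta \colon S_0 \to S_1$.

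\textbf{Choosing $\theta$.} Since $S_0$ and $S_1$ have the same boundary $K$ and are $\Z_d$-surfaces in a simply-connected $X$, I first check they are abstractly homeomorphic rel.\ boundary.
\begin{itemize}
\item They must have the same orientability and genus. I expect this to come from the fact that their exteriors have spin universal covers and share the same relative Euler number. Otherwise the statement implicitly assumes it, since a bundle isomorphism $H$ restricting to $S_0 \to S_1$ already forces $S_0 \cong S_1$; I will read the corollary as including this.
\item Given this, pick any homeomorphism $\theta \colon S_0 \to S_1$ that is the identity near $K$.
\item In the characteristic case, Theorem~\ref{thm:SpinUnion} also requires $\theta$ to preserve the Freedman--Kirby (resp.\ Guillou--Marin) quadratic forms $q_{S_0}, q_{S_1}$. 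Both forms refine the $\Z_2$-intersection forms on $H_1(S_i;\Z_2)$ and are associated to the same boundary data. So I would precompose $\theta$ with a surface homeomorphism fixed near the boundary that carries $q_{S_0}\circ\theta^{-1}_*$ to $q_{S_1}$. This uses the classical fact that the mapping class group of a surface with one boundary component acts transitively on quadratic refinements with the same Arf (resp.\ Brown) invariant.
\item The Arf/Brown invariants agree because they are determined by $X$, $K$, $[S_i]$ and $e$, via the Freedman--Kirby/Guillou--Marin formula applied to the capped-off closed surfaces $\widehat{S}_i\subset X\cup -D^4$ as in Lemma~\ref{lem:WhatIsTheBoundary}. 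The homology classes agree since characteristic classes are determined mod~2 by $X$, and the formula relates the invariant to $\sigma(X)-[\widehat S]^2$ and $\operatorname{ks}$.
\end{itemize}

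\textbf{Applying Theorem~\ref{thm:SpinUnion}.} With such a $\theta$, the theorem produces a bundle isomorphism $H$ extending $\theta$, the identity over $\overline{\nu}(K)$, and intertwining coefficient systems.
\begin{itemize}
\item If $\Sigma_d(K)$ is a rational homology sphere, the first bullet of the theorem upgrades $H$ to $H'$ so that the universal cover of $X_{S_0}\cup_{h'}X_{S_1}$ is spin. This gives the first item.
\item If the $[S_i]$ are characteristic or $X$ is spin, the second bullet of the theorem removes the hypothesis on $\Sigma_d(K)$ and yields that $X_{S_0}\cup_h X_{S_1}$ itself is spin. This gives the second item.
\end{itemize}
Note that when $d>0$ is not prime and $\Sigma_d(K)$ is not a rational homology sphere, Theorem~\ref{thm:SpinUnion} is stated with that hypothesis. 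In that case only the second item is claimed, so this is consistent.

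\textbf{Main obstacle.} The only real work lies in the characteristic case, namely the existence of a $\theta$ preserving the quadratic forms. This requires the equality of Arf/Brown invariants together with the transitivity of the mapping class group rel.\ boundary on refinements with fixed invariant. I would cite Freedman--Kirby/Guillou--Marin for the invariant formula and the standard classification of quadratic forms (e.g.\ as in \cite{KirbyTaylor}) for transitivity. The nonorientable $\Z_4$-valued case deserves the most care, since the Brown invariant takes values in $\Z_8$.
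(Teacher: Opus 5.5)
Your proposal is correct and follows the paper's own proof. In both, everything reduces to Theorem~\ref{thm:SpinUnion}, and the only work is in the characteristic case: there one produces a quadratic-form-preserving rel.\ boundary homeomorphism $\theta$ by matching the Arf/Brown invariants, which depend only on $X$, $K$ and $e$, and then realising the resulting isometry by a surface homeomorphism. This realisation is exactly your mapping-class-group transitivity step.

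Two small corrections. First, the hypotheses do not force $S_0$ and $S_1$ to have the same genus and orientability, so this has to be read as implicit in the statement; the paper's proof also assumes it tacitly. Second, the Arf/Brown computation uses only $e$, $\sigma(X)$, $\operatorname{ks}(X)$ and $K$, not the integral classes $[S_i]$.
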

\begin{proof}
In all but the characteristic case,  the corollary is a consequence of Theorem~\ref{thm:SpinUnion}.
It therefore remains to prove that in the characteristic case,  there exists a quadratic-form-preserving rel.\ boundary homeomorphism $\theta$ as in Theorem~\ref{thm:SpinUnion}.

Since the $S_i$ have a single boundary component, their intersection forms are nonsingular.
The isometry type of a~$\Z_2$-valued quadratic refinement of a~$\Z_2$-valued nonsingular symmetric form on a~$\Z_2$-vector-space is determined by its Arf invariant~\cite[Theorem~III.1.12]{BrowderSimply} and by its Brown invariant in the case of a~$\Z_4$-valued quadratic refinement~\cite[final remark of Section~II]{GuillouMarin}.
Since the surfaces~$S_0$ and~$S_1$ have the same relative Euler number and boundary,  it follows that~$\operatorname{Arf}(q_{S_0})=\operatorname{Arf}(q_{S_1})$ or~$\operatorname{Brown}(q_{S_0})=\operatorname{Brown}(q_{S_1})$ depending on whether or not the~$S_i$ are orientable (see e.g.~\cite{FreedmanKirby,Klug,GilmerLivingston}) and thus,  since the intersection forms of the~$S_i$ are nonsingular, we deduce that~$(H_1(S_0;\Z_2),Q_{S_0},q_{S_0}) \cong (H_1(S_1;\Z_2),Q_{S_1},q_{S_1})$.
Realise this isometry by a rel.\ boundary homeomorphism~$\theta \colon S_0 \to S_1$; see e.g.~\cite[Section 2.1 and the discussion following Theorem 6.4]{FarbMargalit} in the orientable case and~\cite{GadgilPancholi} in the nonorientable case.
\end{proof}

We describe the proof structure of Theorem~\ref{thm:SpinUnion}.
Propositions~\ref{prop:BoyerAdaptSpin} and~\ref{prop:Audrick} prove the result when~$X$ is spin and the~$[S_i]$ are characteristic respectively.
Proposition~\ref{prop:SpindEvenAutomatic} is concerned with the remaining cases.

% then show that in each of this situations, the conclusion of Theorem~\ref{thm:SpinUnion} can be achieved.
%as well as the structure of this section

\begin{remark}
Lee and Wilczynski state the special case of Theorem~\ref{thm:SpinUnion} that the union of sphere exteriors can always be taken to have the same~$w_2$-type as the individual 
%pieces
exteriors~\cite[Bottom of page 340]{LeeWilczy}.
%  we do not find their argument to be convincing.
\end{remark}

Before carrying out the plan described above, we record a fact about vector bundles.

\begin{lemma}
\label{lem:ExtendHomeoOverDiscBundle}
let~$X$ be a simply-connected~$4$-manifold with boundary~$\partial X=S^3$,  let~$K$ be a knot, and let~$S_0,S_1 \subset~X$ be surfaces with boundary $K$,  and fix a rel.\ boundary homeomorphism~$\theta \colon S_0 \to S_1.$
If $S_0$ and $S_1$ have the same relative Euler number, then $\theta$ extends to a bundle isomorphism
$$H\colon (\overline{\nu}(S_0),S_0) \to (\overline{\nu}(S_1),S_1).$$
This bundle isomorphism can be assumed to be the identity on $\overline{\nu}(K)$ and, additionally,
given a choice of $e$-nice identifications and associated $e$-nice sections $s_0,s_1$,  it can also be assumed to send one $e$-nice section to the other, i.e.\ to satisfy~$H \circ s_0=s_1 \circ \theta$.
In particular,~$H$ intertwines the coefficient systems to~$\Z_d$.
\end{lemma}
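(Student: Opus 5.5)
The idea is to build $H$ from $e$-nice identifications. By Lemma~\ref{lem:ExistseNice-or}, choose $e$-nice identifications $\fr_i \colon \nu(S_i) \to \Sigma \mathbin{\wt{\times}} \R^2$ for $i=0,1$, where $\Sigma$ is the abstract model surface. The associated $e$-nice sections are $s_i=\fr_i^{-1}\circ r$. We identify $S_i$ with the zero section, so $\fr_i$ restricts to a homeomorphism $S_i\to\Sigma$. The homeomorphism $\psi:=\fr_1|_{S_1}\circ\theta\circ\fr_0|_{S_0}^{-1}\colon \Sigma\to\Sigma$ is the identity near $\partial\Sigma$ after an isotopy; here we use that $\theta$ is rel.\ boundary and that both $\fr_i$ extend the $e$-framing on $\nu(K)$.

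\textbf{Step 1: lift $\psi$.} First lift $\psi$ to a bundle automorphism $\Psi$ of $\Sigma\mathbin{\wt{\times}}\R^2$ that covers $\psi$ and preserves the section $r$. In the orientable model, set $\Psi:=\psi\times\id_{\R^2}$. In general, use the model $\widehat\Sigma\times_{\Z_2}\R^2$ of Convention~\ref{conv:SectionIsFraming}. The map $\psi$ lifts to a $\Z_2$-equivariant homeomorphism $\widehat\psi$ of the orientation double cover $\widehat\Sigma$, and we set $\Psi[b',v]:=[\widehat\psi(b'),v]$. Since $\bsm 1\\0\esm$ is fixed by the $\Z_2$-action, $\Psi\circ r=r\circ\psi$. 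Choose the lift so that it is the identity over a collar of $\partial\Sigma$.

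\textbf{Step 2: define $H$ and check its properties.} Set $H:=\fr_1^{-1}\circ\Psi\circ\fr_0$, restricted to the closed disc bundles. It extends $\theta$ on the zero section. It satisfies $H\circ s_0=\fr_1^{-1}\Psi r=\fr_1^{-1}r\psi=s_1\circ\theta$, so it sends one $e$-nice section to the other. Over $K$, both $\fr_i$ extend the same $e$-framing of $\nu(K)$, and $\Psi$ is the identity over the collar, so $H$ is the identity on $\overline\nu(K)$. The two identifications could differ near $K$ by a bundle automorphism of $\nu(K)$. Since both are $e$-framings of the same knot, they agree up to homotopy, and one can first isotope $\fr_1$ near $K$ to agree with $\fr_0$ there.

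\textbf{Step 3: intertwining.} This follows from Remark~\ref{rem:Intertwine}. By Construction~\ref{cons:CoeffSys}, the coefficient system $\varphi$ sends the fibre/meridian to $1$ and push-offs along the nice sections to $0$. Now $h=\id_{E_K}\cup H|$ fixes the meridian and sends $s_0$-push-offs to $s_1$-push-offs. Hence $h_*$ commutes with the maps to $\Z_d$ on $H_1$, and therefore on $\pi_1$ because $\Z_d$ is abelian.

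\textbf{Main obstacle.} I expect the hardest point to be the compatibility near $K$ in the nonorientable case. There one must ensure that $\widehat\psi$ can be taken to be the identity over the boundary collar; this holds because $\psi$ is the identity near $\partial\Sigma$ and we may pick the lift fixing a preimage point there. One must also check that the two nice identifications agree over $\nu(K)$. If they differ, I would correct $\fr_1$ by a bundle automorphism supported near $\partial\Sigma$ that is homotopic to the identity, using Propositions~\ref{prop:RotAlphaAxiomsSection2} and~\ref{prop:RotAlphaAxiomsSection2Nonori} to see that this does not spoil niceness.
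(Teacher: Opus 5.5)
Your proof is correct. The paper itself only cites \cite[Lemma 6.20]{ConwayOrsonPowell}, and your construction is the device the paper later uses to define $\theta_{bundle}$ in Section~\ref{sec:Injection}: conjugate by the nice identifications, lift the induced homeomorphism $\psi$ of $\Sigma$ to the orientation double cover, and act trivially on the $\R^2$-factor, so that $\Psi\circ r=r\circ\psi$.

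Drop the proposed modification of $\fr_1$ near $K$, since $s_1$ is given. With the paper's conventions the nice identifications already agree over $K$, because they are part of the fixed gluing $\id_{E_K}\cup\fr|$ onto $Y_e(K)$. Indeed, the conclusion $H|_{\overline{\nu}(K)}=\id$, $H\circ s_0=s_1\circ\theta$ forces $s_0=s_1$ over $K$, so this agreement is an implicit hypothesis rather than something you can arrange.
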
 
\begin{proof}
A proof can be obtained using the argument from~\cite[Lemma 6.20]{ConwayOrsonPowell}.
\end{proof}

\subsection{The case when $X$ is spin.}
\label{sub:CaseWorkSpin}

We prove a strengthening of Theorem~\ref{thm:SpinUnion} when $X$ is spin.
This involves an argument that appeared in work of Boyer when~$\pi=1$ (see~\cite[pages 45-46]{BoyerRealization}) but as we now argue, applies more generally.
Here recall that when $S$ is a nonorientable $\Z_d$-surface, ``$d$ odd" is understood to mean $d=1$.

\begin{proposition}
\label{prop:BoyerAdaptSpin}
Assume~$X$ is spin.
Let $K \subset S^3$ be a knot,  let~$S_0,S_1 \subset X$ be $\Z_d$-surfaces with boundary $K$, and let~$H \colon (\overline{\nu}(S_0),S_0) \to (\overline{\nu}(S_1),S_1)$ be a bundle isomorphism 
with $H|_{\overline{\nu}(K)}=\id$
%%Don't delete.
%%The existence of H implie same relative Euler numbers. 
%%I don't think homologous is needed.
\begin{itemize}
\item  There is another bundle isomorphism~$H' \colon (\overline{\nu}(S_0),S_0) \to (\overline{\nu}(S_1),S_1)$ with $H'|_{S_0}=H|_{S_0}$
%{AC: The homeo~$\Rot(\alpha)$ is the identity on $\overline{\nu}(K)$ right?}
%%We say that Rot(\alpha) is the identity over K but that means being the identity on \nu(K)
and $H'|_{\overline{\nu}(K)}=\id$
such that~$X_{S_0} \cup_{h'} X_{S_1}$ is spin.
\item If~$d$ is odd and $H$ intertwines the coefficient systems to~$\Z_d$, then so does $H'$.
\item If $d$ is odd, $S_0,S_1$ are orientable, and $(F,H)$ is a $k$-invariant preserving extendable compatible pair,  then there is another~$k$-invariant preserving extendable compatible pair $(F',H')$ with $H'|_{S_0}=H_{S_0}$ and such that $X_0 \cup_{h'} X_1$ is spin.
\end{itemize}
\end{proposition}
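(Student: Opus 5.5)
The plan is to modify $H$ by postcomposing with a bundle automorphism $\Rot(\alpha)$ from Propositions~\ref{prop:RotAlphaAxiomsSection2} and~\ref{prop:RotAlphaAxiomsSection2Nonori}. Such an automorphism is the identity on the zero section and on $\overline{\nu}(K)$, so that $H':=\Rot(\alpha)\circ H$ automatically satisfies $H'|_{S_0}=H|_{S_0}$ and $H'|_{\overline{\nu}(K)}=\id$. Here $\alpha$ will be chosen to kill the obstruction to $X_{S_0}\cup_{h'}X_{S_1}$ being spin.

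\textbf{Step 1: locating the obstruction.}
Set $Z:=X_{S_0}\cup_h X_{S_1}$. Since $X$ is spin, each $X_{S_i}\subset X$ is spin. By Mayer--Vietoris, $w_2(Z)$ is measured by a class $\delta\in H^1(\partial X_{S_0};\Z_2)$ modulo the images of $H^1(X_{S_0};\Z_2)\oplus H^1(X_{S_1};\Z_2)$. Concretely, $\delta$ records the difference between the spin structure $\mathfrak s_0$ restricted from $X_{S_0}$ and the pullback $h^*\mathfrak s_1$ of the one restricted from $X_{S_1}$. On $E_K$ the map $h$ is the identity, and both spin structures come from the spin structure of $X$, which restricts uniquely to $\overline{\nu}(K)$ and $E_K$ up to the meridional ambiguity. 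So after possibly changing spin structures on the $X_{S_i}$ (which only matters along the meridian, via $H^1(X_{S_i};\Z_2)$), $\delta$ is supported on $\Sigma\mathbin{\wt{\times}}S^1$, vanishes on the fibre and on $\partial\Sigma$, and is determined by its values on push-offs $r(\gamma)$ of curves $\gamma\subset\Sigma$.

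Thus $\delta$ defines an element of $H^1(\Sigma,\partial\Sigma;\Z_2)\cong H^1(\Sigma;\Z_2)$, namely the defect $\gamma\mapsto \mathfrak s_0(r\gamma)-\mathfrak s_1(H r\gamma)$.

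\textbf{Step 2: cancelling the defect.}
Choose $\alpha\in H^1(\Sigma,\partial\Sigma;\Z^w)$ (or $H^1(\Sigma)$ in the orientable case) lifting this $\Z_2$-class. By item (2) of the propositions, $\Rot(\alpha)$ adds $\alpha(\gamma)$ copies of the fibre to each push-off. The fibre is a meridian of $S_1$, which bounds a normal disc in $\overline{\nu}(S_1)\subset X$ and hence carries the bounding spin structure on the $X_{S_1}$ side. Each added meridian therefore toggles the spin value along $r\gamma$ by $1$, so the defect becomes zero and $Z':=X_{S_0}\cup_{h'}X_{S_1}$ is spin. This is the main obstacle: checking carefully that adding a fibre changes the spin structure on the curve by exactly $1$. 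This is essentially Boyer's computation, and I would verify it on the circle bundle model, using that the framing of the fibre induced from the disc bundle is the bounding one.

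\textbf{Step 3: the remaining items.}
For the second item, observe that when $d$ is odd we may take $\alpha$ divisible by $d$: multiply the lift by $d$, which does not change it mod $2$. By Remark~\ref{rem:HowManyIdentifications} and item (2), $\Rot(d\beta)$ changes push-offs by multiples of $d$ meridians, which are trivial in $\Z_d$. Hence $H'$ still sends $e$-nice sections to $e$-nice sections and intertwines the coefficient systems (Remark~\ref{rem:Intertwine}). (For nonorientable $S_i$ with $d=1$ this is vacuous.)

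For the third item, with $d$ odd and $S_i$ orientable, $\Rot(d\beta)$ acts on $H_1(\partial X_{S_0};\Z[\Z_d])\cong H_1(\Sigma_d(K))\oplus\Z_{e/d}\oplus\Z^{2g}$ by a shear $\Z^{2g}\to\Z_{e/d}$. In the $d$-fold cover this shear corresponds to $\beta$ composed with the lifted fibre. I would realise it as $\partial$ of an automorphism of $\lambda_{X_{S_1}}$ obtained from a homeomorphism: namely, the extension of $\Rot(d\beta)$ across $\overline{\nu}(S_1)$ composed with the ambient map. Alternatively, I would modify $F$ by a map factoring through $H_2(X_{S_1},\partial;\Z[\Z_d])\to H_1(\partial)$, setting $F':=\Phi\circ F$ with $\partial\Phi=\Rot(d\beta)_*$. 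Preservation of $k$-invariants would follow since $(\Phi,\Rot(d\beta)|)$ is induced by a homeomorphism of $X_{S_1}$ (an isotopy-supported twist near $\partial$), and $k$-invariants are natural under homeomorphisms. Constructing this $\Phi$ is a second potential difficulty. My first attempt would be a collar twist: implant $\Rot(d\beta)|$ in a collar $\partial X_{S_1}\times I$, using that $\Rot(d\beta)|$ is isotopic to the identity through homeomorphisms of $\partial X_{S_1}$ precisely when it lifts as described.
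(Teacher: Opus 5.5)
Your first two items follow the paper's argument. The paper also compares the spin structures restricted from $X$; it works on the disc bundles, where their difference is $p^*(x)$ with $x\in H^1(S_0;\Z_2)$. It lifts $x$ to $\alpha$ and composes $H$ with $\Rot(\alpha)$, precomposing on the $S_0$ side where you postcompose. It obtains exactly the toggling computation you sketch from Propositions~\ref{prop:RotalphaSpinOrientable} and~\ref{prop:RotalphaSpinNonOrientable}, and replaces $\alpha$ by $d\alpha$ when $d$ is odd.

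The third item, however, has a genuine gap.
\begin{itemize}
\item The collar twist cannot work. By Proposition~\ref{prop:RotAlexanderModule}, $\Rot(d\beta)|$ acts on $H_1(\partial X_{S_1};\Z[\Z_d])\cong H_1(\Sigma_d(K))\oplus\Z_{e/d}\oplus H_1(\Sigma)$ by the shear $\gamma\mapsto\gamma+\beta(\gamma)\widetilde{\mu}$. This shear is nontrivial, for example whenever $e=0$ (so the middle summand is $\Z$) and $[\beta]_2\neq 0$, which is exactly the case you must handle. So $\Rot(d\beta)|$ is in general not isotopic, or even homotopic, to the identity. Lifting to the $\Z_d$-cover only expresses compatibility with the coefficient system and has nothing to do with isotopy.
\item Your fallback is circular. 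You argue that $(\Phi,\Rot(d\beta)|)$ is induced by a homeomorphism of $X_{S_1}$ and therefore preserves $k$-invariants. But extending a boundary homeomorphism over the exterior is precisely what Theorem~\ref{thm:CK4Manifold} needs a $k$-invariant preserving compatible pair for. So neither the existence of $\Phi$ nor the $k$-invariant condition is established.
\end{itemize}

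The missing ingredient is the purely algebraic Proposition~\ref{prop:BuildCompatiblePair}.
\begin{itemize}
\item Since $H_1(\Sigma)$ is free abelian, $f$ lifts along the surjection $H_2(X_S;\Z[\Z_d])^*\to H_1(\partial X_S;\Z[\Z_d])$. This is where orientability is used.
\item Dualising and averaging with the norm map $N$ gives $\psi\colon H_2(X_S;\Z[\Z_d])\to H_2(\partial X_S;\Z[\Z_d])$, and hence an isometry $G=\id-i\circ\psi$ with $\partial G=\id+Nf_\partial$.
\item The $k$-invariant is controlled only after multiplying the correction by $d$. This is because $H^3(\Z_d;H_2(X_S;\Z[\Z_d]))\cong\Z_d$ is $d$-torsion, so \cite[Proposition 7.13]{ConwayKasprowski4Manifolds} applies to $d$ times the $k$-invariant.
\item Take $f_\partial=\Rot(d\alpha)-\id$. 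Since $\Z_d$ acts trivially on $H_1(\Sigma)$, $Nf_\partial=df_\partial$, and this yields $\partial G=\id+dNf_\partial=\Rot(d^3\alpha)$.
\end{itemize}
So what you can actually realise is $\Rot(d\beta)$ with $\beta=d^2\alpha$, not an arbitrary odd lift $\beta$. Because $d^3$ is still odd, $(F,H)\circ(G,\Rot(d^3\alpha))$ keeps the spin property established in the first item.
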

\begin{proof}
We build on the argument from~\cite[Page 45]{BoyerRealization}.
Restrict the spin structure on $X$ to~$X_{S_i}$ and then further to a spin structure~$\mathfrak{s}_i$ on~$\partial X_{S_i}=Y_i \cup E_K$.
Note that since~$\mathfrak{s}_i$ is obtained from the spin structure on $X$, it extends to a spin structure~$\widetilde{\mathfrak{s}}_i$ on~$\overline{\nu}(S_i)$.
%maybe uniquely.
%%%Don't delete
%%Obstruction theory?
%H^1(\nu,\partial \nu)\cong H_3(\nu)=0 many spin structures
%%%Or:
%%if the spin structure extends over nu S, then it does so uniquely because H^1(nu S;Z/2)->H^1(Y;Z/2) is injective (and spin structure are 1:1 with H^1(-;Z/2)).
If~$H$ were to take~$\widetilde{\mathfrak{s}}_1$ to~$\widetilde{\mathfrak{s}}_0$, then~$X_{S_0} \cup_h X_{S_1}$ would be spin.
We will precompose $H$ with a bundle isomorphism in order to arrange for this to hold.

The spin structures~$H^*(\widetilde{\mathfrak{s}}_1)$ and~$\widetilde{\mathfrak{s}}_0$ differ by an element of~$H^1(\overline{\nu}(S_0);\Z_2)$.
Write $p \colon \overline{\nu}(S_0) \to~S_0$ for the projection and fix an~$x \in H^1(S_0;\Z_2) \cong H^1(S_0,\partial S_0;\Z_2)$ so that~$p^*(x) \cdot H^*(\widetilde{\mathfrak{s}}_1)=\widetilde{\mathfrak{s}}_0$.
Pull the class~$x \in H^1(S_0,\partial S_0;\Z_2)$  back to an $\alpha \in H^1(S_0,\partial S_0;\Z^w)$ and consider~$\Rot(\alpha) \colon \overline{\nu}(S_0) \to \overline{\nu}(S_0)$.
%,z \mapsto \gamma(p(z)) \cdot z$, where $\cdot$ denotes the action $S^1$ on $\overline{\nu}(S_0)$ by rotation of the fibres.
Recall that~$\Rot(\alpha)|_{S_0}=\id_{S_0}$.
Propositions~\ref{prop:RotalphaSpinOrientable} and~\ref{prop:RotalphaSpinNonOrientable} imply that~$H':=H \circ \Rot(\alpha)$ now satisfies
%~$(H')^*(\widetilde{\mathfrak{s}}_1)=\widetilde{\mathfrak{s}}_0$:
$$
(H')^*(\widetilde{\mathfrak{s}}_1)
=\Rot(\alpha)^*H^*(\mathfrak{s}_1)
=p|^*([\alpha]_2) H^*(\mathfrak{s}_1)
=p^*(x) \cdot H^*(\mathfrak{s}_1)
=\mathfrak{s}_0.
$$
%that~$H':=H \circ \Rot(\alpha)$ now satisfies~$(H')^*(\widetilde{\mathfrak{s}}_1)=\widetilde{\mathfrak{s}}_0$.
Proposition~\ref{prop:RotAlphaAxiomsSection2} implies that~$\Rot(\alpha) \colon \overline{\nu}(S_0) \to \overline{\nu}(S_0)$ induces~$\bsm \id &\alpha \\ 0 & \id \esm$ on $H_1(\partial \overline{\nu}(S_0)) \cong \Z \oplus H_1(S_0)$; recall Proposition~\ref{prop:RotAlphaAxiomsSection2Nonori} in the nonorientable case.
Thus when~$H$ intertwines the coefficient systems to~$\Z_d$,  the composition~$H \circ \Rot(\alpha)$ intertwines the coefficient systems if and only if~$\alpha$ is divisible by~$d$.
%%Don't 
%%Rot(\alpha) has that component \alpha \colon H_1(\Sigma) \to \Z \to \Z_d that is given by \alpha you need \alpha to be divisible by $d$.
Thus, in general,  replacing~$H$ by~$H':=H \circ \Rot(\alpha)$ as we did above will not preserve this property.
When~$d$ is odd, we can instead define~$H':=H \circ \Rot(d\alpha)$ because the previous explanation shows that when~$d$ is odd,~$\Rot(d\alpha)$ and~$\Rot(\alpha)$ induce the same map on~$\Z_2$-homology.
%%And Rot^d will be ok for coefficient systems.
%%%In the nonorientable case d odd means d=1.

At this point, we have proved the first two items and it remains to prove the third.
The new compatible pair $(F',H')$ will be constructed by applying Proposition~\ref{prop:BuildCompatiblePair} below to
$$f_\partial:=\Rot(d\alpha)-\id \colon H_1(\partial X_{S_0};\Z[\Z_d]) \to H_1(\partial X_{S_0};\Z[\Z_d]).$$
Note that since Proposition~\ref{prop:RotAlphaAxiomsSection2} ensures that~$\Rot(d\alpha)=\bsm \id & \alpha \\ 0&\id \esm$, it follows that~$\Rot(d\alpha)-\id$ is of the form~$\bsm 0&* \\ 0&0 \esm$.
This verifies the assumption of Proposition~\ref{prop:BuildCompatiblePair} whose conclusion concerns the 
%$\Z[\Z_d]$-linear 
map~$Nf_\partial(x):=\sum_{g\in\Z_d}gf(g^{-1}x).$
Here,  since the $\Z_d$-action on $H_1(\Sigma)$ is trivial, 
%%Don't delete.
%%The map * is defined on H_1(\Sigma); that's the only nontrivial component of Nf_\partial
 this map simplifies to~$Nf_\partial=d f_\partial$ and thus~$d \cdot Nf_\partial=d^2 f_\partial =\Rot(d^3 \alpha)-\id$
%%Don't delete.
%d^2 f_\partial=d(\Rot(d\alpha)-\id)=d^2triangular matrix with d\alpha=triangular matrix with d^3\alpha=\Rot(d^3 \alpha)-\id.$ 
and~$d \cdot N f_\partial+ \id=\Rot(d^3\alpha)$.
Proposition~\ref{prop:BuildCompatiblePair} provides an isometry~$G \colon \lambda_{X_{S_0}} \cong \lambda_{X_{S_0}}$ so that~$(G,\Rot(d^3\alpha))$
 is~$k$-invariant preserving and satisfies
$$ \partial G=\Rot(d^3\alpha) \colon H_1(\partial X_{S_0};\Z[\Z_d]) \to H_1(\partial X_{S_0};\Z[\Z_d]).$$
Proposition~\ref{prop:CompatibleSimplification} then implies that~$(G,\Rot(d^3\alpha))$ forms a $k$-invariant preserving compatible pair.
We then set~$(F',h'):=(F,H) \circ (G,\Rot(d^3\alpha)) $ which is again a $k$-invariant preserving compatible pair.
Since~$d$ is odd,  so is $d^3$ and thus,  as proved in the second item $X_{S_0} \cup_{h'} X_{S_1}$ is spin.
\end{proof}

It remains to prove the result (namely Proposition~\ref{prop:BuildCompatiblePair}) that was used during this argument.
%We begin with a preliminary lemma.
%	\begin{lemma}
%	\label{lem:kinvariantd}
%		Let~$(X_0,Y_0)$ and~$(X_1,Y_1)$ be~$4$-dimensional Poincar\'e pairs with~$\pi_1(X_i) \cong \Z_d$ and~$\iota_j \colon \pi_1(Y_i) \to \pi_1(X_i)$ surjective  for $i=0,1$.
%		Fix a degree one homotopy equivalence $h \colon Y_0 \to Y_1$ and assume there is a group isomorphism $u \colon \pi_1(X_0) \to \pi_1(X_1)$ that satisfies~$u \circ \iota_0=\iota_1 \circ h_*$.
%		If there is a compatible triple~$(F,G,h)$, then 
%		$$F_*(d \cdot k_{X_0,Y_0})=h^*(d \cdot k_{X_1,Y_1}) \in H^3(B\pi,Y_0;\pi_2(X_1)).$$
%	\end{lemma}
%	\begin{proof}
%		The proof is essentially the same as the proof of \cite[Proposition~7.13]{ConwayKasprowski4Manifolds}, except that in the first line of the claim, the map~$\ev\colon H^2(X_1;H_2(X_1;\Z[\pi]))\to \Hom_{\Z[\pi]}(H_2(X_1;\Z[\Z_d]),H_2(X_1;\Z[\Z_d]))$ is no longer surjective but potentially has cokernel \[\Ext^3_{\Z[\Z_d]}(H_0(X_1;\Z[\Z_d]),H_2(X_1;\Z[\Z_d]))\cong \Ext^3_{\Z[\Z_d]}(\Z,I)\cong \Z_d.\]
%		Hence the claim only shows that the right-hand square in the diagram commutes for elements of divisibility~$d$. Hence the paragraph after the claim only applies to~$dk_{X_1,Y_1}$ and~$dk_{X_0,Y_0}$ instead of~$k_{X_1,Y_1}$ and~$k_{X_0,Y_0}$.
%	\end{proof}
For this, given $\Z[\Z_d]$-modules~$A,B$ and a $\Z$-linear map $f \colon A \to B$,   we will consider the associated~$\Z[\Z_d]$-linear map~$N f\colon A \to B$ defined by 
$$Nf(x)=\sum_{g\in\Z_d}gf(g^{-1}x).$$
We are now ready to establish the result that was used during the proof of Proposition~\ref{prop:BoyerAdaptSpin}.
		\begin{proposition}
		\label{prop:BuildCompatiblePair}
		Let $d>0$ and let~$S \subset X$ be an orientable~$\Z_d$-surface with Euler number $e$, and recall the decomposition
		$$H_1(\partial X_S;\Z[\Z_d])\cong  H_1(\Sigma_d(K)) \oplus \Z_{e/d} \oplus H_1(\Sigma).$$
		Given a $\Z$-linear map~$f\colon H_1(\Sigma)\to \Z_{e/d}\oplus H_1(\Sigma)$ and
		$$
		f_\partial:=\bsm 0&f \\0&0 \esm \colon H_1(\partial X_S;\Z[\Z_d]) \to H_1(\partial X_S;\Z[\Z_d]),
		$$
		there is an isometry $G \colon \lambda_{X_S} \cong \lambda_{X_S}$ such that
		\begin{enumerate}
			\item the maps $\partial G$ and $\id_\partial+Nf_\partial$ agree on $H_1(\partial X_S;\Z[\Z_d])$,
			% the pair~$(G,\id_\partial+Nf_\partial)$ is compatible,
			\item %the pair~$(G,\id_\partial+d \cdot Nf_\partial)$ is compatible 
			the maps $\partial G$ and $\id_\partial+d \cdot Nf_\partial$ agree on $H_1(\partial X_S;\Z[\Z_d])$ and, additionally satisfy
			$$ G_*(k_{X_S,\partial X_S})=(\id_\partial+d \cdot Nf_\partial)^*(k_{X_S,\partial X_S}).$$
			%and $k$-invariant preserving in the sense that 
		\end{enumerate}
	\end{proposition}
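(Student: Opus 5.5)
The plan is to realise $\id_\partial+Nf_\partial$ by an isometry of the form $G=\id_H+B\circ\Ad\lambda_{X_S}$, where $H:=H_2(X_S;\Z[\Z_d])$ and $B\colon H^*\to H$ is $\Z[\Z_d]$-linear. I expect to follow the realisation arguments of Boyer and of Conway--Powell.

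\emph{Setting up the lift.} Since $\pi=\Z_d$ is finite, $\ev$ is an isomorphism. So there is an exact sequence
$$H\xrightarrow{\Ad\lambda}H^*\xrightarrow{q}H_1(\partial X_S;\Z[\Z_d])\to 0,$$
and $H^*\cong\Hom_\Z(H,\Z)$ via $\ev_e$; in particular $H^*$ is free abelian. The map $f_\partial$ is strictly upper triangular with respect to the decomposition of Proposition~\ref{prop:AlexanderModule}. Its source summand $H_1(\Sigma)$ carries the trivial action and the image lies in $\Z_{e/d}\oplus H_1(\Sigma)$, so $f_\partial\circ f_\partial$ is zero up to the $\Z_{e/d}$-component, and I will arrange things so that only this nilpotence is used. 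I first choose a $\Z$-linear lift $\ell\colon H^*\to H$ with $\Ad\lambda\circ\ell$ lifting $f_\partial\circ q$. Such a lift exists because $H^*$ is $\Z$-free, and because the image of $f_\partial$ is represented by classes coming from $H_1(\Sigma)$ and $\Z\langle\wt\mu\rangle$ which (after choosing lifts of the boundary classes, i.e.\ push-offs of curves on $S$ capped in $X_S$) lift to $H$. Averaging gives the $\Z[\Z_d]$-linear map $N\ell$, and $q\circ\Ad\lambda\circ N\ell=Nf_\partial\circ q$. I then set $B:=N\ell-\tfrac12(\ldots)$ corrected as follows. A map $\id+B\Ad\lambda$ is an isometry exactly when $B+B^*+B^*\circ\Ad\lambda\circ B=0$. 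So I replace $N\ell$ by $B=N\ell-(N\ell)^*-C$, where $C$ is a correction term absorbing the quadratic term $B^*\Ad\lambda B$. The skew part does not change the induced map on $\coker\Ad\lambda$, because $(N\ell)^*$ composed with $q$ vanishes by the triangularity of $f_\partial$. The quadratic term should be solvable because $Nf_\partial\circ Nf_\partial$ vanishes on the boundary, so its lift is hermitian of the form $a+\bar a$ and can be halved via $N$. This yields item (1).

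\emph{Main obstacle.} The hard step is making $B^*\Ad\lambda B$ absorbable, i.e.\ writing a hermitian correction as $a+\bar a$ over $\Z[\Z_d]$. This is exactly where the norm $N$ (rather than a naive lift) is needed. My first attempt will be to take $a$ as the norm of a $\Z$-linear half, using that $\Ad\lambda\circ N\ell$ has image fixed by $\Z_d$ on the relevant summand.

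\emph{Item (2).} I run the same construction with $d\cdot\ell$ in place of $\ell$. This gives $G$ with $\partial G=\id+d\cdot Nf_\partial$ and $(G,\id+d\,Nf_\partial)$ compatible by Proposition~\ref{prop:CompatibleSimplification}. For the $k$-invariant, I argue as in the proof of Proposition~\ref{prop:EulerNumberZeroSpheres}. The difference $G_*-(\id+dNf_\partial)^*$ acting on $k_{X_S,\partial X_S}$ is induced by $d$ times a map that factors through $H_2(X_S,\partial X_S;\Z[\Z_d])$, up to an obstruction in $\Ext^3_{\Z[\Z_d]}(\Z,I)\cong\Z_d$. Multiplication by $d$ kills that obstruction. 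The remaining part vanishes because $k_{X_S,\partial X_S}$ maps trivially to $H^3(\pi,\partial X_S;H_2(X_S,\partial X_S;\Z[\Z_d]))$ by~\cite[Lemma~6.1]{ConwayKasprowskiKinvariant}. This gives the equality of relative $k$-invariants.
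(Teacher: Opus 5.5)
Your ansatz $G=\id_H+B\circ\Ad\lambda_{X_S}$ cannot work, because every isometry of this shape has $\partial G=\id$. The map $\partial G$ is induced on $\coker(\Ad\lambda_{X_S})\cong H_1(\partial X_S;\Z[\Z_d])$ by $(G^*)^{-1}$. Since $\Ad\lambda_{X_S}$ is hermitian, identifying $H^{**}=H$ gives $G^*=\id_{H^*}+\Ad\lambda_{X_S}\circ B^*$, which preserves every coset of $\im(\Ad\lambda_{X_S})$. Hence $G^*$, and so $(G^*)^{-1}$, induces the identity on the cokernel.

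The same problem already appears in your lifting step. You ask for $\ell\colon H^*\to H$ with $q\circ\Ad\lambda\circ\ell=f_\partial\circ q$, but $q\circ\Ad\lambda=0$ by exactness, so this forces $f_\partial=0$. Push-offs of curves on $S$ capped off in $X_S$ give relative classes, i.e.\ elements of $H^*\cong H_2(X_S,\partial X_S;\Z[\Z_d])$, not of $H$. Consequently the skew-symmetrisation and your ``main obstacle'' of absorbing $B^*\Ad\lambda B$ as $a+\bar a$ are beside the point.

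The missing idea is that the correction must factor through the boundary, not through $\Ad\lambda$.
\begin{itemize}
\item Because $H_1(\Sigma)$ is free abelian with trivial action, lift $f_\partial$ to a $\Z$-linear map $\phi'\colon H_2(\partial X_S;\Z[\Z_d])^*\to H^*$ with $\partial\circ\PD\circ\ev^{-1}\circ\phi'\circ\ev\circ\PD^{-1}=f_\partial$.
\item Put $\phi:=N\phi'$. Since $H_2(\partial X_S;\Z[\Z_d])$ is $\Z$-free with trivial action, write $\phi=\psi^*$ for some $\psi\colon H\to H_2(\partial X_S;\Z[\Z_d])$.
\item Set $G:=\id-i\circ\psi$, where $i\colon H_2(\partial X_S;\Z[\Z_d])\to H$ is induced by inclusion. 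This is automatically an isometry, because $\Ad\lambda\circ i=0$: classes from the boundary lie in the radical, so no quadratic correction term arises.
\item Now $G^*=\id-\phi\circ i^*$, and $i^*\circ\ev\circ\PD^{-1}=\ev\circ\PD^{-1}\circ\partial$. Together these give $(\partial G)^{-1}=\id-Nf_\partial$, hence $\partial G=\id+Nf_\partial$, using $(Nf_\partial)^2=0$.
\end{itemize}

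This linearity in $\psi$ is also what makes item (2) work. The map $\id-i\circ d\psi$ is again an isometry, with boundary $\id+d\,Nf_\partial$. Apply \cite[Proposition~7.13]{ConwayKasprowski4Manifolds} to the compatible pair $(\id-i\circ\psi,\ \id+Nf_\partial)$. Because $H^3(\Z_d;H)\cong\Z_d$ is $d$-torsion, this gives the $k$-invariant identity for $d\cdot k_{X_S,\partial X_S}$. Cancelling the $d\,k$ terms then moves the factor $d$ from $k$ onto $\psi$.

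Your sketch of (2), an $\Ext^3$ obstruction killed by $d$ plus \cite[Lemma~6.1]{ConwayKasprowskiKinvariant}, is too vague to check. In your setup it would in any case be blocked by the nonlinear correction term $C$, which prevents scaling the construction by $d$.
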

	\begin{proof}
We begin with the construction of the map~$G$.
As a first step, we construct a~$\Z$-linear map~$\phi' \colon H_2(\partial X_S;\Z[\Z_d]) \to H_2(X_S;\Z[\Z_d])^*$ such that $f_\partial$ factors as 
		\[H_1(\partial X_S;\Z[\Z_d])\xrightarrow{\ev\circ\PD^{-1}}H_2(\partial X_S;\Z[\Z_d])^*\xrightarrow{~\phi'~}H_2(X_S;\Z[\Z_d])^*\xrightarrow{\partial\circ\PD\circ\ev^{-1}}H_1(\partial X_S;\Z[\Z_d]).\]
Use Proposition~\ref{prop:AlexanderModule} to view~$H_1(\Sigma)$ as a summand of~$H_1(\partial X_S;\Z[\Z_d])$ and consider the composition~$H_1(\Sigma) \subset H_1(\partial X_S;\Z[\Z_d]) \xrightarrow{f} H_1(\partial X_S;\Z[\Z_d])$.
Since the surface~$\Sigma$ is orientable,~$H_1(\Sigma)$ is free abelian and so this map lifts along~$\partial \circ \PD \circ \ev^{-1} \colon H_2(X_S;\Z[\Z_d]) \twoheadrightarrow H_1(\partial X_S;\Z[\Z_d])$ yielding 
$$
(0 \ \widehat{f}) \colon \overbrace{H_1(\partial X_S;\Z[\Z_d])}^{\cong H_1(\Sigma_d(K)) \oplus \Z_{e/d} \oplus H_1(\Sigma)} \to H_2(X_S;\Z[\Z_d])^*.
$$
Consider the evaluation map 
$$\ev_H:=\begin{pmatrix}
\ev_T& 0\\
0&\ev_\Sigma
\end{pmatrix}
 \colon \overbrace{H_1(\partial X_S;\Z[\Z_d])}^{\cong (H_1(\Sigma_d(K)) \oplus \Z_{e/d}) \oplus H_1(\Sigma)} \to \overbrace{H_1(\partial X_S;\Z[\Z_d])^{**}}^{\cong H_1(\Sigma_d(K))^{**}\oplus H_1(\Sigma)^{**}} .$$
 %%Direct sum because it is evaluation
Since $H_1(\Sigma)$ is a free abelian group endowed with the trivial $\Z_d$-action, the evaluation map~$\ev_\Sigma$ is an isomorphism. 
Precomposing $(0 \ \widehat{f})$ with $(0 \ \ev_\Sigma^{-1})$ and $(\PD \circ \ev^{-1})^*$, we obtain the required~$\phi'$ as well as the following commutative diagram:
$$
\xymatrix{
& H_2(\partial X_S;\Z[\Z_d])^{*} \ar[d]_{( \PD \circ \ev^{-1})^*,\cong}
\ar@/^6pc/[dd]^{:=\phi'}
 \\
& H_1(\partial X_S;\Z[\Z_d])^{**} \ar[d]^{(0 \ \widehat{f} \circ \ev_\Sigma^{-1})} \\
& H_2(X_S;\Z[\Z_d]) \ar[d]^{\partial \circ \PD \circ \ev^{-1}} \\
%%%%
H_1(\partial X_S;\Z[\Z_d]) \ar[r]^{f_\partial}
\ar[ru]^{(0 \ \wh f)}
\ar@/^1pc/[ruu]^{\ev_H}
\ar@/^4pc/[ruuu]^{\ev \circ \PD^{-1}}
& H_1(\partial X_S;\Z[\Z_d]).
}
$$
We outline why the leftmost triangle commutes, i.e.\ why $\PD^* \circ \ev \circ \PD^{-1}=\ev^* \circ \ev_H$.
This reduces to showing that $\langle \PD(\alpha),\PD^{-1}(x)\rangle=\langle \alpha,x \rangle \in \Z[\Z_d]$ for every~$\alpha \in H^1(\partial X_S;\Z[\Z_d])$ and every $x \in H_1(\partial X_S;\Z[\Z_d])$.
Thanks to~\cite[Lemmas A.5 and A.6]{ConwayKasprowski4Manifolds}, this equality can then be verified using properties of cap and cup products. 
%%Don't delete: for pi_1=Z, can replace evaluations by caps and then use the usual tricks which include using the commutativity of cup. Over ZZ_d,  graded symmetry is annoying as is going from cap to evaluation which is why we reference those lemmas from our appendix.
%%Perhaps up to adding an \overline{•} on one of the maps and/or adding a sign.

Taking the norm construction of $\phi'$ we obtain a $\Z[\Z_d]$-linear map 
		$$\phi:=N \phi' \colon H_2(\partial X_S;\Z[\Z_d])^*\to H_2(X_S;\Z[\Z_d])^*.$$
		
		Since~$H_2(\partial X_S;\Z[\Z_d])$ is a free abelian group endowed with the trivial~$\Z_d$-action, it follows that the evaluation map~$H_2(\partial X_S;\Z[\Z_d])\to H_2(\partial X_S;\Z[\Z_d])^{**}$ is an isomorphism. Applying~\cite[Lemma~9.26]{ConwayKasprowski4Manifolds} ensures the existence of a map~$\psi\colon H_2(X_S;\Z[\Z_d])\to H_2(\partial X_S;\Z[\Z_d])$ with $\psi^*=\phi$.
Using $i\colon H_2(\partial X_S;\Z[\Z_d])\to H_2(X_S;\Z[\Z_d])$ to denote the inclusion induced map, set
		$$G:=\id-i\circ\psi.$$
We now verify that $G$ is an isometry.
The naturality of Poincar\'e duality and of the evaluation map yield~$i^*\circ \ev\circ \PD^{-1}=\ev\circ \PD^{-1}\circ \partial=0$.
It follows that
		\begin{equation}
			\label{eq:G*1}
			\PD\circ \ev^{-1}\circ G^*\circ \ev\circ \PD^{-1}=\id-\PD\circ\ev^{-1}\circ\phi\circ\ev\circ\PD^{-1}\circ\partial.
		\end{equation}
Applying the connecting homomorphism $\partial$ as well as the definition of $\phi'$ and $\phi$, we obtain
	%%Don't delete
	%%One gets the thing with \phi instead of \phi' but then uses \phi=N\phi'.
	%%Get \partial-\partial \circ \PD\circ\ev^{-1}\circ\phi\circ\ev\circ\PD^{-1}\circ\partial. 
	%% \partial-\partial \circ \PD\circ\ev^{-1}\circ N\phi'\circ\ev\circ\PD^{-1}\circ\partial.
	%%Linearity of 	%%Then use the linearity of the norm map N.
	%% \partial-N(\partial \circ \PD\circ\ev^{-1}\circ\phi'\circ\ev\circ\PD^{-1}\circ\partial).
	%% =\partial-Nf_\partial \circ\partial.
		\begin{equation}
			\label{eq:G*}
			\partial\circ \PD\circ \ev^{-1}\circ G^*\circ \ev\circ \PD^{-1}=\partial-Nf_\partial\circ \partial.
		\end{equation}
		Using the definition of~$G:=\id-i\circ\psi$ and \eqref{eq:G*1} one verifies that $G$ is an isometry.
		%%Don't delete.\\
		%%One has to show that G^* \circ ev \circ \PD^{-1} \circ j \circ G= ev \circ \PD^{-1} \circ j.
		%%Using the definition of $G=\id-i \circ \psi$, the LHS reduces to G^* \circ \ev \circ \PD^{-1} \circ j.
		%%Thus we have to show that G^* \circ \ev \circ \PD^{-1} \circ j=\ev \circ \PD^{-1} \circ j
		%%Apply \PD \circ \ev^{-1} to both sides to get
		%%\PD \circ \ev^{-1} \circ G^* \circ \ev \circ \PD^{-1} \circ j=j.
		%%By 0.2 the LHS=j- (junk \circ j \circ \partial)=j, as required.
		 
		 It remains to prove that~$\partial G=\id+Nf_\partial$.
		 The argument is inspired by~\cite[Lemma~1.12]{BoyerUniqueness}. 
		 Since~$\id-Nf_\partial$ is the inverse of~$\id+Nf_\partial$
		 %%Don't delete.
		 %Upper triangular so Nf_\partial^2=0
		 and~$\partial\circ \PD\circ \ev^{-1}$ is surjective, this is equivalent to showing that
		 \begin{equation}
		 	\label{eq:WTSSpinFix}
\overbrace{(\partial G)^{-1}\circ \partial\circ \PD\circ \ev^{-1}}^{=\partial\circ \PD\circ \ev^{-1}\circ G^*}
=(\id-Nf_\partial)\circ \partial\circ \PD\circ \ev^{-1}.
		 \end{equation}
		 This follows from \eqref{eq:G*} when precomposing with $\ev\circ \PD$. 
		 This concludes the proof of (1).

		We now prove (2).
		The above argument showing that~$\id-i\circ \psi$ is an isometry with boundary~$\id_\partial+Nf_\partial$ also shows that~$G:=\id-i\circ d \psi$ is an isometry with~$\partial G=\id_\partial+dNf_\partial$.
		We will prove that this~$G$ also satisfies the condition on the~$k$-invariant.
		First,  note the following relation (where the second equality is a consequence of~\cite[Proposition 7.13]{ConwayKasprowski4Manifolds} which applies because the group~$H^3(\Z_d;H_2(X_{S_1};\Z[\Z_d])) \cong H^3(\Z_d;I)\cong H^3(\Z_d;\Z)=\Z_d$ is~$d$-torsion):
		%%Don't delete.
		%%H_2(X_S;\Z[\Z_d]) \cong_s I by the begining of the proof of Theorem~\ref{thm:CK4Manifold}.
		\begin{align*}
			dk_{X_S,\partial X_S}-d(i\circ \psi)_*(k_{X_S,\partial X_S})
			&=(\id-i\circ \psi)_*(dk_{X_S,\partial X_S})=(\id+Nf_\partial)^*(dk_{X_S,\partial X_S}) \\
			&=dk_{X_S,\partial X_S}+d(Nf_\partial)^*(k_{X_S,\partial X_S}).
		\end{align*}
		Cancelling the~$dk_{X_S,\partial X_S}$ terms, we deduce~$-d(i\circ \psi)_*(k_{X_S,\partial X_S})=d(Nf_\partial)^*(k_{X_S,\partial X_S}),$ and so
		\begin{align*}
			(\id-i\circ d\psi)_*(k_{X_S,\partial X_S})
			&=k_{X_S,\partial X_S}-d(i\circ \psi)_*(k_{X_S,\partial X_S})=k_{X_S,\partial X_S}+d(Nf_\partial)^*(k_{X_S,\partial X_S}) \\
			&=(\id+dNf_\partial)^*(k_{X_S,\partial X_S}),
		\end{align*}
		as needed.
		This concludes the proof of the proposition.
\end{proof}

\subsection{The characteristic case}
\label{sub:SpinUnionCharacteristic}

Next we prove Theorem~\ref{thm:SpinUnion} in the case when the surfaces are characteristic.

\begin{proposition}
\label{prop:Audrick}
Let~$S_0,S_1 \subset X$ be simple characteristic surfaces with the same boundary and the same relative Euler number $e$,
and let $\theta \colon S_0 \to S_1$ be a homeomorphism.
The following assertions are equivalent:
%%%Don't delete.
%%(1) could the stronger statement
%%For any H with H|_=\theta that preserves e-nice, the gluing is spin and such an $H$ exists
\begin{enumerate}
\item 
There is a bundle isomorphism~$H \colon (\overline{\nu}(S_0),S_0) \to (\overline{\nu}(S_1),S_1)$ with $H|_{S_0}=\theta$ that preserves a choice of $e$-nice section, i.e.\ $H \circ s_0=s_1 \circ \theta$,  and such that~$X_{S_0} \cup_h X_{S_1}$ is spin.
%%%Don't delete
%%Understood to mean H|_{\nu(K)}=\id, see def.
\item The induced isomorphism $\theta$ preserves the Freedman--Kirby or Guillou--Marin forms.
\end{enumerate}
\end{proposition}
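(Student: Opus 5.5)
We argue via the characteristic relationship between $w_2$ of surface exteriors and spin structures on the boundary, following Kirby--Taylor (Remark~\ref{rem:KirbyTaylorIntro}). Since $S_i$ is characteristic, $X_{S_i}$ is spin (as $w_2(X)$ is dual to $[S_i]$ and so restricts to zero off $S_i$). Moreover, because $\Z_2$-Poincar\'e duality makes $S_i$ characteristic exactly when no class of $H_2(X)$ meeting $S_i$ evenly has odd square, the restriction map $H^1(X_{S_i};\Z_2)\to H^1(\partial X_{S_i};\Z_2)$ combined with simple connectivity of $X$ should show that the spin structure on $X_{S_i}$ is unique up to the $H^1(X_{S_i};\Z_2)$-action. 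In the closed case this action is only by $\Z_2$ when $d$ is even, and the resulting boundary spin structures have the same class in the quotient relevant below. Let $\mathfrak{s}_i$ denote the induced spin structure on $\partial X_{S_i}$. By Kirby--Taylor, $\mathfrak{s}_i$ does not extend over $\overline{\nu}(S_i)$. Under the bijection $\Spin(Y_e)\cong \Spin(S^1)\times \operatorname{Quad}(H_1(S;\Z_2))$ induced by the $e$-nice identification, $\mathfrak{s}_i$ maps to the Freedman--Kirby (resp.\ Guillou--Marin) form $q_{S_i}$. The $\Spin(S^1)$-coordinate is the non-bounding one, as forced by characteristicity.

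The key steps run as follows.

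\textbf{Step 1.} Observe that $X_{S_0}\cup_h X_{S_1}$ is spin if and only if $h^*\mathfrak{s}_1=\mathfrak{s}_0$ for some choice of spin structures on the exteriors. This is a Mayer--Vietoris argument on $H^1(-;\Z_2)$ and $w_2$: a spin structure on the union restricts to spin structures on both pieces, and conversely spin structures agreeing on the boundary glue.

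\textbf{Step 2.} Given $H$ preserving $e$-nice sections and with $H|_{S_0}=\theta$, transport everything through the $e$-nice identifications, so that $h$ becomes a bundle map of $Y_e$ covering $\theta$ and preserving the section $r$. Naturality of the Kirby--Taylor bijection then shows that $h^*$ acts on the $\operatorname{Quad}$-coordinate by $q\mapsto q\circ\theta_*$, and fixes the $\Spin(S^1)$-coordinate because $r$ is preserved. Combining with Step~1, $X_{S_0}\cup_h X_{S_1}$ is spin if and only if $q_{S_1}\circ \theta_*=q_{S_0}$. This gives (1)$\Rightarrow$(2).

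\textbf{Step 3.} For (2)$\Rightarrow$(1), first apply Lemma~\ref{lem:ExtendHomeoOverDiscBundle} to produce some $H$ extending $\theta$, equal to the identity on $\overline{\nu}(K)$, and with $H\circ s_0=s_1\circ\theta$. Step~2 then applies to this $H$, and any such $H$ works.

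The main obstacle is the naturality claim in Step~2 for the nonorientable ($\Z_4$-valued) case and with the knot exterior $E_K$ present. There one must check that the quadratic enhancement read off from the boundary spin structure depends only on the section, not on the full bundle map. Note that $\Rot(\alpha)$ changes the section by $\alpha$ and alters the spin structure by $p^*[\alpha]_2$; this is exactly the freedom exploited in Proposition~\ref{prop:BoyerAdaptSpin}, and fixing $e$-nice sections removes it. To make this rigorous, I would cite the Kirby--Taylor computation in the form of \cite[Section 6]{ConwayOrsonPowell} and \cite[Section 3]{Pyronneau}. For the $E_K$ portion, note that $h$ is the identity there and that $H^1(E_K;\Z_2)\cong\Z_2$ is detected by the meridian (fibre), which $h$ fixes.
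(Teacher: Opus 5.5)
Your argument is essentially the paper's. Both proofs glue spin structures across $h$ and use the Kirby--Taylor correspondence on the circle bundle. Both rely on the fact that every spin structure on $\partial X_{S_i}$ extending over $X_{S_i}$ yields $q_{S_i}$ through an $e$-nice section; the paper obtains this via the exterior form $\widehat{q}_{S_i}$, computed with discs in $X_{S_i}$, which is where simplicity is used. Both then use that an $H$ preserving nice sections transports these forms by $\theta_*$.

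One correction: when $X$ is spin (for instance any surface in $D^4$), the restriction of the global spin structure does extend over $\overline{\nu}(S_i)$. So your claims that $\mathfrak{s}_i$ never extends and that its fibre coordinate is always non-bounding are false. This is harmless: in that case both fibre values occur on each side, $h$ preserves the fibre coordinate, and you only need one common value together with the quadratic-form statement for every extendable spin structure.
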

\begin{proof}
The argument is a generalisation of the arguments from~\cite[Section 6]{ConwayOrsonPowell} for $\Z_2$-surfaces in $D^4$ and of Pyronneau in the case of surfaces with simply-connected complements~\cite[Section~3]{Pyronneau}.
%A detailed proof will appear in upcoming work of Pyronneau but we outline the idea of the proof for the reader's convenience.
For brevity, we write $q_S$ to refer to either the Freedman--Kirby form (in the orientable case) or the Guillou--Marin form (in the nonorientable case).
Consider the circle bundle~$Y:=\partial X_S \setminus E_K$.
As in~\cite[Section 6.3]{ConwayOrson}, since~$S$ is simple,  the choice of an~$e$-nice section~$s \colon S \to Y$ gives rise to the \emph{exterior Freedman--Kirby} form~$\widehat{q}_S \colon H_1(s(S)) \to \Z_2$ (resp.\ Guillou--Marin form~$\widehat{q}_S \colon H_1(s(S)) \to \Z_4$).
The definition of this form is analogous to that of~$q_S$, but given~$[\gamma] \in H_1(s(S);\Z_2)$, thanks to the hypothesis on the section~$s$ and that fact that~$S$ is simple,  it is possible to use a \emph{disc}~$D \subset X_S$ with boundary~$\gamma$ to calculate~$\widehat{q}_S(\gamma)$.
Without the simplicity assumption,~$\gamma$ would be nullhomologous in~$X_S$ but might not be nullhomotopic.
The terminology refers to the fact that a disc~$D$ can be found in the exterior of~$S$.
The proof of~\cite[Proposition 6.15]{ConwayOrsonPowell} shows that the section $s$ induces an isometry 
\begin{equation}
\label{eq:ExteriorQuadratic}
s_* \colon (H_1(S;\Z_2),q_S) \to (H_1(s(S);\Z_2),\widehat{q}_S).
\end{equation}
%Set~$Y:=\partial X_S \setminus E_K$ and, 
Given a spin structure~$\mathfrak{s}$ on~$Y$,  write~$q_{KT}(\mathfrak{s}) \colon H_1(s(S);\Z_2) \to \Z_4$ for the Kirby--Taylor quadratic refinement associated to $S$ and the nice section $s$~\cite{KirbyTaylor}; see also~\cite[Section 6.1]{ConwayOrsonPowell}.
In the orientable case,~$q_{KT}(\mathfrak{s})$ takes values in~$2 \cdot \Z_4 \cong \Z_2$.
The proof of~\cite[Lemma 6.16]{ConwayOrsonPowell} shows that if~$\mathfrak{s}$ extends over~$X_S$, then~$q_{KT}(\mathfrak{s})=\widehat{q}_S$.
%This is the step where~$S$ being simple is used: given~$[\gamma] \in H_1(S;\Z_2)$, the argument in~\cite{ConwayOrsonPowell} involves picking a \emph{disc}~$D \subset X_S$ with boundary~$\gamma$ to calculate the (exterior) Freedman--Kirby (or Guillou--Marin) form~$q_S(\gamma)$; in general (thanks to the choice of a nice identification as in Section~\ref{sub:AlexanderModule}),~$\gamma$ is nullhomologous in~$X_S$ but might not be nullhomotopic.
Since $X_S$ is spin, 
%%Don't delete
%so that <0
this implies that
$$
1
\leq
|\{ \mathfrak{s} \in \Spin(Y) \mid \mathfrak{s} \text{ extends over $X_S$} \}|
\leq 
|\{ \mathfrak{s} \in \Spin(Y) \mid \ q_{KT}(\mathfrak{s})=\widehat{q}_S \}|
=2.
$$
For the equality,  the proof of~\cite[Proposition 6.10]{ConwayOrsonPowell} shows that there are two spin structures~$\mathfrak{s}$ on~$Y$ such that~$q_{KT}(\mathfrak{s})=\widehat{q}_S$. 
These two spin structures are differentiated by their value on the~$S^1$-fibre $\mu$ that we will frequently also refer to as ``the meridian".
Here recall that~$H_1(Y) \cong \Z^{2g}\oplus \Z\langle \mu \rangle$ in the orientable case and~$H_1(Y) \cong \Z^h \oplus \Z_2\langle \mu \rangle$ in the nonorientable case; 
see e.g. Section~\ref{sec:Setup} and~\cite[Proposition 4.2]{ConwayOrsonPowell} for the latter isomorphism.

 When $S$ is characteristic,  we claim that~$H_1(X_S;\Z_2)=0$ if and only if $X$ is nonspin.
To see this,  recall from Lemmas~\ref{lem:MV} and~\ref{lem:H1NonOrientable} that $H_1(X_S;\Z_2)=0$ if and only if~$d$ is odd (by which we mean~$[S]\neq 0$ in the nonorientable case).
In turn,  $d$ is odd if and only if $[S] \neq 0 \in H_2(X,\partial X;\Z_2)$ which is equivalent to the existence of an $x \in H_2(X;\Z_2)$ such that $Q_X([S],x)=1$ mod $2$.
Since $S$ is characteristic, this latter condition is equivalent to $X$ being nonspin.
This establishes the claim.

Since there are $|H^1(X_S;\Z_2)|=|H_1(X_S;\Z_2)| \in \{1,2\}$ spin structures on $X_S$, the claim (together with  the fact that if a spin structure on $Y$ extends over $\overline{\nu}(S)$ or over an~$X_S$ with $H_1(X_S;\Z_2)=0$, then it is trivial on the meridian)
%%Don't delete 
%In general, if [S^1] is trivial in H_1(Z;Z/2) then the spin structure restricted to it is trivial. Probably we can just use that without a proof, but a proof would be to pick an immersed nullbordism and then pulling back the spin structure gives a spin nullbordism from the S^1.
implies the following assertions.
\begin{itemize}
\item If~$X$ is nonspin,  since $H_1(X_S;\Z_2)=0$, there is a unique spin structure~$\mathfrak{s}$ on~$Y$ that extends over~$X_S$, namely the unique spin structure~$\mathfrak{s}$ with~$q_{KT}(\mathfrak{s})=\widehat{q}_S$ and such that~$\mathfrak{s}$ does not extend over~$\overline{\nu}(S)$ (since~$X$ is spin,~$\mathfrak{s}$ cannot extend over both~$X_S$ and~$\overline{\nu}(S)$).
Since~$X$ is nonspin if and only if $H_1(X_S;\Z_2)=0$, it follows that~$\mathfrak{s}$ is the unique spin structure on~$Y$ with~$q(\mathfrak{s})=\widehat{q}_S$ and that vanishes on the meridian.
%%Don't delete
%This does not contradict~$s$ not extending over $\nu$.
%For example for CP^1 in CP^2, \nu(S) is not spin.
\item If $X$ is spin,  since $H_1(X_S;\Z_2)=0$,  there are two spin structures~$\mathfrak{s}$ on~$Y$ that extend over~$X_S$, namely the spin structures that satisfy~$q_{KT}(\mathfrak{s})=\widehat{q}_S$.
%%Don't delete.
%%Distinct because different values on meridian.
Only one of these spin structures extends over $\overline{\nu}(S)$, namely the one that is trivial on the meridian.
This is the spin structure obtained by restricting the spin structure on $X$ to $X_S$.
\end{itemize}
In particular,  if a spin structure on $Y$ satisfies~$q_{KT}(\mathfrak{s})=\widehat{q}_S$ and is trivial on the meridian, then it extends over $X_S$.
We will later use this fact to prove the $(2) \Rightarrow (1)$ direction of the proposition.

First, however we prove the $(1)\Rightarrow (2)$ direction: we assume that the bundle isomorphism $H$ exists and prove that $\theta_*=(H|_{S_0})_*$ preserves the Freedman--Kirby forms or Guillou--Marin forms.
Let~$s_0$ and~$s_1$ be~$e$-nice sections that are preserved by~$H$, i.e.~$H \circ s_0=s_1 \circ \theta,$ and let~$\widehat{q}_{S_0}$ and~$\widehat{q}_{S_1}$ be the corresponding exterior quadratic forms.
Since~$S_0$ and~$S_1$ are characteristic, their exteriors are spin;  see e.g.~\cite[Proposition~7.18]{FriedlNagelOrsonPowell}.
By hypothesis,  it is possible to pick spin structures on~$X_{S_0}$ and~$X_{S_1}$,  such that when restricted to spin structures~$\mathfrak{s}_0$ and~$\mathfrak{s}_1$ on~$Y_0$ and~$Y_1$, they satisfy~$H^*(\mathfrak{s}_1)=\mathfrak{s}_0$.
Let~$q_{KT_0}(\mathfrak{s}_0)$ and~$q_{KT_1}(\mathfrak{s}_1)$ be the Kirby--Taylor quadratic forms associated to~$s_0,\mathfrak{s}_0$ and~$s_1,\mathfrak{s}_1$ respectively.
Since the spin structures $\mathfrak{s}_0$ and $\mathfrak{s}_1$ extend over the respective surface exteriors,  it follows that
%Since these spin structures extend over the surface exteriors,  it follows that
$$
\widehat{q}_{S_0}
=q_{KT_0}(\mathfrak{s}_0)
=q_{KT_0}(H^*(\mathfrak{s}_1))
=q_{KT_1}(\mathfrak{s}_1) \circ H_*
=\widehat{q}_{S_1} \circ H_*.
$$
The second equality is valid because $H$ preserves 
%a choice of 
the~$e$-nice sections; we refer to~\cite[Equations~(17) and~(18)]{ConwayOrsonPowell} for more details.
Using the isometry $(s_i)_* \colon q_{S_i} \cong \widehat{q}_{S_i}$, this equation can now be rewritten as $q_{S_0} \circ (s_0)_*^{-1}=q_{S_1} \circ (s_1)_*^{-1} \circ H_*$, i.e.\  $q_{S_0} \circ (s_0)_*^{-1}=q_{S_1} \circ \theta_* \circ (s_0)_*^{-1}$.
Thus $\theta_*=(H|_{S_0})_*$ preserves the Freedman--Kirby or Guillou--Marin forms as required.

We now prove the converse, namely we assume that~$\theta_*$ preserve the Freedman--Kirby or Guillou--Marin forms and 
%%Don't delete
%construct the required bundle isomorphism $H$.
%By assumption the relative Euler numbers of~$S_0$ and~$S_1$ agree.
%We can therefore apply Lemma~\ref{lem:ExtendHomeoOverDiscBundle} to extend~$(\theta)_*$ to a bundle isomorphism~$H \colon (\overline{\nu}(S_0),S_0) \to (\overline{\nu}(S_1),S_1)$ that preserves a choice of nice sections.
%It remains to
 prove that
 there is a bundle isomorphism~$H$ with $H|_{S_0}=\theta$ that preserves a choice of $e$-nice section, and such that~$X_{S_0} \cup_h X_{S_1}$ is spin.
% ~$X_{S_0} \cup_h X_{S_1}$ is spin.
The proof proceeds similarly to~\cite[Proof of Proposition~6.2]{ConwayOrsonPowell}.
 Given $e$-nice sections $s_0,s_1$ (which exist by Lemma~\ref{lem:ExistseNice-or}), apply Lemma~\ref{lem:ExtendHomeoOverDiscBundle} to obtain a bundle isomorphism~$H$ with $H|_{S_0}=\theta$ and~$H \circ s_0=s_1 \circ \theta$.
Fix a meridian $\mu_0$ for $S_0$ and set $\mu_1:=H(\mu_0)$.
Pick a spin structure on~$X_{S_1}$ that is trivial on $\mu_1$, restrict it to a spin structure~$\mathfrak{s}_1$ on~$Y_1$ and use~$h \colon Y_0 \to Y_1$ to pull it back to a spin structure~$\mathfrak{s}_0$ on~$Y_0$.
This spin structure is trivial on $\mu_0$.
%%Because s_1 trivial on $\mu_1$ and because $H$ bundle iso.
%{AC: This is still $Y_i=\partial X_{S_i} \setminus E_K$.}
As in~\cite[Equation~(18)]{ConwayOrsonPowell}, it follows from the definition of $\mathfrak{s}_0$,  the equality~$q_{KT}(\mathfrak{s}_1)=\widehat{q}_{S_1}$ (which, recall, holds because~$\mathfrak{s}_1$ extends over~$X_{S_1}$),  and~$H \colon (\overline{\nu}(S_0),S_0) \to (\overline{\nu}(S_1),S_1)$ inducing an isometry of the Freedman--Kirby forms that
$$q_{KT}(\mathfrak{s}_0) \cong q_{KT}(\mathfrak{s}_1) \circ H_*= {q}_{S_1} \circ H_*= \widehat{q}_{S_0}.$$
In the first equality, we also used that $H$ preserves a choice of nice sections.
%%Don't delete.
%%~\cite[Equations (17) and (18)]{ConwayOrsonPowell}

Since the spin structure~$\mathfrak{s}_0=h^*(\mathfrak{s}_1)$ is trivial on $\mu_0$ and~$q_{KT}(\mathfrak{s}_0)=\widehat{q}_{S_0}$, it extends over~$X_{S_0}$.
It follows that~$X_{S_0} \cup_h X_{S_1}$ is spin, as required.
This concludes the proof of the proposition.
%%Don't delete yet.
%When~$X$ is spin, this already ensures that~$\mathfrak{s}_0$ extends over~$X_{S_0}$: when $e$ is odd,  this is clear, whereas for $e$ even,  the argument is the same as in~\cite[Corollary~6.17 and proof of Proposition~6.2]{ConwayOrsonPowell}.
%When~$X$ is nonspin,  since~$\mathfrak{s}_1$ does not extend over~$\overline{\nu}(S_1)$ and since~$H$ is fibre-preserving, it follows that~$\mathfrak{s}_0$ does not extend over~$\overline{\nu}(S_0)$; since we also know that~$q_{KT}(\mathfrak{s}_0)=\widehat{q}_{S_0{$, this implies that~$\mathfrak{s}_0$ extends over~$X_{S_0}$, as required.
%Thus, in both cases $h$ preserves the spin structures.
\end{proof}

%%%Don't delete yet; folded it into cor:SpinAutomatic
%The following remark is not in the sequel but might be of independent interest.
%\begin{remark}
%We argue that every isometry of the Freedman--Kirby or Guillou--Marin forms is realised by a homeomorphism.
%Since the $S_i$ have a single boundary component, their intersection forms are nonsingular.
%The isometry type of a~$\Z_2$-valued quadratic refinement of a~$\Z_2$-valued nonsingular symmetric form on a~$\Z_2$ vector-space is determined by its Arf invariant~\cite[Theorem~III.1.12]{BrowderSimply} and by its Brown invariant in the case of a~$\Z_4$-valued quadratic refinement~\cite[final remark of Section II]{GuillouMarin}.
%Since~$S_0$ and~$S_1$ have the same relative Euler number and boundary,  it follows that~$\operatorname{Arf}(q_{S_0})=\operatorname{Arf}(q_{S_1})$ or~$\operatorname{Brown}(q_{S_0})=\operatorname{Brown}(q_{S_1})$ depending on whether or not the~$S_i$ are orientable (see e.g.~\cite{FreedmanKirby,Klug,GilmerLivingston}) and thus,  since the intersection forms of the~$S_i$ are nonsingular, we deduce that~$(H_1(S_0;\Z_2),Q_{S_0},q_{S_0}) \cong (H_1(S_1;\Z_2),Q_{S_1},q_{S_1})$.
%The argument is now concluded by realising this isometry by a rel.\ boundary homeomorphism~$S_0 \to S_1$; see e.g.~\cite[Section 2.1 and the discussion following Theorem 6.4]{FarbMargalit} in the orientable case and~\cite{GadgilPancholi} in the nonorientable case.
%\end{remark}

\subsection{The almost spin case}
\label{sub:AlmostSpin}

The final proposition is specific to the case when $d \neq 0$ is even.

%%Don't delete
%{XX: We weren't sure how this meshed with Proposition~\ref{prop:BoyerAdaptSpin}. 
%This one seems to say always spin while the other says you might have to change something.
%XX says ``The answer might be that we are taking some even cover which kills the change (because spin is a mod 2 thing)."
%}

\begin{proposition}
\label{prop:SpindEvenAutomatic}
Let~$d>0$ be even and let~$K \subset S^3$ be a knot such that~$\Sigma_d(K)$ is a rational homology sphere when~$d>2$.
If~$S_0,S_1 \subset X$ are~$\Z_d$-surfaces with boundary~$K$ such that the universal covers~$\widetilde{X}_{S_0}$ and~$\widetilde{X}_{S_1}$ are spin,
%%%Don't delete.
%%The existence of H implie same relative Euler numbers. 
%%I don't think homologous is needed.
then for any bundle isomorphism~$H \colon (\overline{\nu}(S_0),S_0) \to (\overline{\nu}(S_1),S_1)$
with~$H|_{\overline{\nu}(K)}=\id$,
 the universal cover of~$X_{S_0} \cup_h X_{S_1}$ is spin.
\end{proposition}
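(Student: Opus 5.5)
We would show that the $d$-fold cover of $X_{S_0}\cup_h X_{S_1}$ has vanishing $w_2$. Write $M:=X_{S_0}\cup_h X_{S_1}$. Since $h$ intertwines the coefficient systems (Remark~\ref{rem:Intertwine}, after precomposing $H$ with a $\Rot$ of a class divisible by $d$ if necessary, which changes nothing on the $d$-fold cover mod $2$), the map $\pi_1(M)\to\Z_d$ is defined. The universal cover of $M$ is then the $\Z_d$-cover $\widetilde M=\widetilde X_{S_0}\cup_{\widetilde h}\widetilde X_{S_1}$, glued along $\widetilde Y_i:=\partial\widetilde X_{S_i}$.

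By hypothesis each $\widetilde X_{S_i}$ carries a spin structure, and such structures restrict to spin structures on $\widetilde Y_i$. The obstruction to $\widetilde M$ being spin is whether some spin structure $\mathfrak t_1$ on $\widetilde X_{S_1}$ and some $\mathfrak t_0$ on $\widetilde X_{S_0}$ satisfy $\widetilde h^*(\mathfrak t_1|)=\mathfrak t_0|$. Thus it suffices to show that the restriction images
\[
R_i:=\im\bigl(\Spin(\widetilde X_{S_i})\to\Spin(\widetilde Y_i)\bigr)
\]
are matched by $\widetilde h$.

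The plan is to compute these restrictions using the decomposition of Proposition~\ref{prop:AlexanderModule} (resp.\ \ref{prop:AlexanderModuleNonOri}). The cover $\widetilde Y_i$ is $\widetilde E_K\cup(\Sigma\mathbin{\wt\times}S^1)$, where the circle factor is the $d$-fold cover and the $E_K$ part is identical for $i=0,1$; moreover $\widetilde h$ is the identity on $\widetilde E_K$. Spin structures on $\widetilde Y_i$ form an $H^1(\widetilde Y_i;\Z_2)$-torsor. The key point is that $H_1(\widetilde Y_i;\Z_2)$ is spanned by $H_1(\Sigma_d(K);\Z_2)$, the lifted meridian $\widetilde\mu$, and $H_1(\Sigma;\Z_2)$ pushed off by the $e$-nice section.

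We would then compare the two spin structures on each generator:
\begin{itemize}
\item On $\widetilde\mu$: since $d$ is even, $\widetilde\mu$ is the lift of $d\mu$. The spin structure restricted from $\widetilde X_{S_i}$ is determined by whether this curve bounds in the cover, and we expect both covers to give the same value.
\item On the $E_K$ part: $\widetilde h$ is the identity there, so any discrepancy is controlled by classes pulled back from $H^1(\widetilde X_{S_i};\Z_2)$.
\item On the section curves $s(\gamma)$: we would use $\Rot(\alpha)$ and Propositions~\ref{prop:RotalphaSpinOrientable}/\ref{prop:RotalphaSpinNonOrientable}. On the $d$-fold cover, a change of $H$ by $\Rot(\alpha)$ becomes a change by $\Rot(d\alpha)$ on $\Sigma\mathbin{\wt\times}S^1$ (the fibre is multiplied by $d$). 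This class is even, so it acts trivially on spin structures.
\end{itemize}
Hence every $H$ gives the same $\widetilde h^*$ up to something independent of $H$. We reduce to exhibiting a single $H$ that works, for instance the $H$ produced as in Proposition~\ref{prop:BoyerAdaptSpin} or Proposition~\ref{prop:Audrick} applied downstairs. Alternatively, we argue directly that $\widetilde h^*$ maps $R_1$ into $R_0$ because both are characterised intrinsically (e.g.\ via a Kirby--Taylor type quadratic refinement on the lifted surface, or by the value on $\widetilde\mu$).

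The main obstacle is the last step: showing that the set of spin structures on $\widetilde Y_i$ extending over $\widetilde X_{S_i}$ is intrinsically determined and independent of $i$. We would first try a counting argument. $H^1(\widetilde X_{S_i};\Z_2)=0$ because $\widetilde X_{S_i}$ is simply connected, so $R_i$ is a single point. We would then identify that point through its values on $\widetilde\mu$ and on lifted section curves, using discs in $\widetilde X_{S_i}$ (available by simple connectivity) and the Rokhlin-type parity from Lemma~\ref{lem:EvenElemOfOrder2}, where the rational homology sphere hypothesis keeps $H_1(\Sigma_d(K);\Z_2)$ from contributing free classes.
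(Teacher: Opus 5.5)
Your opening reduction is fine: $\widetilde X_{S_i}$ is simply connected, so it induces a unique spin structure $\mathfrak r_i$ on $\widetilde Y_i$. Hence the universal cover of the union is spin if and only if $\widetilde h^*\mathfrak r_1=\mathfrak r_0$.

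After that, the comparisons are only asserted.
\begin{itemize}
\item On $\widetilde\mu$ you write ``we expect''. Also, ``whether this curve bounds'' is not the relevant quantity: every curve bounds in $\widetilde X_{S_i}$, and what matters is a framing obstruction.
\item The $E_K$ bullet amounts to claiming that $\mathfrak r_0$ and $\mathfrak r_1$ agree on $\widetilde E_K$, although they come from different $4$-manifolds. The rational homology sphere hypothesis does not exclude $2$-torsion in $H_1(\Sigma_d(K))$ when $d$ is not a power of $2$.
\end{itemize}

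The step meant to remove the dependence on $H$ is wrong. A change of $H$ respecting the coefficient systems is by $\Rot(d\beta)$, and its lift to the $d$-fold cover is $\Rot(\beta)$, not the rotation by an even class. The fibre upstairs is $d$ times the fibre downstairs, so the twist is divided by $d$, as in Proposition~\ref{prop:RotAlexanderModule}. By Proposition~\ref{prop:RotalphaSpinOrientable}, $\Rot(\beta)$ changes $\mathfrak r_0$ on the circle-bundle part by $p|^*([\beta]_2)$ as soon as $\mathfrak r_0$ bounds on $\widetilde\mu$, for instance when $\Sigma_d(S_0)$ is spin.

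This really occurs. Take $d=2$, $X=\C P^2$ (punctured if you want boundary), and $S_0=S_1=S$ a conic with a trivial handle added, so that $\Sigma_2(S)\cong S^2\times S^2\# S^2\times S^2$. Then $H=\Rot(2\beta)$ with $\beta$ odd on some curve gives a union whose universal cover is not spin: its two halves induce spin structures on $\widetilde Y$ differing by $p|^*([\beta]_2)\neq 0$. So no argument valid for an arbitrary $H$ can succeed. Your fallback to Propositions~\ref{prop:BoyerAdaptSpin} and~\ref{prop:Audrick} also does not apply in the relevant case, where the $X_{S_i}$ themselves are not spin.

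The missing ingredient is a ($k$-invariant preserving) compatible pair $(F,h)$. The paper's proof uses it implicitly, and it is available wherever the proposition is applied, e.g.\ in Theorems~\ref{thm:CompatiblePair} and~\ref{thm:Injectionv1}. The argument runs as follows.
\begin{itemize}
\item Given $(F,h)$, the proof of Theorem~\ref{thm:CK4Manifold} yields the secondary obstruction $b=b(c_0,c_1)$, a hermitian form on $H^2(\partial X_{S_1};\Z[\Z_d])\cong T\oplus\Z^k$ with $T^*=0$.
\item Since $T^*=0$, all torsion, including $2$-torsion, is invisible to $b$; this is exactly where the rational homology sphere hypothesis enters.
\item On the trivial-action summand $\Z^k$, $b$ takes values in $\mathcal N\cdot\Z[\Z_d]\cong\Z$. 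Hence $b(x,x)_1=b(x,x)_g$ for the element $g$ of order $2$, which exists precisely because $d$ is even.
\item $b(x,x)_g$ is even by Lemma~\ref{lem:EvenElemOfOrder2} applied to the union. So $b$ is weakly even, hence even, hence of the form $b(\varphi)$ by \cite[Theorem 9.4]{ConwayKasprowski4Manifolds}.
\item \cite[Theorem 2.18 and Lemma 2.21]{ConwayKasprowski4Manifolds} then give that the universal cover of the union is spin.
\end{itemize}
Your final appeal to Lemma~\ref{lem:EvenElemOfOrder2} is not attached to any invariant whose coefficients at $1$ and at $g$ you could compare. That comparison is the whole mechanism by which ``$d$ even'' forces spinness.
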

\begin{proof}
Proposition~\ref{prop:AlexanderModule} and~\ref{prop:AlexanderModuleNonOri} imply that~$H_1(\partial X_{S_i};\Z[\Z_d]) \cong \Z^{2g} \oplus \Z_{e/d} \oplus H_1(\Sigma_d(K))$ in the orientable case and~$H_1(\partial X_{S_i};\Z[\Z_d]) \cong \Z^{h-1} \oplus G \oplus H_1(\Sigma_d(K))$ with~$|G|=4$ in the nonorientable case.
The assumption on~$H_1(\Sigma_d(K))$ ensures that~$H_1(\partial X_{S_i};\Z[\Z_d])$ decomposes as~$T \oplus \Z^k$ where~$T^*=0$ (recall Convention~\ref{conv:LTZ}) as is required to apply Theorem~\ref{thm:CK4Manifold}.
%%Don't delete
%%When e=0 the meridian still has the trivial action.
The proof of the first item of Theorem~\ref{thm:CK4Manifold} then implies that the universal cover of the union is spin.
Here are some more details.
During that proof,  we argued that the secondary obstruction~$b=b(c_0,c_1)$ is even hermitian.
Apply~\cite[Theorem 9.4]{ConwayKasprowski4Manifolds} to realise $b$ as $b(\varphi)$ for some $\varphi \in \mathcal{G}$.
It follows from the definition of the action of~$\mathcal{G}$ that~$b$ vanishes in $\operatorname{Herm}(H^2(\partial X_{S_1};\Z[\Z_d]))/\mathcal{G}.$
The fact that the universal cover of the union is spin now follows from~\cite[Theorem 2.18 and Lemma 2.21]{ConwayKasprowski4Manifolds}.
%b=0+b(\varphi)$
The fact that these results from~\cite{ConwayKasprowski4Manifolds} apply was discussed during the proof of Theorem~\ref{thm:CK4Manifold}.
\end{proof}

\begin{remark}
\label{rem:Normal1TypeUnion}
We collect a consequence of the results of this section that will be useful when considering normal $1$-types in Section~\ref{sec:DiscsSpheres}.
Here, we assume some familiarity with terminology from Kreck's modified surgery theory~\cite{KreckSurgeryAndDuality}.
Recall that for a surface~$S \subset X$, the exterior~$X_S$ is spin if and only if~$X$ is spin or~$S$ is characteristic;  see e.g.~\cite[Proposition~7.18]{FriedlNagelOrsonPowell}.
For~$\Z_d$-surfaces, Propositions~\ref{prop:BoyerAdaptSpin} and~\ref{prop:Audrick} therefore show that if~$X_{S_0}$ and~$X_{S_1}$ are spin,
% then  unless~$X$ is spin,  the~$S_i$ are ordinary and~$d$ is even, 
then there is a bundle isomorphism~$H \colon (\overline{\nu}(S_0),S_0) \to (\overline{\nu}(S_1),S_1)$
with $H'|_{\overline{\nu}(K)}=\id$
 such that~$M:=X_{S_0} \cup_h X_{S_1}$ is spin and such that the maps~$\pi_1(X_{S_i}) \to \Z_d$ extend to a map~$\pi_1(M) \to \Z_d$.
%%Don't delete
%%One might worry about the case where~$X$ is spin,  the~$S_i$ are ordinary and~$d$ is even.
%%But this can't happen: if d is even and X is spin, then S.x=0 (because d even) =x.x (because spin) so S is characteristic.
Similarly,  Proposition~\ref{prop:SpindEvenAutomatic} shows that when the~$X_{S_i}$ are not spin but the~$\widetilde{X}_{S_i}$ are spin (so that~$d$ is necessarily positive and even),  the same is true for~$M$ and Lemma~\ref{lem:ExtendHomeoOverDiscBundle} shows that, again, the coefficient systems can be assumed to extend.
Put differently, it is possible to choose~$H$ so that~$M$ maps into the normal~$1$-type of the~$X_{S_i}$ in a way that extends normal~$1$-smoothings on these latter $4$-manifolds.
%Because same pi_1 of union because pi_1-surj.
\end{remark}

\subsection{Spin unions}
\label{sub:ProofSpinUnion}

Let~$X$ be a simply-connected~$4$-manifold with~$\partial X=S^3$, and let~$K \subset \partial X$ be a knot.
Recall that Theorem~\ref{thm:SpinUnion} states that if~$S_0,S_1 \subset X$  are~$\Z_d$-surfaces with boundary~$K$ whose relative Euler numbers agree and whose exteriors have spin universal covers, then in nearly all cases a homeomorphism~$\theta \colon S_0 \to S_1$ (that preserves the appropriate quadratic forms in the characteristic case) extends to a homeomorphism~$H\colon (\overline{\nu}(S_0),S_0) \to (\overline{\nu}(S_1),S_1)$ such that the universal cover of~$X_{S_0} \cup_h X_{S_1}$ is spin.

\begin{proof}[Proof of Theorem~\ref{thm:SpinUnion}]
Since the relative Euler numbers of the~$S_i$ agree,  Lemma~\ref{lem:ExtendHomeoOverDiscBundle} ensures that the homeomorphism $\theta$ extends to a bundle isomorphism~$ (\overline{\nu}(S_0),S_0) \to (\overline{\nu}(S_1),S_1)$
that can be assumed to intertwine the coefficient systems to $\Z_d$.
If the surfaces~$S_0$ and~$S_1$ are characteristic, then the result follows from Proposition~\ref{prop:Audrick}.
If $X$ is spin and $d$ is odd, then the result follows from Proposition~\ref{prop:BoyerAdaptSpin}.
%d even doesn't preserve coefficient systems.
Recall that for a surface $S \subset X$, the exterior~$X_S$ is spin if and only if $X$ is spin or $S$ is characteristic;  see e.g.~\cite[Proposition~7.18]{FriedlNagelOrsonPowell}.
The remaining cases are therefore the case where the~$X_{S_i}$ are spin and~$d \geq 0$ is even as well as the case when the~$\widetilde{X}_{S_i}$ are spin but the~$X_{S_i}$ are not spin.
When $d=0$,  the Alexander modules are torsion and so there is nothing to check; recall the first item of Theorem~\ref{thm:CK4Manifold}.
%%Don't delete.
%; see also~\cite[Theoren 3.12]{ConwayPowell}
%%Seemed like too much to also cite this?!
In the remaining situations, $d>0$ is even and the theorem follows from Proposition~\ref{prop:SpindEvenAutomatic}. 
\end{proof}

\section{Proof of the main theorem}
\label{sec:ProofMain}

We prove a variation on 
Theorem~\ref{thm:CompatiblePairIntro} from the introduction.
The proof of the statement in the introduction is identical, the only difference is that, there, the homeomorphism~$\theta \colon S_0 \to S_1$ was not fixed: instead, a bundle isomorphism $H$ with the necessary properties was given.

%\begin{theorem}
\begin{customthm}
{\ref{thm:CompatiblePairIntro}}
\label{thm:CompatiblePair}
Let~$X$ be a simply-connected~$4$-manifold with boundary~$\partial X=S^3$, and let~$K \subset~S^3$ be a knot.
When~$d>0$ is not a prime, assume that $\Sigma_d(K)$ is a rational homology sphere.
Let~$S_0,S_1 \subset X$ be~$\Z_d$-surfaces with boundary~$K$, with the same Euler number, and that are either both characteristic or both ordinary.
Fix a rel.\ boundary homeomorphism 
$$ \theta \colon S_0 \to S_1.$$
When the $S_i$ are characteristic,  assume furthermore that~$\theta$ preserves the Freedman--Kirby or Guillou--Marin quadratic form depending on whether the~$S_i$ are orientable or not.
The following assertions are equivalent:
\begin{itemize}
\item There is a rel.\ boundary homeomorphism~$\Theta \colon (X,S_0) \to (X,S_1)$ with $\Theta|_{S_0}=\theta$.
\item There is an isometry $F\colon \lambda_{X_{S_0}} \cong \lambda_{X_{S_1}}$ of the equivariant intersection forms and a bundle isomorphism $H \colon \overline{\nu}(S_0) \to \overline{\nu}(S_1)$ that is the identity on a tubular neighborhood of $K$ with~$H|_{S_0}=\theta$ such that~$(F,\id_{E_K} \cup H|)$ preserves the relative~$k$-invariants,
$$\partial F=(\overbrace{\id_{E_K} \cup H|}^{:=h})_* \colon H_1(\partial X_{S_0};\Z[\Z_d]) \to H_1(\partial X_{S_1};\Z[\Z_d]),$$
and,  when $X$ is spin,  $d=1$, and $S_0,S_1$ are nonorientable,~$X_{S_0} \cup_h X_{S_1}$ is spin.
\end{itemize}
The~$k$-invariant condition can be omitted in the following circumstances: either~$\pi$ is trivial,  or~$\pi$ is infinite cyclic,  or the $S_i$ are discs with nonzero Euler number, or the $S_i$ are Moebius bands. 
The condition involving the quadratic forms is automatic when the $S_i$ are discs or Moebius bands. 
\end{customthm}
\begin{proof}
%%Don't delete
%%Boyer has to work harder in the case odd intersection form and e=0 because he wants isotopy. He changes F but he still needs it to fit in the diagram.
%%We don't have this issue.
The existence of a homeomorphism~$(X,S_0) \to (X,S_1)$ extending~$\theta$ is readily seen to imply the existence of~$F$ and~$H$.
We therefore focus on the converse.
Fix an isometry~$F$ and a bundle isomorphism~$H \colon (\overline{\nu}(S_0),S_0) \to (\overline{\nu}(S_1),S_1)$ with $H|_{S_0}=\theta$, such that~$(F,h):=(F,\id_{E_K} \cup H|_\partial)$ is~$k$-invariant preserving, satisfies $\partial F=h_*$,  and is such that, when $X$ is spin, $S_0,S_1$ are nonorientable and $d=1$,  the universal cover of~$X_0 \cup_h X_1$ is spin.
The goal is to prove that there is a $k$-invariant preserving extendable compatible pair~$(F',H')$ with $H'|_{S_0}=\theta$ and such that the universal cover of~$X_0 \cup_{h'} X_1$ is spin, when relevant.
The conclusion will then follow from Theorem~\ref{thm:CKForSurfaceExteriors}.

Proposition~\ref{prop:CompatibleSimplification} shows that the equality~$\partial F=h_*$ suffices to ensure that~$(F,h)$ forms a compatible pair.
The clauses concerning the~$k$-invariant condition are proved in Proposition~\ref{prop:NokInvariant}.
The final sentence of the theorem follows from the fact that in those cases~$H_1(S_i)\cong \Z$.
We therefore focus on producing a new~$k$-invariant preserving extendable compatible pair~$(F',H')$ with~$H'|_{S_0}=\theta$ that satisfies the spin condition, when relevant.
When~$X$ is spin,~$S_0,S_1$ are nonorientable and~$d=1$,  by assumption, we can take~$(F,H):=(F',H')$.
When~$X$ is spin,~$S_0,S_1$ are orientable and~$d$ is odd, the pair~$(F',H')$ is produced by the third item of Proposition~\ref{prop:BoyerAdaptSpin}.
When~$S_0,S_1$ are characteristic, since the homeomorphism~$\theta$ preserves the quadratic forms, Proposition~\ref{prop:Audrick} guarantees that we can take~$(F',H'):=(F,H).$
The remaining cases are therefore the case where the~$X_{S_i}$ are spin and~$d \geq 0$ is even 
%This now covers all the spin exterior cases
as well as the case when the~$\widetilde{X}_{S_i}$ are spin but the~$X_{S_i}$ are not spin; in both cases we can again take~$(F',H'):=(F,H)$.
Indeed, when~$d=0$,  the Alexander modules are torsion and so there is nothing to check; recall the first item of Theorem~\ref{thm:CK4Manifold}; in the remaining situations,~$d>0$ is even and the result follows from Proposition~\ref{prop:SpindEvenAutomatic}. 
As mentioned above, the conclusion of the theorem now follows from Theorem~\ref{thm:CKForSurfaceExteriors}.
\end{proof}

\section{Compatible pairs and pointed hermitian forms}
\label{sec:CompatiblePairs}

The goal of this section is to relate Theorems~\ref{thm:CompatiblePairClosed} and~\ref{thm:CompatiblePair} to the work of Lee and Wilczynski on simple spheres~\cite{LeeWilczy,LeeWilczyOdd}.
For this,  we begin by recalling some terminology.

\begin{notation}
A \emph{pointed hermitian form} is a triple~$(H,\lambda,z)$ where~$(H,\lambda)$ is a hermitian form and~$z \in H$.
Two pointed hermitian forms~$(H_0,\lambda_0,z_0)$ and~$(H_1,\lambda_1,z_1)$ are \emph{isometric} if there is an isometry~$f \colon  (H_0,\lambda_0) \cong (H_1,\lambda_1)$ with~$f(z_0)=z_1$.
Given a $\Z_d$-sphere $S \subset X$ with $d \neq 0$,  we consider the pointed hermitian form $(H_2(\Sigma_d(S)),\lambda_{\Sigma_d(S)},[\widetilde{S}])$, where 
$$\lambda_{\Sigma_d(S)}(x,y)= \sum_{k=0}^{d-1} Q_{\Sigma_d(S)}(x,T^ky) T^{-k}.$$
If $S \subset X$ is a $\Z_d$-surface with boundary a knot $K \subset \partial X = S^3$ satisfying $H_1(\Sigma_d(K))=0$,  then the inclusion induces an isomorphism ~$i_* \colon H_2(\Sigma_d(S)) \to H_2(\Sigma_d(S),\partial \Sigma_d(S))$ and we consider the pointed hermitian form $(H_2(\Sigma_d(S)),\lambda_{\Sigma_d(S)},i_*^{-1}([\widetilde{S}])).$
\end{notation}

For discs and spheres, the next result shows that a pointed isometry between the equivariant intersection forms of the branched covers determines an extendable compatible pair.
%%Don't delete
%{AC: Does this generalise to higher genus? (that pointed isometry gives a $(F,H)$ such that $\partial F=h$.}

\begin{proposition}
\label{prop:LeverageLW}
Let $d>0$, let~$K$ be a knot with~$H_1(\Sigma_d(K))=0$,  and let~$S_0,S_1 \subset X$ be 
%homologous
 $\Z_d$-discs with boundary~$K$ and the same relative Euler number.
\begin{enumerate}
\item \label{item:Pointed1} A pointed isometry~$F^\Sigma \colon \lambda_{\Sigma_d(S_0)} \cong \lambda_{\Sigma_d(S_1)}$ restricts to an isometry~$F \colon \lambda_{X_{S_0}} \cong~\lambda_{X_{S_1}}$.
\item  After possibly precomposing a fixed linear bundle isomorphism~$H \colon \overline{\nu}(S_0) \cong \overline{\nu}(S_1)$ with the 
%orientation-reversing 
%%%Used to be written with \wt{S}_i
bundle automorphism~$\overline{\nu}(S_0) \cong D^2 \times D^2$ given by~$(x,y) \mapsto (x,-y)$,  the induced homeomorphism~$h:=\id_{E_K} \cup H|$ is compatible with the isometry $F$ from~\eqref{item:Pointed1}:
$$\partial F=h_* \colon H_1(\partial X_{S_0};\Z[\Z_d]) \to H_1(\partial X_{S_1};\Z[\Z_d]).$$  
\end{enumerate}
The same assertions hold for spheres.
\end{proposition}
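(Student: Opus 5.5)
The approach is to view $\Sigma_d(S_i)$ as $\widetilde{X}_{S_i}\cup \overline{\nu}(\widetilde{S}_i)$, where $\widetilde{X}_{S_i}$ is the $d$-fold (universal) cover of the exterior and $\overline{\nu}(\widetilde{S}_i)\cong D^2\times D^2$ is the preimage of the tubular neighbourhood. This is the $d$-fold branched cover of $D^2\times D^2$ branched along $D^2\times 0$. A Mayer--Vietoris argument identifies $H_2(\widetilde{X}_{S_i})$ with the orthogonal complement of $i_*^{-1}[\widetilde{S}_i]$ in $H_2(\Sigma_d(S_i))$. It uses two facts. First, $H_1(\partial\overline{\nu}(\widetilde S_i)\setminus\nu(\widetilde K))$ is generated by the lifted meridian. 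Second, $H_1(\Sigma_d(K))=0$, so that $H_1(\partial X_{S_i};\Z[\Z_d])\cong \Z_{e/d}$ by Proposition~\ref{prop:AlexanderModule}.

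Concretely, $x\in H_2(\Sigma_d(S_i))$ can be represented in the complement of $\widetilde S_i$ exactly when $Q(x,[\widetilde S_i])=0$. Moreover, the map $H_2(\widetilde X_{S_i})\to H_2(\Sigma_d(S_i))$ is injective, because $H_2(\partial$-piece$)$ dies: its source is $H_2$ of a solid-torus-like piece glued along a disc bundle. Since $F^\Sigma$ is $\Z_d$-equivariant, preserves $Q$, and sends $[\widetilde S_0]$ to $[\widetilde S_1]$, it preserves these orthogonal complements. By Proposition~\ref{prop:EquivCompatiblePair} (first item), it therefore restricts to an isometry $F$ of the equivariant intersection forms $\lambda_{X_{S_i}}$. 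This proves~\eqref{item:Pointed1}.

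For the second item, I will compute $\partial F$ on $H_1(\partial X_{S_0};\Z[\Z_d])\cong\Z_{e/d}\langle\widetilde\mu\rangle$, where $\widetilde\mu$ is the lifted $d$-fold meridian. Any automorphism of $\Z_{e/d}$ is multiplication by a unit $u$. The homeomorphism $h_*$ sends $\widetilde\mu_0\mapsto\pm\widetilde\mu_1$, with the sign determined by whether $H$ preserves fibre orientation. Precomposing with $(x,y)\mapsto(x,-y)$ flips this sign, so it suffices to show $\partial F(\widetilde\mu_0)=\pm\widetilde\mu_1$.

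To compute $\partial F(\widetilde\mu_0)$, I will use that $\widetilde\mu_i$ is the boundary of a normal disc fibre of $\widetilde S_i$ in the cover. Under $H_2(\widetilde X_{S_i},\partial)\cong H_2(\widetilde X_{S_i})^*$, the class $\widetilde\mu_i$ lifts to the functional $y\mapsto Q(y,[\widetilde S_i])/?$. More precisely, via Poincar\'e--Lefschetz duality in $\Sigma_d(S_i)$, the fibre disc $D$ satisfies $D\cdot y = y\cdot \widetilde S_i$-type relations. Writing $H_2(\Sigma_d(S_i))^*\to H_2(\widetilde X_{S_i})^*$ and tracking the element dual to $[\widetilde S_i]$, we find that $\partial$ applied to the restriction of $Q(-,z)$ (for $z$ with $Q(z,[\widetilde S_i])=1$, which exists by unimodularity given $H_1(\Sigma_d(K))=0$) is $\pm\widetilde\mu_i$.

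Since $F^\Sigma$ sends the pair ($[\widetilde S_0]$, an element $z_0$ dual to it) to ($[\widetilde S_1]$, $F^\Sigma(z_0)$), naturality gives $\partial F(\widetilde\mu_0)=\widetilde\mu_1$ up to the universal sign convention. That sign is absorbed by the optional flip of $H$. Compatibility on $H_2$ then follows from Proposition~\ref{prop:CompatibleSimplification}. For spheres the same argument works with $X$ closed: $\Sigma_d(S)$ is closed and unimodular.

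The main obstacle is the bookkeeping in the last step. It requires correctly identifying the connecting map $\partial\circ\PD\circ\ev^{-1}$ with "intersection with $\widetilde S_i$", including the factor from the branching. The meridian of $\widetilde S_i$ is $\widetilde\mu$ itself, not its $d$-th power. I expect this identification to need a careful local model of the branched cover of $D^2\times D^2$.
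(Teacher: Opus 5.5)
Your argument for item~(1) is the paper's. Mayer--Vietoris for $\Sigma_d(S_i)=\widetilde X_{S_i}\cup\overline\nu(\widetilde S_i)$ identifies $H_2(\widetilde X_{S_i})$ with $\{x : Q(x,[\widetilde S_i])=0\}$, and a pointed isometry preserves this subgroup.

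For item~(2) you take a different, shorter route. The paper uses excision and a relative Mayer--Vietoris sequence to build $F_{\rel}$, $F^\nu_{\rel}$ and an induced automorphism $F_\partial$ of $H_1(\widetilde S_i\times S^1)\cong\Z$. It proves $\incl\circ F_{\rel}=F_!\circ\incl$ by an intersection computation. It then gets $\partial F(\widetilde\mu_0)=\pm\widetilde\mu_1$ only because $F_\partial$ is an automorphism of $\Z$, and absorbs the sign into $H$. You instead take $z_0$ with $Q(z_0,[\widetilde S_0])=1$. The relative class it cuts out of $\widetilde X_{S_0}$ is dual to $Q(-,z_0)|$ and has boundary $\widetilde\mu_0$. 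You then use $(F^*)^{-1}\bigl(Q(-,z_0)|\bigr)=Q(-,F^\Sigma z_0)|$, where $Q(F^\Sigma z_0,[\widetilde S_1])=1$ by pointedness. The geometric input is the same as in the paper's Claim: intersections in $\widetilde X_{S_i}$ of absolute with relative classes agree with intersections in $\Sigma_d(S_i)$. What your version buys is that it avoids the relative diagram. Also, with meridians oriented compatibly with the $\widetilde S_i$, the sign in $\partial(Q(-,z)|)=\pm\widetilde\mu_i$ is the same for $i=0,1$. So pointedness gives $\partial F(\widetilde\mu_0)=\widetilde\mu_1$ exactly, not just up to sign.

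Three points should be tightened.
\begin{enumerate}
\item Unimodularity alone does not produce $z$ with $Q(z,[\widetilde S_i])=1$; you also need $[\widetilde S_i]$ to be primitive. This holds because the Mayer--Vietoris map $H_2(\Sigma_d(S_i))\to H_1(\widetilde S_i\times S^1)\cong\Z\langle\widetilde\mu_i\rangle$ is onto, since $H_1(\widetilde X_{S_i})=0=H_1(\overline\nu(\widetilde S_i))$. Concretely, $\widetilde\mu_i=\partial G_i$ for a surface $G_i\subset\widetilde X_{S_i}$, and $z_i=[G_i\cup D_i]$ with $D_i$ a normal disc; this $G_i$ is the paper's $G_0$. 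With this choice your ``main obstacle'' is immediate. The relative class is $[G_i]$, its boundary is the meridian $\widetilde\mu_i$ of $\widetilde S_i$ (no factor of $d$), and its dual functional is $y\mapsto y\cdot z_i$.
\item $F^\Sigma(z_0)$ need not be of this form, so record that $\partial(Q(-,z)|)$ does not depend on the choice of $z$ with $Q(z,[\widetilde S_i])=1$. Two such choices differ by an element of $[\widetilde S_i]^\perp=H_2(\widetilde X_{S_i})$. That changes the functional only by an element of $\im(\Ad Q_{\widetilde X_{S_i}})$, which dies in $H_1(\partial\widetilde X_{S_i})$.
\item Drop the tentative functional $y\mapsto Q(y,[\widetilde S_i])/?$, which vanishes identically on $H_2(\widetilde X_{S_i})$.
\end{enumerate}
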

\begin{proof}
We focus on discs as the argument for spheres is similar.
%%Don't delete
%%Detailed argument in CompatiblePairsSpheres.pdf.
Throughout this proof we work with covers rather than with local coefficients; recall Proposition~\ref{prop:EquivCompatiblePair}.
%we showed in Proposition~\ref{prop:EquivCompatiblePair} that this does not affect the result.
Furthermore,  for brevity, we denote algebraic intersections by~$x \cdot y$ instead of e.g.~$Q_{\Sigma_d(S_i)}(x,y)$.
The plan of the proof is to define~$F$, then give a convenient description of~$\partial F$ in terms of~$F^\Sigma$ and, finally, to verify the equality~$\partial F=\widetilde{h}_* \colon H_1(\partial \widetilde{X}_{S_0}) \to H_1(\partial \widetilde{X}_{S_1}).$
Since~$H_1(\partial \widetilde{X}_{S_i}) \cong \Z_{e/d}$ is generated by~$\widetilde{\mu}_i$ (because~$H_1(\Sigma_d(K))=0$, recall Proposition~\ref{prop:AlexanderModule}) and~$\widetilde{h}_*(\widetilde{\mu}_0)=\widetilde{\mu}_1$ (because~$H$ is a fibre-preserving), this will reduce to proving that~$\partial F(\widetilde{\mu}_0)=\widetilde{\mu}_1.$
\begin{claim}
%Don't delete: automatically isometry.
The isometry~$F^\Sigma$ restricts to an isometry~$F \colon H_2(\widetilde{X}_{S_0}) \to H_2(\widetilde{X}_{S_1})$.
\end{claim}
\begin{proof}
For $i=0,1,$ consider the Mayer--Vietoris sequences for~$\Sigma_d(S_i)=\widetilde{X}_{S_i} \cup \overline{\nu}(S_i)$:
 %{One could probably look at~$(\Sigma_d(S_i),\overline{\nu}(S_i))$ instead. Useful when boundary.}
\begin{equation}
\label{eq:RelativeDiagram}
\xymatrix{
0 \ar[r] & H_2(\widetilde{X}_{S_0}) \ar[r]^-{\incl_0} & H_2(\Sigma_d(S_0))\ar[r]^-{\partial}\ar[d]^{F^\Sigma} & {\overbrace{H_1(\widetilde{S}_0 \times S^1)}^{\cong \Z\widetilde{\mu}_0}} \ar[r]&\ldots  \\
%%%%
0 \ar[r] & H_2(\widetilde{X}_{S_1}) \ar[r]^-{\incl_1} & H_2(\Sigma_d(S_1))\ar[r]^-{\partial} & {\underbrace{H_1(\widetilde{S}_1 \times S^1)}_{\cong \Z\widetilde{\mu}_1}} \ar[r]&\ldots  \\
}
\end{equation}
The definition of the connecting homomorphism implies that for~$i=0,1$
$$\im(\incl_i)=\{  x \in H_2(\Sigma_d(S_i)) \mid Q_{\Sigma_d(S_i)}(x,[\widetilde{S}_i])=0\}.$$
We deduce that~$F^\Sigma$ restricts to an isometry~$\im(\incl_0) \to \im(\incl_1)$ of the restricted forms: 
%%Don't delete
%%Recall that ``Furthermore,  for brevity, we denote algebraic intersections by~$x \cdot y$ instead of e.g.~$Q_{\Sigma_d(S_i)}(x,y)$."
indeed since~$F^\Sigma$ is a pointed isometry, we get~$F^\Sigma(x) \cdot [\widetilde{S}_1]=F^\Sigma(x) \cdot F^\Sigma([\widetilde{S}_0])=x \cdot [\widetilde{S}_0]$.
%%Don't delete
%e \neq 0 not needed
Since the inclusion induces an isomorphism~$H_2(\widetilde{X}_{S_i})\cong \im(\incl_i)$, we deduce that~$F^\Sigma$ restricts to an isometry 
$$ F :=F^\Sigma|\colon H_2(\widetilde{X}_{S_0}) \to H_2(\widetilde{X}_{S_1}).$$
This concludes the proof of the claim.
\end{proof}

This concludes the first step of the proof.
Since~$H|$ is obtained by restricting a bundle isomorphism,  we have~$h(\widetilde{\mu}_0)=\widetilde{\mu}_1$.
Thus,  since~$H_1(\partial \widetilde{X}_{S_0}) \cong \Z_{e/d}$ is generated by~$\widetilde{\mu}_0$ (because~$H_1(\Sigma_d(K))=0$, recall Proposition~\ref{prop:AlexanderModule}), it remains to prove that~$\partial F(\widetilde{\mu}_0)=\widetilde{\mu}_1.$

We move on to the second step of the proof which consists of giving a convenient reformulation of~$\partial F \colon H_1(\partial \widetilde{X}_{S_0}) \to H_1(\partial \widetilde{X}_{S_1})$ in terms of~$F^\Sigma.$ 
The existence of the map~$F$ and the fact that~$H_k(\overline{\nu}(S_i))=0$ for $k=1,2$ respectively ensure that~$F^\Sigma$ induces isomorphisms
\begin{align*}
&H_2(\Sigma_d(S_0),\widetilde{X}_{S_0}) \to H_2(\Sigma_d(S_1),\widetilde{X}_{S_1}), \\
&H_2(\Sigma_d(S_0),\overline{\nu}(\widetilde{S}_0)) \to H_2(\Sigma_d(S_1),\overline{\nu}(\widetilde{S}_1)).
\end{align*}
Using excision,  we think of these as isomorphisms
\begin{align*}
 F_{\rel}^\nu &\colon H_2(\overline{\nu}(\widetilde{S}_0),\widetilde{S}_0 \times S^1) \to H_2(\overline{\nu}(\widetilde{S}_1), \widetilde{S}_1 \times S^1), \\
 %%%%
  F_{\rel} &\colon H_2(\widetilde{X}_{S_0},\widetilde{S}_0 \times S^1) \to H_2(\widetilde{X}_{S_1},\widetilde{S}_1 \times S^1).
\end{align*}
Consider now the following diagram in which we use the relative Mayer--Vietoris sequence to obtain the inclusion induced isomorphism~$H_2(\widetilde{X}_{S_i},\widetilde{S}_i \times S^1) \oplus H_2(\overline{\nu}(\widetilde{S}_i),\widetilde{S}_i \times S^1) \cong H_2(\Sigma_d(S_i),\widetilde{S}_i \times S^1)$:
$$
\xymatrix{
 \ar[r] & H_2(\Sigma_d(S_0)) \ar[d]^{F^\Sigma}_\cong \ar[r] & {\overbrace{H_2(\Sigma_d(S_0),\widetilde{S}_0 \times S^1)}^{\cong H_2(\widetilde{X}_{S_0},\widetilde{S}_0 \times S^1) \oplus H_2(\overline{\nu}(\widetilde{S}_0),\widetilde{S}_0 \times S^1)}} \ar[r]^-{\partial_0^{\times S^1}}\ar[d]^{F_{\rel} \oplus F^\nu_{\rel}}_\cong & H_1(\widetilde{S}_0 \times S^1) \ar[r]\ar@{-->}[d]^{F_\partial}_\cong&0  \\
%%%%
\ar[r] & H_2(\Sigma_d(S_1)) \ar[r] &
{\underbrace{H_2(\Sigma_d(S_1),\widetilde{S}_1 \times S^1)}_{\cong H_2(\widetilde{X}_{S_1},\widetilde{S}_1 \times S^1) \oplus H_2(\overline{\nu}(\widetilde{S}_1),\widetilde{S}_1 \times S^1)}} 
\ar[r]^-{\partial_1^{\times S^1}} & H_1(\widetilde{S}_1 \times S^1) \ar[r]&0.  \\
}
$$
The map~$F_\partial$  exists because the left square commutes.
It is calculated by picking a surface in~$\Sigma_d(S_0)$ with boundary the given curve, applying~$F^\Sigma$ and then taking the boundary of the resulting surface.
In particular,~$F_\partial$ agrees with the map induced on this same group by~$F_{\rel}$ and~$F_{\rel}^\nu$.
%%By the corresponding LESs.

We relate~$F_\partial$ with the map $\partial F$ that appears in the definition of a compatible pair.
Here, we write~$F_! \colon H_2(\widetilde{X}_{S_0},\partial \widetilde{X}_{S_0}) \to H_2(\widetilde{X}_{S_1},\partial \widetilde{X}_{S_1})$ for the map induced by $F$ via Poincar\'e duality and the universal coefficient theorem.
\begin{claim}
\label{claim:Compatible3}
The following diagram commutes:
$$
\xymatrix{
H_2(\widetilde{X}_{S_0},\partial \widetilde{X}_{S_0}) \ar[r]^-{\partial_0} \ar@/_5pc/[ddd]^{F_!}
& H_1(\partial \widetilde{X}_{S_0}) \ar@/^3pc/[ddd]^{\partial F} 
\ar[r]&0
\\
%%%
H_2(\widetilde{X}_{S_0},\widetilde{S}_0 \times S^1)\ar[r]^-{\partial_0^{\times S^1}} \ar[d]^{F_{\rel}} \ar[u]_{\incl_0^{\times S^1}}
& H_1(\widetilde{S}_0 \times S^1) \ar[d]^{F_\partial}\ar[u]^{\incl_0^{\times S^1}} 
\ar[r]&0
\\
%%%
H_2(\widetilde{X}_{S_1},\widetilde{S}_1 \times S^1) \ar[r]^-{\partial_1^{\times S^1}} \ar[d]^{\incl_1^{\times S^1}}
& H_1(\widetilde{S}_1 \times S^1)\ar[d]_{\incl_1^{\times S^1}} 
\ar[r]&0
\\
%%%
H_2(\widetilde{X}_{S_1},\partial \widetilde{X}_{S_1}) \ar[r]^-{\partial_1}& H_1(\partial \widetilde{X}_{S_1})
\ar[r]&0. \\
}
$$
In particular, the commutativity of the right hand side of the diagram implies that
$$\partial F(\widetilde{\mu}_0)
=\incl_1^{\times S^1} \circ F_\partial(\widetilde{\mu}_0).$$
%%%Don't delete
%%View \mu as both in H_1(\widetilde{S}_0 \times S^1) and H_1(\partial \widetilde{X}_{S_0})
%%Aka omit incl_0^{\times S^1} from the notation
\end{claim}
\begin{proof}
The central square clearly commutes (by definition of $F_\partial$) as does the outermost part of the diagram (by definition of $\partial F$).
The upper and lower squares commute by naturality.
We prove that the leftmost part of the diagram commutes, i.e.\ that~$\incl_1^{\times S^1} \circ F_{\rel}=F_!  \circ \incl_0^{\times S^1}$.
%%Don't delete
%%Recall that ``Furthermore,  for brevity, we denote algebraic intersections by~$x \cdot y$ instead of e.g.~$Q_{\Sigma_d(S_i)}(x,y)$."
First, note that given~$x \in H_2(\widetilde{X}_{S_0})$ and~$y \in H_2(\widetilde{X}_{S_0},\widetilde{S}_0 \times S^1)$, we have
\begin{align*}
 F(x) \cdot_{\widetilde{X}_{S_1}} F_!(\incl_0^{\times S^1}(y))
&=x \cdot_{\widetilde{X}_{S_0}} \incl_0^{\times S^1}(y)
=x \cdot_{\Sigma_d(S_0)} \incl_0^{\times S^1}(y) \\
&=F^{\Sigma}(x) \cdot_{\Sigma_d(S_1)} F^{\Sigma}(\incl_0^{\times S^1}(y))
=F(x) \cdot_{\widetilde{X}_{S_1}} \incl_1^{\times S^1}(F_{\rel}(y)).
\end{align*}
The first equality follows from the definition of $F_!$, 
%F_!=ev \circ PD^{-1} F^{-*} \circ \ev \circ PD^{-1} and then use the algebraic definition of the intersection form.
the second follow by reasoning geometrically (and omitting inclusion maps from the notation), the third because $F^\Sigma$ is an isometry, and the fourth from the definition of $F_{\rel}$ (and again omitting inclusion maps from the notation).

Since this equation holds for every~$x \in H_2(\widetilde{X}_{S_0})$ and since the adjoint of this relative pairing is an isomorphism, we deduce that~$F_!(\incl_0^{\times S^1}(y))=\incl_1^{\times S^1}(F_{\rel}(y))$ for every~$y \in H_2(\widetilde{X}_{S_0},\widetilde{S}_0 \times S^1)$.
%%%Old.
%~$y \in H_2(\widetilde{X}_{S_0},\partial \widetilde{X}_{S_0})$, we have
%$$ F(x) \cdot F_!(y)=x \cdot y=F(x) \cdot \incl_1^{\times S^1}(F_{\rel}(y)).$$
%Since the adjoint of this relative pairing is an isomorphism, we deduce that~$\incl(F_{\rel}(y))=F_!(y)$ for every~$y \in H_2(\widetilde{X}_{S_0},\partial \widetilde{X}_{S_0})$ and in particular for~$y=\incl_0^{\times S^1}(y')$.

Thus, the leftmost part of the diagram commutes and, finally, the rightmost part of the diagram commutes because all the other parts commute and because $\partial_0^{\times S^1}$ is surjective.
% finally, for the rightmost part of the diagram, we note that for~$x \in H_2(\widetilde{X}_{S_0},\widetilde{S}_0 \times S^1)$, we have
%$$
%\incl \circ F_\partial (x)
%=\incl\circ \partial_1 \circ  F_{\rel}(x)
%=\partial_1 \circ \incl \circ  F_{\rel}(x)
%= \partial_1 \circ F_!(\incl(x))
%=\partial F(\incl(x)).
%$$
This concludes the proof of the claim.
\end{proof}
The claim implies that~$\partial F(\widetilde{\mu}_0)
=\incl_1^{\times S^1} \circ F_\partial(\widetilde{\mu}_0)$.
Our goal is now to calculate this quantity.
As explained above the claim,~$F_\partial(\widetilde{\mu}_0)=F_\partial \circ \partial_0^{\times S^1}([G_0])= \partial_1^{\times S^1} \circ F_{\rel}([G_0])$, where~$G_0 \subset \widetilde{X}_{S_0}$ is a surface with boundary~$\widetilde{\mu}_0$.
We also write~$D_i$ for a meridional disc with~$\partial D_i=\widetilde{\mu}_i$ for $i=0,1$.
%Note that~$F_\partial(\widetilde{\mu}_0)= \partial_1 \circ F_{\rel}([G_0])$.
The explanation from the paragraph above the claim together with the commutativity of the diagram from the claim gives
%Under the displayed identifications,~$\Sigma_i \cup D_i$ is identified with~$\Sigma_i - D_i$ and the connecting homomorphism is given by 
$$ \partial_1^{\times S^1} \circ  F_{\rel}([G_0]) -\partial_1^{\times S^1} \circ F_{\rel}^\nu([D_0])
=F_\partial \circ \partial_0^{\times S^1} ([G_0] -[D_0])
=F_\partial(\widetilde{\mu}_0-\widetilde{\mu}_0)
=0.$$
We deduce that 
\begin{equation}
\label{eq:MVEquality}
\partial_1^{\times S^1} \circ F_{\rel}([G_0]) =\partial_1^{\times S^1} \circ F_{\rel}^\nu([D_0]) \in H_1(\widetilde{S}_1 \times S^1).
\end{equation}
The next claim calculates this quantity.
\begin{claim}
\label{claim:Compatible2}
The following equality holds:~$\partial_1^{\times S^1} \circ F_{\rel}^\nu([D_0])=\pm \widetilde{\mu}_1 \in H_1(\widetilde{S}_1 \times S^1).$
\end{claim}
\begin{proof}
Consider the following commutative square:
$$
\xymatrix{
0 \ar[r]&H_2(\overline{\nu}(S_0),\widetilde{S}_0 \times S^1)\ar[d]^{F^\nu_{\rel}}_\cong \ar[r]^-{\partial_0^{\times S^1}} &{\overbrace{H_1(\widetilde{S}_0 \times S^1)}^{\cong \Z\langle\widetilde{\mu}_0\rangle}}\ar[d]^{F_\partial}_\cong
\ar[r] &0
 \\
%%%%
0 \ar[r]&H_2(\overline{\nu}(S_1),\widetilde{S}_1 \times S^1) 
\ar[r]^-{\partial_1^{\times S^1}}&{\underbrace{H_1(\widetilde{S}_1 \times S^1)}_{\cong \Z\langle\widetilde{\mu}_1\rangle}}
\ar[r]&0.
\\
}
$$
%We've already explained why the rightmost map is the same as the one from the previous diagram.
The rightmost zeros occur because we are working with discs.
The commutativity of the diagram implies that~$\partial_1^{\times S^1} \circ F_{\rel}^\nu([D_0])= F_\partial \circ \partial_0^{\times S^1}([D_0])
=F_\partial(\widetilde{\mu}_0)$.
%Since the rightmost groups are isomorphic to~$\Z$ and
Since~$F_\partial$ is an isomorphism,  we also have that~$F_\partial(\widetilde{\mu}_0)=\pm \widetilde{\mu}_1$.
This concludes the proof of the claim.
\end{proof}

%It remains to prove that~$F_{\rel}=F_!$ as it will then follow that~$(F,h)$ is a compatible pair.
%\begin{claim}
%\label{claim:Compatible3}
%We have~$F_{\rel}=F_! \colon H_2(\widetilde{X}_{S_0},\partial \widetilde{X}_{S_0}) \to H_2(\widetilde{X}_{S_1},\partial \widetilde{X}_{S_1})$.
%\end{claim}
%\begin{proof}
%Given~$x \in H_2(\widetilde{X}_{S_0})$ and~$y \in H_2(\widetilde{X}_{S_0},\partial \widetilde{X}_{S_0})$, we have
%$$ F(x) \cdot F_!(y)=x \cdot y=F(x) \cdot F_{\rel}(y).$$
%Since the adjoint of this relative pairing is an isomorphism, we deduce that~$F_{\rel}(y)=F_!(y)$ for every~$y \in H_2(\widetilde{X}_{S_0},\partial \widetilde{X}_{S_0})$.
%Thus~$F_{\rel}=F_!$, establishing the claim.
%\end{proof}

We can now conclude the proof that~$(F,h)$ is a compatible pair.
Indeed, combining Claim~\ref{claim:Compatible3},  with the equality in~\eqref{eq:MVEquality}, and Claim~\ref{claim:Compatible2}, we obtain
\begin{align*}
\partial F(\widetilde{\mu}_0)
&=\incl_1^{\times S^1} \circ F_\partial(\widetilde{\mu}_0)
=\incl_1^{\times S^1}  \circ \partial_1^{\times S^1} \circ F_{\rel}([G_0])
=\incl_1^{\times S^1}  \circ \partial_1^{\times S^1} \circ F_{\rel}^\nu([D_0])
=\incl_1^{\times S^1} (\pm \widetilde{\mu}_1) \\
&=\pm \widetilde{h}_*(\widetilde{\mu}_0).
\end{align*}
After possibly precomposing~$H$ with the 
%orientation-reversing 
homeomorphism of~$\overline{\nu}(S_0) \cong D^2 \times D^2,(x,y) \mapsto (x,-y)$,  we can assume that $\widetilde{h}_*$ and $\partial F$ agree on $\widetilde{\mu}_0$.
Since~$H_1(\partial \widetilde{X}_{S_0})$ is cyclic with~$\widetilde{\mu}_0$ as a generator, we deduce that~$\partial F=\widetilde{h}_*$ as required.
\end{proof}

\section{The automorphism invariant}
\label{sec:Injection}

This section provides a quantitative take on the results of Theorem~\ref{thm:CompatiblePairIntro}.
Namely, we aim to estimate the number of surfaces in a simply-connected $4$-manifold with prescribed genus, (relative) Euler number,  type, boundary and whose exteriors have a fixed equivariant intersection form.
We focus on the case where $\pi$ is finite cyclic as the corresponding analysis for $\pi$ infinite cyclic was carried out in~\cite[Section 6]{ConwayPiccirilloPowell}.
%This is also sets up the question of realisation for future work.

\subsection{Set-up}
\label{sub:SetUpInjection}
Throughout this section,  it will be necessary to navigate between the equivariant intersection form and the $\Z$-intersection form on the universal cover.
We record a remark on this topic for later use and then outline the idea underlying this section.

\begin{remark}
\label{rem:HermHSymmH}
Given a group~$\pi$ and a~$\Z[\pi]$-module~$H$,  taking the coefficient at $g=e$ is seen to induce a map between the set of~$\Z[\pi]$-hermitian forms on~$H$ and the set of~$\pi$-invariant integral symmetric bilinear forms on~$H$:
$$(\ev_e)_* \colon \operatorname{Herm}(H) \to \operatorname{Sym}_\pi(H).$$
When $\pi$ is finite, this map is an isomorphism whose inverse is given by sending a~$\pi$-invariant symmetric bilinear form~$Q$ to the hermitian form~$(x,y) \mapsto \sum_{g \in \pi} Q(x,gy)g^{-1}.$
As was already implicitly noted during the proof of Proposition~\ref{prop:EquivCompatiblePair},  taking the coefficient at $g=e$ is also seen to induce the following commutative diagram of $\Z[\pi]$-modules and $\Z[\pi]$-linear maps:
$$
\xymatrix{
0 \ar[r]& \ker(\Ad \lambda) \ar[r]\ar[d]_=& H \ar[r]^-{\Ad \lambda}\ar[d]_=& \Hom_{\Z[\pi]}(H,\Z[\pi]) \ar[r]\ar[d]_\cong^{(ev_e)_*}&\ar[d]_\cong \coker(\Ad \lambda) \ar[r]\ar[d]_\cong&0  \\
%%%
0 \ar[r]& \ker(\Ad Q) \ar[r]& H \ar[r]^-{\Ad Q}& \Hom_{\Z}(H,\Z) \ar[r]&\coker(\Ad Q) \ar[r]&0.  \\
}
$$
Additionally,  a~$\Z[\pi]$-automorphism of $H$ is an isometry of $Q$ if and only if it is an isomorphism of the associated hermitian form $\lambda$.
Consequently, we will often go back and forth between hermitian forms~$\lambda$ and $\pi$-invariant symmetric forms $Q$.
\end{remark}

Next, we outline the idea underlying this section.
Fix a hermitian form $(H,\lambda)$ and let $(H,Q)$ be the associated symmetric form as in Remark~\ref{rem:HermHSymmH}.
Given a knot $K \subset S^3$, we write 
$$\widehat{Y}_e(K) \to Y_e(K):=E_K \cup_e (\Sigma \mathbin{\wt{\times}}S^1)$$
%for brevity and $\widehat{Y}_e(K)$ 
for the $d$-fold cover associated to the coefficient system described in Construction~\ref{cons:CoeffSys}.
Given a~$\Z_d$-surface $S \subset X$ with boundary~$K$, relative Euler number $e$ and~$(H_2(\widetilde{X}_S),Q_{\widetilde{X}_S}) \cong (H,Q)$,   we will construct an isomorphism 
$$
\coker(\Ad Q) \xrightarrow{\cong} H_1(\widehat{Y}_e(K)).
$$
The construction of this isomorphism will depend on several choices and, in essence, our goal will be to extract an invariant (which we call the \emph{automorphism invariant}) from it nonetheless.
The next construction provides the essential idea on how this isomorphism will be defined.

\begin{construction}
\label{cons:bForManifolds}
Fix a closed $3$-manifold $Y$ and an epimorphism $\varphi \colon \pi_1(Y) \twoheadrightarrow \pi$. 
Denote the corresponding $\pi$-cover by $\widehat{Y}.$
Given a~$4$-manifold~$V$ with fundamental group~$\pi$,  consider the following commutative diagram of exact sequences of $\Z[\pi]$-modules and $\Z[\pi]$-linear maps: 
$$
\xymatrix{
H_2(\widetilde{V})
\ar[r]^-{\Ad Q_{\widetilde{V}}}\ar[d]^=&
\Hom_{\Z}(H_2(\widetilde{V}),\Z)
\ar[r]\ar[d]^-{\PD\circ ev^{-1}}_\cong&
\coker(\Ad Q_{\widetilde{V}})
\ar[r]\ar@{-->}[d]^{D_V}_\cong&
0 \\
%%%%%
H_2(\widetilde{V})
\ar[r]^-j&
H_2(\widetilde{V},\partial \widetilde{V})
\ar[r]&
H_1(\partial \widetilde{V})
\ar[r]&
0.
}
$$
As in Section~\ref{sub:CompatiblePairEquiv},  we endow $\Hom_{\Z}(H_2(\widetilde{V}),\Z)$ with the right-$\pi$-action given by~$g \cdot \varphi(x):=\varphi(gx).$
%%PD and ev are then equivariant.

When there is a $\Z[\pi]$-isomorphism~$F \colon  (H,Q) \cong (H_2(\widetilde{V}),Q_{\widetilde{V}})$ and an orientation-preserving homeomorphism~$\gamma \colon \partial V\to Y$ such that the composition~$\pi_1(Y)\xrightarrow{\gamma_*} \pi_1(\partial V) \to \pi_1(V) \cong \pi$ agrees with the epimorphism~$\varphi$,  we consider the following isomorphism of $\Z[\pi]$-modules:
$$
b(V):=\gamma_*  \circ D_V \circ \partial F \colon \coker(\Ad Q) \to H_1(\widehat{Y}).
$$
Here $\partial F \colon \coker(\Ad Q_{\widetilde{V}}) \to \coker(\Ad Q)$ denotes the $\Z[\pi]$-isomorphism induced by $F$.
\end{construction}

\subsection{The automorphism invariant}
\label{sub:SetUpInjection}

We begin the construction of the automorphism invariant by applying Construction~\ref{cons:bForManifolds} to the case where~$V$ is a surface exterior. 
Some notation is in order.
To begin with, fix a simply-connected~$4$-manifold~$X$ with boundary~$\partial X=S^3$ (recall our conventions), 
%a homeomorphism~$h \colon \partial X \xrightarrow{\cong} S^3$
and a knot~$K \subset \partial X$.
%The goal of this section is to define the sets of surfaces and embeddings that we will be interested in.

\begin{notation}
Consider the set~$\Surf(g)(X,K)$ of rel.\ boundary equivalence classes of genus~$g$ embedded simple surfaces in~$X$ with boundary~$K$.
Given a positive integer $d>0$, an integer~$e \in \Z$, and a hermitian form~$(H,\lambda)$ over $\Z[\Z_d]$, we also consider the subset
\[ \Surf_{d,e,\lambda}(g)(X,K) \subset \Surf(g)(X,K)\]
of $\Z_d$-surfaces with relative Euler number $e$ and whose exteriors have equivariant intersection form isometric to~$\lambda$.
For nonorientable genus~$h$ surfaces,  $d$ is understood to be either $2$ or $1$ (depending on whether or not the surface is trivial in $H_2(X,\partial X;\Z_2)$) and we write~$\Surf^{-}(h)(X,K)$ and~$\Surf_{d,e,\lambda}^{-}(h)(X,K)$ for the corresponding sets of simple surfaces.
For brevity,  we write
$$\Surf_{d,e,\lambda}^w(g)(X,K)$$
 to encompass both the orientable and nonorientable cases, where~$w \in \{\pm\}$.

Similarly, we consider the set~$\Emb^w(g)(X,K)$ of rel.\ boundary equivalence classes of embeddings~$\iota \colon \Sigma \hookrightarrow X$ such that~$\iota(\Sigma)$ is a simple surface of (nonorientable) genus $g$ and with boundary~$K$. 
The group~$\Homeo_\partial(\Sigma)$ of rel.\ boundary self homeomorphisms of~$\Sigma$ acts on~$\Emb^w(g)(X,K)$ via~$\theta \cdot \iota=\iota \circ \theta^{-1}$ and one checks that the surjection~$\Emb^w(g)(X,K) \to \Surf^w(g)(X,K)$ descends to a bijection~$\Emb^w(g)(X,K)/\Homeo_\partial(\Sigma)\cong \Surf^w(g)(X,K)$.
Homeomorphisms are assumed to preserve local orientations, which is automatic when said homeomorphisms are rel.\ boundary.
%%Don't delete
%{I think we always want to preserve the local orientation. But if $\partial\neq\emptyset$ then any homeo preserves it anyway.}
The aforementioned bijection also induces a bijection
$$\Emb_{d,e,\lambda}^w(g)(X,K)/\Homeo_\partial(\Sigma)\cong \Surf_{d,e,\lambda}^w(g)(X,K).$$
The knot group of the exteriors of these surfaces is isomorphic to~$\Z_d$.
\end{notation}

\begin{notation}
\label{not:Kinvariant}
We write~$\Aut_{\Z[\pi]}(Q)$ for the group of~$\Z[\pi]$-isometries of the $\Z[\pi]$-module~$H$ that preserve the symmetric bilinear form $Q$.
\end{notation}

We apply Construction~\ref{cons:bForManifolds} to surface exteriors.

\begin{construction}
\label{cons:biota}
Let~$\iota \colon \Sigma \hookrightarrow X$ be an embedding that represents an element of~$\Emb_{d,e,\lambda}^{w}(g)(X,K)$.  
Choose a normal bundle~$(\nu(\iota),f \colon \nu(\iota) \hookrightarrow X)$ of~$\iota$,  an isometry~$F \colon \lambda \cong \lambda_{X \setminus f(\nu(\iota))}$,  and an~$e$-nice identification~$\fr \colon \nu(\iota) \to \Sigma \mathbin{\wt{\times}} \R^2$.
Consider the resulting boundary homeomorphism
$$\overline{\fr}:=\id_{E_K} \cup \fr \colon \partial (X \setminus f(\nu(\iota))) \to Y_e(K)$$ 
as well as the following $\Z[\Z_d]$-isomorphism
\begin{equation}
\label{eq:b}
b^{\fr}(\iota):=b(X \setminus f(\nu(\iota)))=\overline{\fr}_* \circ D_{X \setminus f(\nu(\iota))} \circ \partial F \colon \coker(\Ad Q) \to H_1(\widehat{Y}_e(K)).
\end{equation}
\end{construction}

We record the fact that $b^{\fr}(\iota)$ preserves linking forms once restricted to $\Z$-torsion subgroups.
Before doing so, we recall the notion of the boundary linking form of a symmetric bilinear form.
\begin{construction}
\label{cons:LinkingForm}
Fix a closed~$3$-manifold~$Y$,  an epimorphism~$\varphi \colon \pi_1(Y) \twoheadrightarrow \pi$,  and a hermitian form~$\lambda$ over~$\Z[\pi]$.
%, an isometry~$F \colon \lambda_V \cong \lambda$.
Given a~$\Z[\pi]$-module~$H$, we write~$T_\Z H$ for the~$\Z$-torsion subgroup of~$H$, viewed as an abelian group.
%%Don't delete.
%T is also a submodule
When~$\pi$ is finite,  consider the linking form on the~$\pi$-cover~$\widehat{Y}$ of~$Y$:
$$\ell_{\widehat{Y}} \colon T_\Z H_1(\widehat{Y}) \times T_\Z H_1(\widehat{Y}) \to \Q/\Z.$$
Given a $\pi$-invariant integral symmetric bilinear form~$Q \colon H \times H \to \Z$, where $H$ is free as an abelian group, 
%Continuing with the assumption that~$\pi$ is finite,  assume from now on that~$H$ is free as an abelian group and write~$\lambda(x,y)=\sum_{g \in \pi} Q(x,gy)g^{-1}$ for some $\pi$-invariant integral symmetric bilinear form~$Q \colon H \times H \to \Z$; recall Remark~\ref{rem:HermHSymmH}.
we then consider the boundary linking form of $Q$
(see e.g.~\cite[page 243]{RanickiExact}):
\begin{align*}
\partial Q \colon T_{\Z} \coker(\Ad Q)  \times T_{\Z} \coker(\Ad Q) \to \Q/\Z \\
([x],[y]) \mapsto \frac{1}{n}y(z).
\end{align*}
Here,  since $T_{\Z} \coker(\Ad Q)$ is torsion, there exists an $n\in \Z$ such that $nx=\Ad Q(z)$.
We also write~$\Aut_{\Z[\pi]}(\partial Q)$ for the group of~$\Z[\pi]$-isometries of the $\Z[\pi]$-module~$\coker(\Ad Q)$ that preserve the linking form~$\partial Q$.
\end{construction}

The next proposition proves that~$b^{\fr}(\iota)$ preserves these linking forms.

\begin{proposition}
\label{prop:bfrPreservesLinking}
Continuing with the notation from Construction~\ref{cons:biota}, the isomorphism $b^{\fr}(\iota)$ preserves the linking forms on the $\Z$-torsion submodules:
 $$
 b^{\fr}(\iota) \colon
(T_\Z\coker(\Ad Q),\partial Q) \to (T_\Z H_1(\widehat{Y}_e(K)),\unaryminus \ell_{\widehat{Y}_e(K)}).
 $$
This isometry is a $\Z[\pi]$-isomorphism on the underlying modules.
\end{proposition}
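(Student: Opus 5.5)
We factor $b^{\fr}(\iota)=\overline{\fr}_*\circ D_V\circ \partial F$, with $V:=X\setminus f(\nu(\iota))$, and check each factor separately: being a $\Z[\pi]$-isomorphism and respecting the linking forms on $\Z$-torsion (up to the stated sign) are both properties stable under composition.

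First, $\partial F\colon \coker(\Ad Q)\to\coker(\Ad Q_{\widetilde V})$ is induced by a $\Z[\pi]$-isometry $F\colon (H,Q)\cong (H_2(\widetilde V),Q_{\widetilde V})$; this uses Remark~\ref{rem:HermHSymmH}, which says $F$ preserves $Q$ if and only if it preserves $\lambda$. Being a $\Z[\pi]$-isomorphism is then immediate. For the linking forms, suppose $n[x]=0$ with $nx=\Ad Q(z)$. Then
\[
n(F^{-1})^*x=\Ad Q_{\widetilde V}(Fz),
\qquad
\big((F^{-1})^*y\big)(Fz)=y(z),
\]
so $\partial F$ is an isometry $\partial Q\cong\partial Q_{\widetilde V}$ directly from the formula in Construction~\ref{cons:LinkingForm}.

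Second, $\overline{\fr}_*$ is induced by a homeomorphism $\partial V\to Y_e(K)$. By the choice of an $e$-nice identification, this homeomorphism intertwines the $\Z_d$-coefficient systems (Construction~\ref{cons:CoeffSys} and Remark~\ref{rem:Intertwine}). It therefore lifts to an equivariant, orientation-preserving homeomorphism of $d$-fold covers $\partial\widetilde V\to\widehat Y_e(K)$, which induces a $\Z[\pi]$-isomorphism on $H_1$ preserving $\ell$, hence also $-\ell$.

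The main step is the middle map $D_V\colon\coker(\Ad Q_{\widetilde V})\to H_1(\partial\widetilde V)$, for which we must show $\partial Q_{\widetilde V}$ corresponds to $-\ell_{\partial\widetilde V}$ on $T_\Z$. This is the classical fact that the boundary of a compact oriented $4$-manifold $W=\widetilde V$ has torsion linking form equal to minus the boundary linking form of its intersection form; see e.g.~\cite{RanickiExact} or the standard argument via Poincar\'e--Lefschetz duality. Note that $H_1(\widetilde V)=0$ since $\widetilde V$ is simply connected, so the sequence in Construction~\ref{cons:bForManifolds} is exact at $H_1(\partial\widetilde V)$. The $\pi$-equivariance of $D_V$ is already built into that construction, since $\PD$ and $\ev$ are equivariant.

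Concretely, let $a,b\in T_\Z H_1(\partial W)$ with $na=0$. Write $a=\partial\alpha$ and $b=\partial\beta$ with $\alpha,\beta\in H_2(W,\partial W)$. Then $n\alpha=j(z)$ for some $z\in H_2(W)$, and $nc=\partial\alpha$ for a relative $2$-cycle with boundary a multiple of $a$. Geometrically, $\ell(a,b)=\tfrac1n\,(\text{surface bounding } na)\cdot b$ in $\partial W$, and this equals $\tfrac1n\,(z\cdot\beta)$ computed in $W$, up to a sign. That sign comes from comparing boundary orientation conventions when pushing the surface in $\partial W$ into a collar.

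On the algebraic side, $\partial Q([x],[y])=\tfrac1n\,y(z)$, where $x=\ev\circ\PD^{-1}(\alpha)$, $y=\ev\circ\PD^{-1}(\beta)$ and $nx=\Ad Q(z)$. Unwinding, $y(z)=z\cdot\beta$, so the two formulas match up to the sign. Tracking that sign is the step I expect to be the main obstacle: it depends on outward-normal-first conventions. I would settle it by testing on a disc bundle over $S^2$ with Euler number $n$. There $Q=(n)$, so $\partial Q$ is $1/n$, while the boundary $L(n,1)$ (with the boundary orientation) has linking form $-1/n$. This confirms the minus sign in the statement.
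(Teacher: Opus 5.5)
Your proposal is correct and follows the paper's proof. The paper also factors $b^{\fr}(\iota)=\overline{\fr}_*\circ D_{X_\iota}\circ\partial F$ and checks each factor, and it too handles $\overline{\fr}_*$ by noting it is an orientation-preserving homeomorphism lifting to the covers. Where the paper cites \cite[page~336]{BoyerUniqueness} for $\partial F$ and adapts \cite[Proposition 3.5]{ConwayPowell} for $D_{X_\iota}$, you instead check $\partial F$ directly from the formula for $\partial Q$, and you sketch the classical Poincar\'e--Lefschetz argument for $D_V$, fixing the universal sign on the Euler number $n$ disc bundle over $S^2$.
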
 
\begin{proof}
Set~$X_\iota:=X \setminus f(\nu(\iota))$ for brevity.
We outline why~$\partial F,D_{X_\iota}$ and~$\overline{\fr}_*$ are isometries of the linking form.
For~$\overline{\fr}_* \colon \ell_{\partial \widehat{X}_\iota} \cong \ell_{\widehat{Y}_e(K)}$, this is clear since~$\overline{\fr}$ is an orientation-preserving homeomorphism that lifts to the covers.
For~$\partial F \colon  \partial Q \cong \partial Q_{\widetilde{X}_\iota}$,  we refer to the explanation in~\cite[page~336]{BoyerUniqueness} applied to the universal cover of~$X_\iota$ viewed as a simply-connected~$4$-manifold.
Finally, for~$D_{X_\iota}\colon  \partial Q_{\widetilde{X}_\iota} \cong \unaryminus \ell_{\partial \widehat{X}_\iota}$ this can be seen by adapting the argument in~\cite[Proposition 3.5]{ConwayPowell} to the simply-connected~$4$-manifold~$\widetilde{X}_\iota$; we do not outline the details since the result is also implicit in~\cite[page~336]{BoyerUniqueness}.
%The fact that~$b^{\fr}(\iota)$ is a~$\Z[\pi]$-isomorphism holds because it holds for~$\partial F,D_V$ and~$\overline{\fr}_*$.
Since~$\partial F,D_V$ and~$\overline{\fr}_*$ are~$\Z[\pi]$-isomorphism, so is~$b^{\fr}(\iota)$.
\end{proof}

The isomorphism~$b^{\fr}(\iota)$ is not yet an invariant of~$\iota$ as the following proposition describes.

\begin{proposition}
\label{prop:ChangeIdentification}
Continuing with the notation from Construction~\ref{cons:biota},
the following holds.
\begin{itemize}
\item For any two choices of isometries $F,F'$ and normal bundles $f,f'$, the isomorphisms constructed in~\eqref{eq:b} differ by precomposition with $\partial G$ for some isometry $G \in \Aut_{\Z[\pi]}(Q)$.
% the resulting $b$ changes $b$ by precomposition with $\partial G$ for some isometry $G \in \Aut_{\Z[\pi]}(Q)$.
\item For any two $e$-nice identifications $\fr,\fr'$ of $\iota(\Sigma) \subset X$, there is a class~$\alpha \in d \cdot H^1(\Sigma,\partial \Sigma;\Z^w)$ such that $\fr=\Rot(\alpha) \circ \fr'$ and $b^{\fr}(\iota)=\Rot(\alpha)_* \circ b^{\fr'}(\iota)$.
%\item When $X$ is spin and $d$ is odd,
%%{DK: and $d$ odd? Otherwise shouldn't it be $lcm(2,d)$ instead of $2d$?},  
%if we choose a spin structure~$\mathfrak{s}$ on~$\Sigma\mathbin{\wt{\times}} \R^2$ and only consider those~$e$-nice identifications~$\fr$ that pull~$\mathfrak{s}$ back to the spin structure of~$X$ restricted to~$\nu(S)$, then for any two such~$e$-nice framings $\fr,\fr'$, there is an~$\alpha \in 2d \cdot H^1(\Sigma,\partial \Sigma;\Z^w)$ such that $\fr=\Rot(\alpha) \circ \fr'$ and $b^{\fr}(\iota)=\Rot(\alpha)_*^d \circ b^{\fr'}(\iota)$.
\end{itemize}
\end{proposition}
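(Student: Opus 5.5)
The plan is to treat the two items separately, since they concern independent choices entering~\eqref{eq:b}: the pair $(F,f)$ only affects $D_{X_\iota}\circ \partial F$, while $\fr$ only affects $\overline{\fr}_*$.

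\emph{First item.} Suppose first that the normal bundle $f$ is fixed and only the isometry changes from $F$ to $F'$. Then
\[
G:=F^{-1}\circ F' \in \Aut_{\Z[\pi]}(Q),
\]
using Remark~\ref{rem:HermHSymmH} to pass between isometries of $\lambda$ and $\Z[\pi]$-isometries of $Q$. Functoriality of the induced maps on cokernels gives $\partial F'=\partial F\circ \partial G$. Hence the two isomorphisms from~\eqref{eq:b} differ by precomposition with $\partial G$.

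Now suppose the normal bundle also changes, from $f$ to $f'$. By uniqueness of topological normal bundles up to ambient isotopy rel.\ $K$, there is an ambient isotopy of $X$, fixed near $\partial X$ and preserving $\iota(\Sigma)$, carrying $f(\nu(\iota))$ onto $f'(\nu(\iota))$. Its end map restricts to a homeomorphism
\[
\Phi\colon X\setminus f(\nu(\iota))\to X\setminus f'(\nu(\iota))
\]
which is compatible with the bundle structures, so $\fr$ is transported to an $e$-nice identification on the other side. Since $\Phi$ is homotopic to the identity rel.\ $E_K$ and the bundle part matches, naturality of Poincar\'e duality gives
\[
D_{X\setminus f'(\nu(\iota))}\circ \partial \Phi_* = (\Phi|_\partial)_*\circ D_{X\setminus f(\nu(\iota))},
\qquad
\overline{\fr}'\circ \Phi|_\partial=\overline{\fr}.
\]
Replacing $F'$ by $\Phi_*^{-1}\circ F'$ reduces this case to the one with fixed bundle.

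\emph{Second item.} This follows from Remark~\ref{rem:HowManyIdentifications}. Two $e$-nice identifications satisfy $\fr\simeq \Rot(\alpha)\circ \fr'$ rel.\ boundary for some $\alpha$. The class $\alpha$ must be divisible by $d$: by Proposition~\ref{prop:RotAlphaAxiomsSection2}(2) and its nonorientable analogue, pushing off a curve $\gamma$ with the new section adds $\alpha(\gamma)$ meridians. Both push-offs are nullhomologous in $X_S$, whose $H_1$ is $\Z_d$ generated by the meridian, so $\alpha(\gamma)\equiv 0 \bmod d$. Because homotopic bundle maps induce the same map on homology of the lifted boundaries, we get
\[
\overline{\fr}_*=\Rot(\alpha)_*\circ \overline{\fr}'_*,
\]
where $\Rot(\alpha)$ is extended by the identity over $E_K$ and lifted to $\widehat{Y}_e(K)$. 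The lift exists because $d\mid\alpha$ makes $\Rot(\alpha)$ commute with the coefficient system $\varphi$. Substituting into~\eqref{eq:b} gives $b^{\fr}(\iota)=\Rot(\alpha)_*\circ b^{\fr'}(\iota)$.

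\emph{Main obstacle.} The delicate step is the change of normal bundle, where the isotopy's end map has to intertwine the chosen identifications and the coefficient systems. I would handle it by transporting $\fr$ along $\Phi$, as above, rather than fixing it independently on each side.
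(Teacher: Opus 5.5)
Your proposal is correct and follows the paper's route. The second item is the paper's argument via Remark~\ref{rem:HowManyIdentifications}, using that $\Rot(\alpha)$ is the identity over $\partial\Sigma$ and so extends by $\id_{E_K}$. Your change-of-normal-bundle step is the Freedman--Quinn uniqueness argument that the paper only carries out in the proof of Proposition~\ref{prop:bWellDef}. There it produces a homeomorphism $G$ with $G\circ f=f'$ on the nose; transporting $\fr$ along $\Phi$ instead is the more careful formulation, since two normal bundles can differ by a bundle automorphism, and that discrepancy is exactly what the second item and the $d\cdot H^1(\Sigma,\partial\Sigma;\Z^w)$-action absorb.
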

\begin{proof}
The first assertion follows from the definitions, so we focus on the second.
% and third.
Given two choices of~$e$-nice identifications~$\fr \colon  \nu(\iota) \to  \Sigma \mathbin{\wt{\times}} \R^2$ and~$\fr' \colon \nu(\iota) \to \Sigma \mathbin{\wt{\times}} \R^2$,  Remark~\ref{rem:HowManyIdentifications} ensures that~$\fr=\Rot(\alpha) \circ \fr'$ for some cohomology class~$\alpha \in d \cdot H^1(\Sigma,\partial \Sigma;\Z^w)$. % and some~$k \geq 0$.
We refer to Appendix~\ref{sec:BundleIsos} for the construction of the bundle isomorphism~$\Rot(\alpha) \colon \Sigma \mathbin{\wt{\times}} \R^2 \to  \Sigma \mathbin{\wt{\times}} \R^2$ but we recall from Propositions~\ref{prop:RotAlphaAxiomsSection2} and~\ref{prop:RotAlphaAxiomsSection2Nonori}
 that~$\Rot(\alpha)|_{\partial \Sigma \times \R^2}=\id$.
In particular~$\Rot(\alpha)$ 
%or rather its restriction to the disc bundles
extends over~$E_K$ by the identity leading to an orientation-preserving homeomorphism~$\Rot(\alpha)\colon Y_e(K) \to Y_e(K)$.
Temporarily setting~$X_\iota:=X \setminus f(\nu(\iota))$ as well as~$b^{\fr}(\iota):= (\id_{E_K} \cup \fr) \circ D_{X_\iota} \circ \partial F$ and~$b^{\fr'}(\iota):=(\id_{E_K} \cup \fr') \circ D_{X_\iota} \circ \partial F$, it follows that
\begin{align*}
b^{\fr}(\iota)&=
(\id_{E_K} \cup \fr)_* \circ D_{X_\iota} \circ \partial F \\
&=(\id_{E_K} \cup (\Rot(\alpha) \circ \fr'))_* \circ D_{X_\iota} \circ \partial F \\
&=\Rot(\alpha)_* \circ (\id_{E_K} \cup  \fr')_* \circ D_{X_\iota} \circ \partial F \\
&=\Rot(\alpha)_* \circ  b^{\fr'}(\iota).
\end{align*}
%For the last assertion, if the~$e$-nice identifications~$\fr,\fr'$ pull the fixed spin structure~$\mathfrak{s}$ back to the spin structure of~$X$ restricted to~$\nu(S)$, then~$\fr=\Rot(\alpha)\circ \fr'$ for some~$\alpha\in 2d \cdot H^1(\Sigma,\partial \Sigma;\Z^w)$. 
%Indeed it must be~$\Rot(\beta)^{kd}$ since it must preserve the coefficient systems and~$k$ must be even otherwise it won't induce trivial map on spin structures.
This concludes the proof of the proposition.
\end{proof}

In order to obtain a map from $\Emb_{d,e,\lambda}^w(g)(X,K)$, we introduce some notation.
\begin{notation}
The group~$\Aut(\lambda)=\Aut_{\Z[\Z_d]}(Q)$ acts on~$\Aut_{\Z[\Z_d]}(\partial Q)$ by~$F \cdot h:=h \circ \partial  F^{-1}|_{T_\Z}$.
Similarly~$\Aut_{\Z[\Z_d]}(Q)$ acts on~$\Iso_{\Z[\Z_d]}(\coker(\Ad Q),H_1(\widehat{Y}_e(K)))$ by precomposition.
Our notation for the subgroup of linking form preserving $\Z[\Z_d]$-isometries is 
$$
\Iso_{\Z[\Z_d]}^{\ell k}(\coker(\Ad Q),H_1(\widehat{Y}_e(K))).
$$
\end{notation}

The next result is concerned with the assignment $\iota \mapsto b^{\fr}(\iota)$.

\begin{proposition}
\label{prop:bWellDef}
Continuing with the notation from Construction~\ref{cons:biota},
%if~$X$ is nonspin or $d>0$ is even, then 
the assignment $\iota \mapsto b^{\fr}(\iota)$ determines a map
$$
\Emb_{d,e\lambda}^{w}(g)(X,K) \to
\frac{\Iso_{\Z[\Z_d]}^{\ell k}(\coker(\Ad Q),H_1(\widehat{Y}_e(K)))}{\Aut_{\Z[\Z_d]}(Q) \times d \cdot H^1(\Sigma,\partial \Sigma;\Z^w)}.
$$
\end{proposition}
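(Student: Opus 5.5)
The plan is to check three things: $b^{\fr}(\iota)$ lands in the target set; its class in the quotient is independent of all auxiliary choices; and it depends only on the rel.\ boundary equivalence class of $\iota$.

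For the first point, Construction~\ref{cons:biota} produces a $\Z[\Z_d]$-isomorphism $\coker(\Ad Q)\to H_1(\widehat{Y}_e(K))$. Proposition~\ref{prop:bfrPreservesLinking} shows that its restriction to $\Z$-torsion preserves the linking forms. Hence $b^{\fr}(\iota)$ lies in $\Iso^{\ell k}_{\Z[\Z_d]}(\coker(\Ad Q),H_1(\widehat{Y}_e(K)))$.

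Before treating choices, I would specify the action of $\Aut_{\Z[\Z_d]}(Q)\times d\cdot H^1(\Sigma,\partial\Sigma;\Z^w)$ on this set. An isometry $G$ acts by precomposition with $\partial G^{-1}$. A class $\alpha$ acts by postcomposition with $\Rot(\alpha)_*$, which is the map induced on the $d$-fold cover $\widehat{Y}_e(K)$ by the homeomorphism $\id_{E_K}\cup\Rot(\alpha)$ of $Y_e(K)$. These two actions commute because one is pre- and the other post-composition. I also need $\Rot(\alpha)$ to lift to the $\varphi$-cover, so that $\Rot(\alpha)_*$ is a $\Z[\Z_d]$-map. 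This holds because $\alpha$ is divisible by $d$: by the second items of Propositions~\ref{prop:RotAlphaAxiomsSection2} and~\ref{prop:RotAlphaAxiomsSection2Nonori}, $\Rot(\alpha)$ then commutes with $\varphi$. Since $\Rot(\alpha)$ is an orientation-preserving homeomorphism covered by a homeomorphism, it preserves linking forms, so the action preserves $\Iso^{\ell k}$. Finally $\Rot(\alpha)\circ\Rot(\beta)\simeq\Rot(\alpha+\beta)$ by the uniqueness in the third items, so this is a genuine group action.

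Independence of choices then follows from Proposition~\ref{prop:ChangeIdentification}. Changing $F$ or the normal bundle $f$ precomposes $b$ with some $\partial G$. Changing the $e$-nice identification postcomposes with $\Rot(\alpha)_*$ for some $\alpha\in d\cdot H^1(\Sigma,\partial\Sigma;\Z^w)$. When both change, I would first fix $F,f$ and vary $\fr$, then vary $F,f$; the commutativity of the two actions makes the order irrelevant.

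Invariance under rel.\ boundary equivalence is the step I expect to need the most care. Suppose $\Phi\colon X\to X$ is a homeomorphism fixing $\partial X$ with $\Phi\circ\iota=\iota'$. Transport all choices for $\iota$ via $\Phi$: take the normal bundle $\Phi\circ f$, the isometry $\Phi_*\circ F$ (where $\Phi_*$ is induced on $\Z[\Z_d]$-homology of the exteriors), and the identification $\fr\circ(\Phi|)^{-1}$. The last is again $e$-nice, since $\Phi$ is the identity near $K$ and carries push-offs that are nullhomologous in one exterior to push-offs nullhomologous in the other. By naturality of Poincar\'e duality and of the connecting map, $D_{X_{\iota'}}\circ\partial(\Phi_*F)=(\Phi|_\partial)_*\circ D_{X_\iota}\circ\partial F$. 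Moreover $\overline{\fr'}\circ\Phi|_\partial=\overline{\fr}$ on $\partial X_\iota$, because $\Phi|_{E_K}=\id$. Together these give $b^{\fr'}(\iota')=b^{\fr}(\iota)$ exactly. Combined with the independence of choices, this shows the class is well defined on $\Emb^w_{d,e,\lambda}(g)(X,K)$.
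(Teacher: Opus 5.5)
Your argument works and uses the same mechanism as the paper, but it splits the work differently and takes from the citation the one nontrivial input.

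The paper's proof is about independence of the normal bundle. Given two normal bundles $f,f'$ of $\iota$, uniqueness of normal bundles \cite[Theorem 9.3]{FreedmanQuinn} gives a rel.\ boundary homeomorphism $G\colon X\to X$ with $G\circ f=f'$. This restricts to a homeomorphism $X\setminus f(\nu(\iota))\to X\setminus f'(\nu(\iota))$ intertwining the parametrisations $\id_{E_K}\cup(\fr|\circ f^{-1})$ and $\id_{E_K}\cup(\fr|\circ {f'}^{-1})$. The choices of $F$ and $\fr$ are then handled by Proposition~\ref{prop:ChangeIdentification}.

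You instead take normal-bundle independence from the first item of Proposition~\ref{prop:ChangeIdentification}. You then spend your effort on invariance under rel.\ boundary equivalence, which the paper leaves implicit. The catch is that the normal-bundle clause of that item is not a formal consequence of the definitions, even though its proof says so. The two exteriors are different submanifolds of $X$. Comparing the two values of $b$ needs a homeomorphism between them that respects the boundary identifications, which is exactly the $G$ above. So you need to add uniqueness of normal bundles.

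Once you have $G$, your transport paragraph with $\Phi=G$ and $\iota'=\iota$ gives normal-bundle independence verbatim. In that sense your argument is the paper's in slightly greater generality. Any rel.\ boundary homeomorphism $\Phi$ with $\Phi\circ\iota=\iota'$, whether from an equivalence or from normal-bundle uniqueness, transports all the choices and preserves $b$ exactly.

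A minor point about your group-action check. In the nonorientable case, items (2) and (3) of Proposition~\ref{prop:RotAlphaAxiomsSection2Nonori} do not by themselves give $\Rot(\alpha)\circ\Rot(\beta)\simeq\Rot(\alpha+\beta)$. Item (2) only detects $[\alpha]_2$, so this has to come from the construction of $\Rot$ via sections.
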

\begin{proof}
We verify the independence of $b(\iota):=b^{\fr}(\iota)$ on the choice of the normal bundle~$(\nu(\iota),f)$ by adapting the verifications from~\cite[page 40]{ConwayPiccirilloPowell} and~\cite[Proposition 4.8]{ConwayMiller}.
The fact that the maps are well defined then follows from Proposition~\ref{prop:ChangeIdentification}.
Assume that~$f \colon \nu(\iota) \hookrightarrow X$ and~$f' \colon \nu(\iota) \hookrightarrow X$ are two embeddings of the normal bundle~$\nu(\iota)$.
We obtain two normal bundles of~$\iota$ inside~$X$ namely~$f \colon  \nu(\iota) \hookrightarrow X$ and~$f'  \colon \nu(\iota) \hookrightarrow X.$
The uniqueness of normal bundles from~\cite[Theorem 9.3]{FreedmanQuinn} ensures the existence of a rel.\ boundary homeomorphism 
$G \colon X \to X$ with~$
G \circ f  = f'$.
This ensures  that~$G$ preserves tubular neighborhoods, i.e.~$G(f(\overline{\nu}(\iota))=f'(\overline{\nu}(\iota)).$
%%Don't delete
%Indeed~\eqref{eq:IsotopyPunctured} implies
%$$
%G(f(\overline{\nu}(e|_{\Sigma}))
%=G \circ f \circ \fr^{-1}(\Sigma \mathbin{\wt{\times}} \R^2)
%=f' \circ (\fr')^{-1}(\Sigma \mathbin{\wt{\times}} \R^2)
%=f'(\overline{\nu}(e|_{\Sigma^\circ})).
%~$$
It follows that~$G \colon X \to X$ restricts to a homeomorphism~$X \setminus f(\nu(\iota)) \to X \setminus f'(\nu(\iota))$.
After choosing an~$e$-nice identification~$\fr \colon  \nu(\iota) \to  \Sigma \mathbin{\wt{\times}} \R^2$, the resulting situation can be summarized in the following diagram:
$$
\xymatrix@C1.5cm{
%& \partial (X \setminus f(\nu(\iota))) \ar[r]^-{\subset}\ar[d]^-{=} \ar[l]_-{\id_{E_K} \cup (\fr| \circ f^{-1})} 
%& X \setminus f(\nu(\iota)) \ar[r]^-{\subset}\ar[d]^-{=}  
%& X \ar[d]^-{=} \\
%%%%%
Y_e(K) \ar[d]^= 
& \partial (X \setminus f(\nu(\iota))) \ar[r]^-{\subset}\ar[d]^-{G} \ar[l]_-{\id_{E_K} \cup (\fr| \circ f^{-1})} 
& X \setminus f(\nu(\iota)) \ar[r]^-{\subset}\ar[d]^-{G}  
& X \ar[d]^-{G} \\
%%%%%
Y_e(K)
& \partial (X \setminus f'(\nu(\iota))) \ar[r]^-{\subset}  \ar[l]_-{\id_{E_K} \cup (\fr| \circ {f'}^{-1})}
& X \setminus f'(\nu(\iota)) \ar[r]^-{\subset}
 & X. \\
}
$$
The central and right squares clearly commute and, for the left square,
since~$G|_{\partial X}=\id_{\partial X}$,
%%Don't delete.
%Recall that \partial X=S^3 from the conventions.
the equation~$G \circ f  = f'$  ensures that~$G \circ (\fr| \cup f^{-1})=(\fr| \cup {f'}^{-1}).$
Thus $G \colon X \setminus f(\nu(\iota)) \to X \setminus f'(\nu(\iota))$ is a homeomorphism rel.\ boundary,  which is readily seen to imply that the induced isomorphisms on the Alexander modules coincide in the orbit set.
%%Don't delete.
%See "AC map b well def (note from CPP).pdf" in dropbox.
\end{proof}

In order for these maps to yield injections, we need to take the relative $k$-invariant and the possible spin structures on the boundary into account.

\begin{construction}
\label{cons:kInvariantMap}
Let~$\iota \colon \Sigma \hookrightarrow X$ be an embedding that represents an element of~$\Emb_{d,e,\lambda}^{w}(g)(X,K)$.  
Choose a normal bundle~$(\nu(\iota),f \colon \nu(\iota) \hookrightarrow X)$ of~$\iota$,  an isometry~$F \colon \lambda \cong \lambda_{X \setminus f(\nu(\iota))}$,  and an~$e$-nice identification~$\fr \colon \nu(\iota) \to \Sigma \mathbin{\wt{\times}} \R^2$.
Set $X_\iota:=X\setminus f(\nu(\iota))$ and consider the cohomology class
$$
k^{\fr}(\iota):=F_*^{-1} \circ (\overline{\fr}^*)^{-1}(k_{X_\iota,\partial X_\iota})
=(\overline{\fr}^*)^{-1} \circ F_* (k_{X_\iota,\partial X_\iota})
\in H^3(\Z_d,Y_e(K);H).
$$
The group $\Aut_{\Z[\Z_d]}(Q) \times d \cdot H^1(\Sigma,\partial \Sigma;\Z^w)$ acts on $H^3(\Z_d,Y_e(K);H)$ by 
%%Don't delete
%{AC: It could potentially be $(\Rot(\alpha)^*)^{-1}$. I can't quite decide. DK: Since $(\Rot(\alpha)^*)^{-1}=\Rot(-\alpha)^*$ it doesn't matter for the quotient.}
$$(F,\alpha) \cdot k
:= F_*^{-1} \circ \Rot(\alpha)^*(k)
:=\Rot(\alpha)^*(k) \circ F_*^{-1} (k).
$$
If $d$ is odd and $X\setminus f(\nu(\iota))$ is spin, then 
%%Unique because $H_1(X\setminus f(\nu(\iota));\Z_2)=0$ because d odd
the unique spin structure on $X\setminus f(\nu(\iota))$ induces a unique spin structure $\mathfrak{s}_\iota$ on $\partial(X\setminus f(\nu(\iota)))$. 
Consider
	$$\mathfrak{s}^{\fr}(\iota):=(\overline{\fr}^*)^{-1}(\mathfrak{s}_\iota)\in\Spin(Y_e(K)).$$
The group $\Aut_{\Z[\Z_d]}(Q) \times d \cdot H^1(\Sigma,\partial \Sigma;\Z^w)$ acts on $\Spin(Y_e(K))$ by
$$(F,\alpha) \cdot\mathfrak{s}:=\Rot(\alpha)^*(\mathfrak{s}).$$
%If~$X$ is nonspin or $d>0$ is even, then 
If $d$ is even or $X\setminus f(\nu(\iota))$ is nonspin, then
the assignment $\iota \mapsto k^{\fr}(\iota)$ determines a map
$$
\Emb_{d,e,\lambda}^{w}(g)(X,K) \to
\frac{H^3(\Z_d,Y_e(K);H)}{\Aut_{\Z[\Z_d]}(Q) \times d \cdot H^1(\Sigma,\partial \Sigma;\Z^w)}.
$$
%If~$X$ is spin and $d$ is odd, then 
If $d$ is odd and $X\setminus f(\nu(\iota))$ is spin, then
the assignment $\iota \mapsto (k^{\fr}(\iota),\mathfrak{s}^{\fr}(\iota))$ determines a map
$$
\Emb_{d,e,\lambda}^{w}(g)(X,K) \to
\frac{H^3(\Z_d,Y_e(K);H) \times \Spin(Y_e(K))}{\Aut_{\Z[\Z_d]}(Q) \times d \cdot H^1(\Sigma,\partial \Sigma;\Z^w)}.
$$
The proofs of these assertions follow along the same lines as the proofs of Propositions~\ref{prop:ChangeIdentification} and~\ref{prop:bWellDef}.
\end{construction}

\begin{remark}
Note that the case ``$d$ is odd and $X\setminus f(\nu(\iota))$ is spin" occurs either when $\iota(\Sigma)$ is characteristic and $X$ is nonspin or when $\iota(\Sigma)$ is ordinary and $X$ is spin.
In the former case,  the spin structure~$\mathfrak{s}^{\fr}(\iota)$ does not extend over the normal bundle, whereas in the latter it does.
For this reason,  we decompose~$\Spin(Y_e(K))$ into the set of spin structures whose restriction to $Y_e(K)$ extend over the disc bundle and those that do not:
$$
\Spin(Y_e(K))=
\Spin^{\operatorname{ext}}(Y_e(K))
\sqcup
\Spin^{\operatorname{not-ext}}(Y_e(K)).
$$
We also decompose~$\Emb_{d,e,\lambda}^w(g)(X)$ according to whether or not the surface is characteristic in~$X$:
$$\Emb_{d,e,\lambda}^w(g)(X)=\Emb_{d,e,\lambda}^{w,\charac}(g)(X) \sqcup \Emb_{d,e,\lambda}^{w,\ord}(g)(X).$$
The assignment $\iota \mapsto (k^{\fr}(\iota),\mathfrak{s}^{\fr}(\iota))$ therefore determines maps
\begin{align*}
\Emb_{d,e,\lambda}^{w,\charac}(g)(X,K) \to
\frac{H^3(\Z_d,Y_e(K);H) \times \Spin^{\operatorname{not-ext}}(Y_e(K))}{\Aut_{\Z[\Z_d]}(Q) \times d \cdot H^1(\Sigma,\partial \Sigma;\Z^w)} &\quad \text{if $X$ is not spin,} \\
%%%
\Emb_{d,e,\lambda}^{w,\ord}(g)(X,K) \to
\frac{H^3(\Z_d,Y_e(K);H) \times \Spin^{\operatorname{ext}}(Y_e(K))}{\Aut_{\Z[\Z_d]}(Q) \times d \cdot H^1(\Sigma,\partial \Sigma;\Z^w)} &\quad \text{if $X$ is spin}.
\end{align*}
\end{remark}

The first main result of this section is the following.

\begin{theorem}
\label{thm:Injectionv1}
Let~$X$ be a simply-connected~$4$-manifold with~$\partial X=S^3$,  let~$d>0$ be an integer,  let~$\lambda \colon H \times H \to \Z[\Z_d]$ be a hermitian form over $\Z[\Z_d]$, and let $e \in \Z$ be another integer.
Let~$K \subset~S^3$ be a knot such that when~$d>0$ is not a prime~$\Sigma_d(K)$ is a rational homology sphere.
\begin{itemize}
\item In the characteristic case,  the automorphism invariant, the relative $k$-invariant and the spin structure on the boundary give rise to an injective map
$$
\Xi_{\Emb} \colon \Emb_{d,e,\lambda}^{w,\charac}(g)(X) 
\hookrightarrow
\begin{cases}
\frac{\Iso_{\Z[\Z_d]}^{\ell k}(\coker(\Ad Q),H_1(\widehat{Y}_e(K)))
\times H^3(\Z_d,Y_e(K);H) \times\Spin^{\operatorname{not-ext}}(Y_e(K))}
{\Aut_{\Z[\Z_d]}(Q) \times d \cdot H^1(\Sigma,\partial \Sigma;\Z^w)} 
\quad& \text{if $X$ is not spin,} \\
 %%%
 \frac{\Iso_{\Z[\Z_d]}^{\ell k}(\coker(\Ad Q),H_1(\widehat{Y}_e(K)))
\times H^3(\Z_d,Y_e(K);H) }
{\Aut_{\Z[\Z_d]}(Q) \times d \cdot H^1(\Sigma,\partial \Sigma;\Z^w)}   \quad & \text{if $X$ is spin.}
 \end{cases}
$$
\item In the ordinary case,  the automorphism invariant, the relative $k$-invariant and the spin structure on the boundary give rise to an injective map
$$
\Xi_{\Emb} \colon \Emb_{d,e,\lambda}^{w,\ord}(g)(X) 
\hookrightarrow
\begin{cases}
\frac{\Iso_{\Z[\Z_d]}^{\ell k}(\coker(\Ad Q),H_1(\widehat{Y}_e(K)))
\times H^3(\Z_d,Y_e(K);H) }
{\Aut_{\Z[\Z_d]}(Q) \times d \cdot H^1(\Sigma,\partial \Sigma;\Z^w)} 
\quad& \text{if $X$ is not spin,} \\
 %%%
 \frac{\Iso_{\Z[\Z_d]}^{\ell k}(\coker(\Ad Q),H_1(\widehat{Y}_e(K)))
\times H^3(\Z_d,Y_e(K);H)  \times\Spin^{\operatorname{ext}}(Y_e(K))}
{\Aut_{\Z[\Z_d]}(Q) \times d \cdot H^1(\Sigma,\partial \Sigma;\Z^w)} 
 \quad & \text{if $X$ is spin.}
 \end{cases}
$$
\end{itemize}
When the $k$-invariant condition is automatic, the $H^3(\Z_d,Y_e(K);H)$ factors can be omitted.
The action of $d \cdot H^1(\Sigma,\partial \Sigma;\Z^w)$ on spin structures is trivial in the characteristic case and nontrivial in the ordinary case.
\end{theorem}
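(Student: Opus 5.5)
We first note that $\Xi_{\Emb}$ is well defined: combine Proposition~\ref{prop:bWellDef} (automorphism invariant) with Construction~\ref{cons:kInvariantMap} ($k$-invariant and boundary spin structure). The three components transform under the same group $\Aut_{\Z[\Z_d]}(Q)\times d\cdot H^1(\Sigma,\partial\Sigma;\Z^w)$, so they define a single orbit. The remark following Construction~\ref{cons:kInvariantMap} shows that the spin structure lies in $\Spin^{\operatorname{not-ext}}$ or $\Spin^{\operatorname{ext}}$ according to the case.

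For injectivity, let $\iota_0,\iota_1$ have the same image. Choose normal bundles, isometries $F_i\colon\lambda\cong\lambda_{X_{\iota_i}}$ and $e$-nice identifications $\fr_i$. Since $\Aut(Q)$ acts by changing the $F_i$ and, by Proposition~\ref{prop:ChangeIdentification}, the $d\cdot H^1$ factor acts by changing the $\fr_i$, we may adjust these choices so that the representatives agree on the nose:
\[
b^{\fr_0}(\iota_0)=b^{\fr_1}(\iota_1),\qquad k^{\fr_0}(\iota_0)=k^{\fr_1}(\iota_1),
\]
and, where present, $\mathfrak{s}^{\fr_0}(\iota_0)=\mathfrak{s}^{\fr_1}(\iota_1)$. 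Because the invariants are computed from a single orbit element, one group element achieves all equalities simultaneously. Now set
\[
H:=\fr_1^{-1}\circ\fr_0\colon \overline{\nu}(\iota_0)\to\overline{\nu}(\iota_1),\qquad h:=\id_{E_K}\cup H|,\qquad F:=F_1\circ F_0^{-1}.
\]
Then $H$ restricts to $\iota_1\circ\iota_0^{-1}$ on the surfaces, and it intertwines the coefficient systems because both $\fr_i$ are $e$-nice (Remark~\ref{rem:Intertwine}). Unwinding the definition $b=\overline{\fr}_*\circ D\circ\partial F$, the equality of the $b$'s is exactly $\partial F=h_*$ on $\coker$, that is on $H_1(\partial\widetilde X;\,)$. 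By Proposition~\ref{prop:CompatibleSimplification}, $(F,h)$ is therefore a compatible pair. Equality of the $k$'s gives $F_*(k_0)=h^*(k_1)$, so the pair is $k$-invariant preserving. When the $k$-invariant is automatic, Proposition~\ref{prop:NokInvariant} gives this without any assumption, which is why the $H^3$ factor can then be dropped.

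It remains to verify the spin hypothesis of Theorem~\ref{thm:CKForSurfaceExteriors}, after which that theorem yields a rel.\ boundary homeomorphism $(X,\iota_0(\Sigma))\to(X,\iota_1(\Sigma))$ extending $h$, hence extending $\iota_1\circ\iota_0^{-1}$. This gives $\iota_0=\iota_1$ in $\Emb$. I expect this spin step to be the main obstacle, and I will split it into cases.

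\begin{itemize}
\item When $d$ is odd and the exteriors are spin, the spin structures on $X_{\iota_i}$ are unique. Equality of the $\mathfrak{s}^{\fr_i}$ means that $h$ pulls back one boundary restriction to the other, so $X_{\iota_0}\cup_h X_{\iota_1}$ is spin.
\item When $d$ is even and the universal covers are spin, Proposition~\ref{prop:SpindEvenAutomatic} (and the $d=0$ discussion, which is not needed here since $d>0$) makes the union automatically spin on universal covers.
\item In the characteristic case with $X$ spin, $d$ must be even by the claim in the proof of Proposition~\ref{prop:Audrick}, so the previous case applies; this is why no spin factor appears there.
\item When the universal covers are nonspin, there is nothing to check.
\end{itemize}

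The delicate point is that the adjustment by $\alpha\in d\cdot H^1$ used to match $b$ and $k$ also moves the spin structure. This is harmless because we match the whole orbit triple at once rather than each component separately. Finally, the statement that the $d\cdot H^1$-action on spin structures is trivial (characteristic) or nontrivial (ordinary) will follow from the appendix computations of $\Rot(\alpha)^*$ on spin structures (Propositions~\ref{prop:RotalphaSpinOrientable} and~\ref{prop:RotalphaSpinNonOrientable}). Those show that $\Rot(\alpha)^*$ changes a spin structure by the class $p^*[\alpha]_2$. In the characteristic spin-relevant case $d$ is even, so $[\alpha]_2=0$ and the action is trivial; in the ordinary case $d$ is odd, so $[\alpha]_2$ ranges over all classes and the action is nontrivial.
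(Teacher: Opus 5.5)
Your injectivity argument is correct and is the paper's proof. You match orbit representatives, set $H=\fr_1^{-1}\circ\fr_0'$ with $\fr_0'=\Rot(\alpha)\circ\fr_0$ and $F=F_1\circ F_0^{-1}$, and read equality of the $b$'s as $\partial F=h_*$. You then apply Proposition~\ref{prop:CompatibleSimplification}, turn equality of the $k$'s into $F_*(k_0)=h^*(k_1)$, run the spin case analysis, and conclude via Theorem~\ref{thm:CKForSurfaceExteriors}. Using a single element of $\Aut_{\Z[\Z_d]}(Q)\times d\cdot H^1(\Sigma,\partial\Sigma;\Z^w)$ to match the whole triple is the right way to handle the spin component.

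The gap is in your justification of the last sentence of the theorem in the characteristic case. The spin factor only appears there when $X$ is \emph{not} spin. By the claim from the proof of Proposition~\ref{prop:Audrick} that you cite earlier, characteristic together with $X$ nonspin forces $H_1(X_S;\Z_2)=0$, so $d$ is odd, not even. Hence $[d\beta]_2=[\beta]_2$ need not vanish. Worse, if the formula $\Rot(\alpha)^*(\mathfrak{s}|)=p|^*([\alpha]_2)\cdot\mathfrak{s}|$ applied, the action would in general be nontrivial, which is the opposite of what is claimed.

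The missing idea is that this formula only holds for spin structures restricted from the disc bundle. In the characteristic case with $X$ nonspin, the boundary spin structure $\mathfrak{s}^{\fr}(\iota)$ does not extend over $\Sigma\mathbin{\wt{\times}}D^2$: if it did, it would glue with the unique spin structure on $X_S$ to give a spin structure on $X$. So it restricts to the non-bounding spin structure on the $S^1$-fibre. In the proof of Proposition~\ref{prop:RotalphaSpinNonOrientable} one has $\Rot(\alpha)^*(x)=x+\epsilon_x\,p^*([\alpha]_2)$, where $\epsilon_x$ is the restriction of $x$ to the fibre. Here $\epsilon_x=0$, so the action is trivial; this is the last clause of that proposition, which is what the paper invokes.

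Your argument in the ordinary case with $X$ spin is fine. There $d$ is odd, and the boundary spin structure is restricted from $X$, so it does extend over the disc bundle and the formula applies.
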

\begin{proof}
The arguments from Proposition~\ref{prop:bWellDef} and Construction~\ref{cons:kInvariantMap} ensure that the maps are well defined, so we focus on showing that they are injective. 
%{Technically this is not true because we do not mod out 2 copies acting on the factors separately. But if follows from the proofs of those statements. }
Assume that $\iota_0,\iota_1$ are embeddings with~$(b^{\fr_0}(\iota_0),k^{\fr_0}(\iota_0))=(b^{\fr_1}(\iota_1),k^{\fr_1}(\iota_1))$ and, when relevant, $\mathfrak{s}^{\fr_0}(\iota_0)=\mathfrak{s}^{\fr_1}(\iota_1)$.
We aim to show that $\iota_0$ and $\iota_1$ are equivalent rel.\ boundary.

If~$(b^{\fr_0}(\iota_0),k^{\fr_0}(\iota_0))=(b^{\fr_1}(\iota_1),k^{\fr_1}(\iota_1))$ in the orbit set,  one verifies that that there exists an isometry~$F$ such that~$(F,h):=\id_{E_K} \cup (\fr_1^{-1} \circ \fr_0'))$ is a~$k$-invariant preserving compatible pair where~$\fr_0'=\Rot(\alpha)\circ \fr_0$; here we are also using Proposition~\ref{prop:EquivCompatiblePair} to phrase compatible pairs in terms of universal covers. 
When the~$k$-invariant condition is automatic, the~$H^3$ factor is not needed.

We explain why~$b^{\fr_0}(\iota_0)=b^{\fr_1}(\iota_1)$ ensures that there is an isometry~$G$ such that~$(G,\id \cup \fr_1^{-1} \circ \fr_0')$ is a compatible pair; the additional verification involving the relative~$k$-invariant is left to the reader.
The definition of the automorphism invariant and of the orbit sets implies that there exists isometries~$G_0,G_1,G'$ and a class~$\alpha \in d \cdot H^1(\Sigma,\partial \Sigma;\Z^w)$ that make the following diagram commute:
$$
\xymatrix{
H_2(X_{S_0})\ar[r]&
H_2(X_{S_0},\partial X_{S_0}) \ar[rr]&&
H_1(\partial X_{S_0}) \ar[ddd]_{\id \cup \fr_0}^\cong \\
%%%
H_2(X_{S_0})\ar[r]^-{\Ad Q_{X_{S_0}}}\ar[u]_=&
H_2(X_{S_0})^* \ar[r]\ar[u]^-{\PD \circ \ev^{-1}}_\cong&
\coker(\Ad Q_{X_{S_0}}) \ar[ru]^-{D_{X_{S_0}}}_\cong & \\
%%%%
&&\coker(\Ad Q)\ar@{-->}[ddd]_-{\partial G'}^\cong \ar[rd]_-{b^{\fr_0}(\iota_0)}^\cong \ar[u]^-{\partial G_0}_\cong& \\
%%%%
H\ar[r]^-{\Ad Q}\ar[uu]^{G_0}_\cong\ar@{-->}[d]_{G'}^\cong&
H^* \ar[rr]\ar[uu]^{(G_0^*)^{-1}}_\cong\ar@{-->}[d]_-{((G')^*)^{-1}}^\cong \ar[ru]^-{\proj}&&
H_1(\widehat{Y}_e(K)) \ar@{-->}[d]_-{\Rot(\alpha)}^\cong \\
%%%%
%%%%
H\ar[r]^-{\Ad Q} \ar[dd]_-{G_1}^\cong&
H^*\ar[rr] \ar[dd]_-{(G_1^*)^{-1}}^\cong\ar[rd]^-{\proj}&&
H_1(\widehat{Y}_e(K)) \\
&&\coker(\Ad Q) \ar[ru]^-{b^{\fr_1}(\iota_1)}_\cong  \ar[d]_-{\partial G_1}^\cong& \\
%%%%
H_2(X_{S_1})\ar[r]^-{\Ad Q_{X_{S_1}}}\ar[d]^=&
H_2(X_{S_1})^* \ar[r]\ar[d]_{\PD \circ \ev^{-1}}^\cong&
\coker(\Ad Q_{X_{S_1}}) \ar[rd]_{D_{X_{S_1}}}^\cong& \\
%%%%
H_2(X_{S_1})\ar[r]&
H_2(X_{S_1},\partial X_{S_1}) \ar[rr]&&
H_1(\partial X_{S_1}) \ar[uuu]^{\id \cup \fr_1}_\cong.
}
$$
If $d$ is even or if~$X\setminus f(\nu(\iota))$ is nonspin (e.g.  if $X$ is spin and the~$S_i$ are characteristic or if $X$ is nonspin and the~$S_i$ are ordinary), then (the proof of) Theorem~\ref{thm:CompatiblePairIntro} ensures that the embeddings are equivalent.
%%Don't delete.
%:{Using 5.8 for the case were $d$ even and there is a spin structure.} 
If $d$ is odd and $X\setminus f(\nu(\iota))$ is spin (e.g. if $X$ is spin and $S$ is ordinary or if $X$ is nonspin and $S$ is characteristic),
%%Don't delete.
%%For S charac, we proved in the spin section that d is odd iff X is nonspin, so if X nonspin and S charac, we have d odd.
%%For For S ordinary, if X is spin, then d is odd.
%%Proof
%S ordinary means there exists a y such that S.y \neq y.y
%Assume X is spin, so ordinay means that there exists a y \in H_2(X) such that S.y=1 mod 2.
%%As explained in the spin section, this means that d is odd.
 then $\mathfrak{s}^{\fr'_0}(\iota_0)=\mathfrak{s}^{\fr_1}(\iota_1)$ ensures that $X_{S_0}\cup_h X_{S_1}$ is spin and again (the proof of) Theorem~\ref{thm:CompatiblePairIntro} ensures that the embeddings are equivalent rel.\ boundary.
Here,  the fact that~$d$ is odd was used to rewrite $\mathfrak{s}^{\fr_0}(\iota_0)=\mathfrak{s}^{\fr_1}(\iota_1)$ as $\mathfrak{s}^{\fr_0'}(\iota_0)=\mathfrak{s}^{\fr_1}(\iota_1)$.
%%Don't delete.
%Rot(d\alpha) doesn't change spin structures for d odd
The statement about the action on spin structures is proved in Proposition~\ref{prop:RotalphaSpinNonOrientable}.
\end{proof}

Next we use Theorem~\ref{thm:Injectionv1} to obtain an injection on
$$\Emb_{d,e,\lambda}^{w}(g)(X,K)/\Homeo_\partial(\Sigma)\cong \Surf_{d,e,\lambda}^{w}(g)(X,K).$$
\begin{construction}
We define an action of~$\Homeo_\partial(\Sigma)$ on~$H_1(\widehat{Y}_e(K))$.
We first describe how a homeomorphism~$\theta \in \Homeo_\partial(\Sigma)$ induces a bundle
isomorphism
$$\theta_{bundle}\colon \Sigma\mathbin{\wt{\times}} \R^2\to  \Sigma\mathbin{\wt{\times}} \R^2.$$
In the orientable case, ~$\Sigma\mathbin{\wt{\times}} \R^2=\Sigma \times \R^2$, the homeomorphism~$\theta$ is orientation-preserving (because~$\theta|_{\partial}=\id$),  and we set~$\theta_{bundle}:=\theta\times \id_{\R^2}$.
%%Don't delete.
%In general not clear that f(\Sigma) and g(\Sigma) being o-p equivalent implies that f,g are o-p homeo?!
In general, we use the model for~$\Sigma\mathbin{\wt{\times}} \R^2$ given by~$\wh \Sigma \times_{\Z_2} \R^2$, where~$p \colon \wh \Sigma \to \Sigma$ denotes the orientable double cover and~$\Z_2$ acts on~$\R^2$ by~$(x,y) \mapsto (x,-y)$; see Appendix~\ref{sub:AutomorphismNonOrientable} for more details.
For each~$x\in \wh\Sigma$ choose~$y_x\in p^{-1}(\theta(p(x)))$ such that the map sending~$[x,z]\in \wh\Sigma\times_{\Z_2}\R^2$ to~$[y_x,z]$ preserves the orientation of~$\Sigma\mathbin{\wt{\times}} \R^2$. 
%%Don't delete
%%If we pick the other preimage y_x' instead then Ty_x=y_{x'} so the map [y_x,(z_1,z_2)] -> [y_x',(z_1,z_2)]=[y_x,(z_1,-z_2)] is an orientation reversing homeo of R^2, so only one choice can make theta_bundle be o-p (since we know what happens in the surface direction). 
Define
$$\theta_{bundle}([x,z]):=[y_x,z].$$
If~$\Sigma$ is orientable and~$\theta$ is orientation preserving, then~$\theta_{bundle}=\theta\times \id_{\R^2}$.
%%Don't delete Section 2 says pick a preferred section.
Note also for that for the preferred section~$r\colon\Sigma\to \wh \Sigma\times_{\Z_2}\R^2$, $x\mapsto [x',(1,0)]$ with~$x'\in p^{-1}(x)$,  we have~$\theta_{bundle}\circ s=s\circ \theta$.
%%Don't delete
%s(\theta(x))=[x',(1,0)] where $p(x')=\theta(x).
%\theta_{bundle}(s(x))=[y_{s(x)},(1,0)] where $p(y_{s(x)})=\theta(p(s(x))=\theta(x) so can take $y_{s(x)}=x'.
%%%%%%
%%%Don't delete (until later)
%For each~$x\in \wh\Sigma$ choose $y_x\in p^{-1}(\theta(p(x)))$, where $p\colon \wh\Sigma\to \Sigma$ is the orientation double cover (which is the trivial double if $\Sigma$ is orientable), such that the map sending $[x,z]\in \wt\Sigma\times_{\Z_2}\R^2\cong \Sigma\mathbin{\wt{\times}} \R^2$ to $[y_x,z]$ preserves the orientation of $\Sigma\mathbin{\wt{\times}} \R^2$. Then $\theta_{bundle}([x,z])=[y_x,z]$. Note that if $\Sigma$ is orientable and $\theta$ is orientation preserving, then $\theta_{bundle}=\theta\times \id_{\R^2}$.
% %isomorphism~$\theta_{bundle}:=f \times \id_{S^1} \colon \Sigma \mathbin{\wt{\times}} \R^2 \to  \Sigma \mathbin{\wt{\times}} \R^2$; here we are using that~$\Sigma$ is orientable to identify~$\Sigma \mathbin{\wt{\times}} S^1$ with~$\Sigma \times S^1$. 
%{DK: Done. Note that it does not agree with your previous definition if $\theta$ is orientation reversing. I added a flip on $\R^2$ in this case to make it orientation preserving on the bundle. But if $\partial\Sigma\neq\emptyset$ and we have the identity on the boundary, then every homeo is o.p.}
%Note that for the section $r\colon\Sigma\to \wh \Sigma\times_{\Z_2}\R^2$, $x\mapsto [x',(1,0)]$ with $x'\in p^{-1}(x)$ we have $\theta_{bundle}\circ s=s\circ \theta$.
Since the homeomorphism $\theta_{bundle}$ is the identity on~$\partial \Sigma \times S^1$,  it extends over~$E_K$ by the identity therefore leading to an orientation-preserving homeomorphism
%%Don't delete. o-p: this is by construction of \theta_bundle, i.e.\ how we pick y_x.
$$\theta_{Y_e(K)}\colon Y_e(K) \to Y_e(K).$$
This homeomorphism preserves the coefficient systems
%Only muddles genus curve
and therefore induces a homeomorphism on the~$\Z_d$-covers. 
This leads to an action of~$\Homeo_\partial(\Sigma)$ on~$\Iso_{\Z[\Z_d]}^{\ell k}(\coker(\Ad Q),H_1(\widehat{Y}_e(K)))$ by postcomposition.
For the same reasons,  the group~$\Homeo_\partial(\Sigma)$ also acts on $H^3(\Z_d,Y_e(K);H)$ by~$\theta \cdot k=(\theta_{Y_e(K)}^*)^{-1}(k)$ and on $\Spin(Y_e(K))$ by~$\theta\cdot \mathfrak{s}=(\theta_{Y_e(K)}^*)^{-1}(\mathfrak{s})$.
\end{construction}

Decompose~$\Surf_{d,e,\lambda}^w(g)(X)$ according to whether or not the surface is characteristic in~$X$:
$$\Surf_{d,e,\lambda}^w(g)(X)=\Surf_{d,e,\lambda}^{w,\charac}(g)(X) \sqcup \Surf_{d,e,\lambda}^{w,\ord}(g)(X).$$
As we will see in Section~\ref{sub:AutomorphismClosed},
the next theorem implies Theorem~\ref{thm:InjectionSurfIntro} from the introduction.
Here,  the semidirect product~$H^1(\Sigma,\partial \Sigma;\Z^w) \rtimes \Homeo_\partial(\Sigma)$ involves the natural action by $\theta \cdot \alpha=\theta^*(\alpha)$.

\begin{theorem}
\label{thm:InjectionSurf}
Let~$X$ be a simply-connected~$4$-manifold with~$\partial X=S^3$,  let~$d>0$ be an integer,  let~$\lambda \colon H \times H \to \Z[\Z_d]$ be a hermitian form over $\Z[\Z_d]$, and let $e \in \Z$ be another integer.
Let~$K \subset~S^3$ be a knot such that when~$d>0$ is not a prime~$\Sigma_d(K)$ is a rational homology sphere.

In the characteristic case,   the map~$\Xi_{\Emb}$ is $\Homeo_\partial(\Sigma)$-equivariant and thus induces an injection
$$
\Xi_{\Surf} \colon \Surf_{d,e,\lambda}^{w,\charac}(g)(X) 
\hookrightarrow
\begin{cases}
\frac{\Iso_{\Z[\Z_d]}^{\ell k}(\coker(\Ad Q),H_1(\widehat{Y}_e(K)))
\times H^3(\Z_d,Y_e(K);H) \times\Spin^{\operatorname{not-ext}}(Y_e(K))
 }{\Aut_{\Z[\Z_d]}(Q) \times (d \cdot H^1(\Sigma,\partial \Sigma;\Z^w) \rtimes \Homeo_\partial(\Sigma))} \quad& \text{if $X$ is not spin,} \\
 %%%
 \frac{\Iso_{\Z[\Z_d]}^{\ell k}(\coker(\Ad Q),H_1(\widehat{Y}_e(K)))
\times H^3(\Z_d,Y_e(K);H) }{\Aut_{\Z[\Z_d]}(Q) \times (d \cdot H^1(\Sigma,\partial \Sigma;\Z^w) \rtimes \Homeo_\partial(\Sigma))}  \quad & \text{if $X$ is spin.}
 \end{cases}
$$
In the ordinary case,   the map~$\Xi_{\Emb}$ is $\Homeo_\partial(\Sigma)$-equivariant and thus induces an injection
$$
\Xi_{\Surf} \colon \Surf_{d,e,\lambda}^{w,\ord}(g)(X) 
\hookrightarrow
\begin{cases}
\frac{\Iso_{\Z[\Z_d]}^{\ell k}(\coker(\Ad Q),H_1(\widehat{Y}_e(K)))
\times H^3(\Z_d,Y_e(K);H) }
{\Aut_{\Z[\Z_d]}(Q) \times (d \cdot H^1(\Sigma,\partial \Sigma;\Z^w) \rtimes \Homeo_\partial(\Sigma))} \quad& \text{if $X$ is not spin,} \\
 %%%
 \frac{\Iso_{\Z[\Z_d]}^{\ell k}(\coker(\Ad Q),H_1(\widehat{Y}_e(K)))
\times H^3(\Z_d,Y_e(K);H)  \times\Spin^{\operatorname{ext}}(Y_e(K))}{\Aut_{\Z[\Z_d]}(Q) \times (d \cdot H^1(\Sigma,\partial \Sigma;\Z^w) \rtimes \Homeo_\partial(\Sigma))}  \quad & \text{if $X$ is spin.}
 \end{cases}
$$
When the $k$-invariant condition is automatic, the $H^3(\Z_d,Y_e(K);H)$ factors can be omitted.
The action of $d \cdot H^1(\Sigma,\partial \Sigma;\Z^w)$ on spin structures is trivial in the characteristic case and nontrivial in the ordinary case.
\end{theorem}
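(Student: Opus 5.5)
Theorem~\ref{thm:InjectionSurf} is a formal consequence of Theorem~\ref{thm:Injectionv1} together with the bijection $\Emb_{d,e,\lambda}^{w}(g)(X,K)/\Homeo_\partial(\Sigma)\cong \Surf_{d,e,\lambda}^{w}(g)(X,K)$. The plan has two steps. First, show that $\Xi_{\Emb}$ is equivariant for the action $\theta\cdot\iota=\iota\circ\theta^{-1}$ on embeddings and the action on the target via $\theta_{Y_e(K)}$. Then deduce injectivity on orbit sets by a short diagram chase. The group acting on the target is $d\cdot H^1(\Sigma,\partial\Sigma;\Z^w)\rtimes\Homeo_\partial(\Sigma)$, so I would also check that conjugation by $\theta_{bundle}$ carries $\Rot(\alpha)$ to $\Rot(\theta^*\alpha)$. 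By the third items of Propositions~\ref{prop:RotAlphaAxiomsSection2} and~\ref{prop:RotAlphaAxiomsSection2Nonori}, it suffices to compare the induced maps on $H_1$ of the circle bundle. The formula in the second items transforms by $\alpha\mapsto\alpha\circ\theta_*$, and $\theta_{bundle}$ commutes with the preferred section $r$, so this comparison goes through. Hence the semidirect product acts on the target.

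For equivariance, let $\iota$ be an embedding and use the data $(f,F,\fr)$ from Construction~\ref{cons:biota}. The embedding $\iota\circ\theta^{-1}$ has the same image, so it has the same exterior $X_\iota$, the same normal bundle embedding, and the same isometry $F$. For it I take the identification $\fr':=\theta_{bundle}\circ\fr$ (reparametrised along $\theta^{-1}$ on the base).

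\begin{itemize}
\item \emph{$\fr'$ is $e$-nice.} It extends the same boundary framing because $\theta|_{\partial\Sigma}=\id$. Its section is $\fr'^{-1}\circ r=\fr^{-1}\circ r\circ\theta^{-1}$, since $\theta_{bundle}\circ r=r\circ\theta$, so the push-offs are the same curves as before and are nullhomologous in $X_S$.
\item \emph{Effect on the boundary map.} We get $\overline{\fr'}=\theta_{Y_e(K)}\circ\overline{\fr}$. Substituting into \eqref{eq:b} gives $b^{\fr'}(\iota\circ\theta^{-1})=(\theta_{Y_e(K)})_*\circ b^{\fr}(\iota)$, using that $\theta_{Y_e(K)}$ preserves the coefficient system and so lifts to $\widehat Y_e(K)$.
\end{itemize}

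The same substitution in Construction~\ref{cons:kInvariantMap} gives $k^{\fr'}=(\theta_{Y_e(K)}^*)^{-1}k^{\fr}$ and $\mathfrak{s}^{\fr'}=(\theta_{Y_e(K)}^*)^{-1}\mathfrak{s}^{\fr}$. These are exactly the actions defined in the construction preceding the theorem. Consequently $\Xi_{\Emb}$ descends to a map $\Xi_{\Surf}$ on $\Homeo_\partial(\Sigma)$-orbits.

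For injectivity, suppose $\Xi_{\Surf}(S_0)=\Xi_{\Surf}(S_1)$. Choose parametrisations $\iota_0,\iota_1$ of the two surfaces. Then $\Xi_{\Emb}(\iota_1)=\theta\cdot\Xi_{\Emb}(\iota_0)$ for some $\theta$, because the $\Aut(Q)\times d\cdot H^1$ part of the semidirect product was already quotiented out in the target of Theorem~\ref{thm:Injectionv1}, and the normality check above lets us move all the $\Homeo_\partial(\Sigma)$-contributions to a single $\theta$. By equivariance $\Xi_{\Emb}(\iota_1)=\Xi_{\Emb}(\iota_0\circ\theta^{-1})$. Theorem~\ref{thm:Injectionv1} then shows that $\iota_1$ and $\iota_0\circ\theta^{-1}$ are equivalent rel.\ boundary, so $S_0$ and $S_1$ are equivalent.

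The closing assertions are inherited directly:
\begin{itemize}
\item omitting the $H^3$ factor is justified by Proposition~\ref{prop:NokInvariant};
\item the behaviour of $d\cdot H^1$ on spin structures is as in Theorem~\ref{thm:Injectionv1};
\item $\Homeo_\partial(\Sigma)$ preserves $\Spin^{\operatorname{ext}}$ and $\Spin^{\operatorname{not-ext}}$, because $\theta_{Y_e(K)}$ extends over the disc bundle via $\theta_{bundle}$.
\end{itemize}

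The main obstacle is the nonorientable case. There $\theta_{bundle}$ is defined through the choice of lifts $y_x$, and I must verify three things: that $\theta_{bundle}$ is continuous and orientation preserving, that it commutes with the section, and that it conjugates $\Rot(\alpha)$ to $\Rot(\theta^*\alpha)$ in $H^1(\Sigma,\partial\Sigma;\Z^w)$ with the correct twisted coefficients. I would verify these in the model $\wh\Sigma\times_{\Z_2}\R^2$ by lifting $\theta$ to the orientation-preserving lift $\wh\theta$ on $\wh\Sigma$, which exists because $\theta$ fixes the boundary. Then $\theta_{bundle}=\wh\theta\times_{\Z_2}\id$, which makes all three verifications straightforward.
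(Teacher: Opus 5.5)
Your proposal is correct and follows the paper's proof essentially step for step. You use the same $e$-nice identification $\fr'=\theta_{bundle}\circ\fr$ with $\overline{\fr'}=\theta_{Y_e(K)}\circ\overline{\fr}$ to get equivariance, read off injectivity from Theorem~\ref{thm:Injectionv1}, and obtain the semidirect product from the relation $\Rot(\alpha)\circ\theta_{Y_e(K)}=\theta_{Y_e(K)}\circ\Rot(\theta^*\alpha)$.

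One correction concerns the nonorientable case. There, the third item of Proposition~\ref{prop:RotAlphaAxiomsSection2Nonori} together with an $H_1$ comparison cannot identify the conjugate. By the second item, $\Rot(\alpha)$ and $\Rot(\alpha+2\beta)$ induce the same map on $H_1(\Sigma\mathbin{\wt{\times}} S^1)$, even though $H^1(\Sigma,\partial\Sigma;\Z^w)$ is free abelian and these classes differ. So your model-level verification via $\wh\theta$, combined with naturality under $\theta$ of the correspondence in Proposition~\ref{prop:cohomology-version}, is genuinely required there rather than merely convenient.
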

\begin{proof}
We first prove
that~$\Xi_{\Emb}$ and~$\Xi_{\Emb}'$ preserve the~$\Homeo_\partial(\Sigma)$ actions. 
The injectivity statement of the theorem then follows from Theorem~\ref{thm:Injectionv1}.
Fix a homeomorphism~$\theta \in \Homeo_\partial(\Sigma)$.
Write~$f_\iota \colon \nu(\iota) \hookrightarrow X$ for a choice of normal bundle and similarly for~$\theta \cdot \iota$.
A verification using the definition of normal bundles shows that~$X_\iota:=X \setminus f_\iota(\nu(\iota))$ equals~$X_{\theta \cdot \iota}:=X \setminus f_{\iota \circ \theta^{-1}}(\nu(\iota))$.
%see Conway-Miller 4.18.
Next, given an~$e$-nice identification~$\fr_\iota \colon \nu(\iota) \to \Sigma \mathbin{\wt{\times}} \R^2$ for~$\iota(\Sigma)$,
%We therefore have to compare~$\id_{E_K} \cup \fr_e\circ f^{-1}$ with~$\id_{E_K} \cup \fr_{e \circ \theta^{-1}} \circ f^{-1}$ where~$\fr_{e \circ \theta^{-1}}~$ is some~$e$-nice identification of~$e\circ \theta^{-1}$.
the composition~$\theta_{bundle} \circ \fr_\iota$ is an~$e$-nice identification for~$(\theta \cdot \iota)(\Sigma)=\iota\circ \theta^{-1}(\Sigma)$
since $(\theta_{bundle}\circ \fr_\iota)^{-1}\circ s=\fr_\iota^{-1}\circ s\circ \theta^{-1}$.
%%Don't delete. The commuting of the \thetas and s was described before wehen we defined \theta_bundle.
Here recall that $s$ denotes our preferred section.
%; here we used that~$\theta_{bundle}=\theta \times \id$.
%%Don't delete DK: (1) follows since $\theta$ is rel boundary and (2) is obvious since it is about all genus curves so not affected by $\theta^{-1}$.
%{Here it is matters that~$\theta_{bundle}$ be of a nice form e.g.~$\theta \times \id$.}
Thus as manifolds with (parametrised) boundary we have that~$(X_{\theta \cdot \iota},\id_{E_K} \cup (\fr_{\iota \circ \theta^{-1}} \circ f^{-1}))$ equals~$(X_{\theta \cdot \iota},\theta_{Y_e(K)}  \circ (\id_{E_K} \cup (\fr_\iota \circ f^{-1}))).$
Taking the maps on Alexander modules,  we obtain~$\Xi_{\Emb}(\theta \cdot \iota)
=\Xi_{\Emb}(\iota \circ \theta^{-1})
=\theta \cdot \Xi_{\Emb}(\iota)$
and similarly for~$\Xi_{\Emb}'$.

To avoid writing the target as a double quotient, one needs to understand how the actions of~$\Aut_{\Z[\Z_d]}(Q) \times d \cdot H^1(\Sigma,\partial \Sigma;\Z^w)$ and~$\Homeo_\partial(\Sigma)$ interact. 
First note that the action of~$\Aut_{\Z[\Z_d]}(Q)$ commutes with the~$\Homeo_\partial(\Sigma)$-action. 
We have that~$\Rot(\alpha)\circ \theta_{Y_e(K)}=\theta_{Y_e(K)}\circ \Rot(\theta^*\alpha)$ for all~$\alpha\in H^1(\Sigma,\partial \Sigma;\Z^w)$ and~$\theta\in\Homeo_\partial(\Sigma)$ since both are fibrewise homeomorphisms over~$\theta$ that act on the fibre over~$\gamma$ by adding~$\alpha(\theta(\gamma))=\theta^*(\alpha)(\gamma)$ copies of the~$S^1$-fibre. 
%%Don't delete
%In other words, If you think of Rot(\alpha) as rotation by \alpha\colon Sigma\to S^1 in the orientable case, then if you first apply theta (on the base) then alpha (in the fibres) that is the same as first applying theta\circ alpha in the fibers and then theta on the base.
%%Alternatively think of \theta as \theta \Sigma \to \Sigma' so that it becomes clear that \alpha needs to change to give a cohomology class on the other surface; need to pullback.
It follows that we have a semidirect product~$H^1(\Sigma,\partial \Sigma;\Z^w) \rtimes  \Homeo_\partial(\Sigma)$ with the natural action of~$\Homeo_\partial(\Sigma)$ on~$H^1(\Sigma,\partial \Sigma;\Z^w)$.
\end{proof}

In the case of spheres and discs with nonzero Euler number,  the statement of Theorem~\ref{thm:InjectionSurf} simplifies drastically. 
In order to state this result, we write~$\Iso_{\Z[\Z_d]}(\partial Q,-\ell_{\widehat{Y}_e(K)})$ for the set of isometries $\partial Q \cong -\ell_{\widehat{Y}_e(K)}$ and, in addition,  when the surfaces are closed, the knot $K$ is assumed to be empty.
Here, we note that although, technically speaking, this section has only treated surfaces with nonempty boundary,  everything works analogously in the closed case, as we will describe in more detail in Section~\ref{sub:AutomorphismClosed}.
% the next section.

\begin{corollary}
\label{cor:Section8DiscsSpheres}
Continuing with the notation from Theorem~\ref{thm:InjectionSurf},  for spheres or discs with nonzero Euler number $e$,  the map~$\Xi_{\Surf}$ induces an injection
$$
\Xi_{\Surf} \colon \Surf_{d,e,\lambda}^{\charac}(0)(X,K) ,\Surf_{d,e,\lambda}^{\ord}(0)(X,K) 
\hookrightarrow
 \frac{\Iso_{\Z[\Z_d]}(\partial Q,-\ell_{\widehat{Y}_e(K)})}{\Aut_{\Z[\Z_d]}(Q)}.
$$
In the case of Moebius bands or projective planes, $\Xi_{\Surf}$ induces an injection
%%Don't delete
%DK: Also, the dH^1(Sigma) action is non-trivial I think: on the Z/4 (or Z/2+Z/2) it adds the meridian to the base curve (by definition of Rot(\alpha))
%That would show that in 1.16 we only have 1 band if det(K) is a prime power (because Aut(Q) acts by -1 on both Z/det(K) and Z/4 while 2H^1 only acts by -1 on the Z/4). 
$$
\Xi_{\Surf} \colon \Surf_{d,e,\lambda}^{-,\charac}(1)(X,K),\Surf_{d,e,\lambda}^{-,\ord}(1)(X,K) 
\hookrightarrow
 \frac{\Iso_{\Z[\Z_d]}(\partial Q,-\ell_{\widehat{Y}_e(K)})}
 {\Aut_{\Z[\Z_d]}(Q) \times d \cdot H^1(\Sigma,\partial \Sigma;\Z^w)}.
$$
%The action of $d \cdot H^1(\Sigma,\partial \Sigma;\Z^w)$ on spin structures is trivial in the characteristic case and nontrivial in the ordinary case.
%%{XXX:: Should this be deleted? There are no spin structures here.  XX:Done.}
%%H^1(X_S;\Z_2)=H_2(\partial X_S;\Z_2)=0.
In this latter statement,  recall that $d \in \{1,2\}$.
\end{corollary}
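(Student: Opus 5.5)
We will deduce the corollary directly from Theorem~\ref{thm:InjectionSurf} by showing that, for these particular surfaces, every factor of the target other than the automorphism invariant either disappears or collapses to a point.

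\textbf{Step 1: the $k$-invariant factor.} For spheres and discs with $e\neq 0$, and for Moebius bands and projective planes, Proposition~\ref{prop:NokInvariant} shows that the relative $k$-invariant condition is automatic. By the last clause of Theorem~\ref{thm:InjectionSurf}, the $H^3(\Z_d,Y_e(K);H)$ factor can therefore be dropped.

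\textbf{Step 2: the spin factor.} If a $\Spin^*$ factor occurs, I will argue that it is a single point. In the disc case, $Y_e(K)$ is a surgery-type manifold $E_K\cup_e(D^2\times S^1)$, and in the Moebius band case it is $E_K$ glued to a twisted circle bundle over the Moebius band. For the spin factor to occur at all we need $d$ odd and the exterior spin. I then compute
\[
H^1(Y_e(K);\Z_2)\cong \Hom(H_1(Y_e(K)),\Z_2)
\]
via Lemma~\ref{lem:MV}. For a disc this gives $\Z_e$. For an orientable closed or bounded surface, Proposition~\ref{prop:AlexanderModule} together with $H_1(\Sigma)=0$ shows that the only possible freedom is the value on the meridian. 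The value on the meridian is forced by the ext/not-ext condition, since extending over $D^2\times D^2$ means being trivial on the meridian. So the relevant $\Spin^*$ set has a single element. For Moebius bands and projective planes, $H_1(X_S;\Z_2)$ and $H^1(\partial X_S;\Z_2)$ are small. I would again check with Lemma~\ref{lem:MV} that $\Spin^{*}$ is determined by the meridian value together with the Guillou--Marin form, which is fixed by the Euler number. This step needs care and is the one I expect to be the main obstacle; if it fails, the fallback is to note that in these low-genus cases the spin condition is automatic in Theorem~\ref{thm:CompatiblePairIntro}.

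\textbf{Step 3: the $\Homeo_\partial(\Sigma)$ and $H^1$ actions.} For discs and spheres, $H^1(\Sigma,\partial\Sigma;\Z^w)=0$. Moreover, $\Homeo_\partial(D^2)$ (or the orientation-preserving homeomorphisms of $S^2$) acts on $Y_e(K)$ isotopically to the identity, so it acts trivially on homology. Both actions therefore drop out. For Moebius bands, $\Homeo_\partial$ of the Moebius band is likewise isotopically trivial modulo $\Rot$-type twists, which are already accounted for by $d\cdot H^1$. So only the $d\cdot H^1(\Sigma,\partial\Sigma;\Z^w)\cong d\Z$ action remains.

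\textbf{Step 4: replacing $\Iso^{\ell k}$.} For $e\neq0$ in the disc/sphere case, and in the Moebius/projective case, the modules $\coker(\Ad Q)\cong H_1(\widehat Y_e(K))$ are $\Z$-torsion by Propositions~\ref{prop:AlexanderModule} and~\ref{prop:AlexanderModuleNonOri}; here we use that $H_1(\Sigma_d(K))$ is finite. Hence $\Iso^{\ell k}$ is exactly the set of linking-form isometries $\partial Q\cong-\ell_{\widehat Y_e(K)}$, by Proposition~\ref{prop:bfrPreservesLinking}.
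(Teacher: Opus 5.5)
Your Steps~1, 3 and~4 match the paper's proof. The gap is in Step~2. The $\Spin^*$ factor is not always a single point, so your counting route cannot reach the stated target in one case the corollary covers: ordinary Moebius bands or projective planes with $d=1$ in a spin $X$.

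\begin{itemize}
\item In that case $d$ is odd and $X_S$ is spin, so the factor $\Spin^{\operatorname{ext}}(Y_e(K))$ genuinely appears in Theorem~\ref{thm:InjectionSurf}.
\item Since $\sigma(X)$ is even, $e$ is even (Proposition~\ref{prop:AlexanderModuleNonOri}). Hence $H_1(Y_e(K))\cong\Z_2\langle\mu\rangle\oplus\Z_2\langle\gamma\rangle$, and $\Spin^{\operatorname{ext}}(Y_e(K))$ has two elements: both are bounding on the meridian and they differ on $\gamma$.
\item The surface is ordinary, so there is no Guillou--Marin form to select one of them.
\item By Proposition~\ref{prop:RotalphaSpinNonOrientable}, the generator $\alpha$ of $H^1(\Sigma,\partial\Sigma;\Z^w)\cong\Z$ swaps them.
\end{itemize}

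Write $\Iso$ for the set of linking-form-preserving isomorphisms $\coker(\Ad Q)\to H_1(\widehat{Y}_e(K))$. The group acts diagonally on $\Iso\times\Spin^{\operatorname{ext}}$, so normalising the spin coordinate leaves only its stabiliser $\Aut_{\Z[\Z_d]}(Q)\times 2H^1(\Sigma,\partial\Sigma;\Z^w)$ acting on $\Iso$. By Corollary~\ref{cor:RotOnYe(K)}, $\Rot(2\alpha)$ acts trivially on $H_1(Y_e(K))$, while $\Rot(\alpha)$ acts by $\gamma\mapsto\gamma+\mu$. So Theorem~\ref{thm:InjectionSurf} plus counting only gives an injection into $\Iso/\Aut_{\Z[\Z_d]}(Q)$. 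That is in general weaker than the claimed injection into the quotient by $\Aut_{\Z[\Z_d]}(Q)\times H^1(\Sigma,\partial\Sigma;\Z^w)$.

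The missing idea is that you do not need to show the spin coordinate is trivial, because it is not needed at all. In every case of the corollary the Alexander modules are $\Z$-torsion, so $H_1(\partial X_{S_i};\Z[\Z_d])^*=0$. The first item of Theorem~\ref{thm:CK4Manifold} then extends $h$ as soon as there is a $k$-invariant-preserving compatible pair, with no spin-union condition. The paper therefore reruns the injectivity arguments of Theorems~\ref{thm:Injectionv1} and~\ref{thm:InjectionSurf} (and~\ref{thm:InjectionSurfClosed} in the closed case) with the spin coordinate deleted from the start.

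Your fallback points in this direction, but citing Theorem~\ref{thm:CompatiblePairIntro} does not suffice. Its statement keeps the spin-union hypothesis exactly when $X$ is spin, $d=1$ and the surfaces are nonorientable, which is the problematic case. You need the torsion/vanishing-dual argument instead.

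Your singleton argument is correct where it applies: for discs and spheres, where the meridian generates $H_1(Y_e(K))\cong\Z_e$, and for characteristic nonorientable surfaces via the Guillou--Marin form. Also, in Step~3 the mapping class group of the Moebius band rel.\ boundary is simply trivial, as the paper notes. The $\Rot$ twists are bundle automorphisms, not surface homeomorphisms.
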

\begin{proof}
In each of these cases,  the mapping class group is trivial (see e.g.~\cite[Theorem 3.4]{EpsteinCurves} for the case of Moebius bands) and so it follows that the $\Homeo_\partial(\Sigma)$ action is trivial.
For discs and spheres, it is additionally the case that~$H^1(\Sigma,\partial \Sigma)=0$.
In all cases,  the Alexander module is torsion (by Propositions~\ref{prop:AlexanderModule} and~\ref{prop:AlexanderModuleNonOri}) and as a consequence
$$\Iso_{\Z[\Z_d]}^{\ell k}(\coker(\Ad Q),H_1(\widehat{Y}_e(K)))=\Iso_{\Z[\Z_d]}(\partial Q,-\ell_{\widehat{Y}_e(K)}).$$
The~$k$-invariant condition is automatic (by Proposition~\ref{prop:NokInvariant}) and  the unions of surface exteriors are automatically spin (recall Theorem~\ref{thm:CK4Manifold} or Section~\ref{sec:SpinUnion}).
%Torsion Alexander module aka genus 0 and 1.
%(by Corollary~\ref{cor:SpinAutomatic}).
Combining these facts, the proofs of Theorems~\ref{thm:Injectionv1} and~\ref{thm:InjectionSurf} (and of Theorem~\ref{thm:InjectionSurfClosed} below in the case of closed surfaces) yield the~result.
\end{proof}

\section{Closed surfaces}
\label{sec:Closed}

In this section, we adapt the results from Sections~\ref{sec:ProofMain} and~\ref{sec:Injection} to the setting of closed surfaces.
\subsection{Extending surface homeomorphisms ambiently}

We prove a variation on 
Theorem~\ref{thm:CompatiblePairClosedIntro} from the introduction.
The proof of the statement in the introduction is identical, the only difference is that, there, the homeomorphism~$\theta \colon S_0 \to S_1$ was not fixed: instead, a bundle isomorphism$H$ with the necessary properties was given.

\begin{customthm}{\ref{thm:CompatiblePairClosedIntro}}
\label{thm:CompatiblePairClosed}
Let~$X$ be a closed simply-connected~$4$-manifold and let~$S_0,S_1 \subset X$ be~$\Z_d$-surfaces with the same Euler number that are either both characteristic or both ordinary.
Fix a homeomorphism 
$$ \theta \colon S_0 \to S_1.$$
When the $S_i$ are characteristic,  assume furthermore that $\theta$ preserves the Freedman--Kirby or Guillou--Marin quadratic forms depending on whether the $S_i$ are orientable or not.
The following assertions are equivalent:
\begin{itemize}
\item There is a homeomorphism~$\Theta \colon (X,S_0) \to (X,S_1)$ with $\Theta|_{S_0}=\theta$.
\item There is an isometry $F\colon \lambda_{X_{S_0}} \cong \lambda_{X_{S_1}}$ of the equivariant intersection forms and a bundle isomorphism $H \colon \overline{\nu}(S_0) \to \overline{\nu}(S_1)$ with $H|_{S_0}=\theta$ such that~$(F,H|)$ preserves the relative~$k$-invariants,
$$\partial F=(H|)_* \colon H_1(\partial X_{S_0};\Z[\Z_d]) \to H_1(\partial X_{S_1};\Z[\Z_d]), $$
and,  when $X$ is spin,  $d=1$, and $S_0,S_1$ are nonorientable, $X_{S_0} \cup_{H|} X_{S_1}$ is spin.
\end{itemize}
The~$k$-invariant condition can be omitted in the following circumstances: either~$\pi$ is trivial,  or~$\pi$ is infinite cyclic,  or the $S_i$ are spheres, or the $S_i$ are projective planes. 
The condition involving the quadratic forms is automatic when the $S_i$ are spheres or projective planes. 
\end{customthm}
\begin{proof}
The theorem can either be obtained by repeating all the steps as 
that we carried up to this point or as a formal consequence of Theorem~\ref{thm:CompatiblePair}.
We outline this later approach following~\cite[Proof of Theorem 1.4]{ConwayPowell}.
We assume that $d \neq 0$ since the $d=0$ case is~\cite[Theorem 1.4]{ConwayPowell}.
%%%%
The necessity of the conditions follows as in the case with nonempty boundary: restrict the homeomorphism $(X,S_0) \to (X,S_1)$ to the exteriors and observe that this yields a compatible pair.

We therefore focus on the sufficiency of the conditions.
Since Euler numbers of the $S_i$ agree, we can extend $\theta$ to a bundle isomorphism~$H\colon (\overline{\nu}(S_0),S_0) \to (\overline{\nu}(S_1),S_1)$.
Choose a disc~$D^2 \subset S_0$ and use an ambient isotopy to take~$\theta(D^2)$ to~$D^2$.
After this 
%In particular,
%after an
ambient isotopy, assume that~$S_0$ and $S_1$ coincide on a disc~$D^2 \subseteq S_0 \cap S_1$.
Assume that the normal bundles also coincide over this~$D^2$.
Consider the preimage~$\mathring{D}^2 \times \R^2 \subseteq \nu(S_i)$.
This is homeomorphic to an open~$4$-ball~$\mathring{D}^4$.
Remove this~$(\mathring{D}^4,\mathring{D}^2)$ from~$(X,S_i)$ to obtain~$(N,S_i^\circ)$, with~$\partial S_i^\circ=S_i \cap \partial N$ the unknot~$K$ in~$\partial N=S^3$.
Note that $\theta$ restricts to a homeomorphism rel.\ boundary~$S_0^\circ \to S_1^\circ$.
%Restricting the homeomorphism $\theta$ to $S_0^\circ$ yields a rel.\ boundary homeomorphism $\theta| \colon S_0^\circ \to S_1^\circ$. 
The exterior of~$S_i \subset X$ equals the exterior of~$S_i^\circ \subset N$.
Since~$X_{S_i}=N_{S_i^\circ}$, the equivariant intersection forms and boundaries are unchanged,  and the compatible pair~$(F,H|)$ therefore also induces a compatible pair~$(F,H|) \colon N_{S_0^\circ} \to N_{S_1^\circ}$.
Theorem~\ref{thm:CompatiblePair} provides a rel.\ boundary homeomorphism of pairs~$\Phi' \colon (N,S_0^\circ) \to (N,S_1^\circ)$ that extends~$\theta|_{S^\circ}$.
We recover the required homeomorphism of pairs~$\Phi \colon (X,S_0) \to (X,S_1)$ by gluing~$\Phi'$ with the identity homeomorphism~$(D^4,D^2) \to (D^4,D^2)$.
This homeomorphism extends $\theta$ by construction.
The fact that the~$k$-invariants can be omitted for Euler number $0$ spheres follows from Proposition~\ref{prop:NokInvariant}.
The final sentence of the theorem follows from the fact that in those cases $H_1(S_i)$ is either $\Z$ or $\Z_2$.
%So automatically quad form preserving.
\end{proof}

\subsection{Nice identifications}
\label{sub:NiceIdentificationsClosed}
In order to extend the results from Section~\ref{sec:Injection} (and specifically Theorems~\ref{thm:Injectionv1} and~\ref{thm:InjectionSurf}) to closed surfaces, we define~$e$-nice identifications in the closed setting.
Given a closed surface~$\Sigma$, fix a model~$\xi=\left( \Sigma \mathbin{\wt{\times}}_e \R^2 \to \Sigma\right)$ for the Euler number~$e$ rank~$2$ vector bundle with~$w_1(\xi)=w_1(\Sigma)$, and fix a section~$r \colon \Sigma^{(1)} \to \Sigma \mathbin{\wt{\times}}_e \R^2$ over its~$1$-skeleton.

	\begin{definition}
		\label{def:NiceFraming-orClosed}
Given a closed surface~$S \subset X$ with normal Euler number~$e$,  a bundle isomorphism~$\fr\colon \nu(S) \xrightarrow{\cong} \Sigma \mathbin{\wt{\times}}_e \R^2$ is a~\emph{nice identification} if pushing off any loop~$\gamma \subset S$ using~$\fr^{-1}\circ r$ results in a curve that is nullhomologous in~$X_S$.
We refer to~$\fr^{-1}|\circ r \colon \Sigma^{(1)} \to \nu(S)$ as a \emph{nice section}.
	\end{definition}
	
\begin{lemma}
\label{lem:ExistseNiceClosed}
A surface~$S \subset X$ whose relative Euler number is~$e$ admits an~$e$-nice identification.
\end{lemma}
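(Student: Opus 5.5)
We adapt the proof of Lemma~\ref{lem:ExistseNice-or} to the closed setting. Here ``relative Euler number'' is to be read as the normal Euler number $e$, and $\Sigma \mathbin{\wt{\times}}_e \R^2$ is the fixed model with $w_1=w_1(\Sigma)$. The plan has two steps. First produce some bundle isomorphism $\fr'\colon \nu(S)\to \Sigma \mathbin{\wt{\times}}_e \R^2$. Then correct it by a fibrewise rotation supported near the $1$-skeleton, so that push-offs of loops along $\fr'^{-1}\circ r$ become nullhomologous in $X_S$.

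\textbf{Step 1: some bundle isomorphism exists.} Since $X$ is orientable, $w_1(\nu(S))=w_1(S)$. Rank two bundles over a closed surface with prescribed $w_1$ are classified by their (twisted) Euler number, and $\nu(S)$ has Euler number $e$ by assumption. Hence there is a bundle isomorphism $\fr'\colon\nu(S)\to\Sigma \mathbin{\wt{\times}}_e \R^2$ covering some identification $S\cong\Sigma$.

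\textbf{Step 2: correcting the push-offs.} For a loop $\gamma$ in the $1$-skeleton, the push-off $\fr'^{-1}\circ r(\gamma)$ lies in $\partial X_S$. By Lemma~\ref{lem:H1NonOrientable} and its orientable analogue, whose Mayer--Vietoris proofs apply verbatim in the closed case, $H_1(X_S)$ is cyclic and generated by a meridian. So this push-off is homologous in $X_S$ to $n_\gamma$ meridians, where $n_\gamma$ is taken modulo $d$ (with $d\in\{1,2\}$ in the nonorientable case). The assignment $\gamma\mapsto n_\gamma$ is additive and defines a class $\alpha\in H^1(\Sigma;\Z_d)$. Lift it to $H^1(\Sigma;\Z^w)$. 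Then form $\Rot(-\alpha)$ by restricting to a closed disc complement $\Sigma^\circ$, where Propositions~\ref{prop:RotAlphaAxiomsSection2} and~\ref{prop:RotAlphaAxiomsSection2Nonori} supply a bundle automorphism that is the identity near $\partial\Sigma^\circ$, and extending it by the identity over the removed disc. Here we use $H^1(\Sigma^\circ,\partial\Sigma^\circ;\Z^w)\cong H^1(\Sigma;\Z^w)$ for a closed surface minus an open disc, which can be checked via the long exact sequence.

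Set $\fr:=\Rot(-\alpha)\circ\fr'$. The second items of those propositions show that $\Rot(-\alpha)$ shifts the section over each loop $\gamma$ by $-\alpha(\gamma)$ fibres. Therefore each push-off $\fr^{-1}\circ r(\gamma)$ becomes nullhomologous in $X_S$, and $\fr$ is a nice identification.

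\textbf{Main obstacle.} The delicate point is checking that the correction is well defined. One must ensure that $\gamma\mapsto n_\gamma$ is a homomorphism on $H_1(\Sigma)$, not merely on loops in $\Sigma^{(1)}$: the boundary of the $2$-cell pushes off to something homologous to $e$ meridians, and $e\equiv 0$ mod $d$ (indeed $d^2\mid e$ in the orientable case, and $e$ is even when $d=2$ in the nonorientable case). One must also ensure that the section $r$, which is fixed only over $\Sigma^{(1)}$, is compatible with working on $\Sigma^\circ$. If this is awkward, an alternative is to puncture as in the proof of Theorem~\ref{thm:CompatiblePairClosed}: removing a $4$-ball around a disc in $S$ yields a surface with unknot boundary in $N$ with the same exterior, where Lemma~\ref{lem:ExistseNice-or} applies directly. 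Its $e$-nice identification then extends over the removed $(D^4,D^2)$, and its push-offs of loops in $S^\circ$ are those of $S$.
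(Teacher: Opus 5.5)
Your argument is essentially the paper's: fix any bundle isomorphism, puncture, correct it over $\Sigma^\circ$ by a $\Rot(\alpha)$ that is the identity near $\partial\Sigma^\circ$ exactly as in Lemma~\ref{lem:ExistseNice-or}, and extend by the identity over the removed disc. Your $4$-ball alternative is the same idea.

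One claim needs weakening. For nonorientable $\Sigma$ the map $H^1(\Sigma^\circ,\partial\Sigma^\circ;\Z^w)\cong H^1(\Sigma,D^2;\Z^w)\to H^1(\Sigma;\Z^w)$ is not an isomorphism. Since $H^0(\Sigma;\Z^w)=0$, it is only surjective, with kernel $H^0(D^2;\Z)\cong\Z$; for example the groups are $\Z$ versus $\Z_2$ for the Moebius band and $\RP^2$. Surjectivity is all your argument uses, so nothing breaks. Alternatively, as the paper does, define $\gamma\mapsto n_\gamma$ on $H_1(\Sigma^\circ)$ and lift straight to $H^1(\Sigma^\circ,\partial\Sigma^\circ;\Z^w)$; this also makes your $2$-cell check unnecessary.
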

\begin{proof}
Choose a bundle isomorphism~$\fr \colon \nu(S) \cong \Sigma \mathbin{\wt{\times}}_e \R^2$.
Delete a disc from the surface $S$ and its image in the model in order to obtain a bundle isomorphism~$\fr \colon \overline{\nu}(S^\circ) \cong \Sigma^\circ \mathbin{\wt{\times}}\R^2$, where $\Sigma^\circ \mathbin{\wt{\times}}\R^2$ denotes our model rank $2$ vector bundle over the surface with one boundary component.
%%%Don't delete.
%%{AC: Technically it's only isomorphic to it. Is that problematic?}
%%%No: could build the model so that these things agree on the nose.
As in the proof of Lemma~\ref{lem:ExistseNice-or}, postcompose this bundle isomorphism with an appropriate~$\Rot(\alpha)$ in order to obtain an~$e$-nice framing $\overline{\nu}(S^\circ) \cong \Sigma^\circ \mathbin{\wt{\times}}\R^2$ in the sense of Definition~\ref{def:NiceFraming-or}.
Since $\Rot(\alpha)$ restricts to the identity on the boundary,  we can then cap off this new isomorphism in order to obtain the required $e$-nice identification.
\end{proof}

\subsection{The automorphism invariants for closed surfaces}
\label{sub:AutomorphismClosed}

Throughout this section, ~$\Sigma^\circ$ will denote a surface with one boundary component and~$\Sigma:=\Sigma^\circ \cup D^2$ will refer to the closed surface obtained by capping off~$\Sigma^\circ$ with a disc.
\begin{construction}
\label{cons:Capoff}
Let~$X$ be a closed~$4$-manifold,  fix a~$4$-ball in~$B \subset X$,  and set~$X^\circ:=X \setminus B^\circ$.
Fix a embedding~$\iota_D \colon D^2 \hookrightarrow B$ of a small standardly properly embedded disc and consider the unknot~$U:=\partial \iota_D(D^2) \subset \partial X=S^3$.
Continuing with the notation from Section~\ref{sec:Injection}, consider the map that assigns to an embedding~$\iota \colon \Sigma^\circ \to X^\circ$ with~$\partial \iota(\Sigma^\circ)=U$ the embedding~$\Sigma \hookrightarrow X$ obtained by capping off~$\iota$ with~$\iota_D$:
$$
\operatorname{capoff} \colon \Emb(X^\circ,U) \to \Emb(X).
$$
%%Don't delete
%%Well def because RHS is rel.\ boundary
This map is surjective: isotope a given embedding~$\widehat{\Sigma} \hookrightarrow X$ until a small ball matches~$B$ and then puncture the outcome to obtain an embedding~$\Sigma^\circ \hookrightarrow X^\circ$ with~$\partial \iota(\Sigma^\circ)=U$.

Capping off an embedding does not affect the genus, orientation character, divisibility, Euler number,  and equivariant intersection form,  leading to a surjective map
$$
\operatorname{capoff} \colon \Emb_{d,e,\lambda}^w(g)(X^\circ,U) \to  \Emb_{d,e,\lambda}^w(g)(X).
$$
\end{construction}

Section~\ref{sec:Injection} constructs an injective map~$\Xi_{\Emb}$ from~$ \Emb_{d,e,\lambda}^w(g)(X^\circ,U)$ into an algebraically defined set; recall Theorem~\ref{thm:Injectionv1}.
For brevity, we also use the following notation for the ``numerator" of the target of this map:
% the targets of the maps in Theorem~\ref{thm:Injectionv1}:
$$
\mathcal{I}:=\begin{cases}
\Iso_{\Z[\Z_d]}^{\ell k}(\coker(\Ad Q),H_1(Y_e;\Z[\Z_d]))
\times H^3(\Z_d,Y_e;H) \times\Spin^*(Y_e)
  \quad& \text{or,} \\
 %%%
\Iso_{\Z[\Z_d]}^{\ell k}(\coker(\Ad Q),H_1(Y_e;\Z[\Z_d]))
\times H^3(\Z_d,Y_e;H).  \quad & \text{}
 \end{cases}
$$
 Here,  note that~$Y_e(U)=E_U \cup (\Sigma^\circ \mathbin{\wt{\times}} S^1)$ agrees with the Euler number~$e$ circle bundle~$Y_e$ over~$\widehat{\Sigma}$, thus explaining we write~$Y_e$ in place of~$Y_e(U)$.

\begin{construction}
We argue that repeating all the steps from Section~\ref{sec:Injection} produces a map 
$$
\Xi_{\Emb}^{\closed} \colon \Emb_{d,e,\lambda}^w(g)(X)
\to
\frac{\mathcal{I}}{{\Aut_{\Z[\Z_d]}(Q) \times d \cdot H^1(\Sigma;\Z^w)}}.
$$
Indeed all the steps from Construction~\ref{cons:biota}, Proposition~\ref{prop:ChangeIdentification}, and Proposition~\ref{prop:bWellDef} carry through without change using the definition of an~$e$-nice framing from Section~\ref{sub:NiceIdentificationsClosed} and noting that, in the closed case, every bundle isomorphism is still of the form~$\Rot(\alpha)$ thanks to the combination of Propositions~\ref{prop:cohomology-version} and~\ref{prop:BundleIsoHE} (these results apply both in the orientable and nonorientable settings).
In more detail, we apply the combination of these propositions with $(B,B')=(\Sigma,\emptyset),E=\Sigma \mathbin{\wt{\times}}_e \R^2$ (so that~$P/SO(2)=\wh B$) to obtain
\begin{equation}
\label{eq:AutBundleClosed}
 \pi_0(\Aut^+(\Sigma \mathbin{\wt{\times}}_e \R^2))
\cong 
\pi_0(\Gamma(\Sigma,\wh \Sigma\times_{\Z_2}SO(2)))
\cong  H^1(\Sigma;\Z^w).
\end{equation}
\end{construction}

\begin{construction}
\label{cons:DiagramClosedEmbedding}
Combining the map $\Xi_{\Emb}^{\closed} $ with the capping off map from Construction~\ref{cons:Capoff}, we obtain the following diagram:
\begin{equation}
\label{eq:CapOffDiagram}
\xymatrix{
\Emb_{d,e,\lambda}^w(g)(X^\circ,U) \ar@{->>}[r]^{\operatorname{capoff}}\ar@{^{(}->}[d]^{\Xi_{\Emb}}& \Emb_{d,e,\lambda}^w(g)(X)\ar[d] \\
\frac{\mathcal{I}}{{\Aut_{\Z[\Z_d]}(Q) \times d \cdot H^1(\Sigma^\circ,\partial \Sigma^\circ;\Z^w)}} \ar@{-->}[r]&
\frac{\mathcal{I}}{{\Aut_{\Z[\Z_d]}(Q) \times d \cdot H^1(\Sigma;\Z^w)}}.
}
\end{equation}
We argue that the dashed map exists, that it is a bijection and that it makes the diagram commute.
Since the action of~$H^1(\Sigma^\circ,\partial \Sigma^\circ;\Z^w) \cong H^1(\Sigma,D^2;\Z^w)$ on $\mathcal{I}$ factors through an action of~$H^1(\Sigma;\Z^w)$,  the identity $\id_{\mathcal{I}}$ induces the dashed map; the latter is necessarily surjective and makes the diagram commute.
Since~$H^1(\Sigma^\circ,\partial \Sigma^\circ;\Z^w) \cong H^1(\Sigma,D^2;\Z^w)$ surjects onto $H^1(\Sigma;\Z^w)$, this map is also injective and therefore an bijection.
\end{construction}

\begin{proposition}
\label{prop:InjectionClosedEmbedding}
The capoff map defines a bijection
$$
\operatorname{capoff} \colon \Emb_{d,e,\lambda}^w(g)(X^\circ,U) \xrightarrow{\cong}  \Emb_{d,e,\lambda}^w(g)(X)
$$
\end{proposition}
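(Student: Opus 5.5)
Surjectivity of $\operatorname{capoff}$ was already shown in Construction~\ref{cons:Capoff}, so it remains to prove injectivity. The plan is to deduce this formally from diagram~\eqref{eq:CapOffDiagram}. By Theorem~\ref{thm:Injectionv1} the left vertical map $\Xi_{\Emb}$ is injective, and by Construction~\ref{cons:DiagramClosedEmbedding} the bottom dashed map is a bijection. Since the diagram commutes, the composition $\Xi_{\Emb}^{\closed}\circ\operatorname{capoff}$ equals the bottom bijection composed with $\Xi_{\Emb}$, which is injective. Hence $\operatorname{capoff}$ itself is injective.

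The substantive work is therefore to confirm that the diagram genuinely commutes. Concretely, one must check that $\Xi_{\Emb}^{\closed}(\operatorname{capoff}(\iota))$ and $\Xi_{\Emb}(\iota)$ agree in the closed orbit set. Take an embedding $\iota\colon\Sigma^\circ\hookrightarrow X^\circ$ with an $e$-nice identification $\fr^\circ$ in the sense of Definition~\ref{def:NiceFraming-or}. Capping off gives an identification $\fr$ of $\nu(\operatorname{capoff}(\iota))$: extend $\fr^\circ$ by the standard identification over $\iota_D(D^2)$, exactly as in the proof of Lemma~\ref{lem:ExistseNiceClosed}. This $\fr$ is nice in the sense of Definition~\ref{def:NiceFraming-orClosed}, because every loop on $\Sigma$ can be homotoped into $\Sigma^\circ$.

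Next compare the two exteriors. The exterior of the capped surface in $X$ equals the exterior of $\iota(\Sigma^\circ)$ in $X^\circ$ (as in the proof of Theorem~\ref{thm:CompatiblePairClosed}), and $\id_{E_U}\cup\fr^\circ$ agrees with the boundary identification onto $Y_e=Y_e(U)$ induced by $\fr$. Consequently the automorphism invariant $b$, the relative $k$-invariant, and (when relevant) the spin structure on the boundary all coincide on the nose. Different choices are absorbed by the quotients via Proposition~\ref{prop:ChangeIdentification} and its closed analogue, which uses \eqref{eq:AutBundleClosed}.

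The main obstacle I expect is this last point about different choices. In the closed setting, an arbitrary nice identification of the capped surface need not restrict to one that is the identity near the capping disc. Its restriction therefore need not be $e$-nice in the relative sense. To handle this, first isotope the identification so that it is standard over the disc, using the fact that bundle automorphisms of the disc bundle over $D^2$ are connected. The remaining discrepancy is then a bundle automorphism of the form $\Rot(\alpha)$ with $\alpha\in H^1(\Sigma;\Z^w)$. Any such $\alpha$ lifts to $H^1(\Sigma^\circ,\partial\Sigma^\circ;\Z^w)$, because this group surjects onto $H^1(\Sigma;\Z^w)$ (as noted in Construction~\ref{cons:DiagramClosedEmbedding}). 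The discrepancy is thus realised by a relative $\Rot$, which acts compatibly on both sides. Commutativity follows, and with it the bijection.
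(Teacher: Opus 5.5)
Your argument is correct and is the same as the paper's: the paper also combines the commutative diagram~\eqref{eq:CapOffDiagram}, the injectivity of $\Xi_{\Emb}$ (Theorem~\ref{thm:Injectionv1}), the bijectivity of the bottom map, and the surjectivity of $\operatorname{capoff}$ (Construction~\ref{cons:Capoff}). Your extra commutativity check spells out what the paper simply asserts in Construction~\ref{cons:DiagramClosedEmbedding}. The worry about differing choices of identification really belongs to the well-definedness of $\Xi_{\Emb}^{\closed}$, so it is enough to compute $\Xi_{\Emb}^{\closed}(\operatorname{capoff}(\iota))$ using the capped-off identification.
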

\begin{proof}
Consider the commutative diagram displayed in~\eqref{eq:CapOffDiagram}.
Since the cap off map is a surjection (recall Construction~\ref{cons:Capoff}),  the left map is injective (by Theorem~\ref{thm:Injectionv1}) and the bottom map is a bijection, it follows that the capoff map is also injective and thus a bijection.
\end{proof}

As in Section~\ref{sec:Injection}, it remains to pass from embeddings to embedded surfaces.
For this, we need to consider the action of surface homeomorphisms on the sets that feature in~\eqref{eq:CapOffDiagram}.
\begin{construction}
We define an action of $\Homeo(\Sigma)$ on~$\frac{\mathcal{I}}{{\Aut_{\Z[\Z_d]}(Q) \times d \cdot H^1(\Sigma;\Z^w)}}$.
Given a surface homeomorphism $\theta \colon \Sigma \to \Sigma$,  since $\theta^*(\Sigma \mathbin{\wt{\times}}_e \R^2)$ has the same $w_1$ and Euler number as $\Sigma \mathbin{\wt{\times}}_e \R^2$,  there is a bundle isomorphism $\theta^*(\Sigma \mathbin{\wt{\times}}_e \R^2) \cong \Sigma \mathbin{\wt{\times}}_e \R^2$ covering the identity.
It follows that $\theta$ is covered by a bundle isomorphism 
$$\theta_{bundle} \colon \Sigma \mathbin{\wt{\times}}_e \R^2 \to \Sigma \mathbin{\wt{\times}}_e \R^2.$$
This bundle isomorphism is nonunique but Propositions~\ref{prop:cohomology-version} and~\ref{prop:BundleIsoHE} from the appendix (recall~\eqref{eq:AutBundleClosed}) ensure that any two isomorphism differ by an element of~$\Aut(\Sigma \mathbin{\wt{\times}}_e \R^2)\cong H^1(\Sigma;\Z^w)$. 
Thus the group~$\Homeo(\Sigma)$ nevertheless induces an action on~$\frac{\mathcal{I}}{{\Aut_{\Z[\Z_d]}(Q) \times d \cdot H^1(\Sigma;\Z^w)}}.$
\end{construction}

We now prove Theorem~\ref{thm:InjectionSurfIntro} from the introduction.
In order to ease notation, we use the abbreviations~$\Iso:=\Iso_{\Z[\Z_d]}^{\ell k}(\coker(\Ad Q),H_1(Y_e;\Z[\Z_d])), H^3:=H^3(\Z_d,Y_e;H)$ and~$\Spin^{*}:=\Spin^{*}(Y_e)$.

\begin{customthm}{\ref{thm:InjectionSurfIntro}}
\label{thm:InjectionSurfClosed}
Let~$X$ be a closed simply-connected~$4$-manifold,  let~$d>0$ be an integer,  let $e \in \Z$ be another integer, and let $\lambda \colon H \times H \to \Z[\Z_d]$ be a hermitian form over $\Z[\Z_d]$.

The cap off map induces a bijection
$$
\operatorname{capoff} \colon \Surf_{d,e,\lambda}^w(g)(X^\circ,U)
\xrightarrow{\cong}
\Surf_{d,e,\lambda}^w(g)(X).
$$
In the characteristic case,  the map $\Xi_{\Emb}^{\closed}$ induces a injection
$$
\Xi_{\Surf}^{\closed} \colon \Surf_{d,e,\lambda}^{w,\charac}(g)(X) 
\hookrightarrow
\begin{cases}
\frac{\Iso
\times H^3 \times\Spin^{\operatorname{not-ext}}
 }{\Aut(Q) \times d \cdot H^1(\Sigma;\Z^w)}\Big/\Homeo(\Sigma) \quad& \text{if $X$ is not spin,} \\
 %%%
 \frac{\Iso
\times H^3 }{\Aut(Q) \times d \cdot H^1(\Sigma;\Z^w)}\Big/\Homeo(\Sigma)  \quad & \text{if $X$ is spin.}
 \end{cases}
$$
In the ordinary case,  the map $\Xi_{\Emb}^{\closed}$ induces a injection
$$
\Xi_{\Surf}^{\closed}\colon \Surf_{d,e,\lambda}^{w,\ord}(g)(X) 
\hookrightarrow
\begin{cases}
\frac{\Iso
\times H^3 }
{\Aut(Q) \times d \cdot H^1(\Sigma;\Z^w)}\Big/\Homeo(\Sigma) \quad& \text{if $X$ is not spin,} \\
 %%%
 \frac{\Iso
\times H^3  \times\Spin^{\operatorname{ext}}}{\Aut(Q) \times d \cdot H^1(\Sigma;\Z^w)}\Big/\Homeo(\Sigma)  \quad & \text{if $X$ is spin.}
 \end{cases}
$$
When the $k$-invariant condition is automatic, the $H^3$ factors can be omitted.
The action of the group~$d \cdot H^1(\Sigma;\Z^w)$ on spin structures is trivial in the characteristic case and nontrivial in the ordinary case.
%$$
%\Xi_{\Surf}^{\closed} \Surf_{d,e,\lambda}^w(g)(X)
% \hookrightarrow
% \left(\frac{\mathcal{I}}{{\Aut_{\Z[\Z_d]}(Q) \times d \cdot H^1(\Sigma;\Z^w)}}\right)/\Homeo(\Sigma).
%$$
\end{customthm}
\begin{proof}
Consider the following diagram:
$$
\xymatrix{
\Surf_{d,e,\lambda}^w(g)(X^\circ,U) \ar@{->>}[r]^-{\operatorname{capoff},\cong}\ar@{^{(}->}[d]& \frac{\Emb_{d,e,\lambda}^w(g)(X)}{\Homeo(\Sigma,D)} \ar@{^{(}->}[d] \ar@{-->}[r]^{\cong}&
\frac{\Emb_{d,e,\lambda}^w(g)(X)}{\Homeo(\wh  \Sigma)}\ar[d]^{\Xi_{\Surf}^{\closed}}
\\
%%%%%%%
\frac{\mathcal{I}}{Aut_{\Z[\Z_d]}(Q) \times (d \cdot H^1(\Sigma^\circ,\partial \Sigma^\circ;\Z^w) \rtimes \Homeo_\partial(\Sigma))} \ar[r]^-\cong&
\frac{\frac{\mathcal{I}}{{\Aut_{\Z[\Z_d]}(Q) \times d \cdot H^1(\Sigma;\Z^w)} }}{ \Homeo(\Sigma,D)}\ar@{-->}[r]^{\cong}&
\frac{\frac{\mathcal{I}}{{\Aut_{\Z[\Z_d]}(Q) \times d \cdot H^1(\Sigma;\Z^w)} }}{ \Homeo(\Sigma)}.
}
$$
The left hand square commutes because it is obtained by taking the quotient of the diagram in~\eqref{eq:CapOffDiagram} by the~$\Homeo(\Sigma^\circ,\partial \Sigma^\circ)\cong \Homeo(\Sigma,D^2)$ actions.
For the existence of the dashed horizontal bijections,  the argument is similar to the one from Construction~\ref{cons:DiagramClosedEmbedding}: 
Since the action of~$\Homeo(\Sigma^\circ,\partial \Sigma^\circ)\cong \Homeo(\Sigma,D^2)$ factors through an action of~$\Homeo(\Sigma)$,  the identity $\id_{\mathcal{I}}$ induces the (necessarily surjective) dashed maps that then necessarily make the diagram commute.
Since~$\Homeo(\Sigma^\circ,\partial \Sigma^\circ)\cong \Homeo(\Sigma,D^2)$ surjects onto $\Homeo(\Sigma)$,  these maps are also injective and therefore are bijections.
The fact that $\Xi_{\Surf}^{\closed}$ is injective now follows immediately.
\end{proof}

\section{Simple discs and spheres}
\label{sec:DiscsSpheres}

This section applies our results to discs and spheres.
Section~\ref{sub:Cancellation} combines Theorem~\ref{thm:CompatiblePair} with cancellation theorems of Lee-Wilczynski~\cite{LeeWilczy,LeeWilczyOdd} yielding a criterion for certain simple discs to be equivalent rel.\ boundary.
Section~\ref{sub:DiscsSpheresNoCancellation} applies the injection from Theorem~\ref{thm:InjectionSurf} to settings where cancellation is not available.

\subsection{Cancellation methods}
\label{sub:Cancellation}

We first adapt a result of Lee-Wilczynski's from spheres to discs.
\begin{theorem}
\label{prop:PointedFormsDiscs}
Let $d>0$,  let $K$ be a knot with $H_1(\Sigma_d(K))=0$ and let~$D_0,D_1$ be simple discs with boundary $K$ that represent a divisibility $d$ homology class $x \in H_2(X,\partial X)$.
If~$d=1$ or if~$b_2(X)>6$ and 
\begin{equation}
\label{eq:LWCancellationLTSignature}
b_2(X)> \max_{0\leq j<d}\Big|\sigma (X)-\frac{2j(d-j)}{d^2}\, x\cdot x +\sigma_K (e^{\frac{2\pi i j}{d}})\Big|,
\tag{$\star$}
\end{equation}
then there is a pointed isometry $\lambda_{\Sigma_d(S_0)} \cong \lambda_{\Sigma_d(S_1)}$. 
\end{theorem}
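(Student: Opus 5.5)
The plan is to reduce to Lee--Wilczynski's cancellation theorem for pointed forms by capping off the discs to spheres, or alternatively to run their argument directly on the branched covers $\Sigma_d(D_i)$. Since $H_1(\Sigma_d(K))=0$, the branched cover $\Sigma_d(K)$ is an integral homology sphere. Hence each $\Sigma_d(D_i)$ is a $4$-manifold with homology-sphere boundary, and $\lambda_{\Sigma_d(D_i)}$ is a nonsingular hermitian form over $\Z[\Z_d]$ on $H_2(\Sigma_d(D_i))$, which maps isomorphically onto $H_2(\Sigma_d(D_i),\partial)$. The case $d=1$ is immediate: then $\Sigma_1(D_i)=X$, the forms are literally $Q_X$, and the points are the classes $[D_0]=[D_1]=x$. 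The identity is therefore a pointed isometry. From now on assume $d>1$.

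First step: compute the invariants of the pointed forms. I would show that the two pointed forms $(H_2(\Sigma_d(D_i)),\lambda_{\Sigma_d(D_i)},i_*^{-1}[\widetilde D_i])$ agree in every invariant used by Lee and Wilczynski, namely:
\begin{itemize}
\item the rank,
\item the equivariant signatures $\sigma_j$ of the eigenspace decomposition $H_2(\Sigma_d(D_i);\C)=\bigoplus_j E_{\omega^j}$,
\item the type,
\item the value $\lambda([\widetilde D_i],[\widetilde D_i])$, which is determined by $x\cdot x$ and $d$.
\end{itemize}
I would compute the eigenspace signatures with the $G$-signature theorem for branched covers, as in Rokhlin/Viro. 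This gives $\sigma_j=\sigma(X)-\frac{2j(d-j)}{d^2}x\cdot x+\sigma_K(e^{2\pi i j/d})$, where the Levine--Tristram term is the correction coming from the boundary branched along $K$. The ranks follow from an Euler characteristic count: the Alexander module of $\partial X_{D_i}$ is torsion here, so $H_2$ of the branched cover is stably $\Z[\Z_d]$-free of rank $b_2(X)$ plus the rank of the augmentation ideal. Since all these invariants depend only on $X$, $x$ and $K$, they coincide for $D_0$ and $D_1$.

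Second step: apply the algebraic cancellation result underlying \cite[Proposition~4.6 and Theorem~3.3]{LeeWilczy,LeeWilczyOdd}. That result says that, under $b_2>6$ (for stable range and transitivity), two pointed $\Z[\Z_d]$-forms of this kind with equal invariants are isometric, provided each eigenspace form is indefinite with enough hyperbolics. The inequality in \eqref{eq:LWCancellationLTSignature} is exactly the condition that $|\sigma_j|<b_2(X)=\operatorname{rank}_\C E_{\omega^j}$ for every $j$, i.e.\ that every eigenform is indefinite. The proof of that result is purely algebraic, so it applies equally to forms coming from disc branched covers once the module structure is known.

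The main obstacle will be verifying that the pointed forms satisfy the same structural hypotheses as in the sphere case. In particular I need:
\begin{itemize}
\item the complement of the branch class is stably isomorphic to the standard $\Z[\Z_d]$-module, and the point has the same ``divisibility'' in the module;
\item the $G$-signature boundary correction is precisely $\sigma_K$.
\end{itemize}
To handle this, I would first try to cap off: glue $-(D^4,G)$ along $S^3$ where $G$ is a slice-free Seifert-type surface. That fails since $K$ need not be slice, so instead I would glue on the contractible pieces $\Sigma_d$-wise. Concretely, I would compare directly with the sphere case via Remark~\ref{rem:HermHSymmH} and the Mayer--Vietoris sequence $\Sigma_d(D_i)=\widetilde X_{D_i}\cup\overline\nu(\widetilde D_i)$ used in Proposition~\ref{prop:LeverageLW}. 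This identifies the module as $\pi_2(X_{D_i})\oplus\Z$-type data, using $\pi_2(X_{D_i})\cong_s I$ from the proof of Theorem~\ref{thm:CK4Manifold}.
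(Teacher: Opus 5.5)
\textbf{There is a genuine gap in your second step.} Your $d=1$ case is correct: the identity of $Q_X$ is a pointed isometry. Reading \eqref{eq:LWCancellationLTSignature} as indefiniteness of every eigenspace form, i.e.\ as Lee--Wilczynski's condition (9), also matches the paper.

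The problem is how you use \cite{LeeWilczy}. The algebraic input there (the conditions of their Theorems~6.1 and~7.1, assembled as in the argument leading to their (3.5)) is a cancellation argument. It is not a classification of pointed $\Z[\Z_d]$-forms by rank, eigenspace signatures, type and the self-intersection of the point. Its essential hypothesis is a \emph{stable} pointed isometry $(H_0,\lambda_0,z_0)\oplus\mathcal{H}^k\cong(H_1,\lambda_1,z_1)\oplus\mathcal{H}^k$ with $\mathcal{H}^k$ hyperbolic; this is condition (6.3) there. The conclusion is only that the hyperbolic summands can be removed. Your proposal never produces such a stable isometry. Agreement of the invariants you list does not supply one either: there is no theorem saying that nonsingular hermitian forms over $\Z[\Z_d]$ on these lattices, with a distinguished element, are determined (even stably) by rank, multisignature, type and $\lambda(z,z)$.

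\textbf{How the paper gets the stable isometry.} It comes from geometry. One first shows that $D_0$ and $D_1$ become equivalent rel.\ boundary in $X\#_k(S^2\times S^2)$.
By Remark~\ref{rem:Normal1TypeUnion}, which rests on the spin-union results of Section~\ref{sec:SpinUnion}, there is a bundle isomorphism $H$ such that normal $1$-smoothings of $X_{D_0}$ and $X_{D_1}$ extend over $X_{D_0}\cup_h X_{D_1}$.
The exteriors have equal signature and Kirby--Siebenmann invariant, so this union is null-bordant over the normal $1$-type. Kreck's modified surgery \cite{KreckSurgeryAndDuality} then extends $h$ to a stable homeomorphism of the exteriors, which is extended over the disc bundles. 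Passing to $d$-fold branched covers gives condition (6.3).

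After that, the remaining structural hypotheses still have to be checked for disc branched covers:
\begin{itemize}
\item conditions (1)--(4) and (6.2), which are the module-theoretic facts you flag as your ``main obstacle'' and which are verified in \cite[Sections~3--6]{ConwayOrsonPencovitch};
\item conditions (7) and (8), where \cite[Lemma~4.2]{LeeWilczy} is replaced by \cite[Lemma~6.2]{ConwayOrsonPencovitch}.
\end{itemize}
Your Mayer--Vietoris sketch and the observation that $\pi_2(X_{D_i})\cong_s I$ do not yet establish these.
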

\begin{proof}
We assert that the discs become equivalent rel.\ boundary after taking the connected sum of $X$ with sufficiently many copies of $S^2\times S^2$.
This follows by adapting~\cite[Uniqueness portion of Proposition 2.1]{LeeWilczy} (see also~\cite[Theorem 2.5]{LeeWilczyOdd}) from spheres to discs as we now explain.
We assume some familiarity with Kreck's modified surgery theory~\cite{KreckSurgeryAndDuality}.
Let~$(B,\xi)$ denote the normal~$1$-type of the disc exteriors.
Remark~\ref{rem:Normal1TypeUnion} shows that there is a bundle isomorphism~$H \colon \overline{\nu}(S_0) \to \overline{\nu}(S_1)$ and normal $1$-smoothings $X_{S_i} \to B$ for $i=0,1$ that extend to~$X_{S_0} \cup_h X_{S_1} \to B$.
Here recall that $h:=\id_{E_K} \cup H|$.
Since the disc exteriors have the same signature and Kirby--Siebenmann invariant,  an Atiyah-Hirzebruch spectral sequence argument shows that~$X_{S_0} \cup_h X_{S_1}$ is zero bordant in~$\Omega_4(B,\xi)$ where~$(B,\xi)$ denotes the normal~$1$-type of the~$X_{S_i}$; this follows e.g. by combining~\cite[Proposition 1.1]{HambletonHambletonSmoothStructures} with~\cite[Theorem 2]{KreckSurgeryAndDuality}.
%%Perhaps Teichner's thesis has the statement written more explicit dunno.
Kreck's modified surgery theory then implies that~$h$ extends stably to a homeomorphism~\cite[Corollary~3]{KreckSurgeryAndDuality}.
Extending this stable homeomorphism over the disc bundles then proves the assertion.

The pointed isometry is then obtained by following the steps explained in~\cite[Section~3]{LeeWilczy}.
In a nutshell, the verifications from~\cite{LeeWilczy} go through because, since $H_1(\Sigma_d(K))=0$,  the (equivariant) intersection form of the branched cover is nonsingular, just as for spheres; here are some details.
The idea is to verify that the pointed hermitian forms satisfy~$(1)-(4)$ of~\cite[Theorem 6.1]{LeeWilczy}, as well as~\cite[(6.2) and~(6.3)]{LeeWilczy} and conditions~$(7),(8)$ and~$(9)$ of~\cite[Theorem 7.1]{LeeWilczy}.
As explained in~\cite[argument leading up to~(3.5)]{LeeWilczy}, the existence of the required pointed isometry is then known to follow.
The first four conditions and~(6.2) were verified in~\cite[Sections 3-6]{ConwayOrsonPencovitch}.
Condition~(6.3) is a consequence of the stable equivalence and~$(9)$ is equivalent to the inequality involving the Levine--Tristram signature from~\eqref{eq:LWCancellationLTSignature}; see e.g.~\cite[Section 3.4]{ConwayOrsonPencovitch}.
It remains to verify that~(7) and~(8) hold; Lee-Wilczynski's verifications for spheres in~\cite[Proof of Theorem~1.1, page~365]{LeeWilczy} are purely algebraic, apart from their invocation of  of~\cite[Lemma~4.2]{LeeWilczy} which can be replaced by~\cite[Lemma 6.2]{ConwayOrsonPencovitch}.
\end{proof}

We now prove Theorem~\ref{thm:PointedFormsDiscsIntro} from the introduction.

\begin{customthm}{\ref{thm:PointedFormsDiscsIntro}}
\label{thm:Discs}
Let $d>0$,
%%See the max.
let $K$ be a knot with $H_1(\Sigma_d(K))=0$ and let~$D_0,D_1$ be simple discs with boundary $K$,
nonzero Euler number,  and the same type.
%that represent a divisibility~$d$ homology class~$x \in H_2(X,\partial X)$.
The following assertions are equivalent:
\begin{enumerate}
\item the discs~$D_0$ and~$D_1$ are equivalent rel.\ boundary,
\item there is a pointed isometry~$\lambda_{\Sigma_d(D_0)} \cong \lambda_{\Sigma_d(D_1)}$.
\end{enumerate}
When the discs represent a divisibility~$d$ homology class~$x \in H_2(X,\partial X)$,
a pointed isometry exists if~$d=1$ or if~$b_2(X)>6$ and 
$$b_2(X)> \max_{0\leq j<d}\Big|\sigma (X)-\frac{2j(d-j)}{d^2}\, x\cdot x +\sigma_K (e^{\frac{2\pi i j}{d}})\Big|.$$
\end{customthm}
\begin{proof}
If the discs are equivalent rel.\ boundary, then the pointed hermitian forms of the branched covers are certainly isometric.
We therefore focus on the converse.
Proposition~\ref{prop:LeverageLW} shows that the existence of a pointed isometry~$\lambda_{\Sigma_d(S_0)} \cong \lambda_{\Sigma_d(S_1)}$ ensures the existence of an extendable compatible pair between the disc exteriors.
Theorem~\ref{thm:CompatiblePair}, now ensures that the discs are equivalent rel.\ boundary.
In the case of discs,  the $k$-invariant condition is not needed thanks to the Euler number assumption.
The final sentence follows from Theorem~\ref{prop:PointedFormsDiscs}.
\end{proof}

We refer to~\cite[Addendum 1]{LeeWilczy} for a discussion of situations in which cancellation applies to manifolds with~$3 \leq b_2(X) \leq 6$.
%Theorem~\ref{thm:Discs} also applies in those settings.
%%Technically one would have to check the verifications for spheres go through as well. 

We conclude this section with a remark concerning surfaces of higher genus.
\begin{remark}
\label{rem:Sunukjian}
When~$b_2(X)>|\sigma(X)|+2$,  Sunukjian~\cite[Theorem 7.4]{Sunukjian} proves that homologous~$\Z_d$-surfaces of positive genus $g$ are equivalent provided 
$$
b_2(X)+2g > \operatorname{max}_{0 \leq j <d}
\Big|\sigma(X)-\frac{2j(d-j)}{d^2}Q_X(x,x)\Big|.
$$
Thanks to work of Lee and Wilczynski~\cite{LeeWilczyGenus}, this condition can be reformulated as requiring that the surfaces have nonminimal genus.

We relate Sunukjian's work to our own.
The argument below closely follows~\cite[Proof of Theorem 7.4]{Sunukjian}.
Since $X$ is simply-connected and~$b_2(X)>|\sigma(X)|+2$, the work of Freedman~\cite{Freedman} produces a homeomorphism~$X=X' \# S^2 \times S^2$, Sunukjian's assumptions together with work of Wall~\cite{WallDiffeomorphisms} and the result of Lee and Wilczynski are seen to imply that, without loss  of generality, one of the surfaces can be assumed to lie in $X'$, so that the equivariant intersection form of its exterior splits off a hyperbolic.
Since the surfaces are externally stably equivalent (by the same normal~$1$-type argument as in Theorem~\ref{thm:PointedFormsDiscsIntro}),  the equivariant intersection forms are stably isometric.
Hambleton and Kreck's results on cancellation~\cite[Theorem A]{HambletonKreckCancellation} therefore imply that these forms are isometric.
Thus, if one could control the relative~$k$-invariants, then Sunukjian's result
% (and a generalisation to surfaces with boundary)
%%No because there is no LW with boundary.
would follow from Theorem~\ref{thm:CompatiblePairClosed}.

Sunukjian asserts that the surfaces can be taken to be isotopic but we believe that the application of Wall's~\cite{WallDiffeomorphisms} only leads to an equivalence.
It is conceivable that Sunukjian's techniques could be extended further using recent results on concordance of surfaces with boundary~\cite{Hellsten}.
\end{remark}

\subsection{Discs and spheres, without cancellation}
\label{sub:DiscsSpheresNoCancellation}

The work of Lee-Wilczynski~\cite{LeeWilczy,LeeWilczyOdd} and Theorem~\ref{thm:Discs} provide settings in which cancellation results yield uniqueness results for spheres and discs.
The next theorem (which is Theorem~\ref{thm:SpheresDiscsNoCancellation} from the introduction) considers settings in which these cancellation results are not available; the outcome is that in many cases the equivariant intersection form of the surface exteriors is a complete invariant.
The case~$d=0$ is excluded since it was already treated in~\cite{ConwayPowellDiscs,ConwayPowell}.

\begin{customthm}{\ref{thm:SpheresDiscsNoCancellationIntro}}
\label{thm:SpheresDiscsNoCancellation}
%%Phrasing it with homology classes to compare with LW.
Let~$d\neq 0$ and~$e \neq 0$ be integers such that~$n=e/d$ is an odd prime power or~$1,2,4,2p^k$ with~$p$ an odd prime power.
\begin{itemize}
\item Given a closed simply-connected~$4$-manifold~$X$,  two~$\Z_d$-spheres in~$X$ with Euler number $e$ and the same type are equivalent if and only if their exteriors have isometric equivariant intersection forms.
\item Given a simply-connected~$4$-manifold~$X$ with~$\partial X = S^3$ and a knot~$K \subset \partial X = S^3$ with~$H_1(\Sigma_d(K))=0$,  two~$\Z_d$-discs in~$X$ with boundary~$K$,  Euler number $e$, and the same type are equivalent rel.\ boundary if and only if their exteriors have isometric equivariant intersection forms.
\end{itemize}
\end{customthm}
\begin{proof}
Fix a hermitian form~$(H,\lambda)$ over~$\Z[\Z_d]$.
Abbreviate the abstract manifold homeomorphic to the boundary of the relevant surface exterior by~$Y$.
As explained in Corollary~\ref{cor:Section8DiscsSpheres},
in genus~$g=0$,  allowing for the knot $K$ to be empty in the closed case, the injective map from Theorem~\ref{thm:InjectionSurf} therefore takes the form
$$
\Xi \colon \Surf_{x,\lambda}^e(g)(X,K) \hookrightarrow \frac{\Iso_{\Z[\Z_d]}(\partial Q,-\ell_{\widehat{Y}})}{\Aut_{\Z[\Z_d]}(Q)}.
%No Homeo(S^2) action because H_1(S^2)=0 or whatever
$$
%Here, we used that when $d$ odd and the surface exteriors are spin, $\Spin(Y_e(K))$ is a singleton.
%To see this, note that $H^1(Y_e(K);\Z_2)=0$ thanks for Proposition~\ref{prop:AlexanderModule} together with the fact that~$n=e/d$ is assumed to be odd.
For spheres,  $Y=L(n,1)$, so the linking form is isometric to~$(x,y) \mapsto -\frac{1}{n}xy.$
When~$n$ is an odd prime power or~$1,2,4,2p^k$ with $p$ an odd prime power,  the only isometries of this linking form are multiplication by~$\pm 1$.
Since multiplication by~$\pm 1$ is an isometry of any hermitian form,
%Since~$\Aut_{\Z[\Z_d]}(Q)$ always contains~$\{ \pm 1\}$, 
the orbit set is trivial whenever it is nonempty.

Next, we consider discs whose boundary~$K$ satisfies~$H_1(\Sigma_d(K))=0$.
Proposition~\ref{prop:AlexanderModule} ensures that~$H_1(Y;\Z[\Z_d]) \cong \Z_n$ is generated by a lifted meridian~$\widetilde{\mu}$.
We describe how to calculate~$\ell_{\widehat{Y}}(\widetilde{\mu},\widetilde{\mu}) \in \Q/\Z$.
First, observe that~$n\widetilde{\mu}$ is homologous to $\widetilde{\lambda}_K+n\widetilde{\mu}$, where $\widetilde{\lambda}_K$ denotes a lifted Seifert longitude of $K$.
These curves are nullhomologous and, explicitly, the latter bounds the disc~$D$.
It follows that, possibly up to a sign,~$\ell_{\widehat{Y}}(\widetilde{\mu},\widetilde{\mu})=\frac{1}{n}(\widetilde{\mu} \cdot D)=\frac{1}{n}.$
The remainder of the argument follows as in the closed case.
\end{proof}

\begin{remark}
We collect some remarks surrounding Theorem~\ref{thm:SpheresDiscsNoCancellation}.
\begin{itemize}
\item When cancellation applies, Theorem~\ref{thm:SpheresDiscsNoCancellation} is not a new result since, in those cases,  Lee-Wilczynski already proved that homologous simple spheres are isotopic~\cite{LeeWilczyOdd,LeeWilczy}.
Out of the cancellation range,  Theorem~\ref{thm:SpheresDiscsNoCancellation} can be viewed as an improvement on Lee-Wilczynski's theorem that homologous simple
%Don't  delete:I left in homologous because, really, they don't have the refined statement with only e and type.
 spheres are determined by the pointed isometry type of the pointed equivariant intersection form of their branched cover.
For example,  when~$X=\C P^2$ and~$d=2$, 
Theorem~\ref{thm:SpheresDiscsNoCancellation} implies that~$\Z_2$-spheres in $\C P^2$ are equivalent (in fact isotopic~\cite{ConwayOrson}) because the equivariant intersection forms of such sphere exteriors are necessarily isometric to $\Z_- \times \Z_- \to \Z[\Z_2],(x,y) \mapsto -2(1-T)x\overline{y}$; see~\cite[Corollary~2.4]{ConwayOrson} for the argument.
This result is not immediate from the methods of Lee-Wilczynski; a lengthier discussion can be found in~\cite[Example 2.3]{ConwaySimpleSpheres}.
\item We conjecture that the second item of Theorem~\ref{thm:SpheresDiscsNoCancellation} can be generalised as follows:
there are at most~$|\Aut(\ell_{\Sigma_d(K)})/\Aut(\lambda)|$ many
%%Intentionally wrote \lambda (no \ZZ_d subscript) instead of Q because easier to write below.
 equivalence rel.\ boundary classes of~$\Z_d$-discs~$D \subset~X$ with the same boundary~$K$,  
 the same type,  relative Euler number~$e \neq 0$, and 
equivariant intersection form $\lambda$.
A helpful first step towards proving this conjecture would be to show that the linking form on the $d$-fold cover of~$Y_e(K)$ splits as the linking form of a lens space with the linking form on $\Sigma_d(K)$.
%%Don't delete.
%%Careful ZZ_d splitting on the module level.
Note the contrast between this conjecture and the fact that there are knots which bound infinitely many $\Z$-discs in $\C P^2$ with the same equivariant intersection form~\cite{ConwayDaiMiller}.
 \end{itemize}
 \end{remark}

\section{Simple Moebius and projective planes}
\label{sec:Nonorientable}

This section is concerned with simple projective planes and Moebius bands in simply-connected~$4$-manifolds. 
Section~\ref{sub:RP2} shows how our methods promptly recover Lawson's result that~$\Z_2$-projective planes in the~$4$-sphere are unknotted, whereas Section~\ref{sub:Moeb} is concerned with the enumeration of properly embedded Moebius bands in the~$4$-ball.

\subsection{Simple projective planes}
\label{sub:RP2}

We prove Theorem~\ref{thm:ProjectivePlanesIntro} from the introduction; here recall that we say that two surfaces have the \emph{same type} if they are either both characteristic or both ordinary.
As mentioned then, this result can be viewed as a generalisation of Lawson's result that~$\Z_2$-projective planes in the~$4$-sphere are unknotted.

\begin{customthm}{\ref{thm:ProjectivePlanesIntro}}
\label{thm:ProjectivePlanes}
Let $X$ be a closed simply-connected~$4$-manifold.
% and let $e \in \Z$ be an integer.
\begin{itemize}
\item Two projective planes in $X$ with simply-connected complements and the same odd Euler number and type are equivalent if and only if their exteriors have isometric intersection forms.
The intersection forms are isometric if the projective planes are homologous.
\item Two $\Z_2$-projective planes in $X$ with the same Euler number are equivalent if and only if their exteriors have isometric equivariant intersection forms .
\end{itemize}
The same assertions hold for Moebius bands with boundary a knot $K \subset S^3$ satisfying~$\det(K)=1$.
\end{customthm}
\begin{proof}
The arguments in Section~\ref{sub:AutomorphismClosed} (and specifically Theorem~\ref{thm:InjectionSurfClosed}) show that the result for projective planes reduces to the case of Moebius bands with unknotted boundary.
We therefore focus on the case of Moebius bands with boundary a knot~$K \subset S^3$ satisfying~$\det(K)=1$.
%We focus on projective planes as the arguments are identical in the case of Moebius bands with boundary a knot whose trivial determinant is trivial.

We first claim that once the Euler number, knot group and type are fixed, the Moebius bands are determined by the equivariant intersection form of their exteriors, i.e.\ that an extendable compatible pair necessarily exists.
We do this by arguing that the group of isometries~$\Iso_{\Z[\Z_d]}(\partial Q;-\ell_{\widehat{Y}_e(K)})$ contains at most two elements, namely multiplication by~$\pm 1$,
and then considering the actions by~$\Aut_{\Z[\Z_d]}(Q)$ and~$H^1(\Sigma,\partial \Sigma;\Z^w)$.
Once we show that the orbit set is trivial, the conclusion will then follow from Corollary~\ref{cor:Section8DiscsSpheres}.

%It then follows that for any bundle isomorphism~$H \colon (\overline{\nu}(S_0),S_0) \to (\overline{\nu}(S_1),S_1)$, the homeomorphism~$H|$ will fit in a compatible~pair.
%%Can then negate the isometry of the form.

When the knot group is trivial and the Euler number is odd
% (i.e.\ when~$\sigma(X)$ is odd) 
or when the knot group is order~$2$ and the Euler number is of the form~$e=2n$ with~$n$ odd,  Proposition~\ref{prop:AlexanderModuleNonOri} implies that the Alexander module is isomorphic to~$\Z_4$.
A rapid verification shows that the only linking forms on~$\Z_4$ are~$(x,y) \mapsto xy/4$ and~$(x,y)\mapsto -xy/4$.
In both cases, the group of isometries consists of two elements, namely multiplication by~$\pm 1$.
It follows that~$\Iso(\partial Q;-\ell_{\widehat{Y}_e(K)})/\Aut(Q)$ is trivial and the conclusion now follows from Corollary~\ref{cor:Section8DiscsSpheres}.

When the knot group is order $2$ and $e=2n$ with $n$ even,  Proposition~\ref{prop:AlexanderModuleNonOri} implies that the Alexander module is isomorphic to~$\Z_2\langle \widetilde{\mu} \rangle \oplus \Z_2\langle \gamma \rangle$ where~$w_1(\gamma)=-1$,  and that the module structure is determined by~$T\wt {\mu}=\wt{\mu}$ and $T\gamma=\gamma+\widetilde{\mu}$.
It follows that $|\Aut_{\Z[\Z_2]}(\Z_2 \oplus \Z_2)|=2$. %, as required.
Corollary~\ref{cor:RotOnYe(K)Covers} shows that~$\Rot(2\alpha)$ acts by $\gamma \mapsto \gamma +\alpha(\gamma)\widetilde{\mu}=\gamma +\widetilde{\mu}$.
It follows that~$\Iso(\partial Q;-\ell_{\widehat{Y}_e(K)})/H^1(\Sigma,\partial \Sigma;\Z^w)$ is trivial and the conclusion now follows from Corollary~\ref{cor:Section8DiscsSpheres}.
%(0,1)
This concludes the proof of the claim.
%T(1,0)=(1,0), T(0,1)=(1,1), T(1,1)=(1,0)+(1,1)=(0,1)
%%A priori, there are 6 automorphisms.
%%f(1,0) could be (0,1),(1,0),(0,1). Can be (1,0). 
%%T f(1,0)=f(1,0) so f(1,0) has to be (1,0) because only (1,0) is fixed.
%%f(0,1) can be (0,1) or (1,1).

Since nonorientable $\Z_2$-surfaces are nullhomologous (recall Lemma~\ref{lem:H1NonOrientable}), whether or not they are characteristic is determined by whether or not $X$ is spin.
The second item now follows from the claim and Theorem~\ref{thm:CompatiblePairClosedIntro}.
For the first item,  it suffices to show that if~$S \subset X$ is a projective plane with simply-connected complement,  then~$Q_{X_S}$ only depends on~$[S]$:
the result will then again follow  from the claim and Theorem~\ref{thm:CompatiblePairClosedIntro} 
(the condition on spin unions is automatic in nonorientable genus~$1$ because the Alexander modules are torsion; recall Theorem~\ref{thm:CK4Manifold}).
To see this,  use~$H_1(X_S)=0$ and the long exact sequence of the pair to deduce that the inclusion~$X_S \subset X$ induces an isometry of~$Q_{X_S}$ with the restriction of $Q_X$ to the subgroup~$\{ y \in H_2(X) \mid Q_X^{\Z_2}([S],y)=0 \mod 2\}. $
%%0->H_2(X_S)->H_2(X)->Z_2->0
%%%%%
%No Guillou--Marin in genus 1.
\end{proof}

\subsection{$\Z_2$-Moebius bands in~$D^4$}
\label{sub:Moeb}

We generalise Lawson's result in another direction to obtain an upper bound on the number of~$\Z_2$-Moebius in~$D^4$ that a knot~$K \subset S^3$ can bound.
For this, we set $d_K:=\operatorname{max}(n_K,1)$, where $n_K$ denotes the number of distinct prime factors of $\det(K)$.
We also note that Moebius bands in $D^4$ with abelian knot group are necessarily~$\Z_2$-Moebius bands.

\begin{customthm}{\ref{thm:MoebiusIntro}}
\label{thm:Moebius}
Up to isotopy rel.\ boundary, a knot~$K \subset S^3$ bounds at most $2^{d_K-1}$ Moebius bands in~$D^4$ with abelian knot group and a given Euler number.
\end{customthm}
\begin{proof}
%%Can't do outside of D^4 because not cyclic.
We aim to apply Corollary~\ref{cor:Section8DiscsSpheres}.
We assert that the exterior of a~$\Z_2$-Moebius band~$S \subset~D^4$ has cyclic~$H_1(\partial X_S;\Z[\Z_2])$ and that the intersection form of its universal cover is~$4\det(K)$.
%We assert that the exterior of any~$\Z_2$-Moebius band in~$D^4$ has cyclic Alexander module and that the intersection form of its universal cover is~$4\det(K)$.
Proposition~\ref{prop:AlexanderModuleNonOri} shows that~$H_1(\partial X_S;\Z[\Z_2]) 
\cong H_1(Y_e(K);\Z[\Z_2])
\cong H_1(\Sigma_2(K)) \oplus \Z_4.
$
We explain the last isomorphism since,  a priori,  the~$\Z_4$ summand could have been a~$\Z_2 \oplus \Z_2$ summand.
Consider the long exact sequence of the pair~$(X_S,\partial X_S)$ with~$\Z[\Z_2]$ coefficients.
Since~$H_2(X_S;\Z[\Z_2]) \cong \Z_-$ (e.g. by~\cite[Proposition 4.12]{ConwayOrsonPowell}),  it follows that~$\mathcal{A}_{\partial}:=H_1(\partial X_S;\Z[\Z_2])$ is cyclic and is presented by~$Q_{\widetilde{X}_S}$.
In particular,  the order of this finite group is~$4\det(K)$.
Since~$\det(K)$ is odd,~$\mathcal{A}_{\partial} \cong \Z_{\det(K)} \oplus G$ where~$G$ is~$\Z_2 \oplus \Z_2$ or~$\Z_4$.
Since~$\mathcal{A}_{\partial}$ is cyclic,~$G$ must be~$\Z_4$, as asserted.

Since the intersection form is isometric to~$Q:= (\Z,4\det(K))$ and since Moebius bands in $D^4$ are necessarily characteristic, Corollary~\ref{cor:Section8DiscsSpheres} states that the number of $\Z_2$-Moebius bands in $D^4$ with boundary $K$ and Euler number $e$ is at most 
$$
\Big|
 \frac{\Iso_{\Z[\Z_2]}(\partial Q,-\ell_{\widehat{Y}_e(K)})}
 {\Aut_{\Z[\Z_2]}(Q) \times d \cdot H^1(\Sigma,\partial \Sigma;\Z^w)}
 \Big|
 =
 \Big|
 \frac{\Iso_{\Z[\Z_2]}(\partial Q,-\ell_{\widehat{Y}_e(K)})}
 { \lbrace \pm 1 \rbrace \times 2 \Z\langle \alpha\rangle}
 \Big|.
$$
%The equivariant intersection form in~$D^4$ is~$(\Z_-,(1-T)4\det(K))$ but perhaps we don't care.
Here~$\alpha$ denotes the cohomology class that evaluates to~$1$ on the core of the Moebius band~$\Sigma$.

It remains to understand the numerator and the action.
Fix an isometry~$\partial Q \cong -\ell_{\widehat{Y}_e(K)}$ and identify~$\Iso_{\Z[\Z_2]}(\partial Q,-\ell_{\widehat{Y}_e(K)})$ with~$\Aut_{\Z[\Z_2]}(\partial Q)$.
%%%Don't delete.
%%The Rot(2\alpha)-action is trivial on the odd-torsion 
%%It's -1 on the Z_4 part.
%%These facts are invariant under any isomorphism \partial Q \cong -\ell_Y
%%Those it's fine to identify both
Since~$Q_{\widetilde{X}_S}$ is represented by~$(4\det(K))$, the boundary linking form is~$(x,y) \mapsto -\frac{xy}{4\det(K)}.$
The isometries of this linking form correspond to the~$x \in \Z_{4\det(K)}$ with~$x^2=1$.
There are $2^{d_K+1}$ such elements,  unless $\det(K)=1$ in which case there are $2$ such elements.
%When~$\det(K)$ is a (necessarily odd) prime power (resp.\ when~$\det(K)=1$), there are~$4$ (resp.~$2$) such~$x$.
After quotienting by the group action of~$\Aut_{\Z[\Z_2]}(Q)=\Aut(\lambda)=\{ \pm 1\}$, only half of these remain.

In order to obtain the upper bound~$2^{d_K-1}$ in the case where~$\det(K)\neq 1$, we need to better understand the actions.
The four automorphisms we are considering are~$(a,b) \mapsto (\varepsilon_1a,\varepsilon_2b)$ with~$\varepsilon_1,\varepsilon_2 \in \{ \pm 1 \}$.
The action of~$\Aut_{\Z[\Z_2]}(Q)=\Aut(\lambda)=\{ \pm 1\}$ on~$\Z_{\det(K)} \oplus \Z_4\langle \gamma\rangle$ is by~$(a,b) \mapsto (-a,-b)$.
On the other hand,  Corollary~\ref{cor:RotOnYe(K)Covers} shows that~$\Rot(2\alpha)$ acts by the identity on~$\Z_{\det(K)}$ and by~$\gamma \mapsto \gamma +\alpha(\gamma)\widetilde{\mu}=-\gamma$ 
%%Don't delete
%recall w_1(\gamma)=-1.
on the generator of the second factor, i.e.\ by~$(a,b) \mapsto (a,-b)$.
Here, recall that~$\widetilde{\mu}=2\gamma$ denotes a lifted meridian of $K$.
%%Don't delete
%%Recall that \mu=2 so
%1+2=3=-1
It follows that the orbit set has order~$2^{d_K-1}$ and, since Moebius bands in $D^4$ are necessarily characteristic,  the theorem now follows from Corollary~\ref{cor:Section8DiscsSpheres}.
\end{proof}

When~$\det(K)=1$,  Theorem~\ref{thm:Moebius} follows from~\cite[Theorem B]{ConwayOrsonPowell}; the other cases are new.

\section{Homotopy ribbon discs with knot group~$BS(1,2)$}
\label{sec:BS(12)}

%We can probably recover~\cite{ConwayPowellDiscs} because the Alexander module is torsion.
%This would mean writing Theorem~\ref{thm:CKForSurfaceExteriors} for more groups.
%TBD if one could do higher genus.
As a final application,  we venture outside the realm of simple surfaces and show how the methods of this paper and~\cite{ConwayKasprowski4Manifolds} can be used to study discs with knot group the Baumslag-Solitar group~$BS(1,2)$.
For this, recall that a disc $S \subset X$ with boundary $K \subset S^3$ is \emph{homotopy ribbon} if the inclusion induced map $\pi_1(E_K) \to \pi_1(X_S)$ is surjective.
When the Euler number of $S$ relative to the Seifert framing is zero, $\partial X_S \cong S^3_0(K)$ is the $0$-surgery on $K$,  so homotopy ribbon discs are seen to have~$\pi_1(\partial X_S) \to \pi_1(X_S)$ surjective.
%As explained in the introduction, 
The following result generalises the classification of homotopy ribbon discs in the $4$-ball from~\cite{ConwayPowellDiscs} to more general~$4$-manifolds with boundary $S^3$; we refer to~\cite[page 2]{ConwayKasprowski4Manifolds} for the definition of a compatible triple.

%We begin with the case of discs and recover a result from~\cite{ConwayPowellDiscs}.

\begin{theorem}
\label{thm:BS12}
Let $X$ a simply-connected $4$-manifold with boundary $S^3$ and let~$K \subset S^3$ be a knot.
%Let $K \subset S^3$ be a knot.
Two~$BS(1,2)$-homotopy ribbon discs $S_0,S_1 \subset X$ with boundary~$K$ and vanishing relative Euler number are equivalent rel.\ boundary if and only if there exists a bundle isomorphism~$H \colon (\overline{\nu}(S_0),S_0) \to (\overline{\nu}(S_1),S_1)$ such that $h:=(\id_{E_K} \cup H|)$ satisfies $\iota_1 \circ h_*=u \circ \iota_0$ for some isomorphism $u \colon \pi_1(X_{S_0}) \to \pi_1(X_{S_1})$, and $h$ fits into a compatible triple $(F,G,h)$.
\end{theorem}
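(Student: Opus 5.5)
The plan is to mimic the proof of Theorem~\ref{thm:CKForSurfaceExteriors}, replacing Theorem~\ref{thm:CK4Manifold} with the general homeomorphism classification of~\cite{ConwayKasprowski4Manifolds} for $4$-manifolds whose boundary $\pi_1$ surjects onto $\pi_1$ and whose group is good with suitable cohomological properties. The forward direction is immediate: a rel.\ boundary equivalence $(X,S_0)\to(X,S_1)$ can be isotoped to restrict to a bundle isomorphism $H$ of closed tubular neighbourhoods that is the identity near $K$. Its restriction to the exteriors then yields $u$, the relation $\iota_1\circ h_*=u\circ\iota_0$, and a compatible triple $(F,G,h)$ given by the induced maps on $\Z[\pi]$-homology.

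For the converse, set $\pi:=BS(1,2)=\langle a,t\mid tat^{-1}=a^2\rangle$. I will verify the hypotheses of the relevant theorem from~\cite{ConwayKasprowski4Manifolds}, which is the analogue of~\cite[Theorem 1.6]{ConwayKasprowski4Manifolds} used in~\cite{ConwayPowellDiscs}.
\begin{enumerate}
\item The group is good, being solvable.
\item It has $\cd(\pi)=2$: it is the fundamental group of an aspherical $2$-complex, namely its one-relator presentation complex. So~\cite[Proposition 7.13]{ConwayKasprowski4Manifolds} makes the $k$-invariant condition automatic, exactly as in Proposition~\ref{prop:NokInvariant}.
\item The boundary maps surject: since the relative Euler number vanishes, $\partial X_{S_i}\cong S^3_0(K)$, and the homotopy ribbon condition gives surjectivity of $\pi_1(\partial X_{S_i})\to\pi_1(X_{S_i})$.
\item Kirby--Siebenmann invariants agree: $\ks(X_{S_0})=\ks(X)=\ks(X_{S_1})$ by additivity.
\end{enumerate}
Given the compatible triple $(F,G,h)$ and the isomorphism $u$, the homotopy-theoretic input of~\cite[Theorem 2.29]{ConwayKasprowski4Manifolds} should then yield a homotopy equivalence extending $h$. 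Surgery over the good group $\pi$ upgrades this to a homeomorphism via~\cite[Proposition 1.5]{ConwayKasprowski4Manifolds}, with $L$-theory and Whitehead group under control because $\pi$ satisfies the Farrell--Jones conjecture. Gluing this homeomorphism of exteriors to $H$ along $h$ produces the required equivalence rel.\ boundary.

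The main obstacle is the secondary obstruction and the spin condition. Since $\pi$ is infinite, compatible triples cannot simply be replaced by compatible pairs. This is why the statement uses triples; I would invoke~\cite[Proposition 2.11]{ConwayKasprowski4Manifolds} only as far as it applies. One must also check that the secondary obstruction $b\in\Herm(H^2(\partial X_{S_1};\Z[\pi]))$ vanishes. My approach is to argue, as in~\cite{ConwayPowellDiscs}, that $H_1(S^3_0(K);\Z[\pi])$ is $\Z$-torsion and satisfies $H^2(S^3_0(K);\Z[\pi])^*=0$: it is a quotient of the Alexander module tensored up to $\Z[\pi]$, over a metabelian group with $\Z[\frac12]$-type fibre. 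The hermitian form $b$ then has nothing to pair against, so it is trivial, and no spin hypothesis on the union is needed. If that vanishing fails, I would fall back on showing that $b$ is even and absorbing it via the $\mathcal{G}$-action~\cite[Theorem 9.4]{ConwayKasprowski4Manifolds}. That route requires checking that $\pi_2(X_{S_i})$ is projective, or stably of the required form, using $\cd(\pi)=2$ and the exact sequence of~\cite[Lemma 3.3]{ConwayKasprowski4Manifolds}.
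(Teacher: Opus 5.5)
Your outline follows the paper's proof. The paper applies \cite[Theorem~1.8]{ConwayKasprowski4Manifolds} directly to the disc exteriors. Surjectivity on $\pi_1$ comes from the homotopy ribbon condition together with $\partial X_{S_i}\cong S^3_0(K)$. The relative $k$-invariant is dropped because $BS(1,2)$ is $2$-dimensional. No spin or secondary-obstruction hypothesis is needed because the $\Z[\pi]$-Alexander module of $S^3_0(K)$ is torsion. Unpacking that theorem as you do is fine, but the one genuinely nontrivial input is justified incorrectly.

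The module $H_1(S^3_0(K);\Z[\pi])$ is never $\Z$-torsion in this setting. Put $G=\pi_1(S^3_0(K))$, $N=\ker(G\to\pi)$ and $A=[\pi,\pi]\cong\Z[\tfrac12]$.
\begin{itemize}
\item The five-term exact sequence of $1\to N\to[G,G]\to A\to 1$, together with $H_2(A)=\Lambda^2\Z[\tfrac12]=0$, identifies the coinvariants $H_1(N)_A$ with the kernel of the surjection from $H_1(S^3_0(K);\Z[t^{\pm1}])$ onto $\Z[\tfrac12]$.
\item Since $\Delta_K$ vanishes at $2$ or at $\tfrac12$, it vanishes at both by symmetry. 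So this kernel has rational dimension $\deg\Delta_K-1\geq 1$.
\item Hence $H_1(N)=H_1(S^3_0(K);\Z[\pi])$ is not $\Z$-torsion.
\end{itemize}
Your remark about a quotient of a tensored-up Alexander module does not establish anything here.

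What is true, and what the paper uses, is $\Z[\pi]$-torsion. The group $BS(1,2)$ is poly-torsion-free-abelian, so $\Z[\pi]$ is an Ore domain. Then \cite[Proposition~2.10]{CochranOrrTeichner}, combined with $b_1(S^3_0(K))=1$, shows the module has rank zero over the skew field of fractions; this is also explained in the proof of \cite[Lemma~2.1]{ConwayPowellDiscs}. Because $\Z[\pi]$ has no zero divisors, every $\Z[\pi]$-linear map from a torsion module to $\Z[\pi]$ vanishes. This gives $H^2(S^3_0(K);\Z[\pi])^*=0$, so the secondary obstruction is trivial. With this replacement your argument goes through, and the fallback via evenness and \cite[Theorem~9.4]{ConwayKasprowski4Manifolds} is unnecessary.
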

\begin{proof}
The conditions are necessary, so we focus on their sufficiency.
Since the discs are homotopy ribbon, and we assumed the existence of a compatible triple,~\cite[Theorem~1.8]{ConwayKasprowski4Manifolds} implies that the homeomorphism~$h \colon \partial X_{S_0} \to \partial X_{S_1}$ extends over the disc exteriors leading to the required equivalence rel.\ boundary.
This theorem applies because the~$\Z[BS(1,2)]$-Alexander module of~$\partial X_{S_i}$  is torsion for $i=0,1$; this factfollows from~\cite[Proposition~2.10]{CochranOrrTeichner},  and is also explained in~\cite[Proof of Lemma~2.1]{ConwayPowellDiscs}.
Since~$BS(1,2)$ is $2$-dimensional, the relative $k$-invariant can be omitted~\cite[Proposition 2.12]{ConwayKasprowski4Manifolds}.
%%%Don't delete yet.
%As explained in~\cite[Lemma 2.5]{ConwayPowellDiscs},  the Alexander trick shows that it suffices to prove the disc exteriors are homeomorphic rel.\ boundary.
%The disc exteriors are aspherical by~\cite[Lemma~2.1]{ConwayPowellDiscs}.
%Since the Alexander modules are torsion (this follows from~\cite[Proposition~2.10]{CochranOrrTeichner}, as also explained in~\cite[Lemma~2.1]{ConwayPowellDiscs}),  we deduce from the hypothesis and~\cite[Theorem~1.8]{ConwayKasprowski4Manifolds} that the disc exteriors are indeed homeomorphic rel.\ boundary, thus leading to the result.
\end{proof}

In order to recover~\cite[Theorem 1.4]{ConwayPowellDiscs}, one notes that when $X=D^4$, the disc exteriors are aspherical~\cite[Lemma 2.1]{ConwayPowellDiscs} (so that the condition involving compatible triples holds vacuously) and applies the Alexander trick to upgrade the equivalence to an isotopy.

\appendix

\section{Bundle isomorphisms of~$\Sigma \times \R^2$.}
\label{sec:BundleIsos}

Given an orientable surface $\Sigma$ with empty or connected boundary, the first goal of this section is to prove Proposition~\ref{prop:RotAlphaAxioms} which asserts that,  for every $\alpha \in H^1(\Sigma),$ there exists a bundle isomorphism
$$\Rot(\alpha) \colon  \Sigma \times \R^2 \to  \Sigma \times \R^2$$
satisfying a certain number of properties.
Taking circle bundles leads to a bundle isomorphism
$$\Rot(\alpha) \colon \Sigma \times S^1 \to  \Sigma \times S^1.$$
Recall the coefficient system~$\pi_1(\Sigma \times S^1) \to \Z_d$ from Construction~\ref{cons:CoeffSys} that sends the curves in $\Sigma$ to zero and the meridian to~$1$.
The second goal of this section is to calculate the effect of~$\Rot(\alpha)^d|_{\Sigma \times S^1}$ on the Alexander module with respect to this coefficient system.
This was needed in Section~\ref{sec:Injection} to prove that the map $\Xi_{\Emb}$ is well defined.
Finally we will study the effect of $\Rot(\alpha)$ on spin structures since this was used both in Section~\ref{sec:SpinUnion} and in Section~\ref{sec:Injection}.

\subsection{The construction of $\Rot(\alpha)$.}

This section constructs the bundle isomorphism~$\Rot(\alpha)$ and establishes Proposition~\ref{prop:RotAlphaAxioms}.
In what follows,~$p \colon \Sigma \times \R^2 \to \Sigma$ denotes the bundle projection.

\begin{construction}
\label{cons:Rotn}
Given $\alpha \in H^1(\Sigma) \cong H^1(\Sigma,\partial \Sigma),$ choose a representative~$f_\alpha\colon \Sigma\to SO(2)$ of the class~$\alpha$ in~$H^1(\Sigma,\partial \Sigma)\cong [(\Sigma,\partial \Sigma),(S^1,*)] \cong [(\Sigma,\partial \Sigma),(SO(2),*)]$.
Given $z \in \Sigma \times \R^2$, the required bundle isomorphism is 
%then 
$$
\Rot(\alpha)(z)=(f_{\alpha}(p(z))\cdot z.
$$
%%%Don't delete
%%every automorphism assigns to a point x in the base an element of O(2) by restricting the autmorphism to the fibre over x. Since this has to be done continuously, this gives a map X->O(2
%Returning to the case $X=\Sigma$, picking a basis $\{ \alpha_i\}$ of $H^1(\Sigma)$, the~$\{ \Rot(\alpha_i)\}$ generate $[\Sigma,SO(2)] \cong H^1(\Sigma)$.
\end{construction}

In order to calculate the effect of~$\Rot(\alpha)$ on integral homology, we require a bundle theoretic lemma.
		Let~$E\xrightarrow{p}B$ be a rank~$2$ vector bundle with structure group~$SO(2)$ and let~$Y \to B$ be its circle bundle.
		%So automatically orientable
A map~$\phi \colon B\to SO(2)\cong S^1$ induces the bundle automorphism
$$f_\phi \colon E \to E,e \mapsto \phi(p(e))\cdot e.$$
The next lemma describes the effect of~$f_\phi$ on the first homology of $Y$.

	\begin{lemma}
		\label{lem:h1-action}
Let~$E\xrightarrow{p}B$ be a rank $2$ vector bundle with structure group~$SO(2)$, let~$Y \to B$ be its circle bundle, and write~$i\colon S^1\to Y$ for the inclusion of the fibre.
		Given~$\phi \colon B\to SO(2)\cong S^1$,  the bundle automorphism~$f_\phi \colon E \to E$ satisfies: 
$$(f_\phi)_*=\id+i_*\phi_*(p|_Y)_*\colon H_1(Y)\to H_1(Y).$$ 
	\end{lemma}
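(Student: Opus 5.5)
Since $f_\phi$ is a bundle map covering $\id_B$, the plan is to compare $f_\phi|_Y$ with the identity through a homotopy-theoretic factorisation that separates the base and fibre directions. The restriction $f_\phi|_Y\colon Y\to Y$ is the composite
$$Y\xrightarrow{\ (\phi\circ p|_Y,\ \id)\ } S^1\times Y\xrightarrow{\ a\ } Y,$$
where $a\colon SO(2)\times Y\to Y$ is the fibrewise action $(g,y)\mapsto g\cdot y$. This action is well defined globally because the structure group is the abelian group $SO(2)$, so its left action on each fibre commutes with the transition functions.

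I would then compute $a_*$ on $H_1(S^1\times Y)\cong H_1(S^1)\oplus H_1(Y)$ using the Künneth theorem in degree one; the degree-zero Tor term does not contribute. The restriction of $a$ to $\{1\}\times Y$ is the identity, so $a_*(0,y)=y$. The restriction of $a$ to $S^1\times\{y_0\}$ is the orbit map $g\mapsto g\cdot y_0$. This orbit map is the inclusion of the fibre through $y_0$, and since all fibres are freely homotopic to $i(S^1)$, its class is $a_*([S^1],0)=i_*[S^1]$. Additivity of $a_*$ on the direct sum then gives $a_*(x,y)=i_*x+y$ for $x\in H_1(S^1)$ and $y\in H_1(Y)$.

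Next I compute the first map on homology. The map $(\phi\circ p|_Y,\id)$ sends $y\in H_1(Y)$ to $(\phi_*(p|_Y)_*y,\ y)$. This holds because the Künneth isomorphism in degree one is induced by the two projections, so the diagonal-type map splits componentwise. Composing with $a_*$ gives $(f_\phi)_*y=y+i_*\phi_*(p|_Y)_*y$, which is the statement.

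The main obstacle is justifying that the degree-one Künneth decomposition is additive in the way the argument uses, that is, that $a_*(x,y)=a_*(x,0)+a_*(0,y)$ under the splitting determined by the two projections. This follows because $H_1$ of a product splits along the inclusions $S^1\times\{y_0\}$ and $\{1\}\times Y$, which are inverse to the projections. I would state this explicitly, together with a basepoint check: choose a fixed basepoint $y_0$ and use that the fibre over any point is freely homotopic to $i$, which suffices in $H_1$.
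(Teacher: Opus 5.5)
Your argument is correct, but it is organised differently from the paper's. The paper works one loop at a time. It represents a class by a loop $\gamma$ in $Y$ and pulls the circle bundle back over $p\circ\gamma$ to get a trivial bundle $T^2\to S^1$. It then computes the restricted automorphism on $H_1(T^2)\cong\Z^2$ in the basis given by a longitude and the fibre. There the map visibly adds $\deg(\phi\circ p\circ\gamma)$ copies of the fibre to $[\gamma]=(1,z)$.

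You instead factor $f_\phi|_Y$ globally as the action map $a\colon SO(2)\times Y\to Y$ precomposed with $(\phi\circ p|_Y,\id)$. You then read off the formula from the degree-one splitting $H_1(SO(2)\times Y)\cong H_1(SO(2))\oplus H_1(Y)$. This splitting is valid because $Y$ is path-connected, and is given by the two slice inclusions, which are inverse to the two projections. The remaining inputs are that $a|_{\{1\}\times Y}=\id$ and that $a|_{SO(2)\times\{y_0\}}$ is the fibre inclusion.

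Your route treats all of $H_1(Y)$ at once. It avoids representing classes by loops and passing to pullbacks when $p\circ\gamma$ is not embedded. It also makes clear that the only inputs are the global fibrewise action, which is where commutativity of $SO(2)$ enters, and the splitting of $H_1$ of a product. The paper's computation is more hands-on and makes the geometric picture of ``adding meridians'' explicit.

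You should state the orientation convention in your write-up. The map $i$ must be the orbit map $g\mapsto g\cdot y_0$, using the same identification $SO(2)\cong S^1$ as for $\phi$; otherwise a sign appears. Orbit maps at different points are homotopic via $(g,t)\mapsto g\cdot y_t$ along a path $y_t$ in $Y$, so this choice is well defined up to homotopy. The paper uses the same convention implicitly.
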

\begin{proof}
Given~$[\gamma] \in H_1(Y)$, we need to show that~$f_\phi([\gamma])=[\gamma]+i_*(\phi_*((p|_Y)_*([\gamma])))\in H_1(Y)$.
Restricting the bundle~$E \to B$ to the loop~$p(\gamma)\subset B$ yields a trivial circle bundle~$Y|_{\gamma} \cong T^2\xrightarrow{p} S^1$. 
%%Don't delete: SO(2) so orientable bundle.
The bundle automorphism~$f_\phi$ restricts to a bundle automorphism~$f'_\phi\colon Y|_\gamma \to Y|_\gamma$.
This bundle automorphism is induced by~$\phi'\colon S^1\xrightarrow{\gamma}Y\xrightarrow{p}B\xrightarrow{\phi}SO(2)\cong S^1$, so that~$f_\phi'(e)=\phi'(p(e))e$.
Base~$H_1(Y|_\gamma)\cong \Z^2$ by the~$S^1$-fibre~$\mu$ in the second coordinate and by some longitude~$\ell$ in the first coordinate; note~$p_*(\ell)=1\in \Z\cong H_1(S^1)$.
%the calculation above is not in homology but on the space level.
Since~$f_\phi'(\mu)=\phi'(p_*(\mu))\mu=\phi'(0)\mu=\mu$. and~$f_\phi'(\ell)=\phi'(p_*(\ell))\ell=\phi'(1)\ell$, we deduce that
%%%Don't delete
%Choose a basis of~$H_1(Y|_\gamma)\cong \Z^2$ such that~$(1,0)$ maps to~$1\in \Z\cong H_1(S^1)$ under~$p'_*$, and such that~$(0,1)$ is represented by the~$S^1$-fibre. Then~$(f_\phi)_*$ is represented by
%%%%%%
\begin{align*}
(f_\phi)_*=(f_\phi')_* \colon H_1(Y|_\gamma)&\to H_1(Y|_\gamma) \\
\bsm a \\ b \esm &\mapsto \bsm a \\ \phi'(a)b \esm.
\end{align*}
We now calculate~$(f_\phi)_*([\gamma])=(f_\phi')_*([\gamma])$.
By construction,~$[\gamma]=(1,z)\in H_1(T^2)\cong\Z^2$ for some~$z\in\Z$. 
%%Don't delete.
%%(1,z) because fibre above gamma, so \gamma \times S^1
Using the previous calculation and the identification $S^1 \cong SO(2)$, we conclude that\
%%Using a \phi'(1) \in S^1 a bit sneakily.
$$
f_\phi([\gamma])=
(f'_\phi)_*(1,z)
=(1,z+\phi'(1))
=(1,z)+(0,\phi'(1))
=[\gamma]+i_*(\phi_*((p|_Y)_*([\gamma]))).
$$
This concludes the proof of the lemma.
	\end{proof}

We now prove the main result of this section.

\begin{proposition}
\label{prop:RotAlphaAxioms}
For every class~$\alpha\in H^1(\Sigma)$, there exists a bundle isomorphism
$$
\Rot(\alpha) \colon \Sigma \times \R^2 \to \Sigma \times \R^2
$$
that satisfies the following properties:
\begin{enumerate}
\item The homeomorphism $\Rot(\alpha)$ is orientation-preserving and restricts to the identity on the~$0$-section and on $(\Sigma \times \R^2)|_{\partial \Sigma}.$
\item The effect of $\Rot(\alpha)$ on $H_1(\Sigma \times S^1)=\Z \oplus H_1(\Sigma)$ is
\begin{equation*}
%\label{eq:RotnHomology}
\Rot(\alpha)_*=
\begin{pmatrix} \id & \alpha_* \\ 0 & \id \end{pmatrix}.
\end{equation*}
Here,~$\alpha_* \colon H_1(\Sigma) \to H_1(S^1)=\Z$ is the map induced by~$\alpha \in H^1(\Sigma) \cong [\Sigma,S^1]$ on homology.
The splitting~$H_1(\Sigma \times S^1)=\Z \oplus H_1(\Sigma)$ is obtained using the fixed section~$r \colon \Sigma \to \Sigma \times S^1$.
\item For every bundle automorphism~$H \colon \Sigma \times \R^2 \to \Sigma \times \R^2$ that satisfies~$H|_{\partial \Sigma \times \R^2}=\id$, there exists a unique~$\alpha \in H^1(\Sigma)$ such that~$H \simeq \Rot(\alpha)$ rel.\ boundary.
%%Don't delete
%Automatically op as a homeo because rel boundary.
%The bundle iso is the op as a bundle iso because over id.
\end{enumerate}
\end{proposition}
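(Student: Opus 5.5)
We take $\Rot(\alpha)$ to be the bundle automorphism of Construction~\ref{cons:Rotn}: choose a representative $f_\alpha\colon(\Sigma,\partial\Sigma)\to(SO(2),1)$ of $\alpha\in H^1(\Sigma,\partial\Sigma)\cong[(\Sigma,\partial\Sigma),(SO(2),*)]$ and set $\Rot(\alpha)(z)=f_\alpha(p(z))\cdot z$. The three items are then checked in order.

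\textbf{Item (1).} This is immediate from the construction. $\Rot(\alpha)$ is a fibrewise linear isomorphism with inverse $z\mapsto f_\alpha(p(z))^{-1}\cdot z$. Since $SO(2)$ fixes $0\in\R^2$, it restricts to the identity on the $0$-section. Since $f_\alpha|_{\partial\Sigma}\equiv 1$, it is the identity over $\partial\Sigma$. Each fibre map lies in $SO(2)$ and the base map is the identity, so $\Rot(\alpha)$ preserves orientation.

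\textbf{Item (2).} Apply Lemma~\ref{lem:h1-action} with $B=\Sigma$, $E=\Sigma\times\R^2$ and $\phi=f_\alpha$. This gives
\[
\Rot(\alpha)_*=\id+i_*\circ(f_\alpha)_*\circ(p|_Y)_* \quad\text{on } H_1(\Sigma\times S^1).
\]
In the splitting $\Z\langle[S^1]\rangle\oplus r_*H_1(\Sigma)$, the map $(p|_Y)_*$ kills the fibre class and is the identity on $r_*H_1(\Sigma)$, while $i_*$ sends $\Z=H_1(S^1)$ onto the fibre summand. The correction term is therefore the off-diagonal entry $\alpha_*=(f_\alpha)_*\colon H_1(\Sigma)\to\Z$, which yields the stated upper triangular matrix.

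\textbf{Item (3).} This is the substantive part. Because $\Sigma\times\R^2$ is trivial, any bundle automorphism $H$ covering the identity and orientation-preserving on fibres has the form $z\mapsto g(p(z))\cdot z$ for a continuous $g\colon\Sigma\to GL_2^+(\R)$ with $g|_{\partial\Sigma}=1$.

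Two points need care here. First, we must check that $H$ covers the identity of $\Sigma$; this is part of what "bundle automorphism" means in this appendix, and otherwise we would first reduce to that case. Second, $H$ preserves fibre orientation: it is the identity over $\partial\Sigma\neq\emptyset$ (in the closed case one uses the ambient orientation), and $\Sigma$ is connected, so $\det g>0$ everywhere.

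Since $GL_2^+(\R)$ deformation retracts onto $SO(2)$ via Gram--Schmidt, and this retraction fixes $1$, the map $g$ is homotopic rel $\partial\Sigma$ to a map $g'\colon(\Sigma,\partial\Sigma)\to(SO(2),1)$. This homotopy gives a bundle homotopy rel boundary from $H$ to $f_{g'}=\Rot([g'])$. Existence of $\alpha$ then follows from $[(\Sigma,\partial\Sigma),(S^1,1)]\cong H^1(\Sigma,\partial\Sigma)\cong H^1(\Sigma)$, using that $\partial\Sigma$ is connected or empty.

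For uniqueness, suppose $\Rot(\alpha)\simeq\Rot(\beta)$ rel boundary through bundle automorphisms. This gives a homotopy rel $\partial\Sigma$ of maps $\Sigma\to GL_2^+(\R)$, and retracting it to $SO(2)$ shows $f_\alpha\simeq f_\beta$ rel $\partial\Sigma$, hence $\alpha=\beta$. Alternatively, uniqueness follows from item (2), since $\alpha_*$ determines $\alpha$ because $H^1(\Sigma)=\Hom(H_1(\Sigma),\Z)$. The main obstacle is making precise which class of bundle automorphisms and homotopies is allowed, so that the reduction from $GL_2^+$ to $SO(2)$ is legitimate.
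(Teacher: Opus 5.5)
Your proof is correct. Items (1) and (2) are handled exactly as in the paper, via Construction~\ref{cons:Rotn} and Lemma~\ref{lem:h1-action}. For item (3) your route is more direct.

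\emph{The paper's route.} It specialises the general Proposition~\ref{prop:BundleIsoHE}, which identifies orientation-preserving metric automorphisms of $P\times_{O(2)}\R^2$ with sections of $P/SO(2)\times_{\Z_2}SO(2)$. In the trivial case one has $P/SO(2)=\Sigma\times\Z_2$, so these sections are just maps $(\Sigma,\partial\Sigma)\to(SO(2),*)$.

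\emph{Your route.} You use the global trivialisation to write $H$ as a $GL_2^+(\R)$-valued function, then retract onto $SO(2)$ by Gram--Schmidt rel the identity.

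\emph{Comparison.} Your argument is self-contained and slightly more general. The paper's Appendix~B explicitly assumes that all bundle maps preserve a metric, and your retraction is exactly what justifies that convention. The paper's argument has the advantage of being uniform with the nonorientable case. There $\Sigma\mathbin{\wt{\times}}\R^2$ is nontrivial and admits no global matrix description, so maps to $SO(2)$ must be replaced by sections of $\wh\Sigma\times_{\Z_2}SO(2)$.

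\emph{One small correction.} The isomorphism $H^1(\Sigma,\partial\Sigma)\cong H^1(\Sigma)$ needs more than connectedness of $\partial\Sigma$. It also needs the restriction $H^1(\Sigma)\to H^1(\partial\Sigma)$ to vanish, i.e.\ $[\partial\Sigma]=0\in H_1(\Sigma)$, and this is where orientability of $\Sigma$ enters. For a nonorientable $\Sigma$ one has $[\partial\Sigma]=2\gamma\neq 0$, and the map $H^1(\Sigma,\partial\Sigma)\to H^1(\Sigma)$ is not surjective.
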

\begin{proof}
The bundle isomorphism~$\Rot(\alpha)$ is constructed in Construction~\ref{cons:Rotn}.
The first property holds by construction,
%%Don't delete
%Rot(\alpha) is orientation preserving because we are using SO(2) in the construction and not O(2).
%%The bundle iso is op (iff the underlying homeo is op because over id).
whereas the second follows immediately from Lemma~\ref{lem:h1-action} applied to~$B=\Sigma,E=\Sigma \times \R^2$ and~$Y=\Sigma \times S^1$.
%%Don't delete
%%\Rot(\alpha)(\mu)=\mu+0 (because p(\mu)=0
%%\Rot(\alpha)(s(x))=s(x)+i_*f_\alpha(x)=s(x)+\alpha_*(x). because p \circ s=id.
We prove the third property.
Given an orientable rank~$2$ vector bundle~$\zeta$ over~$X=\Sigma$,  the assignment~$f \mapsto (z \mapsto (f(p(z))\cdot z)$ gives rise to a bijection~$[(\Sigma,\partial \Sigma),(SO(2),*)]\cong \pi_0(\Aut_{\partial \Sigma}(\zeta))$, where~$\pi_0(\Aut_{\partial \Sigma}(\zeta))$ denotes the set of homotopy classes of orientation-preserving bundle automorphisms of~$\zeta$ that restrict to the identity over~$\partial \Sigma$.
%%Don't delete
%%Homotopy means through homotopies that are rel.\ boundary at each time.
This fact is probably folklore, but a proof can also be obtained  by applying Proposition~\ref{prop:BundleIsoHE} below in the setting where $B=\Sigma,B'=\partial \Sigma$ and the bundle~$\zeta=(E \to B)$ is orientable, so that, in the notation of that proposition,~$P/SO(2)=B \times \Z_2, P/SO(2)\times_{\Z_2} SO(2)=B \times SO(2)$ and~$\Gamma_{B'}(B,P/SO(2)\times_{\Z_2} SO(2))=\operatorname{Map}((B,B'), (SO(2),*))$.
%Picking a basis~$\{ \alpha_i\}$ of~$H^1(\Sigma) \cong  H^1(\Sigma,\partial \Sigma)~$, the~$\{ \Rot(\alpha_i)\}$ generate~$[(\Sigma,\partial \Sigma),(SO(2),*)] \cong H^1(\Sigma,\partial \Sigma)$.
\end{proof}

\subsection{The effect of~$\Rot(\alpha)$ on the Alexander module.}

Next we describe the effect of~$\Rot(\alpha)$ on the Alexander module of $Y:=\Sigma \times S^1$ with respect to the coefficient system~$\varphi \colon \pi_1(Y)\to \Z_d$ from~\ref{cons:CoeffSys}.
Since the preferred section~$s \colon \Sigma \to \Sigma \times S^1$ satisfies~$\varphi \circ s_*=0$ (as does, in fact, any section),
%%Don't delete
%%By def of \varphi but clunky to write that in the sentence.
%s(x)=(x,b) so it's basically a curve in \Sigma, no S^1 factor to it so is sent to zero.
%no meridional component.
it lifts to a section~$\widetilde{s} \colon \Sigma \to  (\Sigma \times  S^1)^\varphi=:Y^\varphi$ on the $\varphi$-induced cover and thus gives rise to a splitting~$H_1(Y^\varphi)=\Z \oplus H_1(\Sigma)$.
To see this,  it is helpful to remember that, 
%And s(x)=(x,e_1).
the covering~$Y^\varphi=\Sigma \times S^1 \to \Sigma \times S^1=Y$ is given by the identity on the first factor and by~$z \mapsto z^d$ on the second.
Next, a class~$\alpha \in H^1(\Sigma) \cong H^1(\Sigma,\partial \Sigma)\cong [(\Sigma,\partial \Sigma),(S^1,*)] \cong [(\Sigma,\partial \Sigma),(SO(2),*)]$ determines an element~$\alpha_* \colon H_1(\Sigma) \to H_1(S^1)=\Z$.
Proposition~\ref{prop:RotAlphaAxioms} implies that
\begin{equation}
\label{eq:RotnHomology}
\Rot(\alpha)_*^n=
\begin{pmatrix} \id & n \alpha_* \\ 0 & \id \end{pmatrix}.
\end{equation}
Thanks to our choice of coefficient system~$\varphi \colon \pi_1(\Sigma \times S^1) \to \Z_d$,  we see that~$\Rot(\alpha)^n$ lifts to the cover if and only if~$n$ is a multiple of~$d$.
In order to deduce the effect of~$\Rot(\alpha)^{dk}$ on the Alexander module, it only remains to calculate the effect on homology of the covering projection.

\begin{lemma}
\label{lem:JustificationLift}
For the coefficient system~$\varphi \colon \pi_1(\Sigma \times S^1) \to \Z_d$ from Construction~\ref{cons:CoeffSys}, the~$d$-fold covering projection~$Y^\varphi:=(\Sigma \times S^1)^\varphi \to \Sigma \times S^1=:Y$ induces the following map on homology:
	\[H_1(Y^\varphi)\cong \Z \oplus H_1(\Sigma)\xrightarrow{\bsm \cdot d&0 \\ 0&\id \esm }\Z \oplus H_1(\Sigma)\cong H_1(Y).\]
\end{lemma}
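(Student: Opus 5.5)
The plan is to make the covering completely explicit and then read off the map on $H_1$ generator by generator. By Construction~\ref{cons:CoeffSys}, the coefficient system $\varphi$ sends the fibre $\mu=\{x\}\times S^1$ to $1\in\Z_d$ and sends every class carried by the section $r(\Sigma)$ to $0$. Since $\pi_1(\Sigma\times S^1)=\pi_1(\Sigma)\times\Z$, this says that $\varphi$ factors through the projection to the $S^1$ factor followed by reduction mod $d$. Consequently the $\varphi$-induced cover is $Y^\varphi=\Sigma\times S^1$ with covering map
\[
p(x,z)=(x,z^d),
\]
as already noted before the statement. I would justify this by checking that the image of $p_*$ on $\pi_1$ is $\pi_1(\Sigma)\times d\Z=\ker\varphi$, and then invoking uniqueness of covers.

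Next I would fix the splittings on both sides.
\begin{itemize}
\item In $H_1(Y^\varphi)\cong\Z\oplus H_1(\Sigma)$, the $\Z$ summand is generated by the fibre $\widetilde\mu=\{x\}\times S^1$. The $H_1(\Sigma)$ summand is the image of the lifted section $\widetilde r(x)=(x,1)$, which is a lift of $r$ because $1^d=1$.
\item In $H_1(Y)\cong\Z\oplus H_1(\Sigma)$, the $\Z$ summand is generated by $\mu$ and the $H_1(\Sigma)$ summand is the image of $r_*$.
\end{itemize}
By Künneth, both decompositions are the product decompositions.

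Then I compute the map on each summand.
\begin{itemize}
\item The fibre $\widetilde\mu$ maps under $z\mapsto z^d$ to a degree $d$ map of the circle, so $p_*(\widetilde\mu)=d\mu$.
\item Since $p\circ\widetilde r=r$, we get $p_*(\widetilde r_*\gamma)=r_*\gamma$ for every $\gamma\in H_1(\Sigma)$.
\end{itemize}
There are no off-diagonal terms, because $p$ is a product map $\id_\Sigma\times(z\mapsto z^d)$. This gives the matrix $\bsm \cdot d&0\\0&\id\esm$.

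The only point needing care is the identification of the cover with the product cover. In particular, one has to check that the chosen section $r$ from Convention~\ref{conv:SectionIsFraming}, which is $x\mapsto(x,1)$, is compatible with the lift. This is immediate in the orientable model, so I expect no real obstacle.
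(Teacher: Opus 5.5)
Your argument is correct, but it takes a more direct route than the paper's. You identify the cover as the product map $p=\id_\Sigma\times(z\mapsto z^d)$ and read off $p_*$ summand by summand via Künneth: the fibre goes to $d$ times the fibre, and $p\circ\widetilde r=r$ gives the identity on the image of the section.

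The paper does not compute on $Y$ directly. It extends the cover to the branched cover $N^\varphi=\Sigma\times D^2\to\Sigma\times D^2=N$ of disc bundles and compares the long exact sequences of the pairs $(N^\varphi,Y^\varphi)$ and $(N,Y)$. In that setup:
\begin{itemize}
\item the fibre summand of $H_1(Y)$ is the image of $H_2(N,Y)\cong\Z$, which is generated by a $D^2$-fibre;
\item the branched cover acts on that $D^2$-fibre by multiplication by $d$;
\item the quotient $H_1(N)\cong H_1(\Sigma)$ is mapped by the identity;
\item the sections $\widetilde s$ and $s$ give compatible splittings.
\end{itemize}

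Both proofs rest on the same two facts: the map has degree $d$ on the fibre, and the section lifts. Yours is shorter and needs only the global product structure of $Y$ and the explicit formula for the cover. The paper's version instead recovers the fibre summand intrinsically from the disc bundle, so it uses the product structure of $Y$ only through the existence of compatible sections. The price is a small relative homology computation; of it, only $H_2(N)\to H_2(N,Y)$ being zero and $H_1(N,Y)=0$ are actually used.
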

\begin{proof}
Write~$N:=\Sigma \times D^2$ (resp.~$N^\varphi$) for the disc bundle with circle bundle $Y$ (resp.\ $Y^\varphi$).
The covering map~$Y^\varphi \to Y$ extends to a branched covering~$N^\varphi=\Sigma \times D^2 \to \Sigma \times D^2=N$ that is given by the identity on the first factor and by~$z \mapsto z^d$ on the second.
A calculation
%LES
 shows that~$H_i(N,Y)=0$ for~$i\neq 2$ and is generated by a meridional disc for~$i=2$.
The same holds in the branched cover so that,  when we consider the long exact sequences of the pairs~$(N^\varphi,Y^\varphi)$ and~$(N,Y)$, we obtain the following diagram in which the rows are exact and the squares commute:
$$
\xymatrix{
&&&&H_1(\Sigma)\ar[ld]_-{\widetilde{s}_*}\\
%%%%%%%
&0 \ar[r]
%^-{j_*}
&H_2(N^\varphi,Y^\varphi) \ar[r]\ar[d]^-{p_*}
&H_1(Y^\varphi) \ar[r]^-{i_*}\ar[d]^-{p_*}
&H_1(N^\varphi)\ar[r]\ar[d]^-{p_*}_-{\cong}
\ar[u]_-{\proj_*}^\cong
&0 \\
%%%%%%%
&0\ar[r]
%^-{j_*}
&H_2(N,Y) \ar[r]
&H_1(Y) \ar[r]^-{i_*}
&H_1(N)\ar[r]\ar[d]^-{\proj_*}_\cong
&0 \\
%%%%%%
&&&&H_1(\Sigma)\ar[lu]^-{s_*}.\\
}
$$
%Observe that~$H_2(N,Y) \cong H^2(N) \cong H_2(N)^* \cong \Z$.
%We deduce that~$H_2(N,Y) \cong \Z$ is generated by a meridional disc~$D$ and 
%Note that~$j_*([\Sigma])=Q_X([\Sigma],[\Sigma])[D]=e \cdot [D]$.
%The choice of a nice identification~$N \cong N_e(h)$ gives rise to a section~$\Sigma \to Y$ that splits the composition~$Y \to N \xrightarrow{\proj} \Sigma$.
%Since nice identifications lift to the covers,  a similar splitting exists in the cover; the splittings commute with the covering projections.
%(double check).
The bundle projection induces identifications~$H_k(N) \cong H_k(\Sigma)$ and~$H_k(N^\varphi) \cong H_k(\Sigma)$.
%Base~$H_2(N) \cong H_2(\Sigma)$ and~$H_2(N^\varphi) \cong H_2(\Sigma)$ with the homology class of~$\Sigma$ and~$\Sigma$ respectively.
Base~$H_2(N,Y)$ and~$H_2(N^\varphi,Y^\varphi)$ with the~$D^2$-fibres.
%meridional discs.
The previous commutative diagram then becomes
%%= because bases where chosen
$$
\xymatrix{
&0 \ar[r] 
&{\overbrace{H_2(N^\varphi,Y^\varphi)}^{= \Z}} \ar[r]^-{\bsm \id \\ 0\esm}
\ar[d]^-{\cdot d}
&{\overbrace{H_1(Y^\varphi)}^{=\Z \oplus \Z^{2g}}} \ar[r]_-{\bsm 0& \id \esm}\ar[d]^-{p_*}
&{\overbrace{H_1(\Sigma)}^{=\Z^{2g}}}\ar[r]\ar[d]^-{=}
\ar@/_1pc/[l]_-{\bsm 0 \\ \id \esm} 
&0 \\
%%%%%%%
&0 \ar[r]
%^-{\cdot e}
&{\underbrace{H_2(N,Y)}_{=\Z}} \ar[r]^-{\bsm \id \\ 0\esm}
&{\underbrace{H_1(Y)}_{=\Z \oplus \Z^{2g}}} \ar[r]^-{\bsm 0& \id \esm}
&{\underbrace{H_1(\Sigma)}_{=\Z^{2g}}}\ar[r]\ar@/^1pc/[l]^-{\bsm 0 \\ \id \esm} 
&0. \\
}
$$
The conclusion now promptly follows.
\end{proof}

Finally, we can prove the main result of this section.

\begin{proposition}
\label{prop:RotAlexanderModule}
Given~$\alpha \in H^1(\Sigma)$,  the bundle isomorphism~$\Rot(\alpha)^d$ induces the following automorphism on the Alexander module~$H_1(Y^\varphi)\cong \Z \oplus H_1(\Sigma)$ of its circle bundle:
$$(\widetilde{\Rot(\alpha)^d})_*=\begin{pmatrix} \id & \alpha_* \\ 0 & \id \end{pmatrix}.$$
\end{proposition}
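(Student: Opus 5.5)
The plan is to compute $(\widetilde{\Rot(\alpha)^d})_*$ by comparing it with $\Rot(\alpha)^d_*$ through the covering projection $p\colon Y^\varphi \to Y$ of Lemma~\ref{lem:JustificationLift}. Equation~\eqref{eq:RotnHomology} gives
$$\Rot(\alpha)^d_*=\begin{pmatrix} \id & d\alpha_* \\ 0 & \id\end{pmatrix}$$
on $H_1(Y)=\Z\oplus H_1(\Sigma)$. The lift $\widetilde{\Rot(\alpha)^d}$ exists because, as noted above the lemma, $\Rot(\alpha)^n$ lifts exactly when $d\mid n$. It satisfies $p\circ \widetilde{\Rot(\alpha)^d}=\Rot(\alpha)^d\circ p$, so on homology
$$p_*\circ (\widetilde{\Rot(\alpha)^d})_* = \Rot(\alpha)^d_*\circ p_*.$$
We have to pin down which lift is meant: we take the lift that fixes the lifted zero section (equivalently the boundary, since $\Rot(\alpha)$ is the identity there), so it is again a fibrewise rotation.

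The key steps are as follows. First, I would show directly that the lift has the block upper triangular form $\bsm \id & \beta \\ 0 & \id\esm$ with respect to the splitting $H_1(Y^\varphi)=\Z\langle\widetilde{\mu}\rangle\oplus \widetilde{s}_*H_1(\Sigma)$. To see this, write $\Rot(\alpha)^d=f_{\phi}$ with $\phi=f_\alpha^d\colon \Sigma\to SO(2)$. Since $\phi$ is a $d$-th power, it admits the $d$-th root $f_\alpha$, so the lift is the fibrewise rotation $f_{f_\alpha}$ of the (trivial) bundle $Y^\varphi=\Sigma\times S^1$. Here the cover is the identity on $\Sigma$ and $z\mapsto z^d$ on the fibre, and rotating by $f_\alpha(x)$ upstairs covers rotating by $f_\alpha(x)^d$ downstairs. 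Then Lemma~\ref{lem:h1-action}, applied to $Y^\varphi\to\Sigma$ with $\phi=f_\alpha$, gives $\id+i_*(f_\alpha)_*(p|)_*$, which is exactly $\bsm \id & \alpha_*\\ 0&\id\esm$ in the basis given by the lifted section $\widetilde{s}$ and the fibre $\widetilde{\mu}$.

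Second, as a consistency check I would verify this against Lemma~\ref{lem:JustificationLift}. We need
$$\bsm d&0\\0&\id\esm\bsm \id&\alpha_*\\0&\id\esm=\bsm \id&d\alpha_*\\0&\id\esm\bsm d&0\\0&\id\esm.$$
Both sides equal $\bsm d & d\alpha_*\\ 0&\id\esm$. Since $p_*$ is injective on $\Z\oplus H_1(\Sigma)$ (as $H_1(\Sigma)$ is free and multiplication by $d$ on $\Z$ is injective), this relation alone also forces the off-diagonal entry to be $\alpha_*$, provided the upper triangular shape is known. That shape holds because the lift preserves fibres and the lifted section projects to $\Sigma$ identically.

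I expect the main obstacle to be the bookkeeping of which lift is chosen and of the identification of the fibre class upstairs. The generator $\widetilde{\mu}$ of the upstairs fibre maps to $d\mu$, and one has to check that the lifted section $\widetilde{s}$ is the one compatible with the chosen lift. I would handle this by working with the explicit model: take $f_\alpha$ to be the constant map $1$ on $\partial\Sigma$, and use the lift that is the identity over $\partial\Sigma$.
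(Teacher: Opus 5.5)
Your proposal is correct, but your main argument takes a different route from the paper. You identify the lift explicitly. On $Y^\varphi\cong\Sigma\times S^1$, with covering map $(x,z)\mapsto(x,z^d)$, the fibrewise rotation by $f_\alpha$ covers the rotation by $f_\alpha^d$. So $\widetilde{\Rot(\alpha)^d}$ is just $\Rot(\alpha)$ upstairs, and Lemma~\ref{lem:h1-action} (equivalently, the second item of Proposition~\ref{prop:RotAlphaAxioms} applied to the cover) gives the matrix.

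The paper's proof is exactly your ``consistency check''. It uses only \eqref{eq:RotnHomology} with $n=d$, Lemma~\ref{lem:JustificationLift}, and the relation $p_*\circ(\widetilde{\Rot(\alpha)^d})_*=(\Rot(\alpha)^d)_*\circ p_*$. That relation alone suffices, and you do not need to know the upper triangular shape in advance. Since $p_*=\mathrm{diag}(d,\id)$ is injective ($\Z$ is torsion-free and $d>0$), the relation determines the lifted map completely. Writing $M$ for the unknown matrix, $\mathrm{diag}(d,\id)\,M$ must equal $(\Rot(\alpha)^d)_*\,\mathrm{diag}(d,\id)=\begin{pmatrix} d & d\alpha_*\\ 0&\id\end{pmatrix}$, and comparing entries forces $M=\begin{pmatrix} \id & \alpha_*\\ 0&\id\end{pmatrix}$.

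The same injectivity removes the lift bookkeeping you anticipate: any two lifts induce the same map on $H_1(Y^\varphi)$. Alternatively, the deck transformations are rotations by $d$-th roots of unity, hence isotopic to the identity.

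Your route gives a concrete geometric description of the lift, which is useful if you want the lift itself, for instance to see that it is the identity over $\partial\Sigma$. The paper's route is shorter, purely algebraic, and automatically independent of the choice of lift.
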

\begin{proof}
We have argued that on the homology of~$Y$, the bundle isomorphism~$\Rot(\alpha)^d$ acts as~$\bsm \id &d \alpha \\ 0 & \id \esm$.
Combined with Lemma~\ref{lem:JustificationLift}, this implies that
$$
\begin{pmatrix}
\cdot d&0 \\ 0&\id
\end{pmatrix}
\circ 
(\widetilde{\Rot(\alpha)^d})_*
=
(\Rot(\alpha)^d)_*
\circ
\begin{pmatrix}
\cdot d&0 \\ 0&\id 
\end{pmatrix}.
$$
Since $\Z$ is torsion-free, a short calculation then implies the result.
\end{proof}

\subsection{The effect of $\Rot(\alpha)$ on spin structures}
\label{sub:RotalphaSpinOrientable}

Let $p \colon \Sigma \times D^2 \to \Sigma$ be the projection,  and let~$p| \colon \Sigma \times S^1 \to \Sigma$ be the induced projection.
Given a spin structure $\mathfrak{s} \in \Spin(\Sigma \times D^2)$,  we consider  the restricted spin structure $\mathfrak{s}| \in \Spin(\Sigma \times S^1)$.
Furthermore, given a class $\alpha \in H^1(\Sigma)$, we write $[\alpha]_2 \in H^1(\Sigma;\Z_2)$ for its reduction mod $2$.
\begin{proposition}
\label{prop:RotalphaSpinOrientable}
Given a class~$\alpha \in H^1(\Sigma)$ and a spin structure $\mathfrak{s} \in \Spin(\Sigma \times D^2)$,  the effect of the bundle automorphism~$\Rot(\alpha)| \colon \Sigma \times S^1 \to \Sigma \times S^1$ on~$\mathfrak{s}| \in \Spin(\Sigma \times S^1)$ is by
$$
\Rot(\alpha)^*(\mathfrak{s}|)
=
p|^*([\alpha]_2) \cdot 
\mathfrak{s}| 
\in \Spin(\Sigma \times S^1).
$$
\end{proposition}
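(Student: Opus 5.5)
The plan is to compare the two spin structures $\Rot(\alpha)^*(\mathfrak{s}|)$ and $\mathfrak{s}|$ through their difference class in $H^1(\Sigma\times S^1;\Z_2)$, and to detect that class by evaluating on loops. Since $\Rot(\alpha)$ is a bundle automorphism of the total space $\Sigma\times D^2$ (Construction~\ref{cons:Rotn}), we have $\Rot(\alpha)^*(\mathfrak{s}|)=(\Rot(\alpha)^*\mathfrak{s})|$. Moreover, $H^1(\Sigma\times D^2;\Z_2)\cong H^1(\Sigma;\Z_2)$ via $p^*$, and this group acts freely and transitively on $\Spin(\Sigma\times D^2)$. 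Hence $\Rot(\alpha)^*\mathfrak{s}=p^*(x)\cdot\mathfrak{s}$ for a unique $x\in H^1(\Sigma;\Z_2)$. Restricting to the boundary then gives $\Rot(\alpha)^*(\mathfrak{s}|)=p|^*(x)\cdot\mathfrak{s}|$, so it suffices to show $x=[\alpha]_2$.

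To compute $x$, I evaluate on a basis of $H_1(\Sigma;\Z_2)$ given by simple closed curves $\gamma$. A spin structure on the $4$-manifold $\Sigma\times D^2$ (or on the $3$-manifold $\Sigma\times S^1$) restricted to a framed loop yields an element of $\Z_2$, namely bounding or Lie. The difference class evaluated on $\gamma$ is the change of this value between the two spin structures, for a fixed framing. Choose the loop $r(\gamma)\subset\Sigma\times S^1$ with the framing induced from $\gamma\times\{pt\}$, i.e.\ the product framing coming from the trivialisation $\Sigma\times D^2$. Then $x(\gamma)$ is the difference between $\mathfrak{s}$ restricted to $\Rot(\alpha)(r(\gamma))$ with the pushed-forward framing and $\mathfrak{s}$ restricted to $r(\gamma)$ with the product framing.

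Now compute this locally over $\gamma$, where the bundle is $\gamma\times D^2$ and $\Rot(\alpha)$ is $(t,z)\mapsto(t,f_\alpha(t)z)$ with $f_\alpha\colon \gamma\to SO(2)$ of degree $\alpha(\gamma)$. The image loop is isotopic to $r(\gamma)$, but the differential of $\Rot(\alpha)$ twists the normal $D^2$-framing by the loop $f_\alpha|_\gamma$ in $SO(2)\subset SO(4)$. This loop represents $\alpha(\gamma)$ modulo $2$ in $\pi_1(SO(4))=\Z_2$. Changing a framing by the generator of $\pi_1(SO)$ swaps the bounding and Lie spin structures on the circle, so $x(\gamma)=\alpha(\gamma)\bmod 2=[\alpha]_2(\gamma)$.

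The main obstacle is making this last step rigorous: one must identify the pulled-back spin structure along $\gamma$ with the original one precomposed with the framing twist by $f_\alpha$, and check that $SO(2)\to SO(4)$ induces a surjection on $\pi_1$ (it does, since $\pi_1(SO(2))\to\pi_1(SO(n))$ is onto). Fibre loops need not be checked separately, because the difference class already lies in $p^*H^1(\Sigma;\Z_2)$. This is also consistent with $\Rot(\alpha)$ being the identity on fibres over the base point and over $\partial\Sigma$.
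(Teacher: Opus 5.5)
Your proof is correct, but the key computation takes a different route from the paper's.

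\textbf{The paper's route.} The paper stays in the $3$-manifold $\Sigma\times S^1$ and restricts to the tori $\gamma\times S^1$. It uses that $\Rot(\alpha)$ sends $[\gamma\times\{*\}]$ to $[\gamma\times\{*\}]+\alpha(\gamma)[\{*\}\times S^1]$. It then applies the quadratic-refinement identity $q(a+b)=q(a)+q(b)+a\cdot b$ for the spin structure on the torus, together with $\gamma\cdot\mu=1$ and $q(\mu)=0$. The last fact holds because $\mathfrak{s}|$ bounds on the fibre, and this is exactly where the extension over $\Sigma\times D^2$ enters.

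\textbf{Your route.} You work upstairs in $\Sigma\times D^2$.
\begin{itemize}
\item The fact that the difference class lies in $p|^*H^1(\Sigma;\Z_2)$ follows directly from the equivariance of restriction $\Spin(\Sigma\times D^2)\to\Spin(\Sigma\times S^1)$. This is cleaner than the paper's remark that $\Rot(\alpha)$ is homotopic to the identity on fibres.
\item The computation becomes a single framing-twist count using the surjection $\pi_1(SO(2))\to\pi_1(SO(4))$, with no quadratic forms.
\item Your argument should also transfer essentially unchanged to the twisted bundle $\Sigma\mathbin{\wt{\times}}D^2$, where the paper gives a separate cohomological argument.
\end{itemize}

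\textbf{What the paper's route buys.} It shows explicitly why the extension hypothesis matters. If $q(\mu)=1$, the same computation gives $q(\gamma+\alpha(\gamma)\mu)=q(\gamma)$, so there is no change. This is the phenomenon behind the trivial action on non-extendable spin structures in Proposition~\ref{prop:RotalphaSpinNonOrientable}.

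\textbf{The step you flag as the main obstacle.} Evaluate on the zero-section loop $\gamma\times\{0\}$ rather than on $r(\gamma)$. There $\Rot(\alpha)$ fixes the loop pointwise. Its differential is exactly $\mathrm{id}_{T\Sigma}\oplus f_\alpha(t)$, so the pushed-forward framing is the product framing multiplied by the loop $t\mapsto\mathrm{diag}(I_2,f_\alpha(t))$ in $SO(4)$. This avoids both the isotopy and the off-diagonal term (coming from differentiating $f_\alpha(t)\cdot 1$ along $r(\gamma)$) that you would otherwise have to homotope away in the frame bundle. Since $p^*x$ also evaluates to $x(\gamma)$ on this loop, $x(\gamma)\equiv\alpha(\gamma)$ modulo $2$ follows at once.
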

\begin{proof}
We give an argument here but note that the result also follows from the more general Proposition~\ref{prop:RotalphaSpinNonOrientable} below.
The group~$H^1(\Sigma \times S^1;\Z_2)$ acts freely and transitively on~$\Spin(\Sigma \times S^1)$, and we consider the difference class~$\beta \in H^1(\Sigma \times S^1;\Z_2)$ between~$\mathfrak{s}|$ and~$\Rot(\alpha)^*(\mathfrak{s}|)$.
We will evaluate~$\beta$ on curves in~$H_1(\Sigma \times S^1)$ and aim to show that~$\beta(\gamma')=\alpha(p(\gamma'))$ mod~$2$ for every~$\gamma'\in H_1(\Sigma \times S^1)$.

%Note that~$\Rot(\alpha)|$ restricted to~$\{*\}\times S^1$ is homotopic to the identity and hence~$\beta$ is in the image of~$p^*\colon H^1(\Sigma;\Z_2)\to H^1(\Sigma\times S^1;\Z_2)$.
For a simple closed curve~$\gamma \subset \Sigma$, the homeomorphism~$\Rot(\alpha)$ restricts to a homeomorphism~$\Rot(\alpha)|_\gamma$ of~$\gamma\times S^1\subseteq \Sigma\times S^1$ and it suffices to show that the action of~$\Rot(\alpha)|_\gamma$ on~$\mathfrak{s}|_{\gamma\times S^1}$ is trivial if and only if~$\alpha(\gamma)\equiv 0 \mod 2$. 
Note that~$\Rot(\alpha)|_\gamma$ restricted to~$\{*\}\times S^1$ is homotopic to the identity and hence~$\mathfrak{s}|_{\Rot(\alpha)(\gamma\times S^1)}
-
\mathfrak{s}|_{\gamma\times S^1}$ is in the image of~$p^*\colon H^1(\gamma;\Z_2)\to H^1(\gamma\times S^1;\Z_2)$. Thus we have to show that $\mathfrak{s}|_{\Rot(\alpha)(\gamma\times\{*\})}
-
\mathfrak{s}|_{\gamma\times \{*\}} $ is trivial if and only if $\alpha(\gamma)=0$.
Proposition~\ref{prop:RotAlphaAxioms} implies that
\begin{equation}
\label{eq:ForSpin}
[\Rot(\alpha)(\gamma\times\{*\})]=[\gamma\times\{*\}]+\alpha(\gamma)[\{*\}\times S^1]\in H_1(\gamma\times S^1;\Z_2).
\end{equation}
Given a spin structure $\mathfrak{t}$ on an orientable surface $\Sigma$,  the assignment $H_1(\Sigma;\Z_2) \to \Z_2$ that maps~$[\gamma]$ to~$\mathfrak{t}|_{\gamma} \in \Omega_1^{\Spin}(S^1) \cong \Z_2$ determines a quadratic refinement of the intersection form.
Therefore, restricting the spin structure $\mathfrak{s}$ to the left hand side of~\eqref{eq:ForSpin}, we obtain the following equation in~$\Omega_1^{\Spin}(S^1) \cong \Z_2$:
\begin{align*}
\mathfrak{s}|_{\Rot(\alpha)(\gamma\times\{*\})}
-
\mathfrak{s}|_{\gamma\times \{*\}} 
&\equiv
\alpha(\gamma)(\{*\}\times S^1)^*\overbrace{(\mathfrak{s}|_{\{*\}\times S^1}) }^{=0 \in \Omega_1^{\Spin}(S^1)}
+\alpha(\gamma)\overbrace{Q_{T^2}^{\Z_2}([\gamma\times\{*\}],[\{*\}\times S^1])}^{=1} \\
&=\alpha(\gamma).
\end{align*}
Here in the last equality, we used that, by assumption, the spin structure restricted to $\{*\}\times S^1$ bounds by construction.
This concludes the proof of the proposition.
\end{proof}

\section{Bundle isomorphisms of~$\Sigma \mathbin{\wt{\times}} \R^2$. }
\label{sec:BundleIsosNonOri}

This section is concerned with the nonorientable analogues of the results from Appendix~\ref{sec:BundleIsos}.
Namely,  given a nonorientable surface $\Sigma$ with nonempty boundary,  it constructs the bundle isomorphism~$\Rot(\alpha) \colon \Sigma \mathbin{\wt{\times}}\R^2 \to \Sigma \mathbin{\wt{\times}} \R^2$ from Proposition~\ref{prop:RotAlphaAxiomsSection2Nonori}, establishes its properties,  calculates the effect of~$\Rot(\alpha)^2|_{\Sigma \mathbin{\wt{\times}} S^1}$ on the Alexander module with respect to the coefficient system from Construction~\ref{cons:CoeffSys}, and studies the resulting action on the set of spin structures.

\begin{convention}
We recall from Convention~\ref{conv:SectionIsFraming} our preferred choice of model for the rank $2$ vector bundle~$\Sigma\mathbin{\wt{\times}} \R^2$ with nontrivial $w_1$.
Namely, we use~$p \colon \widehat{\Sigma} \to \Sigma$ to denote the orientation double cover of $\Sigma$,  consider the~$\Z_2$-action on~$\R^2$ given by~$(x,y)\mapsto (x,-y)$ and set 
$$\Sigma\mathbin{\wt{\times}} \R^2:=\widehat{\Sigma} \times_{\Z_2} \R^2$$
We also note that~$\Z_2$ acts $SO(2)$ by conjugation with~$\bsm
1&0\\0&-1
\esm$ so that, on the level of circle bundles
%%Note that this is compatible with how Z_2 acts on R^2 above.
$$
\Sigma\mathbin{\wt{\times}} S^1 \cong  \widehat{\Sigma} \times_{\Z_2} SO(2).
$$
\end{convention}

\medbreak

In the orientable setting, bundle automorphisms of~$\Sigma \mathbin{\wt{\times}} \R^2$ are determined by classes in~$H^1(\Sigma,\partial \Sigma)$ relatively directly, essentially using the isomorphisms
$$
\pi_0(\Aut_{\partial \Sigma}(\Sigma \times \R^2))
\cong  [(\Sigma,\partial \Sigma),(SO(2),*)]
\cong  [(\Sigma,\partial \Sigma),(S^1,*)]
\cong H^1(\Sigma,\partial \Sigma).
$$
In the nonorientable setting,  using a cohomology class to define a bundle automorphism is more challenging as $\Sigma$ need not come with a map to $S^1$. 
Setting $w:=w_1(\Sigma) \colon \pi_1(\Sigma) \to \Z_2$, as we will explain,  the strategy will instead involve isomorphisms
\begin{equation}
\label{eq:IsosForNonOri}
\pi_0(\Aut_{\partial \Sigma}(\Sigma \mathbin{\wt{\times}} \R^2)) 
\cong
 \pi_0( \Gamma_{\partial \Sigma}(\Sigma,\widehat{\Sigma} \times_{\Z_2}SO(2)))
\cong
 \pi_0( \Gamma_{\partial \Sigma}(\Sigma,\widehat{\Sigma} \times_{\Z_2} S^1))
\cong 
H^1(\Sigma,\partial \Sigma;\Z^w).
\end{equation}
Here~$\pi_0(\Aut_{\partial \Sigma}(\Sigma \mathbin{\wt{\times}} \R^2))$ denotes the set of homotopy classes of rel.\ boundary bundle automorphisms $\Sigma \mathbin{\wt{\times}} \R^2 \to \Sigma \mathbin{\wt{\times}} \R^2$.
%%Don't delete
% that preserve local orientations.
%%Automatic because rel.\ boundary ->homeo is o-p <-> bundle aut preserves local orientations.
%%For the equivalence: true because both conditions are local.
Also, $\pi_0(\Gamma_{\partial \Sigma}(\Sigma,\widehat{\Sigma} \times_{\Z_2} S^1))$ denotes the  set of homotopy classes of sections~$s \colon \Sigma \to \widehat{\Sigma} \times_{\Z_2} S^1$ whose restriction to $\partial \Sigma$ satisfy~$s(x)=[\widehat{x},1]$ for some $\widehat{x} \in \partial \widehat{\Sigma}$ that lifts~$x\in \partial \Sigma$.
We say that such sections are \emph{boring} on $\partial \Sigma$.
%%In pi_0 the homotopies have to be boring on the boundary throughout.

%The bundles that appear in~\eqref{eq:IsosForNonOri} are closely related to the bundles~$\Sigma \mathbin{\wt{\times}} \R^2$ and~$\Sigma \mathbin{\wt{\times}} S^1$.
%Indeed, since rank~$2$ vector bundles over surfaces with nonempty boundary are determined by their first Stiefel-Whitney class,
%% and Euler class,
%%%Don't delete
%%Surfaces with boundary have vanishing~$H^2$ and therefore vanishing Euler class.
%%Both bundles have the same~$w_1$ thanks to the presence of the orientation double cover leading to 
%there are bundle isomorphisms
%%%w_1=w_1(\Sigma).
%\begin{align*}
%&\widehat{\Sigma} \times_{\Z_2} \R^2 \cong \Sigma\mathbin{\wt{\times}} \R^2, \\
%&\widehat{\Sigma} \times_{\Z_2} SO(2)  \cong \Sigma\mathbin{\wt{\times}} S^1.
%\end{align*}
%Here,  observe that~$\widehat{\Sigma} \times_{\Z_2} SO(2) \cong \widehat{\Sigma} \times_{\Z_2} S^1$ is the circle bundle of~$\widehat{\Sigma} \times_{\Z_2} \R^2$.

The construction of the bundle isomorphism $\Rot(\alpha)$ will involve the isomorphisms of~\eqref{eq:IsosForNonOri}.
%Namely 
Section~\ref{sub:Rot(s)} will first describe how a section $s \in \Gamma_{\partial \Sigma}(\Sigma,\Sigma\mathbin{\wt{\times}} S^1)$ gives rise to an automorphism
$$\Rot(s) \colon \Sigma \mathbin{\wt{\times}} \R^2 \xrightarrow{\cong} \Sigma \mathbin{\wt{\times}} \R^2.$$
Section~\ref{sub:CohomologyReformulation} will then reformulate this construction in terms of~$H^1(\Sigma,\partial \Sigma;\Z^w)$, leading to the definition of $\Rot(\alpha)$.
Section~\ref{sub:RotAlphaAlexanderNonOri} focuses on the effect of~$\Rot(\alpha)$ on the Alexander module.
Section~\ref{sub:RotalphaSpinNonOrientable} studies the effect of~$\Rot(\alpha)$ on spin structures.
Section~\ref{sub:AutomorphismNonOrientable} proves that every bundle automorphism~$\Sigma \mathbin{\wt{\times}} \R^2 \to  \Sigma \mathbin{\wt{\times}} \R^2$ can be written as~$\Rot(\alpha)$ for some~$\alpha \in H^1(\Sigma,\partial \Sigma;\Z^w).$

\subsection{The construction of $\Rot(s)$.}
\label{sub:Rot(s)}

We describe how a section $s \colon \Sigma \to \Sigma \mathbin{\wt{\times}} S^1$ gives rise to a bundle automorphism~$\Rot(s) \colon \Sigma \mathbin{\wt{\times}} \R^2 \to \Sigma \mathbin{\wt{\times}} \R^2$.
We then calculate the effect of $\Rot(s)$ on homology.
Throughout this section, we use the models $\widehat{\Sigma} \times_{\Z_2} \R^2$ and $\widehat{\Sigma} \times_{\Z_2} S^1 \cong \widehat{\Sigma} \times_{\Z_2} SO(2)$ for~$\Sigma \mathbin{\wt{\times}} \R^2$ and~$\Sigma \mathbin{\wt{\times}} S^1$ respectively.

\begin{construction}(The automorphism $\Rot(s)$).
\label{cons:Rots}
Given a section~$s\colon \Sigma\to \wh \Sigma\times_{\Z_2}SO(2) \cong \Sigma \mathbin{\wt{\times}} S^1$
%%Used to be written
%, with~$s(b)=[b',a]$
 that is boring on $\partial \Sigma$, consider the bundle automorphism
\begin{align*}
\Rot(s) \colon &\Sigma \mathbin{\wt{\times}} \R^2 \to \Sigma \mathbin{\wt{\times}} \R^2 \\
&e=[b',v]\mapsto [b',a^{-1}v].
\end{align*}
where $a \in SO(2)$ is such that~$s(p(b'))=[b',a]$.
%{AC: It used to be written differently, but in a way that didn't really make sense to me. }
A short verification shows that this definition does not depend on the representative of $s(p(b'))$ and $e$.
%Used to be written s(b).
%%%%%%%
%%The key is that a depends on b'. 
%%Details Write p=diag(1,-1). Note that that p^{-1}=p. Write T for the generator of Z_2.
%% If s(b)=[b',a]=[Tb',pap], then [b',v]=[Tb',pv] gets mapped to [Tb',pa^{-1}ppv]=[Tb',pa^{-1}pv]=[b',a^{-1}v].
\end{construction}

Our goal is to describe $\Rot(s)_* \colon H_1(\Sigma \mathbin{\wt{\times}} S^1) \to H_1(\Sigma \mathbin{\wt{\times}} S^1)$.
We fix a decomposition of~$H_1(\Sigma \mathbin{\wt{\times}} S^1)$ that will be useful to describe this action matricially.
This decomposition was alluded to in Section~\ref{sec:Setup} (where we referenced~\cite[Proposition 4.2]{ConwayOrsonPowell} for the details) but we take the time to recall it with the present set-up and notation.
%For this we use the homeomorphism $S^1\cong SO(2)$ to identify $S(E)$ with $\wh \Sigma\times_{\Z_2}SO(2)$.

\begin{construction}(A decomposition of $H_1(\Sigma \mathbin{\wt{\times}} S^1)$.)
\label{cons:SplittingS(E)}
Consider the $2$-fold covering map $p \colon \widehat{\Sigma} \to \Sigma$ and the section~$r\colon \Sigma \to \Sigma \mathbin{\wt{\times}} S^1 \cong \wh\Sigma \times_{\Z_2} SO(2),b \mapsto [b',1]$ for any~$b'\in p^{-1}(b)$; since~$1$ is a fixed point under the~$\Z_2$ action, this does not depend on the choice of~$b'$. 
%%%Write p=diag(1,-1). Note that that p^{-1}=p. Write T for the generator of Z_2.
%%If b'=Tb'', then (P.I=PIP^{-1}=I) Flip on 1 \in S^1 doesn't do anything.
%%If [b',1]=[Tb',p \cdot 1]=[b'',1] =[b'',1]
As discussed in Construction~\ref{cons:CoeffSys}, this leads to a splitting
\begin{equation}
\label{eq:SESH1(SE)}
\xymatrix{0 \ar[r]& \Z_2 \ar[r]& H_1(\Sigma \mathbin{\wt{\times}} S^1 ) \ar[r]^-{q_*}& H_1(\Sigma) \ar[r]\ar@/_1pc/[l]_{r_*}& 0.}
\end{equation}
Here $q \colon \Sigma \mathbin{\wt{\times}} S^1=\widehat{\Sigma} \times_{\Z_2} S^1 \to \Sigma,[b',v] \mapsto p(b')$ denotes the bundle projection map.
We therefore obtain an isomorphism 
$$H_1(\Sigma \mathbin{\wt{\times}} S^1 )\cong \Z_2 \oplus H_1(\Sigma).$$
In what follows, we use this identification implicitly.

Next, consider the following map that sends the~$S^1$-fibre to~$1$ and the image of~$r_*$ to zero:
$$\varphi \colon H_1(\Sigma \mathbin{\wt{\times}} S^1 ) \to \Z_2.$$
This map provides a second way of obtaining the aforementioned splitting of~\eqref{eq:SESH1(SE)} as well as the coefficient system needed to define the Alexander module.
Indeed,  the Alexander module is given by $H_1((\Sigma \mathbin{\wt{\times}} S^1)^\varphi)$ where $(\Sigma \mathbin{\wt{\times}} S^1)^\varphi$ denotes the $\varphi$-induced $2$-fold cover of $\Sigma \mathbin{\wt{\times}} S^1$.
%This map also determines an element in
%$$
%H^1(S(E);\Z_2)
%=H^1(\Sigma
%$$
\end{construction}

Given a section~$s \colon \Sigma \to \Sigma \mathbin{\wt{\times}} S^1$,  in order to describe $\Rot(s)_* \colon H_1(\Sigma \mathbin{\wt{\times}} S^1) \to H_1(\Sigma \mathbin{\wt{\times}} S^1)$, it will be helpful to consider the composition~$\varphi \circ s_* \colon H_1(\Sigma) \to \Z_2$.

\begin{proposition}
\label{prop:Rot(s)Homology}
Given a section~$s \colon \Sigma \to \Sigma \mathbin{\wt{\times}} S^1$ that is boring on $\partial \Sigma$,  the automorphism~$\Rot(s)$ induces the following map
%A section~$s \in \Gamma_{\partial \Sigma}(\Sigma,\widehat{\Sigma} \times_{\Z_2} SO(2))$ acts 
on~$H_1(\Sigma \mathbin{\wt{\times}} S^1) \cong \Z_2 \oplus H_1(\Sigma)$
$$
\Rot(s)_*=
\begin{pmatrix}
\id& \varphi \circ s_*\\
0& \id
\end{pmatrix}
\colon 
H_1(\Sigma \mathbin{\wt{\times}} S^1)
\to 
H_1(\Sigma \mathbin{\wt{\times}} S^1).
$$
%where~$\overline{s}\colon \Sigma\to \RP^\infty$ is the composition~$x\circ s$. 
\end{proposition}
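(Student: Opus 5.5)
We adapt the proof of Lemma~\ref{lem:h1-action} to the twisted setting, arguing curve by curve. Since $H_1(\Sigma \mathbin{\wt{\times}} S^1)\cong \Z_2\oplus H_1(\Sigma)$ is generated by the $S^1$-fibre $\mu$ together with the classes $r_*[\gamma]$ for simple closed curves $\gamma\subset\Sigma$, it suffices to compute $\Rot(s)_*$ on these generators and check that the result has the displayed matrix form.

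\emph{The fibre.} Fix a point $b\in\Sigma$ and a lift $b'$, and write $s(b)=[b',a]$. Then $\Rot(s)$ restricted to the fibre over $b$ is $[b',v]\mapsto[b',a^{-1}v]$. This is a rotation of the circle, so it is homotopic to the identity, and hence $\Rot(s)_*\mu=\mu$. This gives the first column $\bsm \id\\0\esm$.

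\emph{Surface classes.} Since $\Rot(s)$ covers the identity of $\Sigma$, we have $q_*\circ\Rot(s)_*=q_*$. Therefore $\Rot(s)_*r_*[\gamma]=r_*[\gamma]+c(\gamma)\mu$ for some $c(\gamma)\in\Z_2$, and it remains to show that $c(\gamma)=\varphi(s_*[\gamma])$. Restrict everything to the bundle over $\gamma$. If $w_1(\gamma)=0$, this restriction is a torus $\gamma\times S^1$, trivialised compatibly with $r$. In that trivialisation $s|_\gamma$ is a map $\gamma\to SO(2)$ of some degree $n$, and $\Rot(s)$ acts by the fibrewise rotation $a^{-1}$. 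Exactly as in Lemma~\ref{lem:h1-action}, this sends $[r(\gamma)]$ to $[r(\gamma)]-n\mu$. Modulo $2$ this equals $[r(\gamma)]+n\mu$, and $n\equiv\varphi(s_*[\gamma])$ because $s_*[\gamma]=r_*[\gamma]+n\mu$ and $\varphi$ kills the image of $r_*$.

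If $w_1(\gamma)=1$, the restriction is a Klein bottle $\gamma\mathbin{\wt{\times}} S^1$. In that case we pass to the double cover of $\gamma$, where the bundle becomes a torus and $s$ lifts to a map of some degree. Comparing the image of $\Rot(s)\circ r(\gamma)$ with $r(\gamma)$ in $H_1$ of the Klein bottle, whose relevant part is $\Z\oplus\Z_2\langle\mu\rangle$, the difference is again a multiple of $\mu$. We then check that this multiple agrees with $\varphi(s_*[\gamma])$ modulo $2$ by writing $s_*[\gamma]=r_*[\gamma]+m\mu$ and reading off $m$ from the same lift.

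\emph{Expected difficulty.} The main obstacle is this nonorientable case. Conjugation by $\bsm 1&0\\0&-1\esm$ inverts rotations, so the "degree" of $s$ along an orientation-reversing loop is only well defined modulo $2$. One therefore has to check carefully that the correction term is governed by $\varphi\circ s_*$ and not by a sign-twisted version of it. Working $\Z_2$-linearly removes the sign ambiguity, and this should make the verification go through.
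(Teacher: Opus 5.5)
The fibre computation and the orientation-preserving-loop case are fine. The gap is in the orientation-reversing-loop case, which is the essential case here: it is where the twisting lives, and it is why $\mu$ has order two. You only assert this case, and the method you indicate for it cannot work.

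On the connected double cover $\hat\gamma\to\gamma$, the pulled-back section is a map $\hat a\colon\hat\gamma\to SO(2)$ satisfying $\hat a\circ\tau=\hat a^{-1}$, where $\tau$ is the deck involution. Since $\tau$ is orientation-preserving on the circle $\hat\gamma$, this gives $\deg\hat a=-\deg\hat a=0$ for every $s$. So the ``degree of the lift'' carries no information about $\varphi(s_*[\gamma])$. Equivalently, the covering torus maps to the Klein bottle sending the lifted base curve to $2[r(\gamma)]$, so any comparison made upstairs only detects $2c(\gamma)\mu=0$. The closing remark that ``working $\Z_2$-linearly should make the verification go through'' does not supply the missing argument.

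The missing idea is the formula you already used on tori, applied globally and on the nose. In the model $\widehat{\Sigma}\times_{\Z_2}SO(2)$, if $s(b)=[b',a]$, then
\[\Rot(s)(r(b))=\Rot(s)([b',1])=[b',a^{-1}]=:\overline{s}(b),\]
and $\overline{s}$ is a well-defined section. Hence $\Rot(s)_*\circ r_*=\overline{s}_*$ on all of $H_1(\Sigma)$, with no case distinction and no passage to covers. Together with $q\circ\Rot(s)=q$ and your fibre computation, this gives the triangular form with corner entry $\varphi\circ\overline{s}_*$.

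It remains to show $\varphi\circ\overline{s}_*=\varphi\circ s_*$. Parametrise a loop $\gamma$ over a fundamental interval $[0,1]$, not over its double cover, and write $s(\gamma(t))=[\tilde\gamma(t),e^{i\theta(t)}]$. Replacing $\theta$ by $-\theta$ shows that the loops $s(\gamma)$ and $\overline{s}(\gamma)$ differ from $r(\gamma)$ by opposite multiples of $\mu$. These agree after applying the $\Z_2$-valued map $\varphi$, which kills $r_*$. This is exactly the paper's proof.
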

\begin{proof}
Recall that $p \colon \widehat{\Sigma} \to \Sigma$ denotes the orientation double cover, let $b_0\in\Sigma$ be a basepoint and fix a basepoint~$b'_0\in p^{-1}(b_0) \subset \widehat{\Sigma}$.
Consider the map~$q\colon \Sigma \mathbin{\wt{\times}} S^1\to \Sigma,[b',v]\mapsto p(b')$ so that, by construction, $q\circ s=q$. 
Let~$j\colon S^1\to \Sigma \mathbin{\wt{\times}} S^1,v\mapsto [b_0',v]$ be the inclusion of the fibre.
Then~$\Rot(s)\circ j$ and $j$ are homotopic since the maps differ by a rotation of $S^1$. 
%%Don't delete.
%%Because  On the fibre Rot(s) is just a rotation.
This leads to the following commutative diagram:
$$
\xymatrix{
0\ar[r]&
\Z_2
\ar[r]^-{j_*}\ar[d]^=&
H_1(\Sigma \mathbin{\wt{\times}} S^1)
\ar[r]^-{q_*}\ar[d]^-{\Rot(s)_*}&
H_1(\Sigma)
\ar[r]\ar[d]^=\ar@/_1pc/[l]_{r_*}&
0 \\
%%%%%%%%%%%%%%%
0\ar[r]&
\Z_2
\ar[r]^-{j_*}&
H_1(\Sigma \mathbin{\wt{\times}} S^1)
\ar[r]^-{q_*}&
H_1(\Sigma)
\ar[r]\ar@/^1pc/[l]_{r_*}&
0.
}
$$
Let $r_*\colon H_1(\Sigma)\to H_1(\Sigma \mathbin{\wt{\times}} S^1)$ be the splitting from Construction~\ref{cons:SplittingS(E)}. 
It follows that, using the induced decomposition $H_1(\Sigma \mathbin{\wt{\times}} S^1)\xrightarrow{(\varphi,q_*),\cong}\Z_2 \oplus H_1(\Sigma)$, we have 
$$\Rot(s)_*=\begin{pmatrix}
	\id&\varphi \circ \Rot(s)_*\circ r_*\\ 0&\id
\end{pmatrix}.$$
Define the section $\overline{s}\colon \Sigma \to \Sigma\mathbin{\wt{\times}} S^1$ by $\overline{s}(b)=[b',a^{-1}]$ when $s(b)=[b',a]$.
We observe that the equality~$\Rot(s)_*\circ r_*=\overline{s}_*$ holds on $H_1(\Sigma)$.
Indeed,  representing a class~$x \in H_1(\Sigma)$ by a loop~$t \mapsto b_t$,  
%Given $b \in H_1(\Sigma)$,  
observe that by construction, for each $t \in[0,1]$, we have
$$\Rot(s)(r(b_t))=\Rot(s)([b_t',1])=[b_t',a^{-1}]=\overline{s}(b_t).$$
%Hence $\Rot(s)_*\circ r_*=\overline{s}_*$,  as asserted
Note that $\varphi \circ \overline{s}_*=\varphi \circ s_*$.
%%\overline{s} just switches the a component.
%% \varphi maps the S^1-fibre to zero so no change when composing.
This concludes the proof of the proposition.
\end{proof}

We now work towards determining the effect of $\Rot(s)$ on the Alexander module.

\begin{proposition}
If a section~$s \in \Gamma_{\partial \Sigma}(\Sigma,\Sigma \mathbin{\wt{\times}} S^1)$ satisfies~$0=\varphi\circ s_*\colon H_1(\Sigma)\to \Z_2$,  then the automorphism $\Rot(s) \colon \Sigma \mathbin{\wt{\times}} S^1 \to \Sigma \mathbin{\wt{\times}} S^1$ lifts to the $\varphi$-induced $2$-fold covers.
\end{proposition}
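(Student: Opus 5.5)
The plan is to use the lifting criterion for covering spaces. Write $Y:=\Sigma \mathbin{\wt{\times}} S^1$ and $Y^\varphi \to Y$ for the $\varphi$-induced $2$-fold cover. Since $\Rot(s)$ is a homeomorphism, the composite $Y^\varphi \to Y \xrightarrow{\Rot(s)} Y$ lifts to a map $Y^\varphi \to Y^\varphi$ (and the lift is then automatically a homeomorphism covering $\Rot(s)$) if and only if
$$\varphi \circ \Rot(s)_* = \varphi \colon H_1(Y) \to \Z_2 .$$
This uses that $\varphi$ factors through $H_1$ because $\Z_2$ is abelian, so the image of $\pi_1(Y^\varphi)$ is exactly $\ker(\varphi)$. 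Accordingly, I would reduce the statement to the equality $\varphi \circ \Rot(s)_*=\varphi$ on $H_1(Y)$, checked after fixing basepoints compatibly.

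To verify this equality, I would use the matrix description from Proposition~\ref{prop:Rot(s)Homology}. With respect to the decomposition $H_1(Y)\cong \Z_2 \oplus H_1(\Sigma)$ of Construction~\ref{cons:SplittingS(E)}, we have $\Rot(s)_*=\bsm \id & \varphi\circ s_* \\ 0 & \id \esm$. By construction $\varphi$ is the projection onto the $\Z_2$-summand: it sends the fibre class to $1$ and $r_*(H_1(\Sigma))$ to $0$. It follows that, for $(a,x)\in \Z_2\oplus H_1(\Sigma)$,
$$\varphi(\Rot(s)_*(a,x)) = a + (\varphi\circ s_*)(x).$$
The hypothesis $\varphi\circ s_*=0$ gives $\varphi\circ\Rot(s)_*(a,x)=a=\varphi(a,x)$, which is the required equality, and the lift exists.

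The only subtle point is the basepoint issue in the lifting criterion: $\Rot(s)$ need not fix the basepoint. To handle this, I would pick a basepoint on $\partial\Sigma$. There $s$ is boring, so $\Rot(s)$ is the identity on the fibre over that point, and the lift can be chosen to fix a preimage basepoint. Alternatively, one can note that the criterion only involves the subgroup $\ker\varphi$, which is normal, so a change of basepoint is harmless. I would also remark that the lift is a homeomorphism, since the same argument applies to $\Rot(s)^{-1}=\Rot(\overline{s})$ using $\varphi\circ\overline{s}_*=\varphi\circ s_*=0$.
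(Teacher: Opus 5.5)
Your proposal is correct and follows the paper's proof: reduce via the lifting criterion to $\varphi\circ\Rot(s)_*=\varphi$, then read this off from the matrix in Proposition~\ref{prop:Rot(s)Homology} with $\varphi=\bsm 1 & 0\esm$. Your remarks on basepoints (using that $s$ is boring on $\partial\Sigma$) and on the lift being a homeomorphism (via $\Rot(s)^{-1}=\Rot(\overline{s})$) supply details that the paper leaves implicit.
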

\begin{proof}
It suffices to prove that $\varphi \circ \Rot(s)_*=\varphi$.
Under the identification $H_1(\Sigma \mathbin{\wt{\times}} S^1)=\Z_2 \oplus~H_1(\Sigma)$ from Construction~\ref{cons:SplittingS(E)},  the map~$\varphi$ is given by $\bsm 1 & 0\esm$.
The proposition then follows from Proposition~\ref{prop:Rot(s)Homology} and the calculation 
$\bsm 1 & 0\esm \circ \bsm 
\id&\varphi \circ s_*\\ 0&\id \esm
=\bsm 1 &\varphi \circ s_* \esm
=\bsm 1 &0 \esm
$.
\end{proof}

Let $f\colon S^1\to S^1$ be the canonical $2$-fold cover.
The $2$-fold cover $g \colon (\Sigma \mathbin{\wt{\times}} S^1)^\varphi \to \Sigma \mathbin{\wt{\times}} S^1 $ is given by $g=\id \times f \colon \wh\Sigma\times_{\Z_2}S^1\to \wh\Sigma\times_{\Z_2}S^1$.
%%well def.
It follows that the map $r^\varphi\colon \Sigma \to (\Sigma \mathbin{\wt{\times}} S^1)^\varphi, b\mapsto [b',1]$ 
with $b' \in p^{-1}(b)$ is well defined (i.e.\ independent of the choice of~$b'$) 
%%Same check as for $r$.
and constitutes
a lift of~$r \colon  \Sigma \to \Sigma \mathbin{\wt{\times}} S^1$ and again induces a splitting $H_1((\Sigma \mathbin{\wt{\times}} S^1)^\varphi) \cong \Z_2 \oplus H_1(\Sigma)$.

\begin{lemma}
\label{lem:LiftRots}
If a section~$s \in \Gamma_{\partial \Sigma}(\Sigma,\Sigma \mathbin{\wt{\times}} S^1)$ satisfies $0=\varphi\circ s_*\colon H_1(\Sigma)\to \Z_2$, then it lifts to a section $\wt s\colon \Sigma\to (\Sigma \mathbin{\wt{\times}} S^1)^\varphi$ and
$$\Rot(\wt s)=\wt{\Rot(s)} \colon (\Sigma \mathbin{\wt{\times}} S^1)^\varphi \xrightarrow{\cong} (\Sigma \mathbin{\wt{\times}} S^1)^\varphi.$$
\end{lemma}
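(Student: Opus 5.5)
We use the explicit models: $\Sigma \mathbin{\wt{\times}} S^1=\wh\Sigma\times_{\Z_2}SO(2)$, and the $\varphi$-induced double cover $g=\id\times f\colon \wh\Sigma\times_{\Z_2}S^1\to\wh\Sigma\times_{\Z_2}S^1$, where $f(z)=z^2$. The map $f$ is $\Z_2$-equivariant, since conjugation by $\bsm 1&0\\0&-1\esm$ acts on $SO(2)\cong S^1$ by complex conjugation, which commutes with squaring. Under the identification $SO(2)\cong S^1$, the map $f$ is a group homomorphism.

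\textbf{Lifting the section.} The section $s$ lifts through $g$ if and only if $s_*(\pi_1(\Sigma))\subseteq g_*(\pi_1((\Sigma \mathbin{\wt{\times}} S^1)^\varphi))=\ker(\varphi\colon\pi_1(\Sigma \mathbin{\wt{\times}} S^1)\to\Z_2)$. Since $\Z_2$ is abelian, this is exactly the hypothesis $\varphi\circ s_*=0$ on $H_1(\Sigma)$. Lifting theory then gives a lift $\wt s$ with $g\circ\wt s=s$. Because $g$ is fibre-preserving over $\id_\Sigma$, the lift $\wt s$ is again a section.

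We fix the lift to be boring on $\partial\Sigma$. Over $\partial\Sigma$ we have $s(x)=[\wh x,1]$, and the fibre of $g$ over this point is $\{[\wh x,1],[\wh x,-1]\}$. The constant choice $[\wh x,1]$ is a lift along $\partial\Sigma$. Since $\Sigma$ is connected, specifying the lift at one boundary point determines it, so we choose the lift through $[\wh x_0,1]$. Uniqueness of lifts along each boundary component then forces $\wt s(x)=[\wh x,1]$ on all of $\partial\Sigma$. Note that this uses $-1\neq 1$.

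\textbf{Identifying the rotations.} Write $\wt s(p(b'))=[b',\tilde a]$, so that $s(p(b'))=[b',\tilde a^2]$. By Construction~\ref{cons:Rots}, $\Rot(\wt s)$ acts on the circle bundle of the cover by $[b',v]\mapsto[b',\tilde a^{-1}v]$. We then compute
$$g(\Rot(\wt s)[b',v])=[b',\tilde a^{-2}v^2]=\Rot(s)(g[b',v]),$$
using that $f$ is a homomorphism. Hence $\Rot(\wt s)$ is a lift of $\Rot(s)$.

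\textbf{Matching the chosen lift.} It remains to show that this lift agrees with the chosen lift $\wt{\Rot(s)}$, which we normalize to be the identity over $\partial\Sigma$. The map $\Rot(\wt s)$ is the identity over $\partial\Sigma$, because $\wt s$ is boring there. A lift of a map to a connected covering space is determined by its value at one point, and $(\Sigma \mathbin{\wt{\times}} S^1)^\varphi$ is connected because $\varphi$ is onto. Therefore the two lifts coincide.

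\textbf{Extension to the vector bundle.} To extend from circle bundles to the $\R^2$-bundle, we interpret the cover as the branched cover $\wh\Sigma\times_{\Z_2}\R^2\to\wh\Sigma\times_{\Z_2}\R^2$ given by $z\mapsto z^2$ on $\R^2=\C$. This map is equivariant under complex conjugation, and the same computation goes through.

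The main subtle point is the boundary normalization of the lift. The rest is the explicit formula above.
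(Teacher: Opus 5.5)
Your argument is correct and follows the paper's proof. Both use the model $g=\id\times f$ with $f(z)=z^2$, define $\wt s$ by taking a pointwise square root of $s$, and check $g\circ\Rot(\wt s)=\Rot(s)\circ g$ using that $f$ is a homomorphism. Your extra steps fill in details the paper leaves implicit: you justify the existence of the lift via the lifting criterion, you normalise $\wt s$ to be boring on the (connected) boundary so that $\Rot(\wt s)$ is defined, and you pin down which of the two lifts of $\Rot(s)$ is meant.
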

\begin{proof}
Since the cover~$g\colon (\Sigma \mathbin{\wt{\times}} S^1)^\varphi\to \Sigma \mathbin{\wt{\times}} S^1$ is given by~$\id\times f\colon \wh \Sigma\times_{\Z_2} S^1\to \wh \Sigma\times_{\Z_2} S^1$, the section~$\wt s$ is defined by setting~$\wt s(b):=[b',a]$ with $a$ such that $s(b)=[b',f(a)]$. 
	 Consider the diagram
	 \[\begin{tikzcd}
(\Sigma \mathbin{\wt{\times}} S^1)^\varphi\ar[r,"\Rot(\wt s)"]\ar[d,"g"]&(\Sigma \mathbin{\wt{\times}} S^1)^\varphi\ar[d,"g"]\\
\Sigma \mathbin{\wt{\times}} S^1\ar[r,"\Rot(s)"]&\Sigma \mathbin{\wt{\times}} S^1.
	 \end{tikzcd}\]
For~$[b',v]\in (\Sigma \mathbin{\wt{\times}} S^1)^\varphi$,  using consecutively $g:=\id \times f$, the definition of $\Rot(s)$ (together with the fact that~$s(p(b'))=s(b)=[b',f(a)]$),  the property~$f(a^{-1}v)=f(a^{-1})f(v)=f(a)^{-1}f(v)$
(viewing~$S^1$ in~$\C$, the map $f$ is given by squaring), the definition of $g$ again, and the definition of~$\Rot(\widetilde{s})$, we obtain the equality
\begin{align*}
\Rot(s)(g([b',v]))
&=\Rot(s)([b',f(v)])=[b',f(a)^{-1}f(v)]=[b',f(a^{-1}v)]=g([b',a^{-1}v]) \\
&=g(\Rot(\wt s)([b',v])).
\end{align*}
 	 Hence the diagram commutes and $\Rot(\wt s)$ is a lift of $\Rot(s)$ as claimed.
\end{proof}

The splitting $r^\varphi \colon \Sigma \to (\Sigma \mathbin{\wt{\times}} S^1)^\varphi$ gives rise to a decomposition $H_1((\Sigma \mathbin{\wt{\times}} S^1)^\varphi) \cong \Z_2 \oplus H_1(\Sigma)$.
Mapping the $S^1$-fibre to $1$ and the image of $r_*^\varphi$ to zero gives rise to a homomorphism 
$$\psi\colon H_1((\Sigma \mathbin{\wt{\times}} S^1)^\varphi)\to \Z_2.$$
We conclude by describing the effect of $\widetilde{\Rot(s)}$ on the Alexander module.

\begin{corollary}
\label{cor:AlexanderModuleRotSectionVersion}
If $s\in \Gamma_{\partial \Sigma}(\Sigma,\Sigma \mathbin{\wt{\times}} S^1)$ is a section that satisfies~$\varphi \circ s_*=0$ and~$\wt s$ is a lift of~$s$,  then $\Rot(\wt s)_*$ acts on $H_1((\Sigma \mathbin{\wt{\times}} S^1)^\varphi)\cong \Z_2 \oplus H_1(\Sigma)$ by 
	$$
	\Rot(\wt s)_*=
	\begin{pmatrix}
		\id&\psi \circ \wt s_*\\ 0&\id
	\end{pmatrix}
	\colon 
	H_1((\Sigma \mathbin{\wt{\times}} S^1)^\varphi)
	\to 
	H_1((\Sigma \mathbin{\wt{\times}} S^1)^\varphi).
	$$
\end{corollary}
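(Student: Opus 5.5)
The plan is to observe that the lifted cover $(\Sigma \mathbin{\wt{\times}} S^1)^\varphi$ is again a bundle of the same shape, namely $\wh\Sigma\times_{\Z_2}S^1$ with the same $\Z_2$-action. So the corollary should follow by applying Proposition~\ref{prop:Rot(s)Homology} to the lifted section $\wt s$ in place of $s$, with $r^\varphi$ and $\psi$ playing the roles of $r$ and $\varphi$. Lemma~\ref{lem:LiftRots} then identifies $\Rot(\wt s)$ with $\wt{\Rot(s)}$, so the resulting matrix is the action on the Alexander module.

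Concretely, I would first check that $\wt s$ is itself a section of $(\Sigma \mathbin{\wt{\times}} S^1)^\varphi\to \Sigma$ that is boring on $\partial\Sigma$. On $\partial\Sigma$ we have $s(x)=[\wh x,1]$, and $f(1)=1$, so we may choose the lift with $\wt s(x)=[\wh x,1]$. Since $\wt s$ is determined by a choice of lift on a connected surface, we fix the lift agreeing with this boundary value.

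Next I would rerun the proof of Proposition~\ref{prop:Rot(s)Homology} verbatim upstairs. The fibre inclusion $j^\varphi$, the projection $q\circ g$ to $\Sigma$, and the section $r^\varphi$ give a split short exact sequence
$$0\to \Z_2\to H_1((\Sigma \mathbin{\wt{\times}} S^1)^\varphi)\to H_1(\Sigma)\to 0.$$
The $\Z_2$ here is generated by the upstairs fibre, since the upstairs space is the same model $\wh\Sigma\times_{\Z_2}SO(2)$ and so \cite[Proposition 4.2]{ConwayOrsonPowell} applies again. The map $\Rot(\wt s)$ is the identity on the base and a rotation on the fibre, so it is the identity on both outer terms. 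Its off-diagonal entry is $\psi\circ\Rot(\wt s)_*\circ r^\varphi_*=\psi\circ\overline{\wt s}_*$, and this equals $\psi\circ\wt s_*$ because inverting the fibre coordinate does not change the value modulo $2$.

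The main point needing care is that the proof of Proposition~\ref{prop:Rot(s)Homology} used only the model $\wh\Sigma\times_{\Z_2}SO(2)$, the preferred section and the definition of $\Rot$. Each of these transfers unchanged, because $g=\id\times f$ and $f$ commutes with conjugation by $\bsm 1&0\\0&-1\esm$, which is what makes the upstairs bundle have the same form. Combining this with Lemma~\ref{lem:LiftRots} completes the argument.
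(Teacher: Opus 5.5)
Your proposal is correct and is essentially the paper's proof, which consists of applying Proposition~\ref{prop:Rot(s)Homology} to $(\Sigma \mathbin{\wt{\times}} S^1)^\varphi\cong \wh\Sigma\times_{\Z_2}S^1$ and $\Rot(\wt s)$ in place of $\Sigma \mathbin{\wt{\times}} S^1$ and $\Rot(s)$, with $r^\varphi$ and $\psi$ playing the roles of $r$ and $\varphi$. Your extra checks (that $g=\id\times f$ is well defined because squaring commutes with the conjugation action, that the lift can be taken boring on $\partial\Sigma$, and that $\psi\circ\overline{\wt s}_*=\psi\circ\wt s_*$) spell out what the paper leaves implicit. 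The boring condition is not actually used in the proof of Proposition~\ref{prop:Rot(s)Homology}, so the formula also holds for the other lift.
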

\begin{proof}
Apply Proposition \ref{prop:Rot(s)Homology} with $(\Sigma \mathbin{\wt{\times}} S^1)^\varphi$ and $\Rot(\wt s)$ instead of $\Sigma \mathbin{\wt{\times}} S^1$ and~$\Rot(s)$.
%%Legit because (\Sigma \mathbin{\wt{\times}} S^1)^\varphi \cong (\Sigma \mathbin{\wt{\times}} S^1)
\end{proof}

\subsection{Reformulation in terms of cohomology classes} 
\label{sub:CohomologyReformulation}

We state the first proposition in this section for general pairs of base spaces $(B,B')$.
Later, we will take~$(B,B')$ to either be~$(\Sigma,\emptyset)$ for~$\Sigma$ a closed surface or~$(\Sigma,\partial \Sigma)$ for~$\Sigma$ a surface with one boundary component.
 Let~$w\colon \pi_1(B)\to \Z_2$ be some homomorphism and let~$\wh B$ be the associated double cover of~$B$. 
As in Construction~\ref{cons:SplittingS(E)}, there is a map $\varphi\colon H_1(\wh B\times_{\Z_2} SO(2))\to \Z_2$
that maps the generator of the $H_1$ of the fibre to $1$ and the image of $r_*(H_1(B))$ to zero, where $r \colon B \to B\times_{\Z_2} SO(2)$ is the preferred section.
%%Z or Z_2->H_1()->H_1(B)
%%In the oriented case map the Z=H_1(fibre) to Z_2
The first goal of this section is to establish the isomorphism~$\pi_0(\Gamma_{B'}(B,\wh B\times_{\Z_2}SO(2))) \cong H^1(B,B';\Z^w)$.

\begin{proposition}
	\label{prop:cohomology-version}
	There is an isomorphism
	\[\Theta\colon \pi_0(\Gamma_{B'}(B,\wh B\times_{\Z_2}SO(2)))\xrightarrow{\cong} H^1(B,B';\Z^w)\]
	such that the following composition maps a section $s$ to~$\varphi\circ s_*$:
	%%%Don't delete.
	%%%This part is technically only needed for surfaces in the nonorientable case.
	$$
	\pi_0(\Gamma_{B'}(B,\wh B\times_{\Z_2}SO(2)))
	\xrightarrow{\Theta} H^1(B,B';\Z^w)
	\xrightarrow{\operatorname{Red}_2} H^1(B,B';\Z_2)
	\xrightarrow{\ev} \Hom(H_1(B,B'),\Z_2).
	$$	
\end{proposition}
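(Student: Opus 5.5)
The plan is to identify sections of the circle bundle $\wh B\times_{\Z_2}SO(2)$ with maps into a twisted Eilenberg--MacLane space, and then to classify those maps by obstruction theory.

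The fibre $SO(2)\cong S^1$ is a $K(\Z,1)$. The $\Z_2$-action is conjugation by $\mathrm{diag}(1,-1)$, which on $S^1\subset\C$ is complex conjugation; it fixes the basepoint $1$ and acts on $\pi_1(S^1)=\Z$ by $-1$. Hence $E:=\wh B\times_{\Z_2}SO(2)\to B$ is a fibration with fibre $K(\Z,1)$, local system $\Z^w$ on $\pi_1$ of the fibre, and a preferred section $r(b)=[b',1]$. Moreover $E$ is the pullback, along the classifying map $B\to B\Z_2$ of $w$, of the universal twisted bundle $L:=E\Z_2\times_{\Z_2}S^1\to B\Z_2$, and $L$ is the twisted $K(\Z^w,1)$ over $B\Z_2$.

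Step 1. By the classification of sections of such fibrations (twisted cohomology with local coefficients is represented by fibrewise maps into $L$), vertical homotopy classes of sections of $E$ relative to $B'$, based at the reference section $r$, are in bijection with $H^1(B,B';\Z^w)$. The reference section $r$ corresponds to $0$, and the boring condition on $B'$ says exactly that the section agrees with $r$ there. Concretely, I will define $\Theta(s)$ to be the difference cochain $d(s,r)$. Since the fibre is aspherical above dimension $1$, this difference class is the only obstruction to a homotopy between two sections and every class is realised: choose $s=r$ on the $0$-skeleton, and on a $1$-cell $e$ wrap $n_e$ times around the fibre, in a way compatible with the local system. This gives $\Theta$ as a bijection. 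Fibrewise multiplication in the abelian group bundle $\wh B\times_{\Z_2}SO(2)$ makes $\pi_0\Gamma_{B'}$ a group, since conjugation is a group automorphism of $SO(2)$, and $\Theta$ is additive because difference classes add under fibrewise product.

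Step 2. For the compatibility, take a loop $\gamma$ in $B$ representing a relative class. The integer $\Theta(s)(\gamma)$, reduced mod $2$, counts how many times $s|_\gamma$ winds around the fibre relative to $r|_\gamma$, and the sign ambiguity from the twisting disappears after reduction mod $2$. On the other hand, $s_*[\gamma]-r_*[\gamma]$ is that winding number times the fibre class in $H_1(E)$, and $\varphi$ sends the fibre to $1$ and kills $r_*$. So $\varphi(s_*[\gamma])$ equals the mod $2$ reduction of $\Theta(s)(\gamma)$, i.e.\ $\ev\circ\operatorname{Red}_2\circ\Theta(s)=\varphi\circ s_*$. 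For relative loops with endpoints on $B'$, the same argument applies once $s$ and $r$ are closed up along $B'$, where they agree.

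The main obstacle I expect is making Step 1 rigorous with twisted coefficients, i.e.\ checking that the local system of the fibration is exactly $\Z^w$ and that the relative difference class is well defined. I would handle this by working equivariantly on $\wh B$: sections of $E$ correspond to $\Z_2$-equivariant maps $\wh B\to S^1$ (with conjugation on $S^1$) that equal $1$ on $\wh{B'}$. Their homotopy classes are computed by the equivariant cohomology $H^1_{\Z_2}(\wh B,\wh{B'};\Z^-)$, which is $H^1(B,B';\Z^w)$.
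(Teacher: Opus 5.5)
Your proposal is correct. Its first half matches the paper: the paper gets $\Theta$ directly from Siegel's classification of sections of twisted Eilenberg--MacLane fibrations, using $SO(2)\simeq K(\Z,1)$ with $\Z_2$ acting by $-1$ on $\pi_1$, and that is exactly the obstruction-theoretic/equivariant statement you sketch.

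The routes diverge in the compatibility with $\varphi$.

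\emph{The paper's route.} It never makes $\Theta$ explicit. Instead it builds a space-level representative $\varphi'\colon \wh B\times_{\Z_2}SO(2)\to\RP^\infty$ of $\varphi$, using the homotopy orbits $\wh B\times_{\Z_2}(SO(2)\times S^\infty)$ and a $\Z_2$-equivariant map $SO(2)\times S^\infty\to\RP^\infty$ that induces reduction mod $2$. It checks that $\varphi'$ sends the fibre to the generator and that $\varphi'\circ r$ is nullhomotopic. It then concludes by naturality of Siegel's isomorphism under the coefficient change $\Z^w\to\Z_2$.

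\emph{Your route.} You define $\Theta(s)$ as the primary difference class $d(s,r)$, so $\operatorname{Red}_2\Theta(s)$ is visibly the mod-$2$ winding of $s$ against $r$. Pulling back along a loop $\gamma\colon S^1\to B$ reduces everything to a torus or a Klein bottle, where $[s\gamma]-[r\gamma]$ is that winding times the fibre class.

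\emph{What each buys.} Your argument is more elementary. It also gives additivity of $\Theta$ under fibrewise multiplication for free, which the paper uses later without comment (for instance $\Rot(2\alpha)\simeq\Rot(\alpha)^2$). The paper's route avoids cochain-level bookkeeping and, as a by-product, realises $\varphi$ by an honest map to $\RP^\infty$.

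Two minor points of phrasing:
\begin{enumerate}
\item When $w(\gamma)=1$ there is no integer $\Theta(s)(\gamma)$, only the value of its mod-$2$ reduction; on $S^1$ with the nontrivial system, $H^1(S^1;\Z^w)\cong\Z_2$. So phrase the step through naturality of the difference class rather than through an integer.
\item Your sentence on relative loops is vague. However, $\varphi\circ s_*$ is only naturally defined on $H_1(B)$ anyway. Only absolute classes are used later, since $H^1(\Sigma,\partial\Sigma;\Z_2)\cong H^1(\Sigma;\Z_2)$ for a surface with one boundary component.
\end{enumerate}
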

\begin{proof}
	Since~$SO(2)\simeq K(\Z,1)$ and the~$\Z_2$-action corresponds to the action~$1\mapsto -1$ on~$\Z$,
	%%Don't delete
	%%The action by diag(1,-1) on SO(2) is the same as the ``flip" action on S^1 that fixes~$1$ and~$-1$ and induces~$-1$ on homology.
	we have an isomorphism
	$$\Theta\colon \pi_0(\Gamma_{B'}(B,\wh B\times_{\Z_2}SO(2)))\cong \pi_0(\Gamma_{B'}(B,\wh B\times_{\Z_2}K(\Z,1)))\cong H^1(B,B';\Z^w).$$ 
	Here, the last isomorphism is given by \cite[Section 3 and Theorem~3.6]{Siegel2}; note that~\cite[Theorem~3.6]{Siegel2} is stated for absolute cohomology but that the statement can also be seen to hold for relative cohomology.
	%%Don't delete.
	%The map will be the same and then use naturality of the long exact sequence and the 5-lemma.
	%exact sequence because (co)homology theory.
	It remains to verify the claim about the mod 2 reduction of~$\Theta([s])$.

	Recall that the \emph{homotopy orbit} of a~$G$-space~$X$ is given by~$X_{hG}:=X\times_{G}EG$ and the projection~$EG\to *$ induces a map~$X_{hG}\to X_G:=X/G$. 
	A verification shows that when~$X$ is a~$G$-CW complex with a free~$G$-action, then the map~$X_{hG}\to X_G$ is a homotopy equivalence.
	%%Don't delete.
	%%Uploaded a note in the dropbox.
	In particular, since~$S^\infty$ with the antipodal~$\Z_2$-action is a model for~$E\Z_2$ and the~$\Z_2$-actions on~$\wh B$ and~$\wh B\times SO(2)$ are free, the projections induce homotopy equivalences
	\begin{align*}
		&\pr \colon \wh B\times_{\Z_2}S^\infty\xrightarrow{\simeq}  B, \\
		&\pr' \colon \wh B\times_{\Z_2}(SO(2) \times S^\infty) =(\wh  B \times SO(2))\times_{\Z_2}S^\infty
		\xrightarrow{\simeq} \wh B\times_{\Z_2}SO(2).
		%%%%
		%Alternatively, the projection S^1xS^infty to S^1 is Z/2-equivariant and hence induces a map of fibre bundles. This induces a map of the LES of homotopy groups. Since the identity on the base and the projection are (weak) homotopy equivalences, the map on total spaces is a (weak) homotopy equivalence as claimed.
	\end{align*}
	We now construct a~$\Z_2$-equivariant map~$g\colon SO(2)\times S^\infty\to \RP^\infty$ that induces reduction mod~$2$ on fundamental groups. 
	Here, we endow~$\RP^\infty$ with the trivial~$\Z_2$-action. 
	For this, we start with the canonical map~$[0,1]\times S^\infty\to \Sigma S^\infty\cong S^\infty$. This is~$\Z_2$-equivariant when we endow~$[0,1]\times S^\infty$ and~$\Sigma S^\infty$ with the action~$(t,x)\mapsto (1-t,-x)$ and~$S^\infty$ with the antipodal map~$x\mapsto -x$. 
	When composed with the projection~$S^\infty\to \RP^\infty$, this map factors through~$S^1\times S^\infty\cong SO(2)\times S^\infty$. 
	%%Become antipodes when going from \Sigma S^\infty to S^\infty.
	This completes the construction of~$g$. 
	Note that by construction~$g(1,x)=g(1,y)$ for all~$x,y\in S^\infty$.
	%All that gets crushed.

	Consider the following composition, which involves a choice of a homotopy inverse of $\pr'$:
	\[\varphi'\colon \wh B\times_{\Z_2}SO(2)
	\xleftarrow{\pr',\simeq}\wh B\times_{\Z_2}(SO(2)\times S^\infty)
	\xrightarrow{\id\times g} \wh B\times_{\Z_2}\RP^\infty
	\cong B\times \RP^\infty
	\xrightarrow{\pr_2}\RP^\infty.\]
	
	We claim that~$\varphi'$ represents~$\varphi\in \Hom(H_1(\wh B \times_{\Z_2} SO(2)),\Z_2)$.
	We first observe that the map~$\varphi' \colon H_1(\wh B \times_{\Z_2} SO(2)) \to \Z_2$ takes the $SO(2)$-fibre to the generator of $H_1(\RP^\infty) \cong \Z_2$: since~$\pr'$ takes the~$SO(2)$-fibre to itself, so does any of its homotopy inverses on homology and,  
	%pr'(\mu)=\mu'=pr'(g(\mu')) so g(\mu')=\mu.
	by construction of~$g$, the~$SO(2)$-fibre is sent to a generator of~$\pi_1(\RP^\infty)\cong \Z_2$. 
	It remains to show that the composition of~$\varphi'$ with the preferred
	%%Don't delete
	%{XXX: Fine for $B=\Sigma$ with boundary. Is there a section for general $(B,B')$? XXX: Yes, still $b\mapsto[b,1]$ as in Construction B.4. Note that this is a section for $\wh B\times_{\Z_2}SO(2)$ which has trivial Euler number not for $B\mathbin{\wt{\times}} S^1$ which can have non-trivial Euler number.} 
	section~$r\colon B\to \wh B\times_{\Z_2}SO(2), b\mapsto [b',1]$ is nullhomotopic
	 or equivalently that~$(\varphi' \circ r)_* \colon H_1(B) \to \Z_2$ is the zero map.
%%%
	%b' \in p^{-1}(b).
	Considering the map~$\overline{r} \colon \widehat{B} \times_{\Z_2} S^\infty \to \wh B\times_{\Z_2}(SO(2)\times S^\infty), [x,y] \mapsto [x,1,y]$, we will prove that~$\varphi'\circ r$ is homotopic to
the following map (which is constant):	
%%Don't delete Because $\overline r$ puts a $1$ in the $SO(2)$-component and $g(1,-)$ is constant.
	\[B\xleftarrow{\pr,\simeq}\wh B\times_{\Z_2}S^\infty\xrightarrow{\overline{r}}\wh B\times_{\Z_2}(SO(2)\times S^\infty)
	\xrightarrow{\id\times g} \wh B\times_{\Z_2}\RP^\infty
	\cong B\times \RP^\infty
	\xrightarrow{\pr_2}\RP^\infty.\]
	% where~$\overline{r}([x,y])=[x,1,y]$. 
By definition of these maps,
%	In turn
	 this reduces to proving that the portion of the following diagram involving the dashed maps commutes (i.e.\ $\overline{r}\circ \pr^{-1} \simeq (\pr')^{-1}\circ r$):
	$$
	\xymatrix{
		B \ar@{-->}@/^1pc/[r]^{r}& \wh B\times_{\Z_2}SO(2) \ar[l]^p \\
		%%%
		\wh B\times_{\Z_2}S^\infty \ar[u]^{\pr,\simeq}
		\ar@{-->}@/^1pc/[r]^{\overline{r}}&\wh B\times_{\Z_2}(SO(2)\times S^\infty)  \ar[l]\ar[u]_{\pr',\simeq}.
	}
	$$
	We have~$\overline{r}\circ \pr^{-1}\simeq (\pr')^{-1}\circ \pr'\circ\overline{r}\circ \pr^{-1}. $
	Since $\overline{r}$ and thus also $\pr'\circ\overline{r}$ has second entry constant to 1, this equals~$(\pr')^{-1}\circ (r\circ p)\circ \pr'\circ\overline{r}\circ \pr^{-1}, $ where $p$ is the projection $\wh B\times SO(2) \to B$. 
	%%Don't delete.
	%%A quick check shows that \pr'\circ\overline{r}=(r\circ p)\circ \pr'\circ\overline{r}
	Now~$p\circ \pr'=\pr\circ p'$ (where $p'\colon \wh B\times SO(2)\times S^\infty \to \wh B\times S^\infty$) and so
	$$
	\overline{r}\circ \pr^{-1}\simeq
	(\pr')^{-1}\circ r\circ \pr\circ p'\circ\overline{r}\circ \pr^{-1}
	%%Used p' \overline{r}(x,y)=p'(x,1,y)=(x,y).%%%%
	=(\pr')^{-1}\circ r\circ \pr\circ \pr^{-1}
	\simeq (\pr')^{-1}\circ r.$$
	%%%For the second p' \circ \ol{r} =\id by looking at both definitions. 
	Thus,  $\varphi' \circ r$ is nullhomotopic
	%$(\varphi' \circ r)_* \colon H_1(B) \to \Z_2$ is the zero map 
	and hence~$\varphi'$ represents~$\varphi$,
	 as claimed. 
%%Don't delete:
%% because \varphi really goes to \Z_2, even in the oriented case.
%%%This is not the coefficient system for oriented surfaces, this is an intermediate adhoc construction that is local.
	
	Consider the following diagram where the first three vertical maps are isomorphisms by \cite[Theorem~3.6]{Siegel2}, the commutativity of the first two squares follows from the naturality of this theorem, and the commutativity of the final square is mentioned in~\cite[proof of Theorem~3.6]{Siegel2}:
	$$
	\xymatrix@C0.25cm{
		\pi_0(\Gamma_{B'}(B,\wh B\times_{\Z_2}SO(2)))
		\ar[r]^-\cong\ar[d]^{\cong}_\Theta&
		\pi_0(\Gamma_{B'}(B,\wh B\times_{\Z_2}(SO(2)\times S^\infty)))
		\ar[r]\ar[d]^-\cong&
		\pi_0(\Gamma_{B'}(B,\wh B\times_{\Z_2}\RP^\infty))
		\ar[r]^-\cong\ar[d]^-\cong&
		\pi_0(\Gamma_{B'}(B,B\times\RP^\infty)) \ar[d]^-\cong
		\\
		%%%%%%%
		H^1(B,B';\Z^w)
		\ar[r]^=&
		H^1(B,B';\Z^w)
		\ar[r]^{\operatorname{Red}_2}&
		H^1(B,B';\Z_2)
		&\ar[l]
		[(B,B'),(\RP^\infty,*)].
	}
	$$
	By definition of the map $\varphi'$, starting from a section $s$ in the upper left corner and mapping it all the way across the top row and then down yields $s \circ \varphi'$.
	%{XXX: Where is the commutativity of the diagram being used? DK: This describes the top then down composition. Down then bottom is what we want to compute. I added the $\Theta$ into the diagram so that it relates to the prop statement more clearly (I would prefer it on the other side of the arrow but don't know how to do it in xymatrix).}
	Upon mapping this element to~$H^1(B,B';\Z_2) \cong \Hom(H_1(B,B'),\Z_2)$ yields~$(\varphi')_* \circ s_*=\varphi
	\circ s_*$, as required.
\end{proof}

We now restrict again to the case of surfaces.
As before, let~$\Sigma$ be a surface with nonempty boundary,  let~$\wh\Sigma$ be its orientation double cover,  and set~$w:=w_1(\Sigma)\colon \pi_1(\Sigma)\to \Z_2$. 
Recall the coefficient system~$\varphi\colon H_1(\Sigma \mathbin{\wt{\times}} S^1)\to \Z_2$ from Construction~\ref{cons:SplittingS(E)}.

We now prove the first main result of this section.

\begin{proposition}
\label{prop:RotAlphaAxiomsNonori}
Let $\Sigma$ be a nonorientable surface with nonempty boundary.
For every cohomology class~$\alpha\in H^1(\Sigma,\partial \Sigma;\Z^w)$, there exists a bundle isomorphism
$$
\Rot(\alpha) \colon \Sigma \mathbin{\wt{\times}} \R^2 \to \Sigma \mathbin{\wt{\times}} \R^2
$$
that satisfies the following properties:
\begin{enumerate}
\item The homeomorphism $\Rot(\alpha)$ is orientation-preserving and restricts to the identity on the~$0$-section and on $(\Sigma \mathbin{\wt{\times}}\R^2)|_{\partial \Sigma}.$
\item The effect of $\Rot(\alpha)$ on $H_1(\Sigma \mathbin{\wt{\times}} S^1)=\Z_2 \oplus H_1(\Sigma)$ is
\begin{equation*}
\Rot(\alpha)_*=
\begin{pmatrix} \id & \ev([\alpha]_2) \\ 0 & \id \end{pmatrix}.
\end{equation*}
Here,~$\ev([\alpha]_2)  \colon H_1(\Sigma) \to \Z_2$ is the map induced by~$[\alpha]_2 \in H^1(\Sigma,\partial \Sigma;\Z_2) \cong H^1(\Sigma;\Z_2)$.
The splitting~$H_1(\Sigma \mathbin{\wt{\times}} S^1)=\Z_2 \oplus H_1(\Sigma)$ is obtained using the fixed section~$r \colon \Sigma \to \Sigma \mathbin{\wt{\times}} S^1$.
\item For every bundle automorphism~$H \colon \Sigma \mathbin{\wt{\times}} \R^2 \to \Sigma \mathbin{\wt{\times}} \R^2$ 
%that preserves local orientations and
%%Don't delete
%%Automatic because rel.\ boundary ->homeo is o-p <-> bundle aut preserves local orientations.
%%For the equivalence: true because both conditions are local.
 such that~$H|_{\partial \Sigma \mathbin{\wt{\times}} \R^2}=\id$, there exists a unique~$\alpha \in H^1(\Sigma,\partial \Sigma;\Z^w)$ such that~$H \simeq \Rot(\alpha)$ rel.\ boundary.
\end{enumerate}
\end{proposition}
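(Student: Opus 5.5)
The plan is to define $\Rot(\alpha):=\Rot(s_\alpha)$, where $s_\alpha$ is any section boring on $\partial\Sigma$ with $\Theta([s_\alpha])=\alpha$. Here $\Theta$ is the isomorphism of Proposition~\ref{prop:cohomology-version} applied with $(B,B')=(\Sigma,\partial\Sigma)$, and $\Rot(s)$ is the automorphism of Construction~\ref{cons:Rots}. Since $\Theta$ is a bijection on $\pi_0$, such an $s_\alpha$ exists, and it is unique up to homotopy through boring sections. A homotopy of sections induces, via Construction~\ref{cons:Rots}, a homotopy of bundle automorphisms rel.\ boundary. So $\Rot(\alpha)$ is well defined up to homotopy rel.\ boundary, which suffices for every statement in the proposition.

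For item (1), note first that $\Rot(s)$ acts fibrewise by an element of $SO(2)$, read in the local chart $[b',\cdot]$. Rotations preserve local orientations of the fibres, and $\Rot(s)$ covers the identity of $\Sigma$, so it is orientation-preserving. It fixes the zero section because $a^{-1}\cdot 0=0$. On $\partial\Sigma$ we have $s(x)=[\widehat x,1]$ by boringness, so $a=1$ there and $\Rot(s)$ is the identity over $\partial\Sigma$.

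Item (2) follows from Proposition~\ref{prop:Rot(s)Homology}, which gives $\Rot(s_\alpha)_*=\bsm \id & \varphi\circ (s_\alpha)_* \\ 0 & \id\esm$. Proposition~\ref{prop:cohomology-version} identifies $\varphi\circ(s_\alpha)_*$ with $\ev(\operatorname{Red}_2\Theta([s_\alpha]))=\ev([\alpha]_2)$. Here $H_1(\Sigma)\to H_1(\Sigma,\partial\Sigma)$ is composed in, which is compatible with the isomorphism $H^1(\Sigma,\partial\Sigma;\Z_2)\cong H^1(\Sigma;\Z_2)$ (valid since $\partial\Sigma$ is connected).

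Item (3) is the main obstacle. I would first construct the map $H\mapsto s_H$ from rel.\ boundary bundle automorphisms to boring sections, showing that every such automorphism is of the form $\Rot(s)$ up to homotopy. Given $H$, it covers the identity, is linear on fibres, and is the identity over $\partial\Sigma$. After a fibrewise Gram--Schmidt homotopy (rel.\ boundary, where $H$ is already orthogonal) we may assume $H$ is orthogonal on each fibre. It is then orientation-preserving on fibres: this holds on $\partial\Sigma$, where $H=\id$, and hence everywhere by connectedness. Consequently, for each $b'\in\widehat\Sigma$ there is a unique $a(b')\in SO(2)$ with $H([b',v])=[b',a(b')^{-1}v]$. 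Replacing $b'$ by $Tb'$ conjugates $a$ by $\bsm 1&0\\0&-1\esm$, so $s_H(p(b')):=[b',a(b')]$ is a well-defined section that is boring on $\partial\Sigma$, and by construction $H=\Rot(s_H)$. The same argument applied to homotopies shows that $H\mapsto [s_H]$ is inverse to $s\mapsto\Rot(s)$ on homotopy classes. This proves the bijection $\pi_0(\Aut_{\partial\Sigma})\cong\pi_0(\Gamma_{\partial\Sigma})$ from \eqref{eq:IsosForNonOri}. Composing it with $\Theta$ yields existence and uniqueness of $\alpha$. The delicate points I expect are that the Gram--Schmidt retraction respects the $\Z_2$-equivariance (it does, because it is natural under orthogonal conjugation) and that it can be carried out rel.\ boundary.
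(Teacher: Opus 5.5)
Your proposal is correct and follows the paper's proof: $\Rot(\alpha)$ is $\Rot(s_\alpha)$ for a boring section with $\Theta([s_\alpha])=\alpha$, items (1) and (2) come from Construction~\ref{cons:Rots} together with Propositions~\ref{prop:Rot(s)Homology} and~\ref{prop:cohomology-version}, and item (3) comes from identifying rel.\ boundary automorphisms with boring sections up to homotopy. The only difference is that you read this identification off directly in the model $\wh\Sigma\times_{\Z_2}\R^2$ and add an orthogonalisation step, whereas the paper obtains it from the principal $O(2)$-bundle correspondence of Lemma~\ref{lem:BundleHomeosSection} and Propositions~\ref{prop:BundleIsoHE} and~\ref{prop:RealiseRotNonOri} and simply assumes bundle maps preserve a metric; for your orthogonalisation, the polar decomposition is natural under all orthogonal conjugations, whereas Gram--Schmidt is only equivariant under diagonal sign changes such as $\operatorname{diag}(1,-1)$, which is all you need here.
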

\begin{proof}
The bundle isomorphism $\Rot(\alpha)$ is constructed by combining Construction~\ref{cons:Rots} (which constructs $\Rot(s)$) with Proposition~\ref{prop:cohomology-version} (which relates sections to cohomology classes).
The first item follows from the construction of $\Rot(s)$ and the translation to cohomology classes from Proposition~\ref{prop:cohomology-version}.
%Rot(\alpha) is orientation preserving because we are using SO(2) in the construction and not O(2).
The second item follows from Proposition~\ref{prop:Rot(s)Homology} (which describes the induced map~$\Rot(s)_* \colon H_1(\Sigma \mathbin{\wt{\times}} S^1) \to H_1(\Sigma \mathbin{\wt{\times}} S^1)$) and Proposition~\ref{prop:cohomology-version}.
For the third item,  Proposition~\ref{prop:RealiseRotNonOri} below proves that every bundle automorphism $\Sigma \mathbin{\wt{\times}} \R^2 \to \Sigma \mathbin{\wt{\times}} \R^2$ arises uniquely (up to homotopy) as~$\Rot(s)$ for some section~$s \in  \pi_0( \Gamma_{\partial \Sigma}(\Sigma,\Sigma \mathbin{\wt{\times}} S^1))$, and the result then follows from Proposition~\ref{prop:cohomology-version}.
This concludes the proof of the proposition.
\end{proof}

%%Don't delete.
%Here,  we note that a bundle automorphism~$H \colon \Sigma \mathbin{\wt{\times}} \R^2 \to \Sigma \mathbin{\wt{\times}} \R^2$ preserves local orientations if and only if the underlying homeomorphism is orientation-preserving.
%Since rel.\ boundary homeomorphisms are necessarily orientation-preserving, this explains why there is no discrepancy between the first and third statements of Proposition~\ref{prop:RotAlphaAxiomsNonori}.
%%covering the identitys

\subsection{The effect on Alexander modules}
\label{sub:RotAlphaAlexanderNonOri}

Given~$\alpha\in H^1(\Sigma,\partial \Sigma;\Z^w)$, this section describes the effect of $\Rot(\alpha)^2$ on the Alexander modules of $\Sigma \mathbin{\wt{\times}} S^1$ and $Y_e(K)=E_K \cup \Sigma \mathbin{\wt{\times}} S^1$.
These results were used in Section~\ref{sec:Injection} to prove that the map $\Xi_{\Emb}$ is well defined.

First, note that if~$\ev([\alpha]_2) \colon H_1(\Sigma) \to \Z_2$ is the zero map, then $\Rot(\alpha)$ lifts to the $\varphi$-covers (combine Lemma~\ref{lem:LiftRots} with Proposition~\ref{prop:cohomology-version}).
Indeed,  for a section $s$ with associated cohomology class $\alpha$, the homomorphism~$ \ev([\alpha]_2)$ corresponds to~$\varphi \circ s_*$. 
In particular,  for every~$\alpha\in H^1(\Sigma,\partial \Sigma;\Z^w)$,  the bundle automorphism $\Rot(2\alpha) \simeq \Rot(\alpha)^2$ lifts in this manner.
%%Don't delete
%%Essentially by naturality of the Siegel construction.
%%If you multiply a class by 2, then the action on S^1 \simeq SO(2) \simeq K(Z,1) is multiplied  by 2 etc.

\begin{proposition}
\label{prop:RotAlexanderModuleNonOri}
Given~$\alpha\in H^1(\Sigma,\partial \Sigma;\Z^w)$,  the bundle isomorphism $\Rot(\alpha)^2$ induces the following automorphism on the Alexander module~$H_1((\Sigma \mathbin{\wt{\times}} S^1)^\varphi)= \Z_2 \oplus H_1(\Sigma)$ of its circle bundle:
	$$
	\widetilde{\Rot(\alpha)^2}_*=
	\begin{pmatrix}
		\id& \ev([\alpha]_2) \\ 0&\id
	\end{pmatrix}
	\colon 
	H_1((\Sigma \mathbin{\wt{\times}} S^1)^\varphi)
	\to 
	H_1((\Sigma \mathbin{\wt{\times}} S^1)^\varphi).
	$$
Here,~$\ev([\alpha]_2)  \colon H_1(\Sigma) \to \Z_2$ is the map induced by~$[\alpha]_2 \in H^1(\Sigma,\partial \Sigma;\Z_2) \cong H^1(\Sigma;\Z_2)$.
The splitting~$H_1((\Sigma \mathbin{\wt{\times}} S^1)^\varphi) = \Z_2 \oplus H_1(\Sigma)$ is obtained using the fixed section~$r \colon \Sigma \to \Sigma \mathbin{\wt{\times}} S^1$.
\end{proposition}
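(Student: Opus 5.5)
Since $\Rot(\alpha)^2\simeq\Rot(2\alpha)$ rel.\ boundary, and the class $2\alpha$ has vanishing mod~$2$ reduction, I would realise $\Rot(2\alpha)$ as $\Rot(s)$ for a section $s$ boring on $\partial\Sigma$ with $\Theta([s])=2\alpha$ (Proposition~\ref{prop:cohomology-version}). Then $\varphi\circ s_*=\ev([2\alpha]_2)=0$, so Lemma~\ref{lem:LiftRots} provides a lift $\wt s$ with $\widetilde{\Rot(s)}=\Rot(\wt s)$. Corollary~\ref{cor:AlexanderModuleRotSectionVersion} then gives
$$\widetilde{\Rot(\alpha)^2}_*=\begin{pmatrix}\id&\psi\circ\wt s_*\\0&\id\end{pmatrix},$$
and the remaining task is to prove $\psi\circ\wt s_*=\ev([\alpha]_2)$ as maps $H_1(\Sigma)\to\Z_2$.

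For this I would compute directly with the explicit models. Let $f\colon S^1\to S^1$ denote squaring, so that the cover $g=\id\times f$ corresponds to the doubling map on the fibre coordinate $SO(2)\simeq K(\Z,1)$. Under the identification of Proposition~\ref{prop:cohomology-version}, lifting along $f$ amounts to dividing the fibre-winding by two. More concretely, $\wt s$ is obtained from a section $s_\alpha$ representing $\alpha$ in the following way: the section of $\wh\Sigma\times_{\Z_2}SO(2)$ obtained by squaring the fibre coordinate of $s_\alpha$ represents $2\alpha$. Since $f$ is $\Z_2$-equivariant for the conjugation action, this squaring is well defined and corresponds to multiplication by $2$ under Siegel's isomorphism, by naturality. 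Hence we may take $s=f\circ s_\alpha$ fibrewise, and then $\wt s$ is just $s_\alpha$ regarded as a section of $(\Sigma\mathbin{\wt{\times}}S^1)^\varphi\cong\wh\Sigma\times_{\Z_2}SO(2)$. The map $\psi$ on the cover plays exactly the role that $\varphi$ plays downstairs, for the analogous splitting induced by $r^\varphi$. Applying the second half of Proposition~\ref{prop:cohomology-version} to the cover therefore gives $\psi\circ\wt s_*=\ev([\Theta(s_\alpha)]_2)=\ev([\alpha]_2)$.

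The main obstacle is keeping the identifications consistent. First, I have to justify that the homeomorphism $(\Sigma\mathbin{\wt{\times}}S^1)^\varphi\cong\wh\Sigma\times_{\Z_2}SO(2)$ carries $r^\varphi$ to the preferred section $r$, so that $\psi$ really does correspond to $\varphi$; this holds because $r^\varphi(b)=[b',1]$ is defined by the same formula as $r$. Second, I have to check that fibrewise squaring realises multiplication by $2$ in $H^1(\Sigma,\partial\Sigma;\Z^w)$, which I would argue via naturality of the isomorphism in~\cite{Siegel2} with respect to the degree-$2$ equivariant self-map of $K(\Z,1)$. Finally, the homotopy $\Rot(\alpha)^2\simeq\Rot(2\alpha)$ rel.\ boundary must lift to the covers, so that the induced maps on $H_1$ agree. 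This follows because the homotopy is through bundle automorphisms, all of which lift once one of them does.
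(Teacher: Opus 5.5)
Your proposal is correct and is essentially the paper's proof, which simply invokes Corollary~\ref{cor:AlexanderModuleRotSectionVersion} together with Proposition~\ref{prop:cohomology-version}. You make explicit the step the paper leaves implicit: the boring lift of $f\circ s_\alpha$ is $s_\alpha$ itself, so that $\psi\circ\wt s_*=\varphi\circ(s_\alpha)_*=\ev([\alpha]_2)$. As a minor simplification, $\Rot(s_\alpha)^2=\Rot(f\circ s_\alpha)$ holds on the nose by the formula in Construction~\ref{cons:Rots}, so you can skip the detour through $\Rot(2\alpha)$ and the lifted homotopy.
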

\begin{proof}
%Corollary~\ref{cor:AlexanderModuleRotSectionVersion} shows that~$
%	\Rot(\wt s)_*= \bsm \id&\psi \circ \wt s_*\\ 0&\id \esm$.
This follows from Corollary~\ref{cor:AlexanderModuleRotSectionVersion} and Proposition~\ref{prop:cohomology-version}.
\end{proof}

Next,  we apply this result to calculate the effect of~$\id_{E_K} \cup \Rot(\alpha)^2$ on the Alexander module of~$Y_e(K)=E_K \cup (\Sigma \mathbin{\wt{\times}} S^1)$, i.e.\ on the boundary of surface exteriors.
Fix the cell structure on $\Sigma$ that consists of a~$0$-cell $e^0$ and a~$1$-cell~$e^1_\partial$ to construct~$\partial \Sigma$, then $h$ 1-cells $\gamma_1,\dots,\gamma_h$ attached to the~$0$-cell, followed by a 2-cell attached via~$e^1_{\partial}\cdot\gamma_1\cdot \gamma_1\cdots\gamma_h \cdot \gamma_h$.
We then endow
$$H_1(\Sigma) \cong \Z \oplus \Z^{h-1}$$ 
with the basis that consists of the curve~$\gamma:=\gamma_1+\cdots+\gamma_h$ in the first summand and~$\gamma_1,\ldots,\gamma_{h-1}$ in the remaining summands.
With respect to this basis, we have~$[\partial \Sigma]=2\gamma=(2,0,\ldots,0)$.

Given an integer~$e$ and a knot~$K \subset S^3$, recall from (the proof of) Proposition~\ref{prop:AlexanderModuleNonOri} (and Lemma~\ref{lem:MV}) that there is an inclusion-induced isomorphism
$$H_1(Y_e(K))\cong  \frac{H_1(\Sigma) \oplus \Z_2\langle \mu\rangle}{([\partial \Sigma],e)}.$$
Here $\mu$ denotes the $S^1$-fibre of $\Sigma \mathbin{\wt{\times}} S^1$, which is identified with the meridian of $K$.
Using the previously described generating set of $H_1(\Sigma)$, it follows that 
$$ H_1(Y_e(K))
\cong
\begin{cases}
\Z_2\langle \mu\rangle \oplus \Z_2\langle \gamma\rangle \oplus \Z^{h-1} &\quad \text{ if~$e$ is even,}  \\
\Z_4\langle \gamma\rangle \oplus \Z^{h-1} &\quad \text{ if~$e$ is odd.} 
\end{cases}
$$
Next,  given~$\alpha\in H^1(\Sigma,\partial \Sigma;\Z^w)$,  recall that~$\ev([\alpha]_2)  \colon H_1(\Sigma) \to \Z_2$ denotes the map induced by~$[\alpha]_2 \in H^1(\Sigma,\partial \Sigma;\Z_2) \cong H^1(\Sigma;\Z_2)$ on homology.
\begin{itemize}
\item  If $e$ is even,  $\ev([\alpha]_2)  \colon H_1(\Sigma) \to \Z_2$ descends to a map~$\Z_2 \oplus \Z^{h-1} \cong  H_1(\Sigma)/\langle \partial \Sigma\rangle \to \Z_2$ that we again denote by $\ev([\alpha]_2) $.
\item If $e$ is odd,  $\ev([\alpha]_2)  \colon H_1(\Sigma) \to \Z_2$ induces a map  $\Z_4 \oplus \Z^{h-1} \cong \frac{H_1(\Sigma) \oplus \Z_2}{([\partial \Sigma],1)} \to \Z_2 \xrightarrow{\cdot 2} \Z_4$ that we also denote by $\ev([\alpha]_2)$.
\end{itemize}
The following result describes the effect of $\id_{E_K} \cup \Rot(\alpha)$ on $H_1(Y_e(K))$.

\begin{corollary}
\label{cor:RotOnYe(K)}
Given an integer $e$, a knot $K \subset S^3$ and~$\alpha\in H^1(\Sigma,\partial \Sigma;\Z^w)$, the effect of the bundle isomorphism $\id_{E_K} \cup \Rot(\alpha)$ on~$H_1(Y_e(K))$ is as follows.
%no : on purpose.
\begin{itemize}
\item If $e$ is even with $H_1(Y_e(K)) \cong  \Z_2 \oplus \Z_2 \oplus \Z^{h-1}$, then 
$$
(\id_{E_K} \cup \Rot(\alpha))_*=
\begin{pmatrix}
\id&\ev([\alpha]_2)&\ev([\alpha]_2)\\
0&\id&0\\
0&0&\id
\end{pmatrix}.
$$
\item If $e$ is odd with $H_1(Y_e(K)) \cong  \Z_4 \oplus \Z^{h-1}$,  then 
$$
(\id_{E_K} \cup \Rot(\alpha))_*=
\begin{pmatrix}
\id&\ev([\alpha]_2) \\
0&\id
\end{pmatrix}.
$$
\end{itemize}
\end{corollary}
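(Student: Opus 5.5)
The plan is to reduce the computation to the circle-bundle piece via Mayer--Vietoris, and then to read off the answer from Proposition~\ref{prop:RotAlphaAxiomsNonori}. The homeomorphism $\id_{E_K} \cup \Rot(\alpha)$ restricts to the identity on $E_K$ and to $\Rot(\alpha)$ on $\Sigma \mathbin{\wt{\times}} S^1$. These two pieces agree on the gluing torus, because $\Rot(\alpha)$ is the identity over $\partial \Sigma$ by the first item of that proposition. Naturality of Mayer--Vietoris therefore gives a commutative square relating the surjection
$$H_1(E_K) \oplus H_1(\Sigma \mathbin{\wt{\times}} S^1) \twoheadrightarrow H_1(Y_e(K))$$
to itself, where the source carries the map $\id \oplus \Rot(\alpha)_*$. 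Since $H_1(E_K)=\Z\langle\mu_K\rangle$ and $\mu_K$ is identified with the fibre $\mu$, the class of every element of $H_1(Y_e(K))$ is already represented in $H_1(\Sigma \mathbin{\wt{\times}} S^1)\cong \Z_2\langle\mu\rangle\oplus H_1(\Sigma)$. So it suffices to push the formula $\Rot(\alpha)_*=\bsm \id & \ev([\alpha]_2)\\ 0&\id\esm$ through the quotient
$$\frac{H_1(\Sigma)\oplus \Z_2\langle\mu\rangle}{([\partial\Sigma],e)}.$$
Concretely, $\mu\mapsto\mu$ and $x\mapsto x+\ev([\alpha]_2)(x)\,\mu$ for $x\in H_1(\Sigma)$.

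The second step writes this in the chosen bases.

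\emph{Case $e$ even.} The relation becomes $2\gamma=0$. The basis is $\mu$, $\gamma$, and $\gamma_1,\ldots,\gamma_{h-1}$, and the map sends $\gamma\mapsto \gamma+\ev([\alpha]_2)(\gamma)\mu$ and $\gamma_i\mapsto\gamma_i+\ev([\alpha]_2)(\gamma_i)\mu$. Reading off the first row (the $\mu$-coordinate) gives the displayed upper triangular matrix. Here the middle entry is $\ev([\alpha]_2)$ restricted to the $\Z_2\langle\gamma\rangle$ summand, and the last entry is its restriction to $\Z^{h-1}$; both are well defined because $\ev([\alpha]_2)$ descends to $H_1(\Sigma)/\langle\partial\Sigma\rangle$. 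This descent holds since $\ev([\alpha]_2)(\partial\Sigma)=\ev([\alpha]_2)(2\gamma)=0$ in $\Z_2$.

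\emph{Case $e$ odd.} The relation is $2\gamma=\mu$, so $\mu=2\gamma$ in $\Z_4\langle\gamma\rangle$. Then $\gamma\mapsto\gamma+\ev([\alpha]_2)(\gamma)\cdot 2\gamma$ and $\gamma_i\mapsto \gamma_i+\ev([\alpha]_2)(\gamma_i)\cdot 2\gamma$. This is exactly the matrix with $\ev([\alpha]_2)$ interpreted as the composite into $\Z_2\xrightarrow{\cdot2}\Z_4$, as in the convention fixed before the statement. The diagonal entry on $\Z_4$ is written $\id$, with the understanding that multiplication by $1+2\epsilon$ is absorbed into the off-diagonal convention; I would check that this matches the paper's meaning of the matrix.

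The main obstacle I expect is bookkeeping rather than mathematics. The splitting via the preferred section $r$ must be compatible with the Mayer--Vietoris identification, so that the "$H_1(\Sigma)$" coordinates in $H_1(Y_e(K))$ are the images $r_*(\gamma)$ and $r_*(\gamma_i)$. Relatedly, the relation coming from gluing the $e$-framed longitude to $r(\partial\Sigma)$ has to be the stated one $([\partial\Sigma],e)$. Both follow from Lemma~\ref{lem:WhatIsTheBoundary} and Lemma~\ref{lem:MV}. Once they are fixed, the matrices follow directly.
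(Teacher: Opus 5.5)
Your argument is correct and is the same as the paper's, which just declares the corollary a direct consequence of the second item of Proposition~\ref{prop:RotAlphaAxiomsNonori} and the description of $\ev([\alpha]_2)$. Your Mayer--Vietoris naturality step simply makes explicit the pushing of that formula through the quotient $\bigl(H_1(\Sigma)\oplus\Z_2\langle\mu\rangle\bigr)/([\partial\Sigma],e)$.

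In the odd case, do not try to absorb anything into a convention. Your computation $\gamma\mapsto(1+2\epsilon)\gamma$, with $\epsilon=\ev([\alpha]_2)(\gamma)$, is right. So the displayed matrix has to be read as $\id+\ev([\alpha]_2)$, where $\ev([\alpha]_2)$ is the map defined on all of $\Z_4\oplus\Z^{h-1}$ just before the statement. In particular, the $\Z_4$ diagonal entry is $-1$ when $[\alpha]_2(\gamma)\neq 0$. This is exactly how the paper uses the analogous Corollary~\ref{cor:RotOnYe(K)Covers} in the proof of Theorem~\ref{thm:Moebius}.
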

\begin{proof}
This is a direct consequence of Proposition~\ref{prop:RotAlphaAxiomsNonori} and the above description of $\ev([\alpha]_2)$.
\end{proof}

Next we describe the situation on the $2$-fold covers.
Given an even integer $e$ and a knot $K \subset S^3$, recall from (the proof of) Proposition~\ref{prop:AlexanderModuleNonOri} that there is an inclusion-induced isomorphism
$$H_1(Y_e(K);\Z[\Z_2])\cong H_1(\Sigma_2(K)) \oplus \frac{H_1(\Sigma) \oplus \Z_2\langle \widetilde{\mu}\rangle}{([\partial \Sigma],e/2)}.$$
Here $\widetilde{\mu}$ denotes the $S^1$-fibre of $(\Sigma \mathbin{\wt{\times}} S^1)^\varphi$, which is identified with the lifted meridian of $K$.
Using the previously described generating set of $H_1(\Sigma)$, it follows that 
$$ H_1(Y_e(K);\Z[\Z_2])
\cong
\begin{cases}
H_1(\Sigma_2(K)) \oplus \Z_2\langle \widetilde{\mu}\rangle \oplus \Z_2\langle \gamma\rangle \oplus \Z^{h-1} &\quad \text{ if~$e/2$ is even,}  \\
H_1(\Sigma_2(K)) \oplus \Z_4\langle \gamma\rangle \oplus \Z^{h-1} &\quad \text{ if~$e/2$ is odd.} 
\end{cases}
$$
Given~$\alpha\in H^1(\Sigma,\partial \Sigma;\Z^w)$,  we use the same notation as above, namely 
\begin{itemize}
\item  if $e/2$ is even,  $\ev([\alpha]_2)  \colon H_1(\Sigma) \to \Z_2$ descends to a map~$\Z_2 \oplus \Z^{h-1} \cong  H_1(\Sigma)/\langle \partial \Sigma\rangle \to \Z_2$ that we again denote by $\ev([\alpha]_2)$,
\item if $e/2$ is odd,  $\ev([\alpha]_2)  \colon H_1(\Sigma) \to \Z_2$ induces a map  $\Z_4 \oplus \Z^{h-1} \cong \frac{H_1(\Sigma) \oplus \Z_2}{([\partial \Sigma],1)} \to \Z_2 \xrightarrow{\cdot 2} \Z_4$ that we also denote by $\ev([\alpha]_2)$.
\end{itemize}
The following result describes the effect of $\id_{E_K} \cup \Rot(\alpha)^2$ on the Alexander module of $Y_e(K)$.

\begin{corollary}
\label{cor:RotOnYe(K)Covers}
Given an even integer $e$, a knot $K \subset S^3$ and~$\alpha\in H^1(\Sigma,\partial \Sigma;\Z^w)$, the bundle isomorphism $\id_{E_K} \cup \widetilde{\Rot(\alpha)^2}$ induces the following automorphism on the Alexander module~$H_1(\widehat{Y}_e(K))$:
%\cong  H_1(\Sigma_2(K)) \oplus \G \oplus \Z^{h-1}$:
\begin{itemize}
\item If $e/2$ is even with $H_1(Y_e(K)) \cong  H_1(\Sigma_2(K)) \oplus \Z_2 \oplus \Z_2 \oplus \Z^{h-1}$, then 
$$
(\id_{E_K} \cup \widetilde{\Rot(\alpha)^2})_*=
\begin{pmatrix}
\id&0&0&0\\
0&\id&\ev([\alpha]_2)&\ev([\alpha]_2)\\
0&0&\id&0\\
0&0&0&\id
\end{pmatrix}.
$$
\item If $e/2$ is odd with $H_1(Y_e(K)) \cong  H_1(\Sigma_2(K)) \oplus \Z_4 \oplus \Z^{h-1}$,  then 
$$
(\id_{E_K} \cup \widetilde{\Rot(\alpha)^2})_*=
\begin{pmatrix}
\id&0&0 \\
0&\id&\ev([\alpha]_2) \\
0&0&\id
\end{pmatrix}.
$$
\end{itemize}
The $\Z[\Z_2]$-module structure is determined by $T[\eta]=[\eta]+w_1(\eta) \widetilde{\mu}$ and $T \widetilde{\mu}= \widetilde{\mu}$ for $[\eta] \in H_1(\Sigma)$.
Here~$\widetilde{\mu}$ denotes the homology class of a lift of twice the meridian of $K$.
\end{corollary}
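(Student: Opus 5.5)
The plan is to reduce the computation on $H_1(\widehat{Y}_e(K))$ to Proposition~\ref{prop:RotAlexanderModuleNonOri}, which already computes the effect of $\widetilde{\Rot(\alpha)^2}$ on the circle bundle piece $H_1((\Sigma \mathbin{\wt{\times}} S^1)^\varphi)$, and then to push the result through the Mayer--Vietoris isomorphism used in the proof of Proposition~\ref{prop:AlexanderModuleNonOri}. Concretely, $\widehat{Y}_e(K)$ decomposes as the $2$-fold cover of $E_K$ glued to $(\Sigma \mathbin{\wt{\times}} S^1)^\varphi$. The homeomorphism $\id_{E_K} \cup \widetilde{\Rot(\alpha)^2}$ respects this decomposition: it is the identity lift on $\widetilde{E}_K$, since $\Rot(\alpha)$ is the identity over $\partial\Sigma$ and the lift may be chosen to fix a point there. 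It therefore induces an automorphism of the Mayer--Vietoris sequence. We record that the lift exists because $\Rot(\alpha)^2 \simeq \Rot(2\alpha)$ and $[2\alpha]_2=0$, as explained just before Proposition~\ref{prop:RotAlexanderModuleNonOri}.

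First, on the summand $H_1(\Sigma_2(K))$ coming from $H_1(\widetilde{E}_K) \cong H_1(\Sigma_2(K)) \oplus \Z\langle\widetilde{\mu}_K\rangle$ (Remark~\ref{rem:HomologyBranchedUnbranched}), the map is the identity. This gives the first row and column of each matrix, with zero off-diagonal entries. Second, on the quotient $\frac{H_1(\Sigma)\oplus \Z_2\langle\widetilde{\mu}\rangle}{([\partial\Sigma],e/2)}$, the induced map is the one induced by $\bsm \id & \ev([\alpha]_2)\\ 0&\id\esm$ from Proposition~\ref{prop:RotAlexanderModuleNonOri}. Since $\widetilde{\mu}_K$ is identified with $\widetilde{\mu}$, which is fixed, and $\ev([\alpha]_2)([\partial\Sigma])=\ev([\alpha]_2)(2\gamma)=0$, this map preserves the relation $([\partial\Sigma],e/2)$ and descends.

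Next, we write the descended map in the stated bases, exactly as in Corollary~\ref{cor:RotOnYe(K)}. If $e/2$ is even, the quotient is $\Z_2\langle\widetilde{\mu}\rangle\oplus\Z_2\langle\gamma\rangle\oplus\Z^{h-1}$, and the generators $\gamma$ and $\gamma_i$ are sent to themselves plus $\ev([\alpha]_2)(\cdot)\,\widetilde{\mu}$. If $e/2$ is odd, $\widetilde{\mu}=2\gamma$ in $\Z_4$, and the correction term becomes $\ev([\alpha]_2)$ composed with multiplication by $2$, which is the convention fixed above.

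Finally, the $\Z[\Z_2]$-module structure statement is deferred to this point by Proposition~\ref{prop:AlexanderModuleNonOri}. It follows by computing the deck transformation $T$ on $(\Sigma\mathbin{\wt{\times}}S^1)^\varphi=\widehat\Sigma\times_{\Z_2}S^1$, which is rotation by $\pi$ in the fibre, so that $T\widetilde{\mu}=\widetilde{\mu}$. For a curve $\eta$, the lift $r^\varphi(\eta)$ is compared with $T r^\varphi(\eta)=[\eta',-1]$; these differ by half a fibre going around, but along an orientation-reversing loop the conjugation flips the path from $1$ to $-1$. The two lifts then close up to a full fibre, giving $T[\eta]=[\eta]+w_1(\eta)\widetilde{\mu}$. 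I expect this last bookkeeping, namely getting the sign convention and checking that the fibre contribution is exactly $w_1(\eta)\widetilde{\mu}$, to be the main obstacle. I would handle it by restricting to the Moebius band over a single orientation-reversing $\gamma_i$ and computing there by hand.
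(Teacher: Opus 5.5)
Your proposal is correct and follows essentially the same route as the paper. The matrices are read off from Proposition~\ref{prop:RotAlexanderModuleNonOri} through the Mayer--Vietoris identification, and the module structure comes from the deck transformation $[x,v]\mapsto[x,-v]$ together with the fibrewise rotation homotopy, which over an orientation-reversing loop fails to close up by exactly one fibre; the sign issue you flag as the main obstacle is harmless, since $\widetilde{\mu}$ has order two in $H_1((\Sigma\mathbin{\wt{\times}}S^1)^\varphi)$.
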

\begin{proof}
The first two assertions are a direct consequence of Proposition~\ref{prop:RotAlexanderModuleNonOri} and the above description of $\ev([\alpha]_2)$.
We therefore focus on the description of the module structure.
The deck transformation $T$ certainly fixes $\widetilde{\mu}$ and,
identifying $(\Sigma \mathbin{\wt{\times}} S^1)^\varphi$ with $\Sigma \mathbin{\wt{\times}} S^1$, it satisfies~$T[x,1]=[x,-1]$.
%(Tx,1)~(x,-1)
If the bundle over $\eta$ is orientable, then $T\eta$ and $\eta$ are homotopic using $([x,1],t) \mapsto [(x,e^{\pi it})]$.
If the bundle over $\eta$ is nonorientable, then we can use the same homotopy everywhere except at one point: at that point, from one side, the homotopy will rotate clockwise and from the other counter-clockwise,  thus creating an additional meridian.
\end{proof}

\subsection{The effect of $\Rot(\alpha)$ on spin structures}
\label{sub:RotalphaSpinNonOrientable}

Let $p \colon \Sigma \mathbin{\wt{\times}} D^2 \to \Sigma$ be the projection and let~$p| \colon \Sigma \mathbin{\wt{\times}} S^1 \to \Sigma$ be the induced
projection.
Given a spin structure $\mathfrak{s} \in \Spin(\Sigma\mathbin{\wt{\times}}
D^2)$,  we consider  the restricted spin structure $\mathfrak{s}| \in
\Spin(\Sigma \mathbin{\wt{\times}} S^1)$.

\begin{proposition}
     \label{prop:RotalphaSpinNonOrientable}
     Given $\alpha \in H^1(\Sigma,\partial \Sigma;\Z^w)$ and a spin structure
$\mathfrak{s} \in \Spin(\Sigma \mathbin{\wt{\times}} D^2)$,  the effect of the
bundle automorphism~$\Rot(\alpha)| \colon \Sigma \mathbin{\wt{\times}} S^1 \to
\Sigma\mathbin{\wt{\times}} S^1$ on~$\mathfrak{s}| \in \Spin(\Sigma \mathbin{\wt{\times}}
S^1)$ is by
     $$
     \Rot(\alpha)^*(\mathfrak{s}|)
     =
     p|^*([\alpha]_2) \cdot
     \mathfrak{s}|
     \in \Spin(\Sigma \mathbin{\wt{\times}} S^1).
     %%%
     %(p^*)^{-1} \left(
     %\operatorname{Red}_2(\alpha) \cdot p^*(\mathfrak{s}) \right) \in \Spin(\Sigma \times D^2).
     $$
If $\mathfrak{s} \in \Spin(\Sigma \mathbin{\wt{\times}} S^1)$ does not extend over $\Sigma \mathbin{\wt{\times}} D^2$, then $\Rot(\alpha)$ acts trivially on $\mathfrak{s}$.
%%Don't delete.
%%Because \eps_x=0 in that case.
\end{proposition}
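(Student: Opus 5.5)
The plan is to mirror the orientable argument of Proposition~\ref{prop:RotalphaSpinOrientable}, working loop by loop. Since $H^1(\Sigma\mathbin{\wt{\times}} S^1;\Z_2)$ acts freely and transitively on $\Spin(\Sigma\mathbin{\wt{\times}} S^1)$, let $\beta$ be the difference class between $\Rot(\alpha)^*(\mathfrak{s}|)$ and $\mathfrak{s}|$. The goal is to show $\beta=p|^*([\alpha]_2)$. Because $H_1(\Sigma\mathbin{\wt{\times}} S^1)\cong \Z_2\langle\mu\rangle\oplus H_1(\Sigma)$ via the section $r$ (Construction~\ref{cons:SplittingS(E)}), it suffices to evaluate $\beta$ on the fibre $\mu$ and on the classes $r_*[\gamma]$ for simple closed curves $\gamma\subset\Sigma$.

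On the fibre, $\Rot(\alpha)$ restricted to a fibre is a rotation, hence isotopic to the identity on that fibre, so $\beta(\mu)=0=p|^*([\alpha]_2)(\mu)$. For a curve $\gamma$, restrict to the circle bundle $Y|_\gamma$, which is a torus when $w_1(\gamma)=0$ and a Klein bottle when $w_1(\gamma)=1$. By Proposition~\ref{prop:RotAlphaAxiomsNonori}(2),
\[
[\Rot(\alpha)(r(\gamma))]=[r(\gamma)]+\ev([\alpha]_2)(\gamma)\,[\mu]\in H_1(Y|_\gamma;\Z_2).
\]
The spin structure $\mathfrak{s}|$, restricted to $Y|_\gamma$, induces a $\Z_2$-valued function on embedded curves. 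I would like to use the same quadratic-refinement formula as in the orientable case:
\[
\mathfrak{s}(\Rot(\alpha)r(\gamma))-\mathfrak{s}(r(\gamma))=\ev([\alpha]_2)(\gamma)\,\mathfrak{s}(\mu)+\ev([\alpha]_2)(\gamma)\,(r(\gamma)\cdot\mu).
\]
Here $\mathfrak{s}(\mu)$ is $0$ when $\mathfrak{s}$ extends over the disc bundle, since the fibre bounds a disc, and $r(\gamma)\cdot\mu=1$. This would give $\beta(r_*\gamma)=[\alpha]_2(\gamma)$, which is the main formula.

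The main obstacle is the Klein bottle case. A spin structure on the $3$-manifold does not restrict to a spin structure on a nonorientable surface, so the quadratic refinement must instead be formulated as a $\operatorname{Pin}^-$-structure. Concretely, I would use the framing of the loop induced by the $3$-dimensional spin structure, following Kirby--Taylor, so that the framed-bordism class in $\Omega_1^{\Spin}=\Z_2$ of a loop in $Y$ is defined directly from its normal framing. I would then check the additivity formula above by band-summing $r(\gamma)$ with $\mu$ inside a neighbourhood of their transverse intersection point. That computation is local and identical in the two cases, and I would present it that way.

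For the final assertion, suppose $\mathfrak{s}$ on $\Sigma\mathbin{\wt{\times}} S^1$ does not extend over $\Sigma\mathbin{\wt{\times}} D^2$. Then $\mathfrak{s}(\mu)=1$, so the two correction terms cancel, $\beta(r_*\gamma)=\ev([\alpha]_2)(\gamma)+\ev([\alpha]_2)(\gamma)=0$, and $\beta$ vanishes on $\mu$ as before. Hence $\beta=0$ and $\Rot(\alpha)$ acts trivially on $\mathfrak{s}$.
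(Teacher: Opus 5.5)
Your reduction to evaluating the difference class $\beta$ on $\mu$ and on $r_*\gamma$, the fibre computation, the torus case, and the deduction of the last assertion from $\mathfrak{s}(\mu)=1$ are all fine. The overall strategy is a genuinely different and workable route, but the Klein bottle step does not work as written.

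\textbf{The Klein bottle step.} When $w_1(\gamma)\neq 0$, both $r(\gamma)$ and $\Rot(\alpha)(r(\gamma))$ are one-sided in $K:=Y|_\gamma$. So neither has a surface framing, and there is no $\Z_2$-valued function $c\mapsto\mathfrak{s}(c)$ on curves of $K$ obeying your displayed identity. A spin structure on $Y$ does not induce a framing of a loop; it only singles out, up to even twists, the framings for which the loop bounds. Your left-hand side only makes sense as $\mathfrak{s}(\Rot(\alpha)r(\gamma),d\Rot(\alpha)f)-\mathfrak{s}(r(\gamma),f)$ for some framing $f$ of $r(\gamma)$. A band sum at the single intersection point does not by itself control $d\Rot(\alpha)f$ along the rest of the spiralling curve $\Rot(\alpha)(r(\gamma))$.

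Either of the following closes the gap.
\begin{enumerate}
\item[(i)] Work with the Kirby--Taylor $\Z_4$-valued enhancement $q_{\mathfrak{s}}$ on $H_1(K;\Z_2)$. It satisfies $q(a+b)=q(a)+q(b)+2\,a\cdot b$ and $q_{x\cdot\mathfrak{s}}=q_{\mathfrak{s}}+2\,x|_K$, and it is natural under $\Rot(\alpha)$, since $\Rot(\alpha)$ preserves $K$ and its normal line field. With $k:=\ev([\alpha]_2)(\gamma)$ this gives $2\beta(r_*\gamma)=q_{\mathfrak{s}}(r\gamma+k\mu)-q_{\mathfrak{s}}(r\gamma)=k\,q_{\mathfrak{s}}(\mu)+2k$. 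Here $q_{\mathfrak{s}}(\mu)\in\{0,2\}$ vanishes exactly when $\mathfrak{s}$ extends, and both assertions follow after halving.
\item[(ii)] First homotope $\Rot(\alpha)$ through bundle automorphisms so that its fibre rotation along $\gamma$ is concentrated over a short arc of $\gamma$. This is possible since the relevant class lies in $H^1(\gamma;\Z^w)\cong\Z_2$. Then $d\Rot(\alpha)f=f$ off that arc, the comparison happens over an orientable chart, and your torus band-sum computation applies verbatim. Your phrase ``local and identical in the two cases'' needs exactly this reduction.
\end{enumerate}

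\textbf{Comparison with the paper.} The paper avoids one-sided curves entirely. It trivialises $T(\Sigma\mathbin{\wt{\times}}S^1)$, so that spin structures become classes $x\in H^1(\Sigma\mathbin{\wt{\times}}S^1;\Z_2)$ and $\Rot(\alpha)^*$ acts by pulling back cohomology classes. Item (2) of Proposition~\ref{prop:RotAlphaAxiomsNonori} then gives $\Rot(\alpha)^*x=x+x(\mu)\,p|^*([\alpha]_2)$ directly. Milnor's observation, that the bounding spin structure on the fibre corresponds to the nontrivial class, shows $x(\mu)=1$ exactly when $\mathfrak{s}$ extends over $\Sigma\mathbin{\wt{\times}}D^2$. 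This proves both statements at once and treats orientable and nonorientable surfaces uniformly.

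Implicitly, the paper uses that with respect to a trivialisation pulled back from $T\Sigma\oplus\det(T\Sigma)$, $d\Rot(\alpha)$ is the identity plus a map from horizontal to vertical vectors. It is therefore homotopic to the identity, so pulling back spin structures agrees with pulling back cohomology classes. Your argument is more geometric and shows exactly where the bounding condition on $\mu$ and $r(\gamma)\cdot\mu=1$ enter. The cost is the $\operatorname{Pin}^-$ bookkeeping in the one-sided case, which the paper's global trivialisation sidesteps.
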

\begin{proof}
Since~$T(\Sigma\mathbin{\wt{\times}} S^1)\colon \Sigma\mathbin{\wt{\times}}
S^1\to BO$ is trivial, a spin structure on~$T(\Sigma\mathbin{\wt{\times}} S^1)$
is the same as a lift to the fibre~$K(\Z_2,1)$ of~$B\Spin\to BO$ or
equivalently 
%(canonically) 
an element of~$H^1(\Sigma\mathbin{\wt{\times}} S^1;\Z_2)$. 
%%Don't delete
%%Here the point is that since the tangent bundle is trivial a lift to Bspin is the same as a map to K(Z,2), and that's really a cohomology class.
%%So usually we'd get a free and transitive action but here it's legit to say we get it on the nose.
Under this identification, the action of~$H^1(\Sigma\mathbin{\wt{\times}} S^1;\Z_2)$ on~$\Spin(\Sigma
\mathbin{\wt{\times}} S^1) \cong H^1(\Sigma\mathbin{\wt{\times}} S^1;\Z_2)$ is then given by left multiplication. 
Hence we want to show that
 \[\mathfrak{s}|-\Rot(\alpha)^*(\mathfrak{s}|)
 =p^*([\alpha]_2)\in
H^1(\Sigma\mathbin{\wt{\times}} S^1;\Z_2).\]
Given~$x\in
H^1(\Sigma\mathbin{\wt{\times}} S^1;\Z_2)$,  using that~$H^1(\Sigma\mathbin{\wt{\times}} S^1;\Z_2)$ is dual to
$H_1(\Sigma\mathbin{\wt{\times}} S^1;\Z_2)$, it follows from
the second item of Proposition~\ref{prop:RotAlphaAxiomsNonori} that
$$\Rot(\alpha)^*(x)=x+\epsilon_x p^*([\alpha_2]),$$
where~$\epsilon_x\in\Z_2$ is given by
the restriction of~$x$ to the fibre~$\{*\}\times S^1$ of
$\Sigma\mathbin{\wt{\times}} S^1$. 
Hence it remains to show that the restriction
of~$\mathfrak{s}|$ to~$\{*\}\times S^1$ is non-trivial, as a cohomology class.
This is true since~$\mathfrak{s}|$ extends over~$\Sigma\mathbin{\wt{\times}} D^2$ and thus
restricts to the bounding spin structure on~$\{*\}\times S^1$. 
As observed by Milnor \cite[p.~201]{Milnor-spin}, the bounding spin
structure on~$TS^1$ corresponds to the non-trivial element of
$H^1(S^1;\Z_2)\cong \Z_2$.
\end{proof}

\subsection{Sections and bundle automorphisms}
\label{sub:AutomorphismNonOrientable}

At this point, it only remains to prove the last item of Proposition~\ref{prop:RotAlphaAxiomsNonori}, namely the fact that every bundle automorphism~$\Sigma \mathbin{\wt{\times}} \R^2 \to \Sigma \mathbin{\wt{\times}} \R^2$ can be written uniquely as~$\Rot(\alpha)$ for some~$\alpha \in H^1(\Sigma,\partial \Sigma;\Z^w).$
Thanks to Proposition~\ref{prop:cohomology-version}, this reduces to proving that every bundle automorphism~$\Sigma \mathbin{\wt{\times}} \R^2 \to \Sigma \mathbin{\wt{\times}} \R^2$ can be written uniquely as~$\Rot(s)$ for some section~$s \colon \Sigma \to \widehat{\Sigma }\mathbin{\wt{\times}} S^1$ that is boring on~$\partial \Sigma$.
This will involve working with the  principal~$O(2)$-bundle~$P \to \Sigma$ associated to~$\Sigma \mathbin{\wt{\times}} \R^2$.
As a consequence, this section is mainly concerned with principal bundles.

In this section, we will implicitly assume that all vector bundles have a metric, i.e.\ structure group~$O(n)$, and that all bundle maps preserve this metric.

\medbreak

\begin{construction}
For a principal $G$-bundle $p\colon P\to B$,  consider the fibre bundle~$P\times_G G\to B$ with fibre $G$, where the $G$-action is given by the principal $G$-action on $P$ and the conjugation action on $G$, i.e.\ $[x,h]=[gx,ghg^{-1}]$. 
In other words, the group~$G$ acts on $P \times G$ by~$g \cdot (x,h)=(gx,ghg^{-1})$ and~$P\times_G G\to B$ is the orbit set of this action.
Note that for $G$ abelian, one gets~$P\times_G G=B \times G$.
\end{construction}

Let~$B'\subseteq B$ be a subset.
We now relate the set~$\Aut_{B'}(P)$ of bundle isomorphisms of~$P$ that are the identity over~$B'$ to the set~$\Gamma_{B'}(B,P\times_G G)$ of sections of~$P\times_G G$ that send~$b\in B'$ to~$[e,1]$ for some preimage $e \in P$ of $b \in B$.
To do so,  given a section~$s \colon B \to P\times_G G$  and~$e \in P$,  we write~$h(e,s) \in  G$ for the group element such that~$s(p(e))=[e,h(e,s)]$.
Since the~$G$-action on~$p^{-1}(p(e))$ is free and transitive, this~$h(e,s) \in G$ exists and is uniquely determined by the condition~$s(p(e))=[e,h(e,s)]$.

\begin{lemma}
\label{lem:BundleHomeosSection}
	The map $\Phi \colon \Gamma_{B'}(B,P\times_G G)\to \Aut_{B'}(P),s\mapsto (e\mapsto h(e,s)e)$ is a homeomorphism.
%Bundle automorphisms of a principal $G$-bundle $P \to B$ correspond bijectively to sections of $P\times_G G \to B$.
%%This is still a fibre bundle with fibre G.
%%The structure group isn't clear.
\end{lemma}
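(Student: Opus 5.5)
The plan is to verify that $\Phi$ is well defined, write down an explicit inverse, and then check continuity of both maps in the compact-open topology.

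First, $\Phi(s)$ is a $G$-equivariant bundle map. The element $h(e,s)$ is determined by $s(p(e))=[e,h(e,s)]$. Using the relation $[e,h]=[ge,ghg^{-1}]$ we get $h(ge,s)=g\,h(e,s)\,g^{-1}$, and therefore $\Phi(s)(ge)=g\,h(e,s)\,g^{-1}\,ge=g\,\Phi(s)(e)$. (Here, as in the paper, the principal action is written on the left.) The map covers the identity because $h(e,s)e$ lies in the fibre of $e$. It is an automorphism, since $s\mapsto \overline{s}$ with $h(e,\overline{s})=h(e,s)^{-1}$ gives an inverse map. Over $B'$ we have $s(b)=[e,1]$, so $h(e,s)=1$ and $\Phi(s)$ is the identity there.

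Next, the inverse. Given $\phi\in\Aut_{B'}(P)$, it preserves fibres, and the $G$-action on each fibre is free and transitive. Hence there is a unique $h_\phi(e)\in G$ with $\phi(e)=h_\phi(e)e$. Equivariance gives $h_\phi(ge)=g\,h_\phi(e)\,g^{-1}$, so the assignment $s_\phi(p(e)):=[e,h_\phi(e)]$ is independent of the choice of $e$ in the fibre and defines a section. This section is boring over $B'$. The two constructions $s\mapsto\Phi(s)$ and $\phi\mapsto s_\phi$ are inverse to each other, by the uniqueness of $h$ in both directions.

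The main step is continuity, that is, showing that $e\mapsto h_\phi(e)$ and $e\mapsto h(e,s)$ are continuous. For this I would use the translation function $\tau\colon P\times_B P\to G$, $(e,ge)\mapsto g$. It is continuous, as one checks in local trivialisations $U\times G$, where $\tau((u,a),(u,b))=ba^{-1}$. With it we have $h_\phi(e)=\tau(e,\phi(e))$. Similarly, $h(e,s)$ is obtained by locally trivialising $P\times_G G$ over $U$ as $U\times G$ via $[ (u,a),h]\mapsto (u,a^{-1}ha)$, which yields a continuous formula. The maps on mapping spaces are then compositions with continuous maps on $P$ or $B$, and continuity in the compact-open topology follows, at least for $B$ locally compact Hausdorff, which covers the surfaces of interest. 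The only care needed is the consistent choice of left/right conventions in $\tau$ and in the local formulas.
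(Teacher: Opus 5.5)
Your proposal is correct and follows the same route as the paper: well-definedness via $h(ge,s)=g\,h(e,s)\,g^{-1}$, and the explicit inverse $\phi\mapsto\bigl(p(e)\mapsto[e,h_\phi(e)]\bigr)$, which is the paper's $\Psi$. Your continuity sketch via the translation function $P\times_B P\to G$ and local trivialisations supplies the homeomorphism verification that the paper explicitly omits; the local compactness caveat is harmless for the surfaces where the lemma is applied.
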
	
\begin{proof}
We first verify that the assignment $\Phi$ is well defined, i.e.\ that $e\mapsto \Phi(s)(e)=h(e,s)e$ is a bundle automorphism that is the identity over $B'$. 
By definition of $\Gamma_{B'}(B,P\times_G G)$, $h(e,s)=1$ for every $e$ with $p(e)\in B'$. Hence this reduces to verifying that $\Phi(s)$ is $G$-equivariant.
Using the equalities~$s(p(ge))=s(p(e))=[e,h(e,s)]=[ge,gh(e,s)g^{-1}]$,  this follows promptly:
$$\Phi(s)(ge)=gh(e,s)g^{-1}ge=gh(e,s)e=g\Phi(s)(e).$$
We define an inverse to $\Phi$.
A principal~$G$-bundle automorphism~$\phi\colon P\to P$ determines a section~$\Psi(\phi) \colon  B \to P\times_G G$ by sending~$b\in B$ to~$\Psi(\phi)(b):=[e,h]$ where~$p(e)=b$ and~$h  \in G$ is the unique group element such that~$\phi(e)=he$. 
A verification now shows that $\Phi$ and $\Psi$ are inverses.
%\Psi(\Phi(s))(b)
%=(e,h)
%=(e,h(e,s)) (by def of h and \Phi)
%=s(p(e)) (by def of h(e,s).
%=s(b) by def of e as satisfying p(e)=b.
%%%%
%\Phi(\Psi(s))(e)
%=h(e,s)\cdot e where h(e,s) is defined by s(p(e))=[e,h(e,s)] with s:=\Psi(\phi)(e) 
%=\phi(e) by looking at the definition of \Psi(\phi)(p(e)).
%Hence the map~$\Gamma(B,P\times_G G)\to \Aut(P)$ is a bijection. 
The verification that $\Phi$ is a homeomorphism is omitted.
%%Don't delete it.
%%Proof of homeo.pdf
\end{proof}

The next remark focuses on the case $G=O(2)$.

\begin{remark}
\label{rem:ComponentsBundle}
The space~$P\times_{O(2)} O(2)$ has two components,  consisting respectively of those~$[(e,g)]$ with~$\det(g)=1$ (i.e.~$g \in SO(2)$) and those with~$\det(g)=-1$.
Here we used~$\det(h)=\det(ghg^{-1})$.
Denote the first of these two components by~$P\times_{O(2)}SO(2)$.
Since the conjugation action of $SO(2)$ on itself is trivial,
the projection $P \times SO(2)\to P/SO(2) \times SO(2)$ induces an isomorphism
$$P\times_{O(2)}SO(2)\xrightarrow{\cong} P/SO(2)\times_{\Z_2}SO(2),$$
where $\Z_2\cong O(2)/SO(2)$ acts on $SO(2)$ by conjugation by $\bsm 1&0\\0&-1\esm$.
Here any matrix with determinant $-1$ will work since all give the same conjugation action,  but $\bsm 1&0\\0&-1\esm$ is most convenient for our purposes below.
%{XX: Any matrix with determinant $-1$ works since the all give the same conjugation action. But this one fits best with the choices below.}
%and observe that a direct verification using $O(2)/SO(2)\cong \Z_2$ yields
%$$P\times_{O(2)}SO(2)\cong P/SO(2)\times_{\Z_2}SO(2).$$
%Here $P/SO(2)\times_{\Z_2}SO(2)$ is obtained as the orbit set of~$P/SO(2) \times SO(2)$ under the natural diagonal action of~$O(2)/SO(2) \cong \Z_2$.
%For~$G=O(2)$ we have~$\det(h)=\det(ghg^{-1})$ and hence~$P\times_{O(2)} O(2)\to B$ contains~$P\times_{O(2)}SO(2)\cong P/SO(2)\times_{\Z_2}SO(2)$ as a component. 
\end{remark}

In what follows,  given a vector bundle $E$, we write $\Aut^+_{B'}(E)$ for the subgroup of bundle automorphisms of $E$ that preserve local orientations and are the identity over $B'$.
\begin{construction}
Let $B$ be a connected space,  let $B'\subseteq B$ be a subspace, let $\xi \colon P\to B$ be a principal~$O(2)$-bundle and let $E:=P\times_{O(2)}\R^2\to B$ be the associated rank $2$ vector bundle. 
We construct a map 
$$
\Gamma_{B'}(B,P/SO(2)\times_{\Z_2} SO(2))\to \Aut^+_{B'}(E).
$$
To do so, we map a section~$s \colon B \to P/SO(2)\times_{\Z_2} SO(2)$ to the bundle automorphism
\begin{align*}
R(s) \colon E &\to E \\
[p,v]&\mapsto [p,h(\overline{p},s)^{-1}v].
\end{align*}
Here~$\overline{p}\in P/SO(2)$ is the image of~$p$ under the canonical projection~$\proj \colon P \to P/SO(2)$ and the group element~$h(\overline{p},s)\in SO(2)$ is such that~$s(\xi(p))=[\overline{p},h(\overline{p},s)]$.
One can verify that this assignment is well defined.
%%Don't delete. 
%%Have to show  for g \in Z_2=O(2)/SO(2), we have [pg,h(\ol{pg},s)gv]=[p,h(\ol{p},s)v]
%%%Let g be in O(2) and \ol{g} be the image in O(2)/SO(2) which is either the identity or (1 & 0 \\ 0 & -1). 
%We have [\ol{p},h(\ol{p},s)]=s(\xi(p))=s(\xi(gp))=[\ol{gp},h(\ol{gp},s)]=[\ol{p},\ol{g}h(\ol{gp},s)\ol{g}] and thus h(\ol{p},s)=\ol{g}h(\ol{gp},s)\ol{g}. 
%Using this we get
%[p,h(\ol{p},s)v]=[p,\ol{g}h(\ol{gp},s)\ol{g}v]=[gp,g\ol{g}h(\ol{gp},s)\ol{g}v]. Both g\ol{g} and h(\ol{gp},s) are in SO(2), so they commute and we get [gp,g\ol{g}h(\ol{gp},s)\ol{g}v]=[gp,h(\ol{gp},s)gv] after cancelling \ol{g}^2=1. Thus R(s)[p,v]=[p,h(\ol{p},s)v]=[gp,h(\ol{gp},s)gv]=R(s)[gp,gv].
\end{construction}

\begin{proposition}
\label{prop:BundleIsoHE}
Let $B$ be a connected space,  let $B'\subseteq B$ be a subspace, let $\xi \colon P\to B$ be a principal~$O(2)$-bundle and let $E:=P\times_{O(2)}\R^2\to B$ be the associated rank $2$ vector bundle.  
The assignment $s \mapsto R(s)$ gives rise to a homeomorphism
$$
\Gamma_{B'}(B,P/SO(2)\times_{\Z_2} SO(2))\xrightarrow{\cong} \Aut^+_{B'}(E).
$$
\end{proposition}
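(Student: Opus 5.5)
The plan is to factor the map through Lemma~\ref{lem:BundleHomeosSection} and Remark~\ref{rem:ComponentsBundle}. First I would observe that metric-preserving bundle automorphisms of $E=P\times_{O(2)}\R^2$ covering the identity correspond bijectively to principal bundle automorphisms of $P$. In one direction, a principal automorphism $\phi$ of $P$ induces $[p,v]\mapsto[\phi(p),v]$. In the other direction, an orthogonal automorphism of $E$ acts on orthonormal frames, and $P$ is the frame bundle. This correspondence is a homeomorphism in the compact-open topologies, and it respects the condition of being the identity over $B'$. Lemma~\ref{lem:BundleHomeosSection} then identifies $\Aut_{B'}(P)$ homeomorphically with $\Gamma_{B'}(B,P\times_{O(2)}O(2))$.

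Next I would show that an automorphism preserves local orientations exactly when the associated section $\Psi(\phi)(b)=[e,h]$ has $\det h=1$. This holds fibrewise: on the fibre over $b$, the map is $[e,v]\mapsto[e,h^{\pm1}v]$, and this preserves orientation if and only if $\det h=1$. The determinant is conjugation invariant, so this condition is well defined. Since $B$ is connected and $\det$ is locally constant along a section, the orientation-preserving automorphisms are precisely the sections landing in the component $P\times_{O(2)}SO(2)$. Remark~\ref{rem:ComponentsBundle} identifies that component with $P/SO(2)\times_{\Z_2}SO(2)$. The boundary condition $s(b)=[e,1]$ over $B'$ is preserved, because $1$ is fixed by conjugation.

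Finally I would check that the composite of these identifications is exactly $s\mapsto R(s)$. The composite sends $s$ to $[p,v]\mapsto[h(p,s)p,v]$. Using the relation $[hp,v]=[p,h^{-1}v]$ in $P\times_{O(2)}\R^2$, this equals $[p,h^{-1}v]$. Under the projection $P\to P/SO(2)$, the element $h(p,s)\in SO(2)$ becomes $h(\overline p,s)$, because $SO(2)$ acts trivially on itself by conjugation, so $h$ depends only on $\overline p$. This recovers the formula for $R(s)$.

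The main obstacle is the topological part, namely that each step is a homeomorphism and not merely a bijection. Lemma~\ref{lem:BundleHomeosSection} omits this verification. I would argue continuity of both directions via the exponential law on compact-open topologies, using that the group operations and the frame map are continuous. If $B$ is not locally compact this may need care, but for the applications here ($B=\Sigma$) it suffices.
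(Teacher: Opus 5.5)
Your proposal is correct and follows essentially the same route as the paper. You identify $\Aut_{B'}(E)$ with $\Aut_{B'}(P)$ and then, via Lemma~\ref{lem:BundleHomeosSection}, with $\Gamma_{B'}(B,P\times_{O(2)}O(2))$; you restrict to the $\det=1$ component, rewrite it using Remark~\ref{rem:ComponentsBundle}, and check that the composite is $[p,v]\mapsto[p,h(\overline{p},s)^{-1}v]$. Your fibrewise determinant argument and your attention to continuity in the compact-open topology, which the paper omits, add detail but do not change the argument.
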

\begin{proof}
We first establish the bijection abstractly.
The equivalence of the categories of vector bundles and principal $O(2)$-bundles together with Lemma~\ref{lem:BundleHomeosSection} give a homeomorphism 
$$\Aut_{B'}(E) 
 \cong   \Aut_{B'}(P)
  \cong \Gamma_{B'}(B,P\times_{O(2)} O(2)).$$
 An automorphism of~$E$ preserves local orientations if and only if the associated section takes values in the subbundle~$P \times_{O(2)}SO(2).$ 
 We rewrite this subbundle as in Remark~\ref{rem:ComponentsBundle}.
 Namely,
 %Using Remark~\ref{rem:ComponentsBundle},  this subbundle can be rewritten as
recall from Remark~\ref{rem:ComponentsBundle} that the projection $\proj \times \id \colon P \times SO(2)\to P/SO(2) \times SO(2)$ induces an isomorphism
\begin{equation}
\label{eq:BundleIsoO2}
P \times_{O(2)}SO(2)
\cong P/SO(2)\times_{\Z_2}SO(2).
\end{equation}
%For the final homeomorphism, observe that since the principal $O(2)$-bundle~$P_E^{O(2)}$ is classified by~$B\to BO(2)$,  the quotient~$P_E^{O(2)}/SO(2)$ is classified by~$B\to BO(2)\to B(O(2)/SO(2))=B\Z_2$, i.e.\ by~$w_1(\xi) \colon B \to B\Z_2$.
%%Don't delete.
%Because the first map is the classifying map and the second is universal w_1.
Combining these facts leads to the required bijection:
$$
\Aut^+_{B'}(E)
\cong \Gamma_{B'}(B,P\times_{O(2)}SO(2))
 \cong \Gamma_{B'}(B,P/SO(2)\times_{\Z_2} SO(2)).
$$
It remains to verify that this bijection maps a section $s$ to the bundle automorphism~$R(s)$.
Start with a section~$s\in \Gamma_{B'}(B,P/SO(2)\times_{\Z_2} SO(2))$ with~$s(\xi(p))=[\overline{p},h(\overline{p},s)]$.
Here~$p \in P$ and~$\overline{p}:=\proj(p) \in P/SO(2)$ and $h(\overline{p},s)\in SO(2)$.
% is the group element such that this equality holds.}
Using the isomorphism from~\eqref{eq:BundleIsoO2}, the associated section~$s'\in \Gamma_{B'}(B,P\times_{O(2)}SO(2))$ is given by~$s'(\xi(p))=[p,h(\overline{p},s)]$. 
%%By construction note that h(\overline{p},s)=h(p,s').
Viewing~$P\times_{O(2)}SO(2)$ as a subbundle of~$P\times_{O(2)}O(2)$ and applying Lemma~\ref{lem:BundleHomeosSection}, the corresponding automorphism of~$P$ is given by~$p\mapsto h(\overline{p},s)p$. 
This is mapped to the automorphism of~$E$ given by
\[[p,v]\mapsto [h(\overline{p},s)p,v]=[p,h(\overline{p},s)^{-1}v].\]
This concludes the proof of the proposition.
\end{proof}

We can now conclude the proof of the last item of Proposition~\ref{prop:RotAlphaAxiomsNonori}.
As explained at the beginning of this section, this reduces to proving the following result.
\begin{proposition}
\label{prop:RealiseRotNonOri}
For every bundle automorphism $H \colon \Sigma \mathbin{\wt{\times}} \R^2 \to \Sigma \mathbin{\wt{\times}} \R^2$ 
%that preserves local orientations and 
%%Don't delete
%%Automatic because rel.\ boundary ->homeo is o-p <-> bundle aut preserves local orientations.
%%For the equivalence: true because both conditions are local.
that restricts to the identity over $\Sigma$, there is a unique a section~$s \colon \Sigma \to \Sigma\mathbin{\wt{\times}} S^1$ that is boring on $\partial \Sigma$ with $H=\Rot(s).$
\end{proposition}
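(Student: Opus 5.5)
The plan is to specialise Proposition~\ref{prop:BundleIsoHE} to $B=\Sigma$, $B'=\partial\Sigma$, and to take $P$ the principal $O(2)$-bundle associated to the model $\Sigma\mathbin{\wt{\times}}\R^2=\widehat{\Sigma}\times_{\Z_2}\R^2$. (I read ``restricts to the identity over $\Sigma$'' as ``over $\partial\Sigma$'', as in the third item of Proposition~\ref{prop:RotAlphaAxiomsNonori}.) First I note that $H$ preserves local orientations. It is the identity over the nonempty boundary, and the condition of preserving local orientations is locally constant over the connected base $\Sigma$, so it holds everywhere. Hence $H\in\Aut^+_{\partial\Sigma}(\Sigma\mathbin{\wt{\times}}\R^2)$.

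Next I identify $P/SO(2)$ with $\widehat{\Sigma}$. Concretely, $P$ is the bundle of orthonormal frames. I realise it as $\widehat{\Sigma}\times_{\Z_2}O(2)$, where $\Z_2$ acts by left multiplication with $c:=\bsm 1&0\\0&-1\esm$. Then $P/SO(2)\cong\widehat{\Sigma}\times_{\Z_2}\Z_2\cong\widehat{\Sigma}$, and the associated bundle $P\times_{O(2)}\R^2$ is exactly $\widehat{\Sigma}\times_{\Z_2}\R^2$. Under this identification the residual $\Z_2$-action on $SO(2)$ is conjugation by $c$, as in Remark~\ref{rem:ComponentsBundle}. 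It follows that
\[
P/SO(2)\times_{\Z_2}SO(2)\cong\widehat{\Sigma}\times_{\Z_2}SO(2)\cong\Sigma\mathbin{\wt{\times}}S^1 .
\]
Proposition~\ref{prop:BundleIsoHE} then supplies a bijection $s\mapsto R(s)$ from $\Gamma_{\partial\Sigma}(\Sigma,\widehat{\Sigma}\times_{\Z_2}SO(2))$ onto $\Aut^+_{\partial\Sigma}(\Sigma\mathbin{\wt{\times}}\R^2)$. The condition defining $\Gamma_{B'}$ (sending $b\in\partial\Sigma$ to $[e,1]$) is precisely the ``boring on $\partial\Sigma$'' condition.

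The remaining step, and the only real check, is to verify that $R(s)=\Rot(s)$ under these identifications. Take a point $[b',v]$ with $b'\in\widehat{\Sigma}$, corresponding to the frame $[b',\id]\in P$ and hence to $\overline{p}=b'$. If $s(p(b'))=[b',a]$, then $h(\overline{p},s)=a$, so $R(s)([b',v])=[b',a^{-1}v]$, which is the formula of Construction~\ref{cons:Rots}. Two things need care here. The first is that the conjugation conventions in the two models agree; this is why the matrix $c$ was chosen in Remark~\ref{rem:ComponentsBundle}. The second is that the answer is independent of the representative $b'$ versus $Tb'$, which is the same check as the well-definedness remark in Construction~\ref{cons:Rots}.

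Existence and uniqueness then follow from bijectivity: $s:=R^{-1}(H)$ gives $H=\Rot(s)$, and injectivity of $R$ gives uniqueness. Since $R$ is a homeomorphism, the statement also passes to $\pi_0$, so the uniqueness holds up to homotopy as well. This is the form used in Proposition~\ref{prop:RotAlphaAxiomsNonori}.
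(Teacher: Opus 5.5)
Your proposal is correct and follows essentially the same route as the paper: specialise Proposition~\ref{prop:BundleIsoHE} to $(B,B')=(\Sigma,\partial\Sigma)$. You use the identifications $P\cong\widehat{\Sigma}\times_{\Z_2}O(2)$ and $P/SO(2)\cong\widehat{\Sigma}$, and then check that $R(s)$ is $[b',v]\mapsto[b',a^{-1}v]$, which is the definition of $\Rot(s)$. Your locally-constant argument for preservation of local orientations (in place of the paper's remark that rel.\ boundary homeomorphisms are orientation-preserving) is fine. So is your reading of ``identity over $\Sigma$'' as ``identity over $\partial\Sigma$'', which is what the paper's proof actually uses.
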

\begin{proof}
Let $P$ be the principal $O(2)$-bundle associated with $\Sigma \mathbin{\wt{\times}} \R^2=\widehat{\Sigma} \times_{\Z_2} \R^2$.
In particular,  $P$ is the unique such bundle with $P \times_{O(2)} \R^2 \cong \widehat{\Sigma} \times_{\Z_2} \R^2$.
%%Don't delete
%%Because bijection
Since $\widehat{\Sigma} \times_{\Z_2} O(2)$ satisfies this property,  we deduce that~$P \cong \widehat{\Sigma} \times_{\Z_2} O(2)$.
%%Don't delete. 
%Observe the isomorphism
%\begin{align*}
%\wh\Sigma\times_{\Z_2} O(2)\times_{O(2)} \R^2 &\xrightarrow{\cong} \wh \Sigma\times_{\Z_2}\R^2=\Sigma \mathbin{\wt{\times}} \R^2 \\
%[b',a,v]&\mapsto [b',av].
%\end{align*}
%%Aka things cancel out as for tensor products.
It follows that $P/SO(2) \cong  (\widehat{\Sigma} \times_{\Z_2} O(2))/SO(2) \cong \widehat{\Sigma} \times_{\Z_2} \Z_2 \cong \widehat{\Sigma}$.
In summary, there are isomorphisms
\begin{align*}
P/SO(2) \times_{\Z_2} SO(2) \cong \widehat{\Sigma} \times_{\Z_2} SO(2) \\
P \times_{O(2)} \R^2 \cong \widehat{\Sigma} \times_{\Z_2} \R^2.
\end{align*}
Using these isomorphisms, Proposition \ref{prop:BundleIsoHE} implies that $\Gamma_{\partial \Sigma}(\Sigma,\Sigma \mathbin{\wt{\times}} S^1) \cong \Aut_{\partial \Sigma}(\Sigma \mathbin{\wt{\times}} \R^2).$
Here, recall that~$\Aut_{\partial \Sigma}(\Sigma \mathbin{\wt{\times}} \R^2)=\Aut_{\partial \Sigma}^+(\Sigma \mathbin{\wt{\times}} \R^2)$: a rel.\ boundary homeomorphism of~$\Sigma \mathbin{\wt{\times}} \R^2$ is orientation-preserving and so,  viewed as a bundle automorphism,  it preserves local orientations.

The proposition therefore reduces to proving that the bundle automorphism $R(s)$ associated to a section~$s\in \Gamma_{\partial \Sigma}(\Sigma,\Sigma \mathbin{\wt{\times}} S^1)$ by this proposition agrees with $\Rot(s)$.
But if we write the section~$s\colon \Sigma\to \Sigma \mathbin{\wt{\times}} S^1 \cong \wh \Sigma\times_{\Z_2}SO(2)$ as~$s(b)=[b',a]$, then
Proposition~\ref{prop:BundleIsoHE} shows that $R(s)$ is given by $[b',v]\mapsto [b',a^{-1}v].$
This is precisely the definition of $\Rot(s)$.
\end{proof}

\bibliographystyle{alpha}
\bibliography{BiblioHomotopyBoundary}
\end{document}